% paper1_11Mar08.tex  11 March 2008
% corrections after final acknowledgement by referee 
% last modified by Alex, March 11, 2008 
%

\documentclass[10pt]{article}
\usepackage{amsthm,amsfonts,amsmath,amssymb,latexsym,epsfig,graphics,color}
\usepackage[matrix,arrow,curve]{xy}
\usepackage{makeidx}

%%%%%%%%%%%%%%%%  header %%%%%%%%%%%%%%%%%
%

\title{Symplectic homology, autonomous Hamiltonians, and Morse-Bott
  moduli spaces}

\author{
\begin{tabular}{cc}
Fr\'ed\'eric {\sc Bourgeois} & Alexandru {\sc
Oancea} \\
& \\
{\it Universit\'e Libre de Bruxelles} & {\it Universit\'e
          Louis Pasteur} \\
{\it B-1050 Bruxelles, Belgium} & {\it F-67084 Strasbourg, France} \\ 
& \\
{\tt fbourgeo@ulb.ac.be} & {\tt oancea@math.u-strasbg.fr} \\
& 
\end{tabular}
}

\date{11 March 2008}

%
%%%%%%%%%%%%%%%% Environments/macros %%%%%%%%%%
%
%default is \theoremstyle{theorem}. 
%But there are also other styles !!
\newtheorem{PARA}{}[section]
\newtheorem{theorem}[PARA]{Theorem}

\newtheorem{lemma}[PARA]{Lemma}
\newtheorem{proposition}[PARA]{Proposition}
\newtheorem{definition}[PARA]{Definition}

\theoremstyle{definition}
\newtheorem{remark}[PARA]{Remark}
\newtheorem{example}[PARA]{Example}
\newcommand{\para}{\begin{PARA}\rm}
\newcommand{\arap}{\end{PARA}\rm}
\newcommand{\dfn}{\begin{definition}\rm}
\newcommand{\nfd}{\end{definition}\rm}
\newcommand{\rmk}{\begin{remark}\rm}
\newcommand{\kmr}{\end{remark}\rm}
\newcommand{\xmpl}{\begin{example}\rm}
\newcommand{\lpmx}{\end{example}\rm}
\newcommand{\cA}{\mathcal{A}}
\newcommand{\cB}{\mathcal{B}}

\newcommand{\cD}{\mathcal{D}}
\newcommand{\cF}{\mathcal{F}}
\newcommand{\cE}{\mathcal{E}}

\newcommand{\cH}{\mathcal{H}}

\newcommand{\cJ}{\mathcal{J}}
\newcommand{\cK}{\mathcal{K}}
\newcommand{\cL}{\mathcal{L}}
\newcommand{\cM}{\mathcal{M}}

\newcommand{\cO}{\mathcal{O}}
\newcommand{\cP}{\mathcal{P}}

\newcommand{\cS}{\mathcal{S}}
\newcommand{\cT}{\mathcal{T}}

\newcommand{\oeps}{{\overline{\epsilon}}}
\newcommand{\okappa}{{\overline{\kappa}}}
\newcommand{\ukappa}{{\underline{\kappa}}}
\newcommand{\og}{{\overline{\gamma}}}
\newcommand{\ug}{{\underline{\gamma}}}
\newcommand{\oev}{\overline{\mathrm{ev}}}
\newcommand{\uev}{\underline{\mathrm{ev}}}
\newcommand{\oD}{{\overline{D}}}
\newcommand{\uD}{{\underline{D}}}
\newcommand{\oS}{{\overline{S}}}
\newcommand{\uS}{{\underline{S}}}
\newcommand{\oQ}{{\overline{Q}}}
\newcommand{\uQ}{{\underline{Q}}}
\newcommand{\ou}{{\overline{u}}}
\newcommand{\uu}{{\underline{u}}}
\newcommand{\oU}{\overline{U}}
\newcommand{\uU}{\underline{U}}
\newcommand{\oV}{\overline{V}}
\newcommand{\uV}{\underline{V}}
\newcommand{\opsi}{\overline{\Psi}}
\newcommand{\upsi}{\underline{\Psi}}
\newcommand{\ozeta}{\overline{\zeta}}
\newcommand{\uzeta}{\underline{\zeta}}
\newcommand{\tzeta}{{\widetilde{\zeta}}}
\newcommand{\txi}{{\widetilde{\xi}}}
\newcommand{\otheta}{\overline{\theta}}
\newcommand{\utheta}{\underline{\theta}}
\newcommand{\one}
{{{\mathchoice \mathrm{ 1\mskip-4mu l} \mathrm{ 1\mskip-4mu l}
\mathrm{ 1\mskip-4.5mu l} \mathrm{ 1\mskip-5mu l}}}}

\newcommand{\C}{{\mathbb{C}}}

\newcommand{\R}{{\mathbb{R}}}

\renewcommand{\u}{{\mathbf{u}}}
\renewcommand{\v}{{\mathbf{v}}}

\newcommand{\x}{{\mathbf{x}}}
\newcommand{\Z}{{\mathbb{Z}}}
\newcommand{\coker}{\mathrm{ coker }}  % cokernel
\newcommand{\im}{\mathrm{ im }}        % image
  % range
    % span
\newcommand{\Det}{{\mathrm{Det}}}      % Det
      % domain
      % divergence
  % trace
        % trace
    % sign
         % identity

\newcommand{\Max}{\mathit{ Max}}
\renewcommand{\min}{\mathit{ min}}
    % rank
   % codimension
     % diagonal matrix
         % closure
     % distance
       % interior
     % support
   % (Fredholm)index
\newcommand{\ind}{\mathrm{ind}}
    % gradient
       % real part
       % imaginary part
       % space of metrics
\newcommand{\Jreg}{\cJ_{\mathrm{reg}}}   %regular J's

\newcommand{\Freg}{\cF_{\mathrm{reg}}}

          % Automorphisms
        % Diffeomorphisms
        % Vector fields
          % Holomorphic
          % Endomorphisms

%
%\renewcommand{\phi}{{\varphi}}
\newcommand{\eps}{{\varepsilon}}
\newcommand{\om}{{\omega}}

\newcommand{\tcO}{{\widetilde{\cO}}}
\newcommand{\ttcO}{{\stackrel{\raisebox{-1mm}{$\approx$}}{\raisebox{.01mm}{$\cO$}}}}
\newcommand{\ttcOs}{{\stackrel{\raisebox{-1mm}{$\approx$}}{\raisebox{.01mm}{$\cO$}}{}\hspace{-1mm}^s}}
\newcommand{\ttcOu}{{\stackrel{\raisebox{-1mm}{$\approx$}}{\raisebox{.01mm}{$\cO$}}{}\hspace{-1mm}^u}}
\newcommand{\tp}{{\widetilde{p}}}
\newcommand{\tq}{{\widetilde{q}}}

\newcommand{\tcS}{{\widetilde{\cS}}}
\newcommand{\tcSg}{{\widetilde{\cS}_{\textrm{good}}}}
\newcommand{\tcSb}{{\widetilde{\cS}_{\textrm{bad}}}}
\newcommand{\tw}{{\widetilde{w}}}
\newcommand{\tx}{{\widetilde{x}}} 

\def\NABLA#1{{\mathop{\nabla\kern-.5ex\lower1ex\hbox{$#1$}}}}
\def\Nabla#1{\nabla\kern-.5ex{}_{#1}}
\def\Tabla#1{\Tilde\nabla\kern-.5ex{}_{#1}}
\renewcommand{\Tilde}{\widetilde}

\newcommand{\p}{{\partial}}
\newcommand{\dbar}{{\bar\partial}}
\newcommand{\tpar}{{\parallel\mskip-7mu\vert}}

\newenvironment{enum}
{\begin{enumerate}}
{\end{enumerate}}

\makeindex

\begin{document}

\maketitle
\pagestyle{myheadings}
\markright{Symplectic homology for autonomous Hamiltonians}

%%%%%%%%%%%%%%%%%%%%%%%%%%%%%%%%%%%%%%%%%%%%%%
%%%%%%%%%%%%%%%%%%%%%%%%%%%%%%%%%%%%%%%%%%%%%%
%%%%%%%%%%%%%%%% Abstract %%%%%%%%%%%%%%%%%%%%
%%%%%%%%%%%%%%%%%%%%%%%%%%%%%%%%%%%%%%%%%%%%%%
%%%%%%%%%%%%%%%%%%%%%%%%%%%%%%%%%%%%%%%%%%%%%%

\begin{abstract}
We define Floer homology for a time-independent, or autonomous
Hamiltonian on a symplectic manifold with contact type boundary, under
the assumption that its $1$-periodic orbits are transversally
nondegenerate. Our construction is based on Morse-Bott techniques for
Floer trajectories. Our main motivation is to understand the
relationship  between linearized contact homology of a fillable
contact manifold and symplectic homology of its filling.
\end{abstract}

{\it 2000 Mathematics Subject Classification: 53D40.
}

\tableofcontents

\vfill \pagebreak

%%%%%%%%%%%%%%%%%%%%%%%%%%%%%%%%%%%%%%%%%%%%%%
%%%%%%%%%%%%%%%%%%%%%%%%%%%%%%%%%%%%%%%%%%%%%%
%%%%%%%%%%%%%%%% Section 1 %%%%%%%%%%%%%%%%%%%
%%%%%%%%%%%%%%%%%%%%%%%%%%%%%%%%%%%%%%%%%%%%%%
%%%%%%%%%%%%%%%%%%%%%%%%%%%%%%%%%%%%%%%%%%%%%%

\section{Introduction} 

One crucial hypothesis in the definition of
Floer homology~\cite{F} of a Hamiltonian $H$ on a symplectic
manifold $(W,\om)$ is that the $1$-periodic orbits of the Hamiltonian
vector field $X_H$ are nondegenerate. Unless they are all constant --
which happens if the Hamiltonian is $C^2$-small -- this forces $H$ to be
time-dependent. The purpose of this paper is to define Floer homology
for a time-independent, or autonomous Hamiltonian $H:W\to \R$ under
the assumption that its $1$-periodic orbits are transversally
nondegenerate. This last condition is generic in the space of
autonomous Hamiltonians.   

Although this generalization of Floer homology is interesting by itself,
our main motivation
is to understand the relationship between linearized contact
homology of a fillable contact manifold $(M,\xi)$ and 
symplectic homology of the filling $(W,\om)$. In this case
there is a natural class of time-independent Hamiltonians on $W$ whose
nonconstant $1$-periodic orbits correspond precisely to closed Reeb
orbits on $M=\p W$, and for which the Floer trajectories can be
related to holomorphic cylinders in the
symplectization $M\times \R$~\cite{BOcontact}. The goal of the present
paper is to relate the Floer trajectories of a specific time-dependent
perturbation to the Floer trajectories of the unperturbed
Hamiltonian. Thus Floer homology for time-independent Hamiltonians
serves as a bridge between symplectic homology and linearized contact
homology. Moreover, the moduli spaces of Floer trajectories for
autonomous Hamiltonians are related to the moduli spaces 
defining $S^1$-equivariant symplectic homology~\cite{BO,BOcontact}.   

The Morse-Bott analysis in this
paper is, to the best of our knowledge, new to the literature, being
based on ideas contained in the first author's
Ph.D. dissertation~\cite{B} within the context of contact
homology. Although our situation is that of critical manifolds of
dimension one, the complexity of the analytical setup is the same as
that of the higher dimensional case. 

We must mention
at this point Frauenfelder's inspired approach~\cite[Appendix~A]{Fr}
in which he defines a complex for a Morse-Bott function on a finite
dimensional manifold via ``flow lines with
cascades'' -- these being our Floer trajectories with gradient
fragments -- and in which, without proving the correspondence with gradient
trajectories for some perturbed Morse function, he directly shows
deformation invariance of the resulting chain complex. 

\medskip 

We now describe the structure of
the paper. We give in the introduction only a loose statement of our main
Correspondence Theorem~\ref{thm:degen} and we recall in
Section~\ref{sec:SH} the construction of symplectic homology. Although
this is well-known to specialists we still need to establish
notations, and we seize the occasion to set up a general framework
using the Novikov 
ring and nontrivial homotopy classes of periodic
orbits. 

Section~\ref{sec:MBcomplex} describes
the Morse-Bott complex 
and formally states the Correspondence Theorem~\ref{thm:degen}. The
latter is complemented by Proposition~\ref{prop:signs} which describes
how the coherent orientation signs for the Morse-Bott complex are
related to the ones for the Floer complex. 

Section~\ref{sec:FredMB} contains the proofs of the  
previous transversality, compactness, gluing and orientation
statements. Finally, the Appendix contains the statements
concerned with the asymptotic behaviour of the various types of Floer
trajectories that we use. These asymptotic estimates enter crucially
in the proof of the compactness statements, as well as in the
definition of the Fredholm setup for gluing.

\medskip 

We end the introduction with an informal presentation of our
results. Let $H:W\to \R$ be 
an autonomous Hamiltonian defined on a symplectic manifold
$(W,\om)$. We assume that $H$ is a Morse function and that the 
nonconstant $1$-periodic orbits of $H$ are transversally
nondegenerate. The set $\cP(H)$ of $1$-periodic orbits of $H$ is the
set of critical points of the Hamiltonian action functional and
consists of isolated elements $\gamma_{\tp}$ corresponding to critical
points $\tp\in\mathrm{Crit}(H)$, and of nonisolated elements coming in
families $S_\gamma$ which are Morse-Bott nondegenerate circles. These
correspond to reparametrizations of some given orbit
$\gamma\in\cP(H)$, with $\gamma:S^1=\R/\Z\to W$. 

For each circle $S_\gamma$ we choose a perfect Morse function
$f_\gamma:S_\gamma\to \R$ with exactly one maximum $\Max$ and one minimum
$\min$. We denote by $\gamma_\min$, $\gamma_\Max$ the orbits in $S_\gamma$
corresponding to the minimum and the maximum of $f_\gamma$ respectively. 
We choose a chart $S^1\times \R^{2n-1}\ni(\tau,p)$ and a smooth
cut-off function $\rho_\gamma:S^1\times \R^{2n-1}\to \R$ in the
neighbourhood of each $\gamma(S^1)\subset W$, and we denote by 
$\ell_\gamma\in \Z^+$ the maximal positive integer such that
$\gamma(\theta+ 1/ {\ell_\gamma})=\gamma(\theta)$, $\theta\in
S^1$. 

Following~\cite{CFHW}, for $\delta>0$ small enough the time-dependent
Hamiltonian 
$$
H_\delta:S^1\times W\to \R,
$$
$$
H_\delta(\theta,\tau,p) := H - \delta \sum_{S_\gamma}
\rho_\gamma(\tau,p) f_\gamma(\tau - \ell_\gamma\theta)
$$
has only nondegenerate $1$-periodic orbits. Moreover, these are of 
the following two types: they are either constant orbits $\gamma_\tp$
corresponding to critical points $\tp\in\mathrm{Crit}(H)$, or they are
nonconstant orbits of the form $\gamma_p\in\cP(H)$ for 
$p\in\textrm{Crit}(f_\gamma)$. Thus, out of each circle $S_\gamma$ of
periodic orbits for $H$ there are exactly two orbits surviving for
$H_\delta$, namely $\gamma_\min$ and $\gamma_\Max$.

Let $J$ be a generic time-dependent almost complex structure on
$W$. Given $p\in \mathrm{Crit}(f_\og)$, $q\in\mathrm{Crit}(f_\ug)$ we
denote by 
$$
\cM(\og_p,\ug_q;H_\delta,J)
$$
the moduli space of Floer
trajectories for the pair $(H_\delta,J)$ modulo reparametrization,
with negative asymptote $\og_p$ and positive asymptote $\ug_q$. 
We also denote by 
$$
\cM(S_\og,S_\ug;H,J)
$$
the moduli space
of Floer trajectories for the pair $(H,J)$ modulo reparametrization,
with negative asymptote in $S_\og$ and positive asymptote in $S_\ug$. 
Our goal is to describe the moduli spaces of the first type in terms of
moduli spaces of the second type. 

We denote by 
$$
\cM(p,q;H, \{f_\gamma\},J)
$$
the moduli space of Floer trajectories for the pair $(H,J)$ 
with intermediate gradient fragments, consisting of tuples 
$$
[\u]=(c_m,[u_m],c_{m-1},[u_{m-1}],\ldots,[u_1],c_0),  
\quad m\ge 0
$$
such that: 
\begin{enum}
\item[(i)] $[u_i]\in
\cM(S_{\gamma_i},S_{\gamma_{i-1}};H,J)$, $i=1,\ldots,m$ with
$\gamma_m:=\og$, $\gamma_0:=\ug$; 
\item[(ii)] $c_m$ is
a semi-infinite gradient trajectory of $f_\og=f_{\gamma_m}$ 
connecting $\og_p$ to the endpoint of $u_m$; 
\item[(iii)] $c_j$, $j=1,\ldots,m-1$ is a finite gradient trajectory of
$f_{\gamma_j}$ connecting the endpoints of $u_{j+1}$ and $u_j$; 
\item[(iv)] $c_0$ is a semi-infinite gradient trajectory of
  $f_\ug=f_{\gamma_0}$ connecting the endpoint of $u_1$ to
  $\ug_q$. 
\end{enum} 
We give a pictogram of such an element $[\u]$ with $m\ge 1$ in
  Figure~\ref{fig:wtilde} on 
page~\pageref{fig:wtilde}, where one should read $c_i$ instead of
$v_i$. If $m=0$ such an element $[\u]$ is simply an infinite gradient
  trajectory of some $f_\gamma$. 
Let us note that, just as the space of Floer trajectories for a
nondegenerate Hamiltonian can be compactified by adding ``broken''
Floer trajectories, the space of Floer trajectories with intermediate gradient
fragments can be compactified
  by adding ``broken'' such objects, with
an obvious meaning. We denote by $\overline \cM(p,q;H,\{f_\gamma\},J)$
  these compactified moduli spaces. 

Our main result is the following comparison theorem. 

\medskip 

\noindent {\bf Theorem.} {\it The 
following assertions hold. 
\begin{enum}
\item any sequence $[v_n]\in\cM(\og_p,\ug_q;H_{\delta_n},J)$,
$\delta_n\to 0$ converges to an element of 
$\overline \cM(p,q;H,\{f_\gamma\},J)$;
\item any element of $\overline \cM(p,q;H,\{f_\gamma\},J)$ can be
  obtained as such a limit; 
\item there is a bijective correspondence between elements of
$\cM(\og_p,\ug_q;H_\delta,J)$ and elements of
$\cM(p,q;H,\{f_\gamma\},J)$ if the difference of index of the
endpoints is equal to one, or equivalently if the moduli spaces have
dimension zero.
\end{enum}
}

The rigorous forms for the statements (i), (ii), (iii) are given in
Proposition~\ref{prop:compact}, Proposition~\ref{prop:family} and
Theorem~\ref{thm:degen} respectively. 
Unsurprisingly, the Fredholm setup for the previous theorem uses
Sobolev norms with exponential weights since we have degenerate
asymptotics. Similarly, due to the convergence estimates in the
Appendix, there are such weights centered on the portions of the
Floer cylinders approaching gradient fragments. For each peak in the
weight, there is a special section supported around this peak which
has constant norm with respect to $\delta\to 0$. For each gradient
fragment this section corresponds to the reparametrization shift 
of the underlying gradient trajectory. As $\delta\to 0$,
the corresponding peak explodes and thus forbids all infinitesimal
variations except for the single degree of freedom coming from Morse
theory.

To be useful for homological calculations the above theorem needs
to be complemented by a statement concerning signs. We describe in
Section~\ref{sec:ori} how to construct coherent orientations on the
relevant spaces of Fredholm operators and how
to obtain signs $\epsilon(\u)$ and $\epsilon(u_\delta)$ for elements
$[\u]\in\cM(p,q;H,\{f_\gamma\},J)$ and
$u_\delta\in\cM(\og_p,\ug_q;H_\delta,J)$ when the corresponding moduli
spaces are zero-dimensional. 
We recall in Remark~\ref{rmk:good and bad} the definition of {\it good
  orbits} borrowed from Symplectic Field Theory, where it plays a
crucial role in all orientation and signs problems. In the following
statement we denote again by $m\ge 0$ the number
of nonconstant Floer trajectories involved in $\u$. 

\medskip 

\noindent {\bf Proposition~\ref{prop:signs}.} {\it 
Assume the moduli spaces under consideration have dimension zero. The
bijective correspondence between elements
$u_\delta\in\cM(\og_p,\ug_q;H_\delta,J)$ and 
$[\u]\in\cM(p,q;H,\{f_\gamma\},J)$ 
changes signs as follows: 
\begin{enum} 
\item If $m\ge 1$ we have 
$$
\epsilon(\u)=(-1)^{m-1}\epsilon(u_\delta);
$$
\item If $m=0$ we have $\u=u_\delta$ and
$\epsilon(\u)=\epsilon(u_\delta)$, $p$ is the minimum and $q$ is the 
maximum of the same function $f_\gamma$, the moduli space 
$\cM(p,q;H,\{f_\gamma\},J)$ consists of 
the two gradient lines of $f_\gamma$ running from $p$ to $q$, and
their signs are different if and only if the underlying orbit $\gamma$
is good.  
\end{enum} 
}

This result has two pleasant consequences. On the one hand we can
construct a ``Morse-Bott'' chain complex which computes symplectic
homology by counting with suitable signs rigid elements in the
moduli spaces $\cM(p,q;H,\{f_\gamma\},J)$. On the other hand, this
chain complex singles out algebraically the good orbits and can be
used to relate the symplectic homology of a manifold $(W,\om)$
with contact type boundary to the 
linearized contact homology of its boundary -- the latter being
defined by a chain complex involving only good orbits. As already mentioned
at the beginning of this section, this is achieved
in~\cite{BOcontact}. 

\medskip 

{\it Acknowledgements.} A.O. has benefited from a Swiss National Fund
grant under the supervision of Prof. Dietmar Salamon at ETH Z\"urich. 
Both authors acknowledge financial support from the Fonds National de
la Recherche Scientifique, Belgium, the Forschungsinstitut
f\"ur Mathematik, Z\"urich, the Institut
de Recherche Math\'ematique Avanc\'ee at Universit\'e Louis Pasteur,
Strasbourg, as well as from the Mathematisches Forschungsinstitut,
Oberwolfach. The authors would like to thank an anonymous referee for
having carefully read through the arguments in the paper and for
having kindly suggested a solution to a gap in the original proof of
Proposition~\ref{prop:Surjectivity_udelta}.

%%%%%%%%%%%%%%%%%%%%%%%%%%%%%%%%%%%%%%%%%%%%%%
%%%%%%%%%%%%%%%%%%%%%%%%%%%%%%%%%%%%%%%%%%%%%%
%%%%%%%%%%%%%%%% Section 1 %%%%%%%%%%%%%%%%%%%
%%%%%%%%%%%%%%%%%%%%%%%%%%%%%%%%%%%%%%%%%%%%%%
%%%%%%%%%%%%%%%%%%%%%%%%%%%%%%%%%%%%%%%%%%%%%%

\section{Symplectic homology} \label{sec:SH}

We define in this section the symplectic homology groups
of a symplectically aspherical manifold with contact type boundary.
Our construction is modelled on those of Cieliebak, Floer, Hofer and
Viterbo~\cite{C,CFH,FH,V}. We consider nontrivial homotopy classes of
loops and we use the Novikov ring.

Let $(W,\om)$ be a compact symplectic manifold with contact type
boundary $M:=\p W$. This means that there exists a vector field 
$X$\index{$X$, Liouville vector field}
defined in a neighbourhood of $M$, transverse and pointing outwards
along $M$, and such that
$$
\cL _X \om = \om.
$$
Such an $X$ is called a {\bf Liouville vector field}. The $1$-form
$\lambda:=(\iota_X\om)|_M$ is a contact form on $M$. We denote by
$\xi$\index{$\xi$, contact distribution} the contact distribution
defined by $\lambda$. The {\bf Reeb 
   vector field} $R_\lambda$\index{$R_\lambda$, Reeb vector field} 
is uniquely defined by the
conditions $\ker \, \om|_M = \langle R_\lambda \rangle$ and
$\lambda(R_\lambda)=1$. We denote by $\phi_\lambda$ the flow of
$R_\lambda$. The {\bf action spectrum} of $(M,\lambda)$ is
defined by
$$
\textrm{Spec}(M,\lambda) := \{ T \in \R^+\, | \, \textrm{ there is a
   closed } R_\lambda\textrm{-orbit of period } T\}.
 \index{$\textrm{Spec}(M,\lambda)$}
$$
We assume throughout this paper the condition
\begin{equation} \label{eq:asph}
\int_{T^2} f^*\om =0 \quad \mbox{for all smooth } f:T^2\to W.
\end{equation}  
This guarantees that the energy of a Floer 
trajectory does not depend 
on its homology class, but only on its 
endpoints (see below). 
Condition~\eqref{eq:asph} 
plays an important 
role in the Morse-Bott description of the 
symplectic homology 
groups. Our main class of examples is provided 
by exact symplectic 
forms.

Let $\phi$ be the flow of $X$. We parametrize a neighbourhood $U$ of $M$ by
$$
G: M \times [-\delta, 0] \to U, \qquad (p,t) \mapsto \phi^t(p).
$$
Then $d(e^t\lambda)$ is a symplectic form on $M\times \R^+$ and
$G$ satisfies $G^*\om = d(e^t \lambda)$.
We denote
$$
\widehat W : = W \ \bigcup _{G} \ M\times \R^+
\index{$\widehat W$, symplectic completion}
$$
and endow it with the symplectic form
$$
\widehat \om : =
\left\{\begin{array}{ll}
\om, & \textrm{ on } W, \\
d(e^t \lambda), & \textrm{ on } M\times \R^+.
\end{array} \right.
\index{$\widehat \om$, symplectic form on the completion $\widehat W$}
$$

Given a time-dependent Hamiltonian $H :S^1\times \widehat W \to \R$,
we define the {\bf Hamiltonian vector field} 
$X^\theta_H$\index{$X_H$, Hamiltonian vector field} 
by
$$
\widehat \om (X^\theta_H,\cdot) = d H_\theta, \qquad \theta\in S^1 = \R/\Z,
$$
where $H_\theta:=H(\theta,\cdot)$. We denote by $\phi_H$ the flow of
$X_H^\theta$, defined by $\phi_H^0=\textrm{Id}$ and
$$
  \frac d {d\theta} \phi_H^\theta (x) = X^\theta_H(\phi_H^\theta(x)), 
\qquad \theta\in \R.
$$

Let $\cH$ be the set of {\bf admissible Hamiltonians}\index{$\cH$,
  admissible Hamiltonians}, 
consisting of functions $H:S^1\times \widehat W \to \R$ which satisfy
\renewcommand{\theenumi}{\roman{enumi}}
\begin{enum}
  \item $H < 0$ on $W$;
  \item $H(\theta,p,t) = \alpha e^t + \beta$ for $t$ large enough, with
    $\alpha\notin \textrm{Spec}(M,\lambda)$;
  \item every $1$-periodic orbit $\gamma:S^1\to \widehat W$ of
    $X^\theta_H$ is nondegenerate, i.e.
   $$
    \det \left(\one - d\phi_H^1(\gamma(0)) \right) \neq 0.
   $$
\end{enum}
We denote by $\cP(H)$ the set of $1$-periodic orbits of $X^\theta_H$
and by $\cP^a(H)$ the set of $1$-periodic orbits in a given free
homotopy class $a$ in $\widehat W$.\index{$\cP(H), \cP^a(H)$} 

Let $\cJ$ denote the set of {\bf admissible almost complex
structures}\index{$\cJ$, admissible a.c. structures}
$$
J:S^1 \to \textrm{End}(T\widehat W), \qquad J^2=-\one
$$
which are compatible with $\widehat \om$ and have the following
standard form for $t$ large enough:
\begin{equation} \label{eq:J}
\left\{ \begin{array}{rcl}
J_{(p,t)} |_\xi & = & J_0, \\[.2cm]
J_{(p,t)} \frac \p  {\p t} & = & R_\lambda.
\end{array}\right.
\end{equation}
Here $J_0$ is any compatible complex structure on the symplectic
bundle $(\xi,d\lambda)$ which is independent of $\theta$ and $t$.

Let us fix a reference loop $l_a:S^1\to \widehat W$ for each free
homotopy class $a$ in $\widehat W$ such that $[l_a]=a$. If $a$ is the
trivial homotopy class we choose $l_a$ to be a constant loop.
Recall that free homotopy classes of loops in $\widehat W$ are in
one-to-one correspondence with conjugacy classes in $\pi_1(\widehat
W)$. As a consequence, the inverse $a^{-1}$ of a free homotopy class
is well-defined. We require that $l_{a^{-1}}$ coincides with the loop
$l_a$ with the opposite orientation.

The {\bf Hamiltonian action functional} acts on pairs $(\gamma,[\sigma])$
consisting of a loop $\gamma\in C^\infty(S^1,\widehat W)$ and the
homology class (rel boundary) of a map $\sigma:\Sigma \to \widehat W$
defined on a Riemann surface $\Sigma$ with two boundary components
$\p_0\Sigma$ (with the opposite boundary orientation) and
$\p_1\Sigma$ (with the boundary orientation), which satisfies
\begin{equation} \label{eq:sigma}
\sigma|_{\p_0\Sigma} = l_{[\gamma]}, \qquad \sigma|_{\p_1\Sigma}=\gamma.
\end{equation}
Its values are defined by
\begin{equation} \label{eq:AH}
\cA_H(\gamma,[\sigma]) :=
-\int_{\Sigma} \sigma^*\widehat \om - \int _{S^1}
H(\theta,\gamma(\theta)) \, d\theta.
\index{$\cA_H$, action functional}
\end{equation}
The differential $d\cA_H(\gamma,[\sigma]):C^\infty(S^1,\gamma^*T\widehat
W)\to \R$ is given by
$$
d\cA_H(\gamma,[\sigma])\zeta := \int_{S^1} \widehat \om (\dot \gamma -
X_H^\theta (\gamma),\zeta)\, d\theta.
$$
Therefore the critical points of $\cA_H$ are pairs $(\gamma,[\sigma])$
such that $\gamma\in \cP(H)$. We fix from now on, for each $\gamma\in
\cP(H)$, a map $\sigma_\gamma$ satisfying~(\ref{eq:sigma}); then the set of all
pairs $(\gamma,[\sigma])$ can be identified with $H_2(W;\Z)$ for 
fixed $\gamma$.

Let us choose a symplectic trivialization
$$
\Phi_a:S^1\times \R^{2n} \to l_a^*T\widehat W
$$
for each free
homotopy class $a$ in $\widehat W$. If $a$ is the trivial homotopy
class we choose the trivialization to be constant. Moreover, we
require that $\Phi_{a^{-1}}(\theta,\cdot)=\Phi_a(-\theta,\cdot)$,
$\theta\in S^1=\R/\Z$. For
each $\gamma\in
\cP(H)$ there exists a unique (up to homotopy) trivialization
$$
\Phi_\gamma: \Sigma \times \R^{2n} \to \sigma_\gamma^*T\widehat W
$$
such that $\Phi_\gamma=\Phi_{[\gamma]}$ on $\p_0\Sigma \times \R^{2n}$. Let
\begin{equation} \label{eq:triv}
\Psi:[0,1]\to \textrm{Sp}(2n), \qquad
\Psi(\theta):= \Phi_\gamma^{-1} \circ d\phi_H^\theta(\gamma(0)) \circ
\Phi_\gamma. 
\end{equation}
Because $\gamma$ is nondegenerate we can define the
{\bf Conley-Zehnder index} 
$\mu(\gamma)$\index{$\mu(\gamma)$, index of Reeb orbit} by
\begin{equation}
   \label{eq:mu}
   \mu(\gamma):=\mu(\gamma,\sigma_\gamma) := -\mu_{CZ}(\Psi),
  \index{$\mu(\gamma)$, index of Reeb orbit}   
\end{equation}
where $\mu_{CZ}(\Psi)$ is the Conley-Zehnder index of a path of
symplectic matrices~\cite{RS}.

\begin{remark} \rm If, in the previous construction, we replace
   $\sigma_\gamma$ with $\sigma_\gamma \# A$ for some $A\in H_2(W;\Z)$, then the
   resulting index will be
  \begin{equation} \label{eq:c1}
   \mu(\gamma,\sigma_\gamma \# A) = \mu(\gamma,\sigma_\gamma) -
   2\langle c_1(TW),A\rangle.
  \end{equation}
\end{remark}

We define the {\bf Novikov ring} 
$\Lambda_\om$\index{$\Lambda_\om$, Novikov ring} 
as the set of formal linear
combinations $\lambda:= \sum_{A\in H_2(W;\Z)} \lambda_A e^A$,
$\lambda_A \in \Z$ such that
$$
\#\left\{ A \, | \, \lambda_A \neq
   0, \ \om(A) \le c \right\}  < \infty
$$
for all $c>0$. The multiplication in $\Lambda_\om$ is given by
$$
\lambda * \lambda' := \sum_{A,B\in H_2(W;Z)} \lambda _A \lambda'_B
e^{A+B}.
$$
We note that, if $\om$ is exact, then $\Lambda_\om = \Z[H_2(W;\Z)]$.
We define a grading on $\Lambda_\om$ by $|e^A| := -2\langle c_1(TW),A\rangle$.
For each free homotopy class $a$ in $\widehat W$ and each admissible
Hamiltonian $H$ we define the {\bf symplectic chain group} 
$SC_*^a(H)$\index{$SC_*^a(H)$, symplectic chain group} 
as the free $\Lambda_\om$-module generated by elements
$\gamma\in \cP^a(H)$. The grading is given by
$$
|e^A\gamma| := \mu(\gamma) - 2\langle c_1(TW),A\rangle.
$$

We define the space of Floer trajectories
$\widehat \cM^A(\og,\ug;H,J)$
\index{$\cM^A(\og,\ug;H,J),\widehat\cM^A(\og,\ug;H,J)$|(} 
as the set of solutions $u:\R\times S^1\to \widehat W$ of the equation
\begin{equation}
   \label{eq:floer}
   \p_s u + J_\theta (\p_\theta u - X_H^\theta) =0,
\end{equation}
such that
\begin{equation}
   \label{eq:lim}
    \lim_{s\to -\infty} u(s,\theta) = \og(\theta), \qquad \lim_{s\to \infty}
    u(s,\theta) = \ug(\theta), \qquad \lim_{s\to \pm\infty} \p_s u =0
\end{equation}
uniformly in $\theta$ and
\begin{equation}
   \label{eq:A}
   [\sigma_{\og} \# u]  = [\sigma_{\ug} \# A ].
\end{equation}

\begin{remark} \rm Under the nondegeneracy assumption on $\og$, $\ug$
   condition~(\ref{eq:lim}) is equivalent to the finiteness of the
   energy
   \begin{equation}
     \label{eq:E} \index{$\cE(u)$, energy}
     \cE(u) := \cE_{J,H}(u) := 
    \frac 1 2 \int_{\R\times S^1} 
\left( |\p_su|^2_\theta + |\p_t u -
     X_H^\theta|^2_\theta \right) \, dsd\theta.
   \end{equation}
\end{remark}

Because  $\og$, $\ug$ are nondegenerate the
linearized operator $D_u: W^{1,p}(\R \times S^1,u^*T\widehat W) \to
L^p(\R \times S^1,u^*T\widehat W)$, $p>2$ given by
\begin{equation}
   \label{eq:D} \index{$D_u$, linearized operator}
   D_u \zeta := \nabla _s \zeta + J_\theta \nabla_\theta \zeta +
   (\nabla_\zeta J_\theta) \p_\theta u - \nabla_\zeta \left( J_\theta
     X_H^\theta \right), \ u\in \widehat \cM^A(\og,\ug;H,J)
\end{equation}
is Fredholm with index
\begin{equation} \label{eq:ind}
\ind (D_u) = \mu(\og) - \mu(\ug) + 2\langle c_1(TW),A\rangle.
\end{equation}
An almost complex structure $J\in \cJ$ is called {\bf regular for} 
$u\in \widehat \cM^A(\og,\ug;H,J)$ if $D_u$ is surjective, and it is
called {\bf regular} if $D_u$ is surjective for all $\og,\ug\in
\cP(H)$, $A\in H_2(W;\Z)$ and $u\in \widehat \cM^A(\og,\ug;H,J)$. It
is proved in~\cite{FHS} that the space 
$\Jreg(H)$\index{$\cJ_{\textrm{reg}}(H)$}
of regular almost
complex structures is of the second category in $\cJ$. For every $J\in
\Jreg(H)$ the space $\widehat \cM^A(\og,\ug;H,J)$ is a smooth manifold
of dimension $\mu(\og) - \mu(\ug) + 2\langle c_1(TW),A\rangle$. From
now on we fix some $J\in \Jreg(H)$. 

If $\og\neq\ug$ or $A\neq 0$, the additive group $\R$ acts freely on
$\widehat \cM^A(\og,\ug;H,J)$ by $s_0\cdot u(\cdot,\cdot) :=
u(s_0+\cdot,\cdot)$. We define the {\bf moduli space of Floer
trajectories} by 
$$
\cM^A(\og,\ug;H,J):= \widehat \cM^A(\og,\ug;H,J)/\R.
$$
\index{$\cM^A(\og,\ug;H,J),\widehat\cM^A(\og,\ug;H,J)$|)} 
Its dimension is
$$
\dim \, \cM^A(\og,\ug;H,J) := \mu(\og) - \mu(\ug) + 2\langle
c_1(TW),A\rangle-1.
$$
If $\og=\ug$ and $A=0$, the space $\widehat \cM^0(\og,\og;H,J)$ consists
of a single point, correponding to a constant solution (i.e. 
independent of $s$).
The $\R$ action is then trivial and we define the moduli space by
$\cM^0(\og,\og;H,J):= \widehat \cM^0(\og,\og;H,J)
\index{$\cM^0(\og,\ug;H,J),\widehat\cM^0(\og,\ug;H,J)$}$.
A straightforward application of the maximum principle~\cite{V} using 
the special form of admissible Hamiltonians for large $t$ shows that
all solutions of equations~(\ref{eq:floer}) and~(\ref{eq:lim}) are
contained in a compact set. Moreover, by condition~(\ref{eq:asph}),
there are no $J$-holomorphic spheres that can bubble off. 
Therefore the moduli space $\cM^A(\og,\ug;H,J)$ can be
compactified~\cite{F} to a space $\overline \cM^A(\og,\ug;H,J)$
consisting of all tuples 
$$
([u_k],[u_{k-1}],\ldots,[u_1]), \qquad
[u_i]\in \cM^{A_i}(\og_i,\ug_i;H,J)
$$
such that
$\ug_1=\ug$, $\og_i=\ug_{i+1}$, $\og_k=\og$ and $\sum_i A_i=A$. We
call such a tuple $([u_k],[u_{k-1}],\ldots,[u_1])$ a {\bf broken
   trajectory} of level $k$. The topology of the compactified moduli
space is described by the following notion of convergence: a sequence
$[u^\nu]\in \cM^A(\og,\ug;H,J)$ is said to converge to the broken
trajectory $([u_k],[u_{k-1}],\ldots,[u_1])$ if there exist sequences
$s_i^\nu\in\R$, $1\le i\le k$ such that $s_i^\nu\cdot u^\nu$ converges
uniformly on compact sets to $u_i$.

\begin{figure}
         \begin{center}
\input{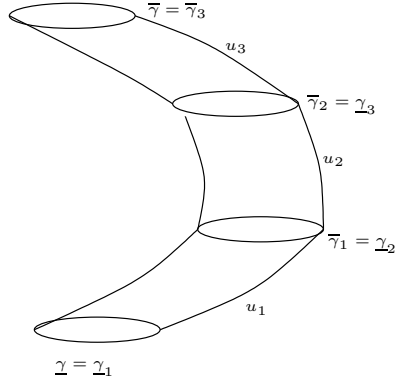}
\caption{Broken trajectory. \label{fig:broken}}
         \end{center}
\end{figure}

If the space $\widehat \cM^A(\og,\ug;H,J)$ is nonempty then its
dimension is strictly positive due to the action of $\R$. In this
case, the broken trajectories involved in the compactification have
level at most $\dim \, \widehat \cM^A(\og,\ug;H,J)$. In particular, when
$\mu(\og) - \mu(\ug) + 2\langle c_1(TW),A\rangle=1$ the moduli space
$\cM^A(\og,\ug;H,J)$
is compact and consists of a finite number of points. In this
situation one can associate a sign 
$\epsilon(u)$\index{$\epsilon(u)$, $\epsilon(u_\delta)$} 
to each element $[u]$ of this
moduli space~\cite{FH} (see also Section~\ref{sec:ori}). We
define the {\bf Floer differential} 
$$
\p : SC_*^a(H) \to SC_{*-1}^a(H)
\index{$\p$, Floer differential}
$$
by
\begin{equation}
   \label{eq:diff}
   \p \og := \sum_{\substack{
   \ug,\ A \\
  \mu(\og) - \mu(\ug) + 2\langle c_1(TW),A\rangle =1
  }}
\sum_{[u]\in \cM^A(\og,\ug;H,J)} \epsilon(u) e^A \ug.
\end{equation}

According to Floer~\cite{F} we have $\p ^2=0$. We define the {\bf
   symplectic homology groups} of the pair $(H,J)$ by
$$
SH_*^a(H,J) := H_*(SC_*^a(H),\p).
$$
  
\begin{remark} \label{rmk:asph} 
{\rm
 In view of 
condition~\eqref{eq:asph} the Novikov ring  
$\Lambda_\om$ can be replaced by $\Z[H_2(W;\Z)]$, or even by $\Z$ at
the price of losing the grading. Indeed, the energy of a Floer
trajectory depends only on its endpoints, hence the moduli spaces 
 $\overline 
\cM(\og,\ug;H,J):=  \bigcup_A \overline \cM^A(\og,\ug;H,J)$
 are compact. Therefore the sum~\eqref{eq:diff} involves only a 
 finite number of classes $A$. }  
\end{remark}

By a standard argument~\cite{F} the groups $SH_*^a(H,J)$ do not depend
on $J\in \Jreg(H)$. Nevertheless, they {\it do} depend on $H$ and, in
order to obtain an invariant of $(W,\om)$, we need an additional algebraic
limit construction. We define an {\bf admissible homotopy of
   Hamiltonians} as a map $H:\R\times S^1\times \widehat W\to \R$ with
the following properties:
\begin{enum}
\item $H(s,\cdot,\cdot) = H_- \in \cH$ for $s\le -1$,
   $H(s,\cdot,\cdot) = H_+ \in \cH$ for $s\ge 1$;
\item $H < 0$ on $W$ and there exist $t_0\ge 0$ and functions
   $\alpha,\beta:\R \to \R$ such that, for all $t\ge t_0$, we have
   $$
    H(s,t,p) = \alpha(s) e^t + \beta(s);
   $$
\item $\p_s H \ge 0$.
\end{enum}
An {\bf admissible homotopy of almost complex structures} is a map
$J:\R\to\cJ$ such that $J(s)=J_-$ for $s\le -1$ and $J(s)=J_+$ for
$s\ge 1$. Given an admissible homotopy of Hamiltonians one defines
regular admissible homotopies of almost complex structures in the
usual way, by linearizing the equation
\begin{equation}
   \label{eq:floers}
   \p_su (s,\theta) + J(s,\theta,u(s,\theta))(\p_\theta u(s,\theta) -
   X_H^\theta(s,\theta,u(s,\theta))) =0,
\end{equation}
subject to the limit conditions
\begin{equation}
   \label{eq:lims}
   \lim_{s\to -\infty} u(s,\cdot) = \og \in \cP(H_-), \qquad
   \lim_{s\to \infty} u(s,\cdot) = \ug \in \cP(H_+).
\end{equation}
Regular admissible homotopies of almost complex structures form again
a set of the second category in the space of admissible homotopies and
the rigid behaviour of $H$ for $t\ge t_0$, together with the condition
$\p_sH\ge 0$, ensures again that solutions of~(\ref{eq:floers})
and~(\ref{eq:lims}) stay in a compact set (see~\cite{O}). The usual
count of solutions of~(\ref{eq:floers}) and~(\ref{eq:lims}) induces
the {\bf monotonicity morphism}
\begin{equation}
   \label{eq:mono}
   \sigma :SH_*^a(H_-) \to SH_*^a(H_+),
\end{equation}
which does not depend on the choice of admissible homotopy connecting
$H_-$ and $H_+$. These morphisms form a direct system on
the set $\left\{ SH_*^a(H), \ H\in \cH \right\}$ and
we define the {\bf symplectic homology groups} of $(W,\om)$ by
\begin{equation*}
   SH_*^a(W,\om) := \lim _{\substack{ \to \\ H\in \cH}
    } SH_*^a(H). 
 \index{$SH_*^a(W,\om)$, symplectic homology}
\end{equation*}
According to~\cite[Lemma~3.7]{C} and~\cite[Theorem~1.7]{V} these
groups do not depend on the choice of the Liouville vector field $X$.

%%%%%%%%%%%%%%%%%%%%%%%%%%%%%%%%%%%%%%%%%%%%%%
%%%%%%%%%%%%%%%%%%%%%%%%%%%%%%%%%%%%%%%%%%%%%%
%%%%%%%%%%%%%%%% Section 3 %%%%%%%%%%%%%%%%%%%
%%%%%%%%%%%%%%%%%%%%%%%%%%%%%%%%%%%%%%%%%%%%%%
%%%%%%%%%%%%%%%%%%%%%%%%%%%%%%%%%%%%%%%%%%%%%%

\section{The Morse-Bott chain complex} \label{sec:MBcomplex}

In this section we apply the Morse-Bott formalism of~\cite{B} to the
case of Hamiltonians $H:\widehat W \to \R$ having circles of
$1$-periodic orbits. 

We denote by $\cP_\lambda$
\index{$\cP_\lambda,\cP_\lambda^b,\cP_\lambda^{i^{-1}(a)}$|(} 
the set of closed unparametrized
$R_\lambda$-orbits in $M$. For each free homotopy class of loops $b$
in $M$ we denote by $\cP_\lambda^b$ the set of all $\gamma\in
\cP_\lambda$ in the homotopy class $b$. The inclusion
$i:M\hookrightarrow W$ induces a map (still denoted by $i$) between
the sets of free homotopy classes of loops in $M$ and $W$
respectively. For each
free homotopy class $a$ in $W$ we denote
$$
\cP_\lambda^{i^{-1}(a)} := \bigcup _{b\in i^{-1}(a)} \cP_\lambda^b.
\index{$\cP_\lambda,\cP_\lambda^b,\cP_\lambda^{i^{-1}(a)}$|)} 
$$
We assume in this section that the closed Reeb orbits on $M$ are
transversally nondegenerate in $M$. This means that, for every orbit $\gamma$
of period $T>0$, we have
$$
\det \left( \one - d\phi_\lambda^T(\gamma(0))|_\xi \right) \neq 0.
$$
This can always be achieved by an arbitrarily small perturbation of
$\lambda$ or, equivalently, of $X$, and such perturbations do not
change the symplectic homology groups. If all orbits $\gamma\in
\cP_\lambda$ are transversally nondegenerate one can assign to each of
them a Conley-Zehnder index $\mu_{CZ}(\gamma)$ according to the
following recipe. 

We fix a reference loop $l_b:S^1\to M$ for each free
homotopy class $b$ in $M$ such that $[l_b]=b$. If $b$ is the
trivial homotopy class we choose $l_b$ to be a constant loop and we
require that $l_{b^{-1}}$ coincides with $l_b$ with the opposite
orientation. We define symplectic trivializations
$$
\Phi_b:S^1\times \R^{2n-2} \to l_b^*\xi
$$
as follows. For each class $b$ we choose a homotopy $h_{ab}:S^1\times
[0,1] \to W$ from $l_a$, $a=i(b)$ to $l_b$ such that
\begin{equation} \label{eq:CRtrivhom}
h_{a^{-1}b^{-1}}(\tau,\cdot) = h_{ab}(-\tau,\cdot).
\end{equation}
We extend the trivialization $\Phi_a:S^1\times \R^{2n} \to
l_a^*T\widehat W$ over the homotopy $h_{ab}$ to get a trivialization
$\Phi'_b:S^1\times \R^{2n} \to l_b^*T\widehat W$. This trivialization
is homotopic to another one, still denoted $\Phi'_b$, such that
\begin{eqnarray}
   \Phi'_b(S^1\times \R^{2n-2} \times \{0\} \times \{0\}) & = &
   l_b^*\xi, \nonumber \\
   \Phi'_b(S^1\times \{0\} \times \R \times \{0\}) & = & l_b^*\langle
   \frac \p {\p t} \rangle, \label{eq:homotopic_triv} \\
   \Phi'_b(S^1\times \{0\} \times \{0\} \times \R) & = & l_b^*\langle
   R_\lambda \rangle. \nonumber
\end{eqnarray}
We define $\Phi_b:=\Phi'_b|_{S^1\times \R^{2n-2} \times \{0 \} \times
   \{0\}}$. If $b$ is the trivial homotopy
class we choose $h_{ab}$ to be a path of
constant loops, so that $\Phi_b$ is constant.

We fix for each $\gamma\in
\cP_\lambda$ a map $\sigma_\gamma:\Sigma \to M$, where $\Sigma$
is a Riemann surface with two boundary components
$\p_0\Sigma$ (with the opposite boundary orientation) and
$\p_1\Sigma$ (with the boundary orientation), satisfying
\begin{equation} \label{eq:sigmaM}
\sigma|_{\p_0\Sigma} = l_{[\gamma]}, \qquad \sigma|_{\p_1\Sigma}=\gamma.
\end{equation}
For each $\gamma\in \cP_\lambda$ there exists a unique (up to
homotopy) trivialization
$$
\Phi_\gamma: \Sigma \times \R^{2n-2} \to \sigma_\gamma^*\xi
$$
such that $\Phi_\gamma=\Phi_{[\gamma]}$ on $\p_0\Sigma \times \R^{2n-2}$. Let
\begin{equation} \label{eq:CRtriv}
\Psi:[0,T]\to \textrm{Sp}(2n-2), \quad
\Psi(\tau):= \Phi_\gamma^{-1} \circ d\phi_\lambda^\tau(p) \circ
\Phi_\gamma, \quad 
p\in \im\,\gamma.
\end{equation}
Because $\gamma$ is nondegenerate we can define the
{\bf Conley-Zehnder index} $\mu(\gamma)$ by
\begin{equation}
   \label{eq:CRmu}
   \mu(\gamma):=\mu(\gamma,\sigma_\gamma) := \mu_{CZ}(\Psi),
\end{equation}
where $\mu_{CZ}(\Psi)$ is the Conley-Zehnder index of a path of
symplectic matrices~\cite{RS}.

\begin{remark} \rm If, in the previous construction, we replace
   $\sigma_\gamma$ with $\sigma_\gamma \# A$ for some $A\in
H_2(M;\Z)$, then the 
   resulting index will be
  \begin{equation} \label{eq:CRc1}
   \mu(\gamma,\sigma_\gamma \# A) = \mu(\gamma,\sigma_\gamma) +
   2\langle c_1(\xi),A\rangle.
  \end{equation}
Note that $c_1(\xi)=i^*c_1(TW)$ because $i^*TW=\xi \oplus \langle
\frac \p {\p t},R_\lambda \rangle$. Moreover, the parity of
$\mu(\gamma)$ is well-defined independently of the trivialization of
$\xi$ along $\gamma$.
\end{remark}

\begin{remark} \label{rmk:good and bad}
For each simple orbit $\gamma\in \cP_\lambda$ we denote by $\gamma^k$,
$k\in \Z^+$ its positive iterates. The parity of the Conley-Zehnder
index of all the odd, respectively even iterates is the same. If
these two parities differ we say that all even iterates $\gamma^{2k}$,
$k\in \Z^+$ are {\bf bad orbits}. It is proved
in~\cite[Lemma~3.2.4]{U} that the even iterates of a simple orbit 
$\gamma$ of period $T$ are bad if and only if
$d\phi_\lambda^T(p)|_\xi$, $p\in \im\,\gamma$ has an odd number of
real negative eigenvalues strictly smaller than $-1$ (see also
Lemma~\ref{lem:cover}).  
The orbits in $\cP_\lambda$ which are not bad are called {\bf good
orbits}. 
\end{remark}

We define a new class $\cH'$\index{$\cH'$, autonomous admissible
  Hamiltonians} of admissible 
Hamiltonians consisting of elements $H:\widehat W\to \R$ such that
\begin{enum}
\item $H|_W$ is a $C^2$-small Morse function and $H<0$ on $W$;
\item $H(p,t)=h(t)$ outside $W$, where $h(t)$ is a strictly
   increasing function with $h(t)=\alpha e^t+\beta$,
   $\alpha,\beta\in\R$, $\alpha \notin \textrm{Spec}(M,\lambda)$ for
   $t$ bigger than some $t_0$, and such that $h''-h'>0$ on $[0,t_0[$.
\end{enum}

Note that the $1$-periodic orbits of $X_H$ in $W$ are constant and
nondegenerate by assumption~(i). A direct computation shows that
\begin{equation} \label{eq:XHR}
X_h(p,t) = -e^{-t}h'(t) R_\lambda.
\end{equation}

The $1$-periodic orbits of $X_H$ fall in two classes:
\begin{enumerate}
\item[(1)] critical points of $H$ in $W$;
\item[(2)] nonconstant $1$-periodic orbits of $X_h$, located on levels
   $M\times \{t\}$, $t\in ]0,t_0[$, which are in one-to-one
   correspondence with closed $-R_\lambda$-orbits of period
   $e^{-t}h'(t)$.
\end{enumerate}

Recall that, for every critical point $\tp\in \mathrm{Crit}(H)$, the
corresponding constant $X_H$-orbit $\gamma_{\tp}$ has Conley-Zehnder index
$$ \mu(\gamma_{\tp}) = \ind(\tp;-H) - n, \qquad n= \frac 1
2 \dim \, W,
$$
where $\ind(\tp;-H)$ is the Morse index of $\tp$
with respect to $-H$~\cite[Lemma~7.2]{SZ}.

Let $\alpha := \lim_{t \to \infty} e^{-t} H(p,t)$.
We denote by $\cP_\lambda^{\le \alpha}$
\index{$\cP_\lambda^{\le \alpha}$}
the set of all $\gamma \in
\cP_\lambda$ such that $\int \gamma^*\lambda \le \alpha$.
Because $H$ is independent of $\theta$, every
orbit $\gamma\in\cP_\lambda^{\le \alpha}$ gives rise to a whole circle
of nonconstant $1$-periodic orbits $\gamma_H$ of $X_H$. We denote by
$S_\gamma$\index{$S_\gamma$, circle of orbits} 
the set of such orbits and identify $S_\gamma$ with its
image under the natural embedding $S_\gamma \to \widehat W$ given by
$\gamma_H \mapsto \gamma_H(0)$. Note that all elements of $S_\gamma$
differ by a shift in the parametrization, and that the $\gamma_H$
are noninjective if their minimal period is smaller than $1$.

\begin{lemma} Let $H\in \cH'$. Every nonconstant $1$-periodic orbit
   $\gamma_H$ of $H$ is transversally nondegenerate in $\widehat W$.
\end{lemma}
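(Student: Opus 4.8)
The plan is to show transverse nondegeneracy of $\gamma_H$ by computing the linearized return map of $X_H$ along $\gamma_H$ and relating it to the linearized Reeb return map, which is nondegenerate on $\xi$ by the standing transverse nondegeneracy assumption on closed Reeb orbits. First I would fix a nonconstant $1$-periodic orbit $\gamma_H$ of $X_H$; by the discussion preceding the lemma, $\gamma_H$ lies on a level $M\times\{t\}$ with $t\in\,]0,t_0[$, and equation~\eqref{eq:XHR} says $X_h(p,t)=-e^{-t}h'(t)R_\lambda$. Thus $\gamma_H$ is, up to the constant reparametrization factor $c:=e^{-t}h'(t)$, a closed $(-R_\lambda)$-orbit of period $c$ sitting at a fixed $t$-level. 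The tangent space $T_{\gamma_H(0)}\widehat W$ splits as $\xi\oplus\langle\partial_t\rangle\oplus\langle R_\lambda\rangle$, and I would analyze the action of $d\phi_H^1(\gamma_H(0))$ on each summand of this splitting.

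The key computation is: since $H=h(t)$ depends only on $t$ outside $W$, the flow $\phi_H^\theta$ preserves the level sets $M\times\{t\}$, so $d\phi_H^1$ preserves the splitting of $T\widehat W$ along $M\times\{t\}$ into the directions tangent to $M$ and the $\partial_t$ direction. On $\langle R_\lambda\rangle\oplus\langle\partial_t\rangle$ the linearized flow is computed directly: $R_\lambda$ is tangent to the orbit (an eigenvector with eigenvalue $1$, as always for an autonomous flow), and the $\partial_t$ direction picks up a shear term proportional to $h''(t)-h'(t)$ — this is exactly where hypothesis (ii) on $\cH'$, namely $h''-h'>0$ on $[0,t_0[$, enters, forcing the $2\times 2$ block on $\langle\partial_t\rangle\oplus\langle R_\lambda\rangle$ to be a nontrivial unipotent (parabolic) block rather than the identity. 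On $\xi$, the relation $X_h = -c\,R_\lambda$ with $c$ constant along the orbit means the linearized flow on $\xi$ is precisely the linearized Reeb return map $d\phi_\lambda^{c}(p)|_\xi$ — up to the sign/direction coming from $-R_\lambda$, which does not affect nondegeneracy — and this is invertible without eigenvalue $1$ by the transverse nondegeneracy of the Reeb orbit. (One should check $c$ is not an integer multiple of the minimal period giving a degenerate iterate; this is part of what ``$\alpha\notin\mathrm{Spec}(M,\lambda)$'' and the strict monotonicity of $t\mapsto e^{-t}h'(t)$, equivalent to $h''-h'>0$, buy us.)

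Assembling these pieces: the circle $S_\gamma$ of orbits is the orbit of the $S^1$-reparametrization action, so $\ker(\one - d\phi_H^1(\gamma_H(0)))$ must contain the tangent direction to $S_\gamma$, which is the $R_\lambda$-direction. Transverse nondegeneracy means this kernel is \emph{exactly} one-dimensional, i.e.\ $d\phi_H^1(\gamma_H(0))$ restricted to any complement of $\langle R_\lambda\rangle$ has no fixed vectors. By the splitting above, that complement decomposes into the $\partial_t$-line, where the parabolic block gives no extra fixed vector precisely because $h''-h'\ne 0$, and $\xi$, where there is no fixed vector because the Reeb orbit is transversally nondegenerate. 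Hence $\ker(\one-d\phi_H^1)=\langle R_\lambda\rangle$ is $1$-dimensional and $\gamma_H$ is transversally nondegenerate in $\widehat W$. The main obstacle I anticipate is bookkeeping the $\partial_t$-direction correctly: one must verify that the off-diagonal shear coupling $\partial_t$ to $R_\lambda$ is genuinely nonzero, which requires differentiating the Hamiltonian equations in the $t$-direction and seeing the factor $(h''-h')$ appear — the sign condition in the definition of $\cH'$ was presumably built in exactly for this step, so the computation should close cleanly once set up.
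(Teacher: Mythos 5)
Your proposal is correct and follows essentially the same route as the paper: split $T\widehat W$ along $\gamma_H$ into $\xi\oplus\langle\partial_t\rangle\oplus\langle R_\lambda\rangle$, identify $d\phi_H^1|_\xi$ with $(d\phi_\lambda^{e^{-t}h'(t)})^{-1}|_\xi$, and compute the shear $d\phi_H^1\,\partial_t=\partial_t-e^{-t}(h''-h')R_\lambda$, whose nonvanishing is exactly condition (ii) in the definition of $\cH'$. One small correction: your parenthetical worry about checking that ``$c$ is not an integer multiple of the minimal period'' is a non-issue, and attributing it to $\alpha\notin\mathrm{Spec}(M,\lambda)$ is misplaced — the standing hypothesis is that \emph{every} closed Reeb orbit, including all iterates, is transversally nondegenerate, and $c=e^{-t}h'(t)$ manifestly lies in the spectrum since there is a closed Reeb orbit of that period through $\gamma_H(0)$; the condition $\alpha\notin\mathrm{Spec}(M,\lambda)$ only serves to exclude periodic orbits at the cylindrical end where $H$ is linear, and plays no role in this lemma.
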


\begin{proof}
We have to show that the only eigenvector of
$d\phi_H^1(\gamma_H(0))$ corresponding to the eigenvalue $1$ is $\dot
\gamma_H(0)$. To this effect we note that $\xi$ is an invariant space
and that
$$
d\phi_H^1(\gamma_H(0))|_\xi =
\left( d\phi_\lambda^{e^{-t}h'(t)} \right) ^{-1}(\gamma_H(0))|_\xi .
$$
Because we have assumed that all $R_\lambda$-orbits are transversally
nondegenerate in $M$, it follows that $d\phi_H^1(\gamma_H(0))|_\xi$ has
no eigenvalue equal to one. On the other hand we have
$$
d\phi_H^1(\gamma_H(0))\frac \p {\p t} = \frac \p {\p t} -
e^{-t}(h''-h')R_\lambda.
$$
The conclusion follows because $h''(t)-h'(t)>0$.
\end{proof}

For each $\gamma\in \cP_\lambda^{\le \alpha}$ we choose a Morse function
$f_\gamma:S_\gamma\to \R$\index{$f_\gamma$, Morse function on $S_\gamma$}
with exactly one maximum $M$ and one minimum $m$.
We fix from now on an element $H\in \cH'$ and, for each
$\gamma\in\cP_\lambda$ corresponding to a nonconstant
$\gamma_H \in \cP(H)$, we denote by $\ell_\gamma \in \Z^+$
the maximal positive integer such that $\gamma_H(\theta +
\frac 1 {\ell_\gamma}) = \gamma_H(\theta)$ for all $\theta\in S^1$. 
We choose a symplectic trivialization
$\psi:=(\psi_1,\psi_2):U_\gamma\stackrel\sim\longrightarrow V\subset
S^1\times \R^{2n-1}$ between 
open neighbourhoods $U_\gamma\subset \widehat W$ of $\gamma_H(S^1)$ and $V$
of $S^1\times \{ 0 \}$, such that
$\psi_1(\gamma(\theta))=\ell_\gamma\theta$. Here $S^1\times \R^{2n-1}$
is endowed with the 
symplectic form  $\om_0:=\sum_{i=1}^n dq_i\wedge dp_i$, $q_1\in S^1$,
$(p_1,q_2,p_2,\ldots,q_n,p_n)\in \R^{2n-1}$. 
Let $\rho:S^1\times \R^{2n-1}\to [0,1]$ be a smooth cutoff function
supported in a small neighbourhood of $S^1\times \{0\}$ such that
$\rho|_{S^1\times \{0\}} \equiv 1$. For
$\delta>0$ and $(\theta,p,t)\in S^1\times U_\gamma$ we define
\begin{equation} \label{eq:Hdelta}
   H_\delta(\theta,p,t):= h(t) + \delta \rho(\psi(p,t))
   f_\gamma(\psi_1(p,t) - \ell_\gamma\theta).
   \index{$H_\delta$, perturbed Hamiltonian}
\end{equation}
The Hamiltonian $H_\delta$ coincides with $H$ outside the open sets
$S^1\times U_\gamma$. This is precisely the perturbation described
in~\cite[Proposition~2.2]{CFHW}. It is shown therein that, for
$\delta$ sufficiently small, the set $\cP(H_\delta)$ consists of the
following elements:
\begin{enumerate}
\item[(1)] constant orbits, which are the same as those of $H$;
\item[(2)] nonconstant orbits, which are nondegenerate and form pairs
   $(\gamma_\min,\gamma_\Max)$, where $\gamma\in \cP_\lambda^{\le\alpha}$
   and $\gamma_\min$, $\gamma_\Max$\index{$\gamma_\min$, $\gamma_\Max$, surviving orbits}
   coincide with the orbits in
   $S_\gamma$ starting at the minimum and the maximum of
   $f_\gamma$ respectively.
\end{enumerate}

\begin{lemma} \label{lem:maslov}
   The periodic orbits $\gamma_\min, \gamma_\Max \in \cP(H_\delta)$ satisfy
   \begin{equation}
     \label{eq:maslov}
     \mu(\gamma_\min) = \mu(\gamma) + 1, \qquad 
     \mu(\gamma_\Max) = \mu(\gamma).
   \index{$\mu(\gamma)$, index of Reeb orbit}   
   \end{equation}
\end{lemma}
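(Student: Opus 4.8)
The plan is to compute the Conley-Zehnder index of $\gamma_{\min}$ and $\gamma_{\Max}$ by comparing the linearized flow of $X_{H_\delta}$ along these orbits with the linearized flow of $X_H$ along the underlying orbit $\gamma_H$, and then relating the latter to the linearized Reeb flow $d\phi_\lambda^T|_\xi$ whose index is, by definition~\eqref{eq:CRmu}, $\mu(\gamma)$. First I would fix the symplectic splitting along $\gamma_H$ into $\xi \oplus \langle \frac{\p}{\p t}, R_\lambda\rangle$ that is already used in the proof of the preceding lemma; this splitting is preserved by $d\phi_H^1$, and the trivialization $\Phi_b$ from~\eqref{eq:homotopic_triv} is adapted to it. On the $\xi$-factor, $d\phi_H^1(\gamma_H(0))|_\xi$ is (the inverse of) the linearized Reeb return map, and $H_\delta$ differs from $H$ only by a term whose Hamiltonian vector field, since $f_\gamma$ is $C^1$-small after scaling by $\delta$, contributes a uniformly small perturbation; hence the $\xi$-part of the Conley-Zehnder index of $\gamma_{\min}$ and of $\gamma_{\Max}$ equals the $\xi$-part coming from $d\phi_\lambda^T|_\xi$, which is exactly $\mu(\gamma)$ (up to the sign conventions in~\eqref{eq:mu} versus~\eqref{eq:CRmu}, which I would track carefully).

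Next I would analyze the two-dimensional factor $\langle \frac{\p}{\p t}, R_\lambda\rangle$, which is where the two orbits $\gamma_{\min}$ and $\gamma_{\Max}$ genuinely differ and where the $+1$ in~\eqref{eq:maslov} is produced. For the unperturbed autonomous $H$, the orbit comes in a Morse-Bott circle $S_\gamma$, so the linearized flow on this $2$-plane has $1$ as an eigenvalue with eigenvector $\dot\gamma_H(0)$; the transverse direction $\frac{\p}{\p t}$ is pushed by $d\phi_H^1(\gamma_H(0))\frac{\p}{\p t} = \frac{\p}{\p t} - e^{-t}(h''-h')R_\lambda$, i.e.\ a shear with positive shear coefficient because $h''-h'>0$. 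After perturbing by $\delta\rho f_\gamma$, the Morse-Bott degeneracy is broken: the critical point of $f_\gamma$ (minimum or maximum) determines whether the perturbed $2\times 2$ return map becomes a small rotation in the positive or negative sense, i.e.\ whether the path of symplectic matrices picks up an extra half-turn. The standard computation (cf.\ the analogous perturbation argument in~\cite{CFHW} and~\cite[Lemma~7.2]{SZ} for the Morse case) shows that the minimum produces a $2\times 2$ Conley-Zehnder contribution one larger than the maximum, and that the maximum reproduces the Robbin-Salamon index of the degenerate shear path, which by the index conventions is $\mu(\gamma)$ on the nose. Combining the $\xi$-contribution and the $2$-dimensional contribution gives $\mu(\gamma_{\Max}) = \mu(\gamma)$ and $\mu(\gamma_{\min}) = \mu(\gamma)+1$.

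The main obstacle I expect is bookkeeping the sign and normalization conventions: the paper uses $\mu(\gamma) = -\mu_{CZ}(\Psi)$ for Hamiltonian orbits in~\eqref{eq:mu} but $\mu(\gamma) = +\mu_{CZ}(\Psi)$ for Reeb orbits in~\eqref{eq:CRmu}, and the relevant symplectic path on the $\frac{\p}{\p t}$-$R_\lambda$ plane is a degenerate (parabolic) path whose Robbin-Salamon index must be computed via the $\pm\frac12$ crossing rule of~\cite{RS}; whether the correction lands on $\gamma_{\min}$ or on $\gamma_{\Max}$ depends on the sign of $h''-h'$ and on the direction $X_h = -e^{-t}h'R_\lambda$ from~\eqref{eq:XHR}. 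I would set up a single explicit $2\times 2$ model — a small perturbation of the shear $\begin{pmatrix}1 & 0\\ -c & 1\end{pmatrix}$ with $c>0$ — compute its Conley-Zehnder index for both signs of the perturbation, and check consistency against the Morse-index normalization $\mu(\gamma_{\tp}) = \ind(\tp;-H) - n$ already recalled in the text, which fixes all conventions unambiguously. Once the model computation is pinned down, the rest is a routine concatenation (using the direct-sum property of the Conley-Zehnder index under the invariant splitting) and invariance under the small perturbation.
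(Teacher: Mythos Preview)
Your proposal is correct and follows essentially the same approach as the paper: split along $\xi \oplus \langle \frac{\p}{\p t}, R_\lambda\rangle$, identify the $\xi$-contribution with $\mu(\gamma)$, and compute the two-dimensional shear contribution via the Robbin-Salamon $\pm\frac12$ rule. The paper streamlines your perturbation step by directly citing~\cite[Proposition~2.2]{CFHW} for the identities $-\mu(\gamma_\min)=\mu_{RS}(\gamma_H)-\frac12$ and $-\mu(\gamma_\Max)=\mu_{RS}(\gamma_H)+\frac12$, and handles the orientation reversal $X_h \parallel -R_\lambda$ by computing $\mu_{RS}(-\gamma_H)=-\mu_{RS}(\gamma_H)$ and then invoking the symplectic shear axiom of~\cite[Theorem~4.1]{RS} on the path $\Psi_\lambda(T\theta)\oplus\begin{pmatrix}1 & 0\\ T\theta & 1\end{pmatrix}$ to obtain $-\mu_{RS}(\gamma_H)=\mu(\gamma)+\frac12$---exactly the bookkeeping you flagged as the main obstacle.
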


\begin{proof}
   We denote by $\gamma_H$ the $1$-periodic orbit of $X_H$
   corresponding to $\gamma\in \cP_\lambda^{\le\alpha}$. We define the
   Robbin-Salamon index of $\gamma_H$ by
$$
\mu_{RS}(\gamma_H) := \mu(\Psi),
$$
where $\Psi:[0,1]\to \textrm{Sp}(2n)$ is given by~(\ref{eq:triv}) and
$\mu(\Psi)$ is the Robbin-Salamon index of an arbitrary path of
symplectic matrices~\cite[\S4]{RS}. It is shown
in~\cite[Proposition~2.2]{CFHW} that
\begin{equation} \label{eq:RS}
-\mu(\gamma_\min) = \mu_{RS}(\gamma_H) - \frac 1 2, \qquad
-\mu(\gamma_\Max)=\mu_{RS}(\gamma_H) + \frac 1 2.
\end{equation}
Note that $\gamma_H$ has the orientation of $-R_\lambda$. Define
$\widetilde \Psi:[0,1]\to \textrm{Sp}(2n)$ by
$$
\widetilde \Psi(\theta):= \Phi_\gamma^{-1}(-\theta) \circ
d\phi_H^{-\theta}(\gamma(0)) \circ \Phi_\gamma(0),
$$
where $\Phi_\gamma:\R/\Z\times \R^{2n} \to \gamma_H^*T\widehat W$ is the
trivialization involved in~(\ref{eq:triv}). Then
$\mu_{RS}(-\gamma_H)=-\mu_{RS}(\gamma_H)=\mu(\widetilde \Psi)$.

Let $\textrm{Sp}^*(2n)\subset \textrm{Sp}(2n)$ be the set of
symplectic matrices with no eigenvalue equal to $1$ and
recall that we have denoted by $a$ free homotopy classes of loops
in $W$ and by $b$ free homotopy classes of loops in $M$.
By our choice~(\ref{eq:CRtrivhom}) and~(\ref{eq:homotopic_triv}) of
trivializations of $T\widehat W$ over the reference loops $l_b$, $b\in
i^{-1}(a)$ we deduce that the path $\widetilde \Psi$ is homotopic with
endpoint in $\textrm{Sp}^*(2n)$ to the path
$$
[0,1]\to \textrm{Sp}(2n) \ : \ \theta \mapsto \Psi_\lambda\left(T\theta \right)
\oplus \left(\begin{array}{cc} 1 & 0 \\ T \theta & 1
              \end{array}
        \right).
$$
Here $T:=e^{-t}(h''(t)-h'(t))$ and $\Psi_\lambda: [0,T]\to
\textrm{Sp}(2n-2)$ is defined by~(\ref{eq:CRtriv}). By the symplectic
shear axiom for the Robbin-Salamon index~\cite[Theorem~4.1]{RS} the
index of the above path is $\mu(\gamma) + \frac 1 2$. As a consequence
$$
-\mu_{RS}(\gamma_H)= \mu_{RS}(-\gamma_H) = \mu(\gamma) + \frac 1 2.
$$
Together with~(\ref{eq:RS}) this yields the conclusion of the Lemma.
\end{proof}

Let $p \in {\rm Crit}(f_\gamma)$; then $\gamma_p \in \cP(H_\delta)$ for all
$\delta\in]0,\delta_0]$ if $\delta_0$ is small enough, and
Lemma~\ref{lem:maslov} says that
$\mu(\gamma_p)=\mu(\gamma)+\textrm{ind}(p;f_\gamma)$. 
If $\tp$ is a critical point of $H$ in $W$ we
denote by $\ug_{\tp}\in\cP(H)$ the corresponding constant orbit.
Our goal is to describe the boundary points as $\delta\to 0$ of
\begin{equation} \label{eq:bigMdelta}
\cM^A_{]0,\delta_0[}(\og_p,\ug_q;H,\{f_\gamma\},J) := \bigcup _{0< \delta <
   \delta_0} \{ \delta \} \times \cM^A(\og_p,\ug_q;H_\delta,J),
\end{equation}
\index{$\cM^A_{]0,\delta_0]}(\og_p,\ug_q;H,\{f_\gamma\},J)$}
with
$$
\mu(\og_p)-\mu(\ug_q) + 2\langle c_1(TW),A \rangle =1,
$$
where
$$
\og,\ug\in \cP_\lambda^{\le\alpha}, \ p\in \mathrm{Crit}(f_{\og}),\ q\in
\mathrm{Crit}(f_{\ug}),\ A\in H_2(W;\Z),\ J\in \cJ
$$
or
$$
\og\in\cP_\lambda^{\le\alpha}, \ p\in \mathrm{Crit}(f_{\og}),\ q\in
\mathrm{Crit}(H),\ A\in H_2(W;\Z),\ J\in \cJ.
$$

Our description is very
similar to that of~\cite{B} within the setting of contact homology.
We fix $J\in \cJ$, $\og,\ug\in\cP_\lambda^{\le\alpha}$ and $\tq\in
\mathrm{Crit}(H)$. We define two Morse-Bott spaces of {\bf Floer 
   trajectories} $\widehat \cM^A(S_{\og},S_{\ug};H,J)$ 
\index{$\cM^A(S_{\og},S_{\ug};H,J),\widehat\cM^A(S_{\og},S_{\ug};H,J)$|(}
and
$\widehat \cM^A(S_{\og},\tq;H,J)$ as follows.
\index{$\cM^A(S_{\og},\tq;H,J),\widehat\cM^A(S_{\og},\tq;H,J)$|(}

For $\og,\ug\in\cP_\lambda^{\le\alpha}$ we denote by $\widehat
\cM^A(S_{\og},S_{\ug};H,J)$ the set
of solutions $u:\R\times S^1 \to \widehat W$ of the Floer
equation~(\ref{eq:floer}) subject to the asymptotic conditions
\begin{equation}
   \label{eq:MBlim}
   \lim_{s\to -\infty} u(s,\theta) = \og_H(\theta), \qquad \lim_{s\to
     \infty} u(s,\theta)= \ug_H(\theta), \qquad \lim_{s\to \pm\infty}
   \p_s u =0
\end{equation}
uniformly in $\theta$, with
\begin{equation}
   \label{eq:MBlim1}
   \og_H\in S_{\og}, \qquad \ug_H\in S_{\ug}
\end{equation}
and
$$
[\sigma_{\og}\# u]=[\sigma_{\ug}\# A].
$$
It is implicit in the above definition that the orbits $\og_H$ and
$\ug_H$ may vary for different elements of~$\widehat
\cM^A(S_{\og},S_{\ug};H,J)$.

For $\og\in \cP_\lambda^{\le\alpha}$ and $\tq\in \mathrm{Crit}(H)$ we denote by
$\widehat \cM^A(S_{\og},\tq;H,J)$ the set
of solutions $u:\R\times S^1 \to \widehat W$ of the Floer
equation~(\ref{eq:floer}) subject to the asymptotic conditions
\begin{equation}
   \label{eq:MBlim2}
   \lim_{s\to -\infty} u(s,\theta) = \og_H(\theta), \qquad \lim_{s\to
     \infty} u(s,\theta)= \tq, \qquad \lim_{s\to \pm\infty}
   \p_s u =0
\end{equation}
uniformly in $\theta$, with
\begin{equation}
   \label{eq:MBlim3}
   \og_H\in S_{\og}
\end{equation}
and
$$
[\sigma_{\og}\# u]=A.
$$
Again, the orbit $\og_H$ may vary for different elements of~$\widehat
\cM^A(S_{\og},\tq;H,J)$.

If $\og\neq\ug$ or $A\neq 0$, the additive group $\R$ acts freely on
$\widehat \cM^A(S_{\og},S_{\ug};H,J)$
and $\widehat \cM^A(S_{\og},\tq;H,J)$
by $s_0\cdot u(\cdot,\cdot):=
u(s_0+\cdot,\cdot)$. We define the {\bf Morse-Bott moduli spaces of
   Floer trajectories} by
$$
\cM^A(S_{\og},S_{\ug};H,J):=\widehat \cM^A(S_{\og},S_{\ug};H,J)/\R
$$
\index{$\cM^A(S_{\og},S_{\ug};H,J),\widehat\cM^A(S_{\og},S_{\ug};H,J)$|)}
and
$$
\cM^A(S_{\og},\tq;H,J):=\widehat \cM^A(S_{\og},\tq;H,J)/\R.
$$
\index{$\cM^A(S_{\og},\tq;H,J),\widehat\cM^A(S_{\og},\tq;H,J)$|)}
If $\og=\ug$ and $A=0$, the space $\widehat \cM^0(S_{\og},S_{\og};H,J)$
is diffeomorphic to $S_{\og}$, consists of constant cylinders (i.e. independent
of $s$) and the $\R$ action is trivial. In this case, we define the Morse-Bott
moduli spaces by $\cM^0(S_{\og},S_{\og};H,J):=\widehat 
\cM^0(S_{\og},S_{\og};H,J)$.
We have natural evaluation maps
$$
\oev : \cM^A(S_{\og},S_{\ug};H,J) \to S_{\og}, \qquad
\uev : \cM^A(S_{\og},S_{\ug};H,J) \to S_{\ug}
\index{$\oev$, $\uev$, evaluation maps}
$$
and
$$
\oev : \cM^A(S_{\og},\tq;H,J) \to S_{\og}
$$
defined by
$$
\oev([u]):= \lim_{s\to-\infty} u(s,\cdot), \qquad \uev([u]):=
\lim_{s\to \infty} u(s,\cdot).
$$

In the statement of the next result we denote by $\cJ'$ 
\index{$\cJ'$, time-indep. admissible a.c. structures} the set of
almost complex structures $J\in\cJ$ 
which are independent of $\theta\in S^1$.

\begin{proposition} \label{prop:Jreg} 
\begin{enum} 
  \item Given $H\in\cH'$, let $\cJ'(H)\subset \cJ'$ be the
(nonempty and open) set
of almost complex structures $J$ such that, for any $x\in \widehat W$
located on a simple $1$-periodic orbit of $X_H$, we have  
\begin{equation} \label{eq:bracket} 
  [X_H,J X_H](x) \neq 0 \quad \mbox{ and } \quad [X_H,J X_H](x)\notin
\langle X_H, J X_H \rangle. 
\end{equation}
There exists a set of second category
$\Jreg'(H)\subset \cJ'(H)$\index{$\cJ'_{\textrm{reg}}(H)$} 
consisting of almost complex structures
$J$ that are regular for all $u\in\widehat
\cM^A(S_\og,S_\ug;H,J)$ or $u\in\widehat \cM^A(S_\og,\tq;H,J)$ with
$\og$ or $\ug$ being a simple orbit, and such that $J\xi=\xi$, $J\frac
\p {\p t}=R_\lambda$ outside a fixed open neighbourhood of the
nonconstant periodic orbits of $X_H$. 
  \item Given $H\in\cH'$, there exists a set of second category
$\Jreg(H)\subset \cJ$\index{$\cJ_{\textrm{reg}}(H)$} consisting of
regular almost complex structures $J$ which, outside a fixed open 
neighbourhood of the nonconstant periodic orbits of $X_H$, are  
independent of $\theta$ and satisfy $J\xi=\xi$, $J\frac \p {\p t} = 
R_\lambda$. 
\end{enum}

\medskip 

In each of the previous cases the relevant moduli spaces 
$\cM^A(S_{\og},S_{\ug};H,J)$, $\cM^A(S_{\og},\tq;H,J)$
are smooth manifolds of dimension  
\begin{eqnarray*}
\dim \, \cM^A(S_{\og},S_{\ug};H,J) & = & \mu(\og) - \mu(\ug) + 2\langle
c_1(TW),A\rangle,\\
\dim \, \cM^A(S_{\og},\tq;H,J) & = & \mu(\og) - \mu(\gamma_{\tq}) + 2\langle
c_1(TW),A\rangle,
\end{eqnarray*}
and the evaluation maps $\oev,\uev$ are smooth.
\end{proposition}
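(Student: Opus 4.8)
The plan is to run the standard Floer-theoretic transversality scheme in the Morse--Bott setting, the novelty being that the almost complex structures must be kept $\theta$-independent near the nonconstant orbits, so one cannot use the usual abundance of perturbations of $J$ in the $\theta$-direction. First I would set up the universal moduli space $\widehat{\cM}^A(S_{\og},S_{\ug};H,\cJ'(H))$ of pairs $(u,J)$ with $u$ solving~(\ref{eq:floer}) and $J\in\cJ'(H)$ equal to the standard form outside a fixed neighbourhood of the nonconstant orbits, and show it is a Banach manifold. For this the key point is that the linearized operator together with the derivative of the section in the $J$-direction is surjective. As usual this reduces, via an $L^2$-pairing and unique continuation, to showing that a nonzero element $\eta$ in the cokernel of $D_u$ cannot be $L^2$-orthogonal to all variations $(\nabla_Y J)(\p_\theta u - X_H^\theta)$ coming from $\theta$-independent $Y$ supported away from the asymptotic orbits. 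The injectivity-type input this requires is that the set of \emph{somewhere-injective} points of $u$ — more precisely, points $(s,\theta)$ where $\p_s u(s,\theta)\neq 0$ and $u(s,\theta)$ is not a periodic orbit and is not hit elsewhere — is open and dense in $\R\times S^1$; here one invokes the results of~\cite{FHS}, adapted to the autonomous situation, using condition~\eqref{eq:asph} to exclude bubbling and the asymptotic estimates of the Appendix to control the behaviour near $s=\pm\infty$.

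The role of hypothesis~(\ref{eq:bracket}), and of the assumption that $\og$ or $\ug$ is \emph{simple}, is precisely to produce such injective points when $u$ is itself $\theta$-independent or more generally when the image of $u$ is contained in the hypersurface swept out by a circle of orbits: the condition $[X_H,JX_H]\notin\langle X_H,JX_H\rangle$ at a point on a simple orbit guarantees that the Floer cylinder is transverse to that hypersurface, hence cannot be entirely trapped inside it, which is what would otherwise destroy the injectivity argument. I would treat this case by hand — showing that any $u$ solving the Floer equation with an asymptote on a simple Morse--Bott circle and with $\p_s u\equiv 0$ must be a constant cylinder over an orbit, and that a nonconstant such $u$ has a point where it genuinely escapes the hypersurface — and then the generic case follows from the usual contradiction argument. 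After the universal moduli space is a Banach manifold, the Sard--Smale theorem gives a second-category subset $\Jreg'(H)\subset\cJ'(H)$ for which $D_u$ is surjective for all relevant $u$; openness and nonemptiness of $\cJ'(H)$ itself is elementary since~(\ref{eq:bracket}) is an open condition satisfied, e.g., by the standard $J_0$ near the orbits. The second statement, with genuinely $\theta$-dependent $J$ away from the orbits, is strictly easier: there one has the full strength of~\cite{FHS} and no constraint~(\ref{eq:bracket}) is needed.

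For the dimension formula I would identify $\widehat{\cM}^A(S_{\og},S_{\ug};H,J)$ near a regular $u$ with the kernel of $D_u$ acting on the appropriate weighted Sobolev space — weighted because the asymptotics are now only \emph{transversally} nondegenerate, so one adds a small exponential weight to recover a Fredholm operator and then enlarges the domain by the one- or two-dimensional space of asymptotic directions tangent to $S_{\og}$ and $S_{\ug}$, as in~\cite{B}. The index computation is the Morse--Bott index formula: starting from~(\ref{eq:ind}) for the nondegenerate perturbation $H_\delta$ and the values $\mu(\gamma_{\min})=\mu(\gamma)+1$, $\mu(\gamma_{\Max})=\mu(\gamma)$ of Lemma~\ref{lem:maslov}, one checks that the weighted index plus the dimensions of the added asymptotic spaces yields $\mu(\og)-\mu(\ug)+2\langle c_1(TW),A\rangle$ (and similarly with $\mu(\gamma_{\tq})$ in the mixed case), the $\R$-quotient being already incorporated in the normalisation, or rather the $-1$ from the $\R$-action being cancelled by one extra asymptotic dimension — this bookkeeping I would carry out carefully but it is routine. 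Smoothness of $\oev$ and $\uev$ is immediate from the implicit function theorem together with the asymptotic convergence estimates, which show that the limit loops depend smoothly on $u$ in the weighted topology. The main obstacle is the injectivity/transversality step in the first part: making precise, in the presence of the rigidity constraint $J\xi=\xi$, $J\frac{\p}{\p t}=R_\lambda$ and $\theta$-independence near the orbits, that condition~(\ref{eq:bracket}) on simple orbits suffices to find enough injective points — this is where the hypothesis on simplicity of $\og$ or $\ug$ is genuinely used and where the argument departs from the standard references.
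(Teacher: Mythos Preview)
Your outline matches the paper's strategy, but the mechanism by which condition~\eqref{eq:bracket} produces injective points is not the one you describe, and your version would not close as stated. The paper does not argue via ``transversality to a hypersurface swept out by orbits'' or by ruling out that $u$ is ``trapped'' in such a hypersurface. Instead, \eqref{eq:bracket} is used exactly as the hypothesis $J\in\cJ_{\mathrm{ad}}(M,\om,X)$ is used in~\cite[\S7]{FHS}: in a neighbourhood $U$ of the simple asymptote $\og$ one defines a point $(s,\theta)$ to be \emph{regular} if $\p_s u$, $\p_\theta u$, $X_H(u)$, $JX_H(u)$ are linearly independent there, and \eqref{eq:bracket} is precisely what makes~\cite[Lemmas~7.6--7.8]{FHS} go through to show that the set of regular injective points is open and dense in $u^{-1}(U)$. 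Simplicity of $\og$ enters only to guarantee that $u$ itself is simple (i.e.\ not a multiple cover in the $\theta$-direction), after which one picks an injective point $z_0$ with $\eta(z_0)\neq 0$ and uses a $\theta$-\emph{independent} cutoff $\rho:\widehat W\to\R$ supported near $u(z_0)$. Your ``escaping the hypersurface'' picture does not obviously give the four-vector linear independence needed for this cutoff argument.

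Two smaller corrections. In part~(ii) the $\theta$-dependence of $J$ is allowed precisely \emph{inside} the fixed neighbourhood of the nonconstant orbits, not away from them; this is what makes the case easier, since the crucial point $z_0\in R(u)$ is chosen with $u(z_0)$ in that neighbourhood, so a $\theta$-dependent cutoff is available there. And nonemptiness of $\cJ'(H)$ is not given by ``the standard $J_0$'': the paper constructs an explicit $J$ in a Darboux chart $S^1\times\R^{2n-1}$ around a simple orbit by setting $J\tfrac{\p}{\p\theta}=\tfrac{\p}{\p t}+\cos\theta\,\tfrac{\p}{\p q_2}+\sin\theta\,\tfrac{\p}{\p p_2}$ (a ``skating ring'' model), for which one checks $[X_H,JX_H]\neq 0$ and $[X_H,JX_H]\notin\langle X_H,JX_H\rangle$ directly. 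The index computation and the smoothness of $\oev,\uev$ are as you say.
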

The proof of this statement is given in
Section~\ref{sec:FredMB}. Unless the contrary is explicitly mentioned,
all statements in this section hold both for $J\in\Jreg(H)$ or
$J\in\Jreg'(H)$, provided one considers moduli spaces with at least
one simple asymptotic orbit in the latter case. 

\medskip

Let now $J\in \Jreg(H)$ and fix for each $\gamma\in \cP_\lambda^{\le\alpha}$ a
metric on $S_\gamma$ such that $R_\lambda$ has length one.
Let $\Freg(H,J)$\index{$\cF_{\textrm{reg}}(H,J)$, regular families $\{f_\gamma\}$} 
be the set of {\bf regular Morse functions},
consisting of families $\{f_\gamma\}$, $\gamma\in \cP_\lambda^{\le\alpha}$ of
perfect Morse functions $f_\gamma:S_\gamma\to \R$
\index{$f_\gamma$, Morse function on $S_\gamma$} 
such that all the
maps $\oev$ are transverse to the unstable manifolds $W^u(p)$, $p\in
\textrm{Crit}(f_\gamma)$, all the maps $\uev$ are transverse to the
stable manifolds $W^s(p)$, $p\in \textrm{Crit}(f_\gamma)$ and all
pairs
$$
(\oev,\uev) : \cM^A(S_{\og},S_{\ug};H,J)   \to  S_{\og} \times
S_{\ug}, 
$$
\begin{equation} \label{eq:transvf}
(\oev,\uev) : \cM^{A_1}(S_{\og},S_{\gamma_1};H,J) \ _{\uev}\times_{\oev} 
\cM^{A_2}(S_{\gamma_1},S_{\ug};H,J)   \to  S_{\og} \times
S_{\ug}
\end{equation} 
are transverse to products $W^u(p)\times W^s(q)$, $p\in
\textrm{Crit}(f_{\og})$, $q\in \textrm{Crit}(f_{\ug})$.
Here and in the sequel the unstable and stable manifolds are
understood with respect to $\nabla f_\gamma$. Denote by
$C^\infty_{p}(S_\gamma,\R)$ the set of perfect Morse
functions on $S_\gamma$.

\begin{lemma} \label{lem:Freg}
    The set $\Freg(H,J)$ is of the second Baire category in the space
    $\prod _{\gamma \in \cP_\lambda^{\le\alpha}}
    C^\infty_{p}(S_\gamma,\R)$.
\end{lemma}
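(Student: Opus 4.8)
The plan is to prove Lemma~\ref{lem:Freg} as a standard Sard--Smale transversality argument, treating the family $\{f_\gamma\}$ as a parameter and showing that the relevant universal moduli spaces are manifolds whose projections to the parameter space have regular values forming a set of the second category. First I would fix $J\in\Jreg(H)$ (so the Morse--Bott moduli spaces $\cM^A(S_{\og},S_{\ug};H,J)$ and $\cM^A(S_{\og},\tq;H,J)$ are already smooth manifolds with smooth evaluation maps, by Proposition~\ref{prop:Jreg}). The space $C^\infty_{p}(S_\gamma,\R)$ of perfect Morse functions on the circle $S_\gamma$ is an open subset of $C^\infty(S_\gamma,\R)$; for the Sard--Smale machinery I would work with the separable Banach manifolds of $C^k$ functions (or use the Floer $C^\eps$-space trick) and recover the $C^\infty$ statement at the end by the standard intersection-of-a-decreasing-sequence argument, exactly as in~\cite{FHS}.

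The key point is that, since $S_\gamma\cong S^1$ and $f_\gamma$ is perfect, the unstable manifold $W^u(p)$ and stable manifold $W^s(q)$ of critical points are arcs (or the whole circle, for the maximum resp.\ minimum), and crucially their positions in $S_\gamma$ can be moved \emph{freely} by varying $f_\gamma$ within $C^\infty_p(S_\gamma,\R)$: the gradient flow of a perfect Morse function on $S^1$ has a $1$-parameter family of reparametrizations, and the critical points can be placed at any two distinct points of $S^1$. Thus for the first transversality condition I would consider the map
$$
\cM^A(S_{\og},S_{\ug};H,J)\times \prod_\gamma C^\infty_p(S_\gamma,\R) \;\longrightarrow\; S_{\og}\times S_{\ug}, \qquad ([u],\{f_\gamma\})\mapsto (\oev([u]),\uev([u])),
$$
and observe that this map is a submersion in the $\{f_\gamma\}$-directions alone (moving $f_{\og}$ moves $W^u(p)$, moving $f_{\ug}$ moves $W^s(q)$), hence transverse to $W^u(p)\times W^s(q)$; the universal moduli space cut out by this intersection is therefore a Banach manifold, and Sard--Smale applied to its projection to $\prod_\gamma C^\infty_p(S_\gamma,\R)$ gives a residual set of $\{f_\gamma\}$ for which the corresponding fiberwise map is transverse, i.e.\ for which $(\oev,\uev)$ is transverse to $W^u(p)\times W^s(q)$. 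The analogous argument handles transversality of the single maps $\oev$ to $W^u(p)$ and $\uev$ to $W^s(p)$, and of $\oev$ on $\cM^A(S_{\og},\tq;H,J)$. For the fiber-product condition~\eqref{eq:transvf}, I would similarly vary $f_{\og}$, $f_{\gamma_1}$, and $f_{\ug}$; here one uses that the intermediate evaluation maps at $S_{\gamma_1}$ have already been made transverse (so the fiber product is a manifold) and that the two outermost gradient fragments can still be moved independently, plus the fact that the intermediate critical manifold is one-dimensional so the fiber-product transversality reduces to a finite codimension condition that is cut out cleanly.

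Finally I would take the countable intersection over all finitely many critical points $p,q$, all homotopy classes $a$, all relevant classes $A\in H_2(W;\Z)$ (here condition~\eqref{eq:asph} is what makes only finitely many $A$ contribute for fixed energy, so the intersection is genuinely countable), and all pairs $\og,\ug$ (countably many, since $\cP_\lambda^{\le\alpha}$ is countable under the transversal nondegeneracy assumption). A countable intersection of residual sets is residual, yielding $\Freg(H,J)$ as a set of the second Baire category in $\prod_\gamma C^\infty_p(S_\gamma,\R)$. The main obstacle I anticipate is the verification that the universal map is a submersion for the fiber-product condition~\eqref{eq:transvf}: one must check that perturbing the three Morse functions really does generate enough directions transverse to $W^u(p)\times W^s(q)$ inside $S_{\og}\times S_{\ug}$ given the constraint imposed by the fiber product over $S_{\gamma_1}$, and this requires a careful bookkeeping of which tangent directions are already available from the (fixed) moduli space factors versus which must come from moving the $f_\gamma$'s; the one-dimensionality of the $S_\gamma$ keeps this manageable but, as the authors note, does not simplify the structure of the argument relative to the higher-dimensional case.
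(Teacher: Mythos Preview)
Your Sard--Smale approach would work (after cleaning up one slip: the map $([u],\{f_\gamma\})\mapsto(\oev([u]),\uev([u]))$ is \emph{constant} in the $\{f_\gamma\}$-directions, so it cannot be a submersion along them; what you mean is that the \emph{target submanifold} $W^u(p)\times W^s(q)$ moves freely with $f_\gamma$, and one should set up the universal space accordingly). But the paper's proof is far more elementary and bypasses infinite-dimensional Sard--Smale entirely.

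The key observation you are missing is this: since each $S_\gamma$ is a circle and $f_\gamma$ is a \emph{perfect} Morse function, the unstable manifold $W^u(\min)$ is the whole circle $S_\gamma$ while $W^u(\Max)=\{\Max\}$ is a single point, and dually $W^s(\Max)=S_\gamma$, $W^s(\min)=\{\min\}$. Transversality of any map to the whole circle is automatic, so the only nontrivial conditions are that $\Max\in S_\gamma$ be a regular value of every $\oev$ landing in $S_\gamma$, that $\min\in S_\gamma$ be a regular value of every $\uev$, and that $(\overline{M},\underline{m})\in S_{\og}\times S_{\ug}$ be a regular value of the pair maps~\eqref{eq:transvf}. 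These are all \emph{finite-dimensional} Sard conditions on the evaluation maps from the (already smooth, by Proposition~\ref{prop:Jreg}) Morse--Bott moduli spaces. So one simply chooses the two critical points of each $f_\gamma$ in the residual set of regular values of the relevant evaluation maps; this is a condition only on the locations of $\min$ and $\Max$, and since any pair of distinct points on $S^1$ arises as the critical set of some perfect Morse function, the lemma follows immediately.

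In short: your machinery is sound but the one-dimensionality of $S_\gamma$ \emph{does} simplify this particular lemma dramatically (contrary to your closing remark --- the authors' comment about complexity matching the higher-dimensional case refers to the gluing analysis, not to this transversality lemma). The paper's argument is three sentences of classical Sard.
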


\begin{proof}
   The first two transversality conditions on $\oev$, $\uev$ are
satisfied if and only if the maximum of
each function $f_\gamma$ is a regular
value of all the evaluation maps $\oev$ having $S_\gamma$ as target
space, and if the minimum of each $f_\gamma$ is a
regular value of all the evaluation maps $\uev$ mapping to $S_\gamma$.
The third transversality condition requires in addition that each pair
$(\overline{M},\underline{m})\in S_{\og} \times S_{\ug}$, with
$\overline{M}$ the maximum of $f_{\og}$ and $\underline{m}$ the
minimum of $f_{\ug}$, is a regular value of $(\oev,\uev)$.

By Sard's theorem the minimum and maximum of each $f_\gamma$ can be
chosen inside a set of second category in $S_\gamma$. The conclusion
follows.
\end{proof}

Let now $J\in \Jreg(H)$ and $\{f_\gamma\}\in \Freg(H,J)$. For $p\in
\textrm{Crit}(f_\gamma)$ we denote the Morse index by 
$$\ind(p):=\dim \, W^u(p;\nabla f_\gamma).
\index{$\ind(p)$, index of critical point of $f_\gamma$}  
$$
Let $\og,\ug\in\cP_\lambda^{\le\alpha}$ and $p\in
\mathrm{Crit}(f_{\og})$, $q\in \mathrm{Crit}(f_{\ug})$. For $m\ge 0$
we denote by 
\begin{equation} \label{eq:bigM}
\cM_m^A(p,q;H,\{f_\gamma\},J)
\index{$\cM_m^A(p,q;H,\{f_\gamma\},J)$, $\cM^A(p,q;H,\{f_\gamma\},J)$|(}
\end{equation}
the union for $\gamma_1,\ldots,\gamma_{m-1}\in\cP_\lambda^{\le\alpha}$ and
$A_1+\ldots +A_m=A$ of the fibered products 
\begin{eqnarray*}
&&
\hspace{-.7cm}W^u(p) 
\times_{\oev}
(\cM^{A_1}(S_{\og}\,,S_{\gamma_1})\!\times\!\R^+)
{_{\varphi_{f_{\gamma_1}}\!\circ\uev}}\!\times  
_{\oev}
(\cM^{A_2}(S_{\gamma_1},S_{\gamma_2})\!\times\!\R^+)
{_{\varphi_{f_{\gamma_2}}\!\circ\uev}\times_{\oev}} \\
&& \hspace{-.7cm}
\ldots\,
{_{\varphi_{f_{\gamma_{m-1}}}\circ\uev}}\times
_{\oev} 
\cM^{A_m}(S_{\gamma_{m-1}},S_{\ug}) {_{\uev}\times}
W^s(q),
\end{eqnarray*}
with the convention $\gamma_0=\og$. This is well defined as a smooth
manifold of dimension 
\begin{eqnarray*}
\lefteqn{\dim \, \cM_m^A(p,q;H,\{f_\gamma\},J)} \\
& = & \ind(p) - 1 + (\dim \,
\cM^{A_1}(S_{\og},S_{\gamma_1}) +1) - 1 \\
&&+ (\dim \,
\cM^{A_2}(S_{\gamma_1},S_{\gamma_2}) +1) -1 + ... \\ 
&& + \dim \,
\cM^{A_m}(S_{\gamma_{m-1}},S_\ug) - 1 + (1-\ind(q)) \\
& = & \mu(\og) + \ind(p) - \mu(\ug) - \ind(q) + 2\langle
c_1(TW),A_1+...+A_m\rangle - 1\\
& = & \mu(\og_p) - \mu(\ug_q) + 2 \langle c_1(TW),A\rangle -1.
\end{eqnarray*} 
The last equality follows from Lemma~\ref{lem:maslov}. Note that
$\cM_0^A(p,q;H,\{f_\gamma\},J)$ is naturally a submanifold of
$\cM^A(S_{\og},S_{\ug};H,J)$. We denote 
$$
\cM^A(p,q;H,\{f_\gamma\},J):=\bigcup_{m\ge 0}
\cM_m^A(p,q;H,\{f_\gamma\},J)
\index{$\cM_m^A(p,q;H,\{f_\gamma\},J)$, $\cM^A(p,q;H,\{f_\gamma\},J)$|)}
$$
and we call this {\bf the moduli space of Morse-Bott broken
trajectories}, whereas $\cM_m^A(p,q;H,\{f_\gamma\},J)$ is called {\bf
the moduli space of Morse-Bott broken trajectories with $m$
sublevels} (see also Definition~\ref{defi:MBbroken} and
Figure~\ref{fig:wtilde}).  

Similarly, given $\og\in\cP_\lambda^{\le\alpha}$,
$p\in \mathrm{Crit}(f_{\og})$, $\tq\in \mathrm{Crit}(H)$, we define
moduli spaces of Morse-Bott broken trajectories
$\cM_m^A(p,\tq;H,\{f_\gamma\},J)$, $m\ge 0$ and 
$\cM^A(p,\tq;H,\{f_\gamma\},J)$ by replacing the last 
term $\cM^{A_m}(S_{\gamma_{m-1}},S_\ug) {_{\uev}\times}
W^s(q)$ in the definition~\eqref{eq:bigM} with 
$\cM^{A_m}(S_{\gamma_{m-1}},\tq;H,J)$. This is again well defined as a
smooth manifold of dimension 
$$
\dim \, \cM^A(p,\tq;H,\{f_\gamma\},J) = \mu(\og_p) - \mu(\gamma_\tq) + 2
\langle c_1(TW),A\rangle -1. 
$$
Again, $\cM_0^A(p,\tq;H,\{f_\gamma\},J)$ is naturally a submanifold of
$\cM^A(S_{\og},\tq;H,J)$. 

\medskip 

The significance of the above moduli spaces of broken Morse-Bott
trajectories is explained by the following theorem, which describes  
the boundary of $\cM^A_{]0,\delta_0[}(\og_p,\ug_q;H,\{f_\gamma\},J)$ 
in~\eqref{eq:bigMdelta} as $\delta\to 0$. 

\begin{theorem}[{\bf Correspondence Theorem}] \label{thm:degen} 
Let $H\in \cH'$ be fixed and
   let $\alpha:=\lim_{t\to \infty} e^{-t}H(p,t)$ be the maximal slope
   of $H$. Let $J\in \Jreg(H)$ and $\{f_\gamma\}\in \Freg(H,J)$. There
   exists
$$
\delta_1:=\delta_1(H,J) \in \, ]0,\delta_0[
$$
such that, for any
$$
\og,\ug\in\cP_\lambda^{\le\alpha},\ p\in
\mathrm{Crit}(f_{\og}),\ q\in\mathrm{Crit}(f_{\ug}),
$$
or
$$
\og\in\cP_\lambda^{\le\alpha},\ p\in
\mathrm{Crit}(f_{\og}),\ q\in\mathrm{Crit}(H),
$$
and any $A\in H_2(W;\Z)$ with
$$
\mu(\og_p)-\mu(\ug_q) + 2\langle c_1(TW),A\rangle =1,
$$
the following hold:
\begin{enum}
  \item $J$ is regular for $\cM ^A(\og_p,\ug_q;H_\delta,J)$
  for all $\delta\in ]0,\delta_1[$; 
  \item the space
    $\cM^A_{]0,\delta_1[}(\og_p,\ug_q;H,\{f_\gamma\},J)$ is a
    $1$-dimensional manifold having a finite number of components which
    are graphs over $]0,\delta_1[$, i.e. the natural projection
    $\cM^A_{]0,\delta_1[}(\og_p,\ug_q;H,\{f_\gamma\},J) \to
    ]0,\delta_1[$ is a submersion;
  \item there is a bijective correspondence between
    points
$$
[\u]\in  \cM^A(p,q;H,\{f_\gamma\},J)
$$
and connected components
of $\cM^A_{]0,\delta_1[}(\og_p,\ug_q;H,\{f_\gamma\},J)$. 
\end{enum}
\end{theorem}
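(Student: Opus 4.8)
The plan is to realize $\cM^A_{]0,\delta_1[}(\og_p,\ug_q;H,\{f_\gamma\},J)$ as a disjoint union of graphs over $]0,\delta_1[$, one for each point of the finite set $\cM^A(p,q;H,\{f_\gamma\},J)$, by combining a \emph{compactness} operation --- which sends a connected component to the Morse--Bott broken trajectory it converges to as $\delta\to 0$ --- with a \emph{gluing} operation --- which sends a point $[\u]\in\cM^A(p,q;H,\{f_\gamma\},J)$ to a smooth family $\delta\mapsto u_\delta\in\cM^A(\og_p,\ug_q;H_\delta,J)$ defined for $\delta\in\,]0,\delta_1([\u])[$. Both the graph property~(ii) and the regularity statement~(i) come out of the implicit function theorem built into the gluing step. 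Throughout one uses that, under the index hypothesis $\mu(\og_p)-\mu(\ug_q)+2\langle c_1(TW),A\rangle=1$, the space $\cM^A(p,q;H,\{f_\gamma\},J)$ is a $0$-dimensional manifold, and that it is compact --- every possible degeneration of a Morse--Bott broken trajectory lies in a boundary stratum of strictly smaller expected dimension --- hence finite. Moreover, by transversal nondegeneracy of the Reeb flow and the asphericity condition~\eqref{eq:asph}, only finitely many of the moduli spaces $\cM^A(p,q;H,\{f_\gamma\},J)$ (over all admissible $\og,\ug,p,q,A$) are nonempty, so one may choose $\delta_1:=\delta_1(H,J)>0$ smaller than all the numbers $\delta_1([\u])$.

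For the gluing, fix $[\u]=(c_m,[u_m],\dots,[u_1],c_0)$, choose representatives $u_i$ of the Floer pieces and lifts of the gradient fragments $c_j$, and use the exponential-convergence estimates of the Appendix --- for each $u_i$ near its asymptotic circle, and for each $c_j$ near its endpoints --- to build a preglued approximate solution $u_\delta^{\mathrm{app}}:\R\times S^1\to\widehat W$: between $u_{i+1}$ and $u_i$ one inserts a long neck, of $s$-length of order $1/\delta$, shadowing a rescaled copy of the gradient trajectory $c_i$ of $f_{\gamma_i}$ inside a neighbourhood of the circle $S_{\gamma_i}$ (the rescaling by $\delta$ being precisely where the order-$\delta$ size of $H_\delta-H$ enters), and at the two ends one attaches semi-infinite gradient necks limiting to $\og_p$ and $\ug_q$. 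Then $\dbar_{H_\delta,J}u_\delta^{\mathrm{app}}$ is exponentially small in $\delta$ in a Sobolev norm carrying exponential weights whose profile has a ``peak'' over each neck region, and one solves $\dbar_{H_\delta,J}(\exp_{u_\delta^{\mathrm{app}}}\xi)=0$ for $\xi$ small in this weighted space by Newton iteration, obtaining a unique genuine solution $u_\delta$ near $u_\delta^{\mathrm{app}}$; letting $\delta$ vary and quotienting by the $\R$-translation produces the family.

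The main obstacle is the uniform invertibility, as $\delta\to 0$, of the linearized operator $D_{u_\delta^{\mathrm{app}}}$ on these weighted spaces. The Morse--Bott degeneracy contributes, for each circle $S_{\gamma_i}$, one approximate kernel element in the $S^1$-direction --- the reparametrization shift of the neck shadowing $c_i$ --- supported over the $i$-th weight peak; since that peak explodes as $\delta\to 0$, this special section keeps weighted norm bounded away from zero, so one must enlarge the domain of $D_{u_\delta^{\mathrm{app}}}$ by the finitely many such sections and prove that on the enlarged domain it admits a right inverse bounded uniformly in $\delta$. By cutting off and patching, this reduces to surjectivity of the linearized Floer operators on the pieces $u_i$ (part of $J\in\Jreg(H)$, Proposition~\ref{prop:Jreg}), nondegeneracy of the finite and semi-infinite gradient fragments, and transversality of the evaluation maps $\oev,\uev$ to the stable and unstable manifolds of the $f_\gamma$ (part of $\{f_\gamma\}\in\Freg(H,J)$, Lemma~\ref{lem:Freg}); matching the weight exponents to the decay rates of the Appendix, and controlling the interaction of the special sections across a neck of length of order $1/\delta$, is the technical core. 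Two byproducts fall out: $D_{u_\delta}$ is itself surjective, being a small weighted-norm perturbation of the invertible glued operator --- this is~(i); and the implicit function theorem applied to $(\delta,\xi)\mapsto\dbar_{H_\delta,J}(\exp_{u_\delta^{\mathrm{app}}}\xi)$ shows that the kernel of the full linearization is spanned by the $\p_\delta$-direction modulo $\R$-translation, so each glued family is a graph over its interval --- this is~(ii).

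It remains to match the two operations. Injectivity: if $[\u]\neq[\u']$, their glued solutions become $C^\infty_{\mathrm{loc}}$-close to $[\u]$, respectively $[\u']$, on ever larger subsets as $\delta\to 0$, so they cannot coincide for small $\delta$; hence distinct points give distinct components. Surjectivity: by the compactness statement (Proposition~\ref{prop:compact}), any sequence $u_{\delta_n}\in\cM^A(\og_p,\ug_q;H_{\delta_n},J)$ with $\delta_n\to 0$ has a subsequence converging to an element of $\overline\cM(p,q;H,\{f_\gamma\},J)$, which by the dimension count actually lies in $\cM^A(p,q;H,\{f_\gamma\},J)$ (no further breaking occurs); by the uniqueness in the implicit function theorem, $u_{\delta_n}$ then equals, for $n$ large, the glued solution over that point, so every connected component of $\cM^A_{]0,\delta_1[}(\og_p,\ug_q;H,\{f_\gamma\},J)$ --- being connected, $1$-dimensional, with the projection to $]0,\delta_1[$ a submersion --- is exactly one glued family. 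With the finiteness of $\cM^A(p,q;H,\{f_\gamma\},J)$ this yields the asserted bijection~(iii); equivalently, for each fixed small $\delta$ it gives a bijection between $\cM^A(\og_p,\ug_q;H_\delta,J)$ and $\cM^A(p,q;H,\{f_\gamma\},J)$. The case $q\in\mathrm{Crit}(H)$ is handled identically, with the obvious modifications at the positive end (the last Floer piece matched to $\cM^{A_m}(S_{\gamma_{m-1}},\tq;H,J)$ and no gradient neck there).
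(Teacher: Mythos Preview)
Your proposal is correct and follows essentially the same route as the paper: compactness (Proposition~\ref{prop:compact}) plus gluing (Propositions~\ref{prop:catenation}, \ref{prop:gluing}, \ref{prop:family}) via a preglued map, weighted Sobolev norms with peaks over the necks, special sections encoding the reparametrization shifts, and a uniform right inverse feeding into the quantitative implicit function theorem. The one step you pass over quickly---that geometric convergence $[v_{\delta_n}]\to[\u]$ implies closeness in the weighted $\|\cdot\|_{1,\delta_n}$-norm needed to invoke uniqueness---is the content of Proposition~\ref{prop:deltageom}, and the paper handles your ``enlarge the domain by the special sections'' by an equivalent device, a compensated norm $\|\cdot\|_{1,\delta}$ and a tailored chart~$\Phi$ (Lemma~\ref{lem:2der}).
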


The proof of this statement, including a discussion of gluing and 
compactness for Morse-Bott moduli spaces, is given in
Section~\ref{sec:FredMB}.

We assume in the remainder of this section that the conclusions of
Theorem~\ref{thm:degen} are satisfied. For each $[\u]\in
\cM^A(p,q;H,\{f_\gamma\},J)$ the sign 
$\epsilon(u_\delta)$\index{$\epsilon(u)$, $\epsilon(u_\delta)$} 
is constant on the corresponding connected component $C_{[\u]}$ for
continuity reasons. We define a sign 
$\bar \epsilon(\u)$\index{$\bar \epsilon(\u)$} by 
\begin{equation} \label{eq:MBsign}
\bar\epsilon(\u):= \epsilon(u_\delta), \qquad \delta \in ]0,\delta_1[,
\quad (\delta,[u_\delta])\in C_{[\u]}.
\end{equation}

We define the {\bf Morse-Bott chain groups} by
\begin{eqnarray}  \label{eq:MBchain_a} 
\index{$BC_*^a(H)$, $BC_*^0(H)$, Morse-Bott chain groups}
   BC_*^a(H) & := &
\bigoplus _{  \gamma\in \cP_\lambda^{i^{-1}(a),\le\alpha}}
\Lambda_\om \langle \gamma_\min,\gamma_\Max\rangle, \qquad a\neq 0, \\
BC_*^0(H) & := & \bigoplus_{\scriptstyle \tp\in \mathrm{Crit}(H)}
   \Lambda_\om \langle \tp \rangle \ \oplus
\bigoplus _{ \gamma\in \cP_\lambda^{i^{-1}(0),\le\alpha} }
\Lambda_\om \langle \gamma_\min,\gamma_\Max\rangle.
\end{eqnarray}
where $\alpha:=\lim_{t\to \infty} e^{-t}H(p,t)$ and 
$\cP_\lambda^{i^{-1}(0),\le\alpha}
= \cP_\lambda^{\le\alpha} \cap \cP_\lambda^{i^{-1}(0)}$. The grading is
defined by
\begin{eqnarray*}
|e^A \tp| & := & \ind(\tp;-H) - n - 2\langle c_1(TW),A\rangle, \\
|e^A \gamma_\min | & := &  \mu(\gamma) + 1 - 2\langle c_1(TW),A\rangle,
\\
|e^A \gamma_\Max | & := &  \mu(\gamma) - 2\langle c_1(TW),A\rangle.
\end{eqnarray*}

We define the {\bf Morse-Bott differential}
$$
\p : BC_*^a(H) \to BC_{*-1}^a(H)
\index{$\p$, Morse-Bott differential}
$$
by
\begin{eqnarray} \label{eq:MBdiff_p}
   \p \tp & := & \sum_{\substack{ \tq\in \mathrm{Crit}(H) \\
      |\tp|-|\tq| =1}}
\ \sum_{\scriptstyle [\u]\in \cM^0(\tp,\tq;H,\{f_\gamma\},J)} \bar
   \epsilon(\u)\tq, \\ 
\label{eq:MBdiff_gamma}
\p \gamma_p & := & \sum_{\substack{
  \tq\in \mathrm{Crit}(H) \\
|\gamma_p| - |e^A \tq|=1}}
\ \sum_{\scriptstyle [\u]\in \cM^A(\gamma_p,\tq;H,\{f_\gamma\},J)}
\bar \epsilon(\u)e^A \tq \\
& & + \sum_{\substack{
  \ug\in \cP_\lambda^{\le\alpha}, q\in \mathrm{Crit}(f_{\ug}) \\
|\gamma_p| - |e^A \ug_q|=1}}
\ \sum_{\scriptstyle [\u]\in \cM^A(\gamma_p,\ug_q;H,\{f_\gamma\},J)}
\bar \epsilon(\u)e^A \ug_q, \qquad p \in {\rm Crit}(f_\gamma). \nonumber
\end{eqnarray}
The sums~(\ref{eq:MBdiff_p}) and~(\ref{eq:MBdiff_gamma}) clearly
involve only periodic orbits in the same free homotopy class as that
of $\tp$ or $\gamma_p$ respectively.

\begin{remark} \label{rmk:grad_traj} \rm
Since $H$ is $C^2$-small, the moduli spaces
$\cM^A(\tp,\tq;H_\delta,J)$, $\tp,\tq\in \mathrm{Crit}(H)$ of expected
dimension $\ind(\tp;-H)-\ind(\tq;-H) + 2\langle c_1(TW),A)\rangle -1=0$
are independent of $\delta$ and consist
exclusively of gradient trajectories of $H$ in
$W$~\cite[Theorem~6.1]{HS}(see also~\cite[Theorem~7.3]{SZ}). As a
consequence, these moduli spaces are empty whenever $A\neq 0$.
\end{remark}

We have, following directly from the definitions, an obvious
isomorphism of free $\Lambda_\om$-modules
$$
SC_*^a(H_\delta) \simeq BC_*^a(H), \qquad \delta \in ]0,\delta_1[.
$$
It follows now from Theorem~\ref{thm:degen} and the
definition~(\ref{eq:MBsign}) of signs in the Morse-Bott complex
that the corresponding differentials, defined by~(\ref{eq:diff})
and~(\ref{eq:MBdiff_p}-\ref{eq:MBdiff_gamma}), also coincide. Here we
use the fact that the Hamiltonian action functional decreases along Floer
trajectories, hence the differential~(\ref{eq:diff}) applied
to elements $\tp\in\mathrm{Crit}(H)$ does not involve nonconstant
elements of $\cP(H_\delta)$ and reduces to~(\ref{eq:MBdiff_p}) by
Remark~\ref{rmk:grad_traj}. As a consequence, we have
$$
H_*(BC_*^a(H),\p) = SH_*^a(H_\delta,J).
$$

We shall construct in Section~\ref{sec:ori} 
a system of coherent orientations on the Morse-Bott moduli spaces
$$
\cM^A(S_{\og},S_{\ug};H,J),\qquad \cM^A(S_{\og},\tq;H,J)
$$
whenever $\og,\ug\in \cP_\lambda^{\le\alpha}$ are
good orbits. This in turn determines signs 
$\epsilon(\u)$\index{$\epsilon(\u)$} 
via an
orientation rule for fiber products (see~\eqref{eq:signepsilon}). 

\begin{proposition} \label{prop:signs}
 Assume $\dim \, \cM^A(p,q;H,\{f_\gamma\},J) =0$. The bijective
 correspondence between elements
$[\u]\in\cM^A_m(p,q;H,\{f_\gamma\},J)$, $m\ge 1$ and 
 elements of $[u_\delta]\in \cM^A(\og_p,\ug_q;H_\delta,J)$ given by
 Theorem~\ref{thm:degen} changes the signs by the rule
$$
\epsilon(\u)=(-1)^{m-1}\epsilon(u_\delta).
$$
Moreover, if $m=0$ then $\u=u_\delta$ and
$\epsilon(\u)=\epsilon(u_\delta)$, $p$ is a minimum and $q$ is a
maximum, the moduli space 
$\cM^A_0(p,q;H,\{f_\gamma\},J)$ consists of 
the two gradient lines running from $p$ to $q$ and their signs 
are different if and only if the underlying orbit is good. 
\end{proposition}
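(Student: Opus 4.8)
The plan is to track how the coherent orientations of the glued Floer cylinders $u_\delta$ compare with the product orientation on the fibered product $\cM^A_m(p,q;H,\{f_\gamma\},J)$ through the gluing construction of Theorem~\ref{thm:degen}. The key observation is that each element $[\u]=(c_m,[u_m],c_{m-1},\dots,[u_1],c_0)$ is assembled from $m$ Morse-Bott Floer pieces $[u_i]\in\cM^{A_i}(S_{\gamma_i},S_{\gamma_{i-1}};H,J)$, $m-1$ finite gradient segments $c_1,\dots,c_{m-1}$, and two semi-infinite gradient segments $c_0,c_m$ lying in unstable/stable manifolds of $p,q$. The sign $\epsilon(\u)$ is, by our orientation rule \eqref{eq:signepsilon} for fiber products, the product of the coherent orientation signs of the $[u_i]$ together with combinatorial factors coming from each fiber product with an evaluation map and from reordering $\R^+$-factors (the gluing lengths) past the other factors. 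When we glue to produce $u_\delta$, the coherent orientation of the resulting operator $D_{u_\delta}$ is, by the gluing theorem for orientations, the product of the coherent orientations of the pieces; the discrepancy with $\epsilon(\u)$ is then a fixed sign depending only on the combinatorial pattern of the gluing, i.e. only on $m$.

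First I would make precise the orientation conventions: recall from Section~\ref{sec:ori} the coherent orientation of the determinant line bundle over the space of Fredholm operators, the induced orientation of $\cM^A(S_{\og},S_{\ug};H,J)$ as a zero set, and the fiber-product orientation rule used in \eqref{eq:bigM}. Second, I would isolate the ``special section'' mentioned after the informal Theorem in the introduction: for each gradient fragment $c_j$ there is a section supported near the corresponding peak in the exponential weight, representing the reparametrization shift of the underlying gradient trajectory, which has constant norm as $\delta\to 0$. This section contributes a one-dimensional piece to the kernel that, in the Morse-Bott picture, is the $\R^+$ gluing-length parameter; I would compute the sign with which this factor enters $\epsilon(\u)$ versus how it enters the orientation of the component $C_{[\u]}$ of $\cM^A_{]0,\delta_1[}$. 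Third, I would run the bookkeeping: gluing $m$ Floer pieces along $m-1$ intermediate circles $S_{\gamma_j}$ involves $m-1$ such shift factors for the internal $c_j$ plus the two semi-infinite ones; comparing the orientation of $D_{u_\delta}$ (product of $m$ piece-orientations, no extra shift signs) with $\epsilon(\u)$ (product of $m$ piece-orientations with shift factors and fiber-product reorderings) yields exactly the factor $(-1)^{m-1}$ after the reorderings cancel in pairs except for one residual transposition per internal gluing.

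For the case $m=0$: here $\u=u_\delta$ is literally a single gradient trajectory of some $f_\gamma$ inside a constant cylinder $S_\gamma$, so $\epsilon(\u)=\epsilon(u_\delta)$ trivially, and $p=\min$, $q=\Max$ by the index constraint $\ind(p)-\ind(q)=1$ together with $\mu(\gamma_\min)=\mu(\gamma)+1$, $\mu(\gamma_\Max)=\mu(\gamma)$ from Lemma~\ref{lem:maslov}. The two gradient lines of a perfect Morse function on $S^1$ from $\min$ to $\Max$ carry signs determined by the coherent orientation of the trivialized operator over the circle $S_\gamma$; the relative sign between the two is governed by the monodromy of the orientation of the determinant line as one goes once around $S_\gamma$, which is precisely the parity datum distinguishing good from bad orbits (Remark~\ref{rmk:good and bad}): for a good orbit the two determinant-line trivializations differ by an orientation-reversing loop so the two gradient lines get opposite signs, while for a bad orbit they get the same sign. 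I would make this last point by relating the orientation monodromy around $S_\gamma$ to the sign of $\det(d\phi_\lambda^T|_\xi)$ restricted to the relevant spectral subspace, exactly as in Lemma~\ref{lem:cover} and \cite[Lemma~3.2.4]{U}.

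The main obstacle I expect is the second step: pinning down, with a definite sign, how the reparametrization-shift section of each gradient fragment sits inside the glued kernel relative to the linearization of the evaluation maps $\oev,\uev$, and verifying that the gluing theorem for orientations genuinely produces the product orientation with no hidden sign beyond the explicitly counted reorderings. This is delicate because the exponential weights degenerate as $\delta\to 0$ and the peaks ``explode'', so one must argue that the orientation comparison can be performed at a fixed small $\delta$ where everything is Fredholm and then invoke continuity of $\epsilon(u_\delta)$ along $C_{[\u]}$ (as used in \eqref{eq:MBsign}); the sign count itself is combinatorial once this stability is in hand.
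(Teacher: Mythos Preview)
Your overall framework is correct: the sign discrepancy does arise from comparing the fibered-product orientation on $T_\u\widehat\cM^A(p,q)$ with the coherent orientation on $\ker D_{u_\delta}\oplus\R^{m-1}$ through the gluing isomorphism $\phi$, and the $\R^{m-1}$ factors do correspond to the reparametrization shifts of the intermediate gradient fragments. However, the argument as written has a genuine gap at precisely the point you flag as the main obstacle.

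Your claim that ``the reorderings cancel in pairs except for one residual transposition per internal gluing'' is not substantiated, and the paper's proof shows this is \emph{not} how the $(-1)^{m-1}$ arises. The mechanism is analytic, not combinatorial. The paper constructs an explicit basis $e_0,\dots,e_{m-1}$ of $T_\u\widehat\cM^A(p,q)$, where $e_k$ is obtained from $(\p_s u_m,\dots,\p_s u_1)$ by reversing the signs of the first $k$ entries; this basis is positively oriented for the geometric orientation. One then computes $\phi(e_k)$ concretely: $\phi(e_0)$ is close to $\p_s u_\delta$, while for $k\ge 1$ the vector $\phi(e_k)\in\ker D_{u_\delta}\oplus\R^{m-1}$ has a \emph{negative} $k$-th $\R$-component and $\ker D_{u_\delta}$-component close to $-\p_s u_\delta$. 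The negative $\R$-component comes from the identity
\[
\frac{d}{d\sigma}\Big|_{\sigma=0} G_{\delta,\oeps_k(2\delta\sigma)}(\tw_k^{-\sigma,\sigma})
= \frac{d}{d\sigma}\Big|_{\sigma=0} G_\delta(\tw_k^{-\sigma,\sigma})
+ 2\delta\,\frac{d}{d\epsilon}\Big|_{\epsilon=0} G_{\delta,\oeps_k(\epsilon)}(\tw),
\]
together with the fact that $(\frac{d}{d\epsilon}|_{\epsilon=0} G_{\delta,\oeps_k(\epsilon)}(\tw),\,1_k)$ lies approximately in $\ker D_{u_\delta}^{\R^{m-1}}$. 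This is a genuine computation with the pregluing map, not a reordering of factors; your proposal does not indicate how to extract the sign of the $\R$-component, which is the heart of the matter. The resulting matrix for $\phi$ in these bases is upper-triangular with diagonal $(+1,-1,\dots,-1)$ up to small perturbation, giving determinant $(-1)^{m-1}$.

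For $m=0$ your outline is essentially right, but the language is inverted: for a \emph{good} orbit the determinant line over $\tcO_+(\C,\theta_n;\gamma)$ is orientable around the circle (Lemma~\ref{lem:tcap}), and the two gradient lines get \emph{opposite} signs because $\p_s u_1$ and $\p_s u_2$ point in opposite directions along $X_H$ while the coherent orientation is consistent; for a bad orbit the orientation monodromy flips once around $S_\gamma$, compensating the geometric flip and making the signs equal. The paper implements this via Lemma~\ref{lem:T}, connecting $D_{u_1}$ to $D_{u_2}$ by gluing with a ``rotation'' operator $T$ whose action on orientations is precisely $\epsilon(\gamma)$.
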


In view of~\eqref{eq:MBsign}
and~(\ref{eq:MBdiff_p}--\ref{eq:MBdiff_gamma}), this identification of
signs between $\epsilon(\u)$ and $\bar \epsilon (\u)$ allows to define
the Morse-Bott differential exclusively in terms of Morse-Bott data.

%%%%%%%%%%%%%%%%%%%%%%%%%%%%%%%%%%%%%%%%%%%%%%
%%%%%%%%%%%%%%%%%%%%%%%%%%%%%%%%%%%%%%%%%%%%%%
%%%%%%%%%%%%%%%% Section 4 %%%%%%%%%%%%%%%%%%%%%%%%%
%%%%%%%%%%%%%%%%%%%%%%%%%%%%%%%%%%%%%%%%%%%%%%
%%%%%%%%%%%%%%%%%%%%%%%%%%%%%%%%%%%%%%%%%%%%%%

\section{Morse-Bott moduli spaces} \label{sec:FredMB}

The structure of this section is as follows. We give
in~\S\ref{sec:transv} the proof of 
Proposition~\ref{prop:Jreg}, whereas 
Theorem~\ref{thm:degen} is proved
in~\S\ref{sec:compact}--\S\ref{sec:gluing}, which treat compactness and
gluing and correspond to assertions (i-ii) and (iii)
respectively. Finally \S\ref{sec:ori} contains a full discussion of
orientation issues and the proof of Proposition~\ref{prop:signs}. 

\subsection{Transversality} \label{sec:transv}

\begin{proof}[\bf Proof of Proposition~\ref{prop:Jreg}]
We first prove (ii).
Let $\cJ^\ell\subset\cJ$ be the space of admissible almost complex
structures of class $C^\ell$, $\ell \ge 1$, and let
$\cJ^\ell(H)\subset \cJ^\ell$ be the set of almost complex
structures $J$ which, outside a fixed neighbourhood of the nonconstant
periodic orbits of $X_H$, are independent of $\theta$ and 
satisfy $J\xi=\xi$, $J\frac \p {\p t} =R_\lambda$. 
By a standard trick of Taubes~\cite[Theorem 5.1]{FHS} it is enough to
show that there exists an open and dense set $\Jreg^\ell(H) \subset
\cJ^\ell(H)$ consisting of regular elements. We define the universal
moduli spaces 
$$
\cM^A(S_{\og},S_{\ug};H,\cJ^\ell(H)) = \{ (u,J) \ | \ J \in
\cJ^\ell(H), u \in \cM^A(S_{\og},S_{\ug};H,J) \}
$$
and
$$
\cM^A(S_{\og},\tq;H,\cJ^\ell(H)) = \{ (u,J)\  |\  J \in \cJ^\ell(H),
u \in \cM^A(S_{\og},\tq;H,J) \} .
$$
The main point is to show that these universal moduli spaces are 
Banach manifolds. Then the sets $\Jreg^\ell(H)$ consist of the regular
values of the natural projections from the universal moduli spaces to
$\cJ^\ell(H)$. We only treat the case of
$\cM^A(S_{\og},S_{\ug};H,\cJ^\ell(H))$ since the second case is
entirely similar, and we assume without loss of generality that
$\og\neq \ug$. This universal moduli space is the zero set of
a distinguished section of a Banach vector bundle $\cE \to \cB^A
\times \cJ^\ell(H)$ which we now define.  

Let $p > 2$ and $d > 0$. Let 
$\cB^A = \cB^{1,p,d}(S_{\og},S_{\ug},A;H)$
\index{$\cB^A = \cB^{1,p,d}(S_{\og},S_{\ug},A;H)$} 
be the space
of proper maps $u : \R \times S^1 \to \widehat W$ which are locally 
in $W^{1,p}$
and satisfy
\begin{enum}
\item the map $u$ converges uniformly in $\theta$ as $s \to \pm\infty$ to
$\ug(\cdot + \utheta_0)$, respectively $\og(\cdot + \otheta_0)$,
for some $\utheta_0,\otheta_0\in S ^1$, and represents the homology 
class $A\in H_2(W;\Z)$;

\item there exist tubular neighbourhoods $\oU$ and $\uU$ of $\og$ and $\ug$
respectively, together with parametrizations $\opsi : \oU \to S^1 
\times \R^{2n-1}$ and
$\upsi : \uU \to S^1 \times \R^{2n-1}$ such that
\begin{eqnarray*}
\opsi \circ \og (\theta) = \{ \theta \} \times \{ 0 \} , &\quad&
\upsi \circ \ug (\theta) = \{ \theta \} \times \{ 0 \} ,\\
\opsi \circ \og (\theta+\otheta_0) - \opsi \circ u (s,\theta) &\in&
W^{1,p}(]-\infty,-s_0],e^{d|s|} ds \, d\theta), \\
\upsi \circ \ug (\theta+\utheta_0) - \upsi \circ u (s,\theta) &\in&
W^{1,p}([s_0,\infty[,e^{d|s|} ds \, d\theta) ,
\end{eqnarray*}
for some $s_0 > 0$ sufficiently large.
\end{enum}
Then $\cB^A$ is a Banach manifold and, for $d/p$ strictly smaller than
the constant $r$ in Proposition~\ref{prop:asymptotic}, it
contains the moduli spaces $\cM^A(S_{\og},S_{\ug};H,J)$ for all $J 
\in \cJ^\ell$.
Let $\cE \to \cB^A \times \cJ^\ell(H)$ be the Banach vector bundle with fiber
$\cE_{(u,J)} = L^p(\R \times S^1, u^* T\widehat W; e^{d|s|} ds \, d\theta)$.
Let $\dbar_H : \cB^A \times \cJ^\ell(H) \to \cE$ be the section defined by
\begin{equation}  \label{eq:dbarHdelta}
 \index{$\dbar_H$}
\dbar_H(u,J) := \partial_s u + J_\theta (\partial_\theta u - X_H) .
\end{equation}

Then $\cM^A(S_{\og},S_{\ug};H,\cJ^\ell(H)) = \dbar_H^{-1}(0)$ and it
remains to show that $\dbar_H$ is transverse to the
zero section. This means that the vertical differential
$$
D\dbar_H(u,J) : T_u \cB^A \times T_J \cJ^\ell(H) \to \cE_{(u,J)}
$$
is surjective for all $(u,J) \in \dbar_H^{-1}(0)$.
We have
$$
T_u \cB^A = W^{1,p}(\R\times S^1,u^*T\widehat W;e^{d|s|}ds\,d\theta) 
\oplus \oV \oplus \uV,
$$
where $\oV$, $\uV$ are the one-dimensional real vector spaces generated by
two sections of $u^*T\widehat W$ of the form $(1-\beta(s,\theta)) X_H(\og(\theta))$
and $\beta(s,\theta)X_H(\ug(\theta))Ò$ respectively, with
$\beta(s,\theta)=\beta(s)$\index{$\beta$, cutoff function} a
smooth cutoff function which vanishes for $s\le 0$ 
and is equal to $1$ for $s\ge 1$. The space $T_J\cJ^\ell(H)$ consists of
matrix valued functions $Y:S^1\to \mathrm{End}(T\widehat W)$ of class
$C^\ell$ satisfying the conditions
\begin{equation} \label{eq:Y}
J_\theta Y_\theta + Y_\theta J_\theta = 0, \qquad \widehat \om 
(Y_\theta v,w) + \widehat \om
(v,Y_\theta w) =0, \ \forall v,w\in T\widehat W,
\end{equation}
and such that, outside fixed neighbourhoods of the nonconstant
periodic orbits of $X_H$, they are independent of $\theta$ and have
the form $\left(\begin{array}{cc} Y_\xi & 0 \\ 0 & 0
\end{array}\right)$ with respect to the splitting $\xi\oplus 
\mathrm{Span}(R_\lambda,\frac \p {\p t})$.
The operator $D\dbar_H(u,J)$ can be written
$$
D\dbar_H(u,J)\cdot (\zeta,Y) = D_u\zeta + Y_\theta(u)(\p_\theta u - X_H(u)).
$$
Here
$$
D_u:W^{1,p}(\R\times S^1,u^*T\widehat W;e^{d|s|}ds\,d\theta) \oplus 
\oV \oplus \uV
\to L^p(\R \times S^1, u^* T\widehat W; e^{d|s|} ds \, d\theta)
$$
is  the linearization of the Cauchy-Riemann operator associated to 
the pair $(H,J)$
and is explicitly given by formula~(\ref{eq:D}). It is proved
in~\cite[Proposition~4]{BM} that $D_u$
is a Fredholm operator. It is at this point that the exponential
weight plays a crucial role, due to the degeneracy of the asymptotic
orbits.  As a consequence the range of $D\dbar_H(u,J)$ is closed
and we are left  to prove that it is also dense. Let $q>1$ be such
that $1/p+1/q=1$. We show that
every $\eta\in L^q(\R \times S^1, u^* T\widehat W; e^{d|s|} ds \, 
d\theta)$ satisfying
\begin{equation} \label{eq:eta}
\int _{\R\times S^1} \langle \eta, D_u\zeta \rangle e^{d|s|} ds \, 
d\theta =0, \qquad
\int _{\R\times S^1} \langle \eta,  Y_\theta(u)(\p_\theta u - X_H(u))
\rangle e^{d|s|} ds \, d\theta =0
\end{equation}
for all $\zeta$ and $Y$ vanishes. The first equation implies,
by elliptic regularity, that $\eta$ is of class $C^\ell$ and has 
the unique continuation
property. Assume by contradiction that $\eta$ does not vanish.
Then the set $\{(s,\theta) \, : \, \eta(s,\theta)\neq 0\}$ is open 
and dense. On the other hand,
it is proved in~\cite[Theorem~4.3]{FHS} that the set
$$
R(u):=\{(s,\theta) \, : \, \p_s u(s,\theta)\neq 0, \ u(s,\theta)\neq
\ug(\theta), \og(\theta), \
u(s,\theta) \notin u(\R\setminus\{s\},\theta)\}
$$
of regular points of $u$ is open and dense (although nondegeneracy of
the asymptotic orbits is a standing assumption in~\cite{FHS}, it does
not play any role in the proof of this result).
Let $z_0=(s_0,\theta_0)$ be a point in $R(u)$ with
$\eta(z_0)\neq 0$ and $u(z_0)$ belonging to the fixed open  
neighbourhood of $\og$ (such a point exists since we have assumed 
$\og\neq\ug$). One can choose a matrix $Y_{\theta_0}(u(z_0))$  
satisfying~(\ref{eq:Y}) such that
$$
\langle \eta(z_0),Y_{\theta_0}(u(z_0))J(u(z_0))\p_s u(z_0)\rangle 
\neq 0. 
$$
Because $z_0$ is a regular point
we can choose a time-dependent cutoff function $\rho:S^1\times 
\widehat W \to [0,1]$
supported near $(\theta_0,u(z_0))$ such that
$Y:=\rho Y_{\theta_0}(u(z_0))$ satisfies
$$
\int _{\R\times S^1} \langle \eta,  Y_\theta(u)(\p_\theta u - X_H(u))
\rangle e^{d|s|} ds \, d\theta \neq 0.
$$
This contradicts~(\ref{eq:eta}) and shows that $D\dbar_H(u,J)$ is surjective,
hence the universal moduli space $\cM^A(S_{\og},S_{\ug};H,\cJ^\ell(H))$ is a
Banach manifold as claimed.

\medskip 

We now prove (i). The set $\cJ'(H)$ is obviously open. The fact that
it is nonempty can be seen as follows. The space $S^1\times \R^2$
admits the ``skating ring'' contact form $\alpha=\sin \theta dx - \cos
\theta dy$, $(\theta,x,y)\in S^1 \times \R^2$ for which 
$\frac \p {\p \theta}\in \xi=\ker\alpha$. If $J$ denotes the
almost complex structure on $\xi$ satisfying $J\frac \p {\p
\theta}=\cos \theta \frac \p {\p x} + \sin\theta \frac \p {\p y}$,
then $[\frac \p {\p \theta},J\frac \p {\p \theta}] \neq 0$ and 
$[\frac \p {\p \theta},J\frac \p {\p \theta}]\notin \xi =
\langle \frac \p {\p \theta},J\frac \p {\p \theta}\rangle$.   
This simple model can be adapted to our situation as follows. 
We can symplectically trivialize a neighbourhood of the simple orbit
$\gamma$ as $S^1\times \R^{2n-1} \ni
(\theta,t,q_2,p_2,\ldots,q_n,p_n)$ with the standard
symplectic form $d\theta\wedge dt +dq_2\wedge dp_2 +\ldots +dq_n
\wedge dp_n$, so that $X_H$ corresponds to $\frac \p {\p \theta}$. 
Let $J$ be a compatible almost complex structure such that $J \frac \p
{\p \theta}= \frac \p {\p t} + \cos \theta \frac \p {\p q_2} + \sin
\theta \frac \p {\p p_2}$. Since $\frac \p {\p \theta}$ and $\frac \p
{\p t}$ commute we have $[X_H,J X_H] = [\frac \p {\p \theta}, \cos \theta
\frac \p {\p q_2} + \sin \theta \frac \p {\p p_2}]\neq 0$ and 
$[X_H,J X_H] \notin \langle X_H,J X_H \rangle$, so that
$J\in\cJ'(H)$. 

Let $\cJ^{\prime\ell}\subset \cJ'$ be the
space of admissible almost complex structures of class $C^\ell$,
$\ell\ge 1$ which are independent of $\theta\in S^1$, and let
$\cJ^{\prime\ell}(H)\subset \cJ^{\prime\ell}$ be the space of almost
complex structures $J$ which, outside a fixed neighbourhood of the
nonconstant periodic orbits of $X_H$, satisfy $J\xi=\xi$, $J\frac\p{\p
t} = R_\lambda$. It is enough to show that there exists an open and
dense set $\Jreg^{\prime\ell}(H)\subset \cJ^{\prime\ell}(H)$
consisting of elements which are regular for Floer trajectories with
one nontrivial simple asymptote. 

We have $\cJ^{\prime\ell}\subset \cJ^\ell$ and the main point is to
show that the corresponding universal moduli spaces
$\cM^A(S_\og,S_\ug;H,\cJ^{\prime\ell}(H)) \subset
\cM^A(S_\og,S_\ug;H,\cJ^\ell(H))$ and
$\cM^A(S_\og,\tq;H,\cJ^{\prime\ell}(H)) \subset 
\cM^A(S_\og,\tq;H,\cJ^\ell(H))$ are Banach manifolds. We again treat
only $\cM^A(S_\og,S_\ug;H,\cJ^{\prime\ell}(H))$ and assume without loss
of generality that $\og$ is a simple orbit and $\og\neq \ug$. This
universal moduli space is the zero set of the section of 
the restricted bundle $\cE\to \cB^A\times \cJ^{\prime\ell}(H)$ defined
by~\eqref{eq:dbarHdelta}, and we have to show that the vertical
differential $D\dbar_H(u,J) : T_u \cB^A \times T_J \cJ^{\prime\ell}(H)
\to \cE_{(u,J)}$ is surjective. Arguing by
contradiction, we get an element $\eta\in L^q(\R\times
S^1,u^*T\widehat W; e^{d|s|}ds \, d\theta)$ of class $C^\ell$ which
does not vanish on an open and dense subset of $\R\times S^1$ and
satisfies $\int_{\R\times S^1} \langle \eta,Y(u)(\p_\theta
u-X_H(u))\rangle e^{d|s|}ds \, d\theta =0$ for any
$Y\in T_J \cJ^{\prime\ell}(H)$. The main difference with respect
to (ii) is that $T_J \cJ^{\prime\ell}(H)
\subset T_J \cJ^\ell(H)$ consists of elements
$Y\in\textrm{End}(T\widehat W)$ which are independent of $\theta$. 

Let 
$$
I(u):=\big\{ (s,\theta) \, : \, \p_s u(s,\theta)\neq 0, \
u^{-1}(u(s,\theta))=\{(s,\theta)\} \big\}
$$
be the set of {\bf injective} points, and denote $I_R(u):= I(u) \, \cap \, 
]R,\infty[\times S^1$, $R>0$. 
The main observation is that our special choice of $J\in\cJ'(H)$
implies that $I_R(u)$ is open and dense in $]R,\infty[\times S^1$ for
$R$ large enough. This is proved exactly as in~\cite[\S7]{FHS}, and
the main steps are the following. Since $\og$ is simple, every $u$ as
above is {\bf simple}, i.e. for every integer $m>1$ there exists
a point $(s,\theta)\in \R \times S^1 = \R\times \R/\Z$ such that
$u(s,\theta+\frac 1 
m) \neq u(s,\theta)$.  Let $U$ be a
neighbourhood of $\og$ in which $[X_H, JX_H]\neq 0$ and $[X_H,J
X_H]\notin \langle X_H, JX_H \rangle$. We call a point $(s,\theta)$
{\bf regular} if $\p_s u$, $\p_\theta u$, $X_H(u)$, $JX_H(u)$ are linearly
independent at $(s,\theta)$, and we denote by $R(u)$ the set of
regular points. Then~\cite[Lemma~7.6]{FHS} holds
and~\cite[Lemma~7.7]{FHS} shows that the set $\{(s,\theta) \in R(u) \,
: \, u(s,\theta) \in U\}$ is open and dense in $u^{-1}(U)$. Note that
we crucially use here our hypothesis $J\in \cJ'(H)$, which plays the
role of the hypothesis $J\in \cJ_{\textrm{ad}}(M,\om,X)$
in~\cite{FHS}. Finally~\cite[Lemma~7.8]{FHS} shows that the set of 
points which are regular and injective is open and dense in
$u^{-1}(U)$, and in particular $I_R(u)$ is open and dense in
$]R,\infty[\times S^1$ for $R$ large enough. 

We can then choose a point $z_0=(s_0,\theta_0)\in I_R(u)$ 
such that $\eta(z_0)\neq 0$ and a matrix $Y(u(z_0))$
satisfying~\eqref{eq:Y} and  $\langle
\eta(z_0),Y(u(z_0))J(u(z_0))\p_s u(z_0)\rangle \neq 0$. Since $z_0$ is
an injective point we can choose a cutoff function $\rho :\widehat
W\to \R$ supported near $u(z_0)$ such that $Y:=\rho Y(u(z_0))$
satisfies $\int_{\R\times S^1} \langle \eta,Y(u)(\p_\theta u -
X_H(u))\rangle e^{d|s|}ds\,d\theta \neq 0$. This contradiction
shows that $D\dbar_H(u,J)$ is surjective and therefore
$\cM^A(S_\og,S_\ug;H,\cJ^{\prime\ell}(H))$ is a Banach manifold
as claimed. 

\medskip 

The dimension of the moduli space $\cM^A(S_{\og},S_{\ug};H,J)$, $J 
\in \Jreg(H)$
is equal to  ind$(D_u) - 1$. The restriction of the operator $D_u$ to 
the subspace
$W^{1,p}(\R\times S^1,u^*T\widehat W;e^{d|s|}ds\,d\theta)$ is conjugated to
a Cauchy-Riemann operator
$$
\cD_u : W^{1,p}(\R\times S^1,u^*T\widehat W; ds\,d\theta) \to
L^p(\R\times S^1,u^*T\widehat W; ds\,d\theta)
$$
via multiplication by $e^{\frac{d}{p}|s|}$. If the asymptotics of $D_u$ were
nondegenerate, the Fredholm index of $\cD_u$ would be given by~\cite{S}
$$
\mu_{RS}(\og) - \mu_{RS}(\ug) + 2 \langle c_1(TW), A \rangle .
$$
Due to the one-dimensional degeneracy of $\og$ and $\ug$, the actual index
of $\cD_u$ is obtained by a calculation analogous to~\cite[Proposition 4]{BM}
(see also Lemma~\ref{lem:maslov})~:
\begin{equation} \label{eq:indMB}
(\mu_{RS}(\og)-\frac12) - (\mu_{RS}(\ug)+ \frac12) + 2 \langle 
c_1(TW), A \rangle .
\end{equation}
We have proved in Lemma~\ref{lem:maslov} that 
$\mu_{RS}(\gamma)=\mu(\gamma)+\frac 12$, hence
$$
\mathrm{ind}(D_u) = \mathrm{ind}(\cD_u) + 2
= \mu(\og) - \mu(\ug) + 2 \langle c_1(TW), A \rangle + 1.
$$

Finally note that the evaluation maps $\oev, \uev$ are well-defined
and smooth on $\cB^A$. Hence their restrictions to the moduli spaces
are smooth as well.
\end{proof}

\subsection{Compactness for Morse-Bott trajectories} 
  \label{sec:compact}

\begin{definition} \label{defi:MBbroken}
  Let $H$, 
$\{f_\gamma\}$ and $J$ be fixed as above, and let $p\in 
\mathrm{Crit}(f_{\og})$, 
  $q\in \mathrm{Crit}(f_{\ug})$. The space 
$\widehat \cM^A(p,q;H,\{f_\gamma\},J)$ 
  of {\bf parametrized 
Morse-Bott broken trajectories} consists of tuples 
  $$
 \u=(c_m,u_m,c_{m-1},u_{m-1},\ldots,u_1,c_0)
  $$ 
  such that 
\begin{enum}
   \item $u_i\in \widehat \cM^{A_i}(S_{\gamma_i},S_{\gamma_{i-1}};H,J)$, 
$i=1,\ldots,m$ 
   with $\gamma_m=\og$, $\gamma_0=\ug$ and 
$A_1+\ldots+A_m=A$;
   \item $c_0:[-1,+\infty[\to S_{\gamma_0}$, 
$c_i:[-T_i/2,T_i/2]\to S_{\gamma_i}$, 
   $i=1,\ldots,m-1$ and 
$c_m:]-\infty,1]\to S_{\gamma_m}$ satisfy 
   $\dot c_i=\nabla 
f_{\gamma_i} \circ c_i$, $i=0,\ldots,m$;
   \item 
$\oev(u_i)=\uev(c_i)$, $\uev(u_i)=\oev(c_{i-1})$, $i=1,\ldots,m$ and 
$c_0(+\infty)=q$, $c_m(-\infty)=p$.
  \end{enum}
  The space 
$\cM^A(p,q;H,\{f_\gamma\},J)$ of {\bf unparametrized Morse-Bott 
broken trajectories} consists of equivalence classes 
$$
[\u]=(c_m,[u_m],c_{m-1},[u_{m-1}],\ldots,[u_1],c_0)
$$ 
such that 
$\u\in \widehat 
\cM^A(p,q;H,\{f_\gamma\},J)$.
\end{definition}

\begin{definition} \label{def:convubar}
 Let 
$$
 \u_k=(c_{m_k,k},u_{m_k,k},c_{m_k-1,k},\ldots,u_{1,k},c_{0,k})\in 
 \widehat \cM^A(p_k,q_k;H,\{f_\gamma\},J)
$$
 with $k=1,\ldots,\ell$, 
and satisfying $q_k=p_{k-1}$ for $k=2,\ldots,\ell$. 
 We denote 
$p:=p_\ell$, $q:=q_1$. 
 A sequence $v_n\in 
 \widehat 
\cM^A(\og_p,\ug_q;H_{\delta_n},J)$ with $\delta_n\to 0$, 
 $n\to 
\infty$ is said to {\bf converge} to $\overline\u:=(\u_\ell,\ldots,\u_1)$ 
if 
 there exist shifts $(s_{i,k}^n)\in\R$, $i=1,\ldots,m_k$ such 
that 
 $$
 v_n(\cdot + s_{i,k}^n\,,\cdot)\to u_{i,k}, \quad 
n\to\infty
 $$
 uniformly on compact sets in $\R\times S^1$. We write 
in this case $v_n\to \overline\u$.  

A sequence $[\widetilde v_n]\in \cM^A(\og_p,\ug_q;H_{\delta_n},J)$ 
with $\delta_n\to 0$, $n\to\infty$ is said to {\bf converge} to
$[\overline\u]\in \cM^A(p,q;H,\{f_\gamma\},J)$ 
if there exist representatives $v_n$ and $\overline
\v$ such that $v_n\to \overline\v$ (this condition is
obviously independent on the choice of representatives). We write in
this case $[\widetilde v_n]\to[\overline\u]$. 

We call $\overline \u$ a {\bf 
broken Floer trajectory with gradient fragments}. We call each of the 
$\u_k$'s a {\bf  Floer trajectory with gradient 
 fragments}. Each 
$\u_k$ is a {\bf level} of $\overline \u$ and 
 each $u_{i,k}$ is a 
{\bf sublevel} of $\u_k$. 
 \end{definition}

\begin{definition} 
\label{defi:stable}
  An element 
  $$
 \u=(c_{m},u_{m},c_{m-1},\ldots,u_{1},c_{0})\in \widehat 
 \cM^A(p,q;H,\{f_\gamma\},J)
  $$ 
  with $m\ge 1$ is {\bf stable} 
if each $u_i$, $i=1,\ldots,m$ is a nonconstant Floer trajectory and 
if each $c_i$, $i=1,\ldots,m-1$ defined on an interval of nonzero 
length is nonconstant. An element $\u=(c_0)\in 
\cM^A(p,q;H,\{f_\gamma\},J)$ is {\bf stable} 
  if $p\neq q$. A broken Floer trajectory with gradient fragments 
  $\overline \u=(\u_\ell,\ldots,\u_1)$ is {\bf stable} 
if each $\u_k$, $k=1,\ldots,\ell$ is stable. 
\end{definition}

\begin{remark} 
{\rm 
  A convergent sequence $v_n$ of nonconstant Floer trajectories 
  has a stable limit $\overline \u$ which is unique  
 up to shifts on the $c_{i,k}$ and $u_{i,k}$. }
\end{remark}

The proofs of the next two lemmas use
the asymptotic estimates proved
in the Appendix. The relevant notation is introduced at the beginning
of the Appendix, and we briefly recall it here for the reader's
convenience. For each $\gamma \in \cP(H)$ we choose coordinates
$(\vartheta, z) \in S^1 \times \R^{2n-1}$
parametrizing a tubular neighbourhood of $\gamma$, such that
$\vartheta \circ \gamma(\theta)
= \theta$ and $z \circ \gamma(\theta) = 0$. Given a smooth function
$f_\gamma : S_\gamma \to \R$, we denote by $\varphi^{f_\gamma}_s$ the
gradient flow of $f_\gamma$ with respect to the natural metric on $S^1$.

In a neighbourhood of $\gamma\in \cP(H)$ 
the Floer equation $\p_s u+J\p_\theta u-JX_H=0$ becomes $\p_s
Z+J\p_\theta Z+J\frac \p {\p\vartheta}-JX_H=0$, where 
$Z(s,\theta):=(\vartheta\circ u(s,\theta)-\theta, z\circ 
u(s,\theta))$. Since 
$X_H=\frac \p {\p\vartheta}$ on $\{z=0\}$ this can be rewritten as   
$
\p_s Z+ J\p_\theta Z + Sz=0
$
for some matrix-valued function $S=S(\vartheta,z)$. The matrix 
$S_\infty(\theta):=S(\theta,0)$\index{$S_\infty$, asymptotic matrix} 
is symmetric.
Let $A_\infty:H^k(S^1,\R^{2n})\to H^{k-1}(S^1,\R^{2n})$ be the operator 
defined by 
$
A_\infty Z:=J\frac d {d\theta} Z+ S_\infty(\theta)z.
\index{$A_\infty$, asymptotic operator}
$
The kernel of $A_\infty$ has dimension one and is spanned  
by the constant vector $e_1:=(1,0,\ldots,0)$. We denote by $Q_\infty$
\index{$Q_\infty$, asymptotic operator} 
the orthogonal projection onto $(\ker \, A_\infty)^\perp$ and we
set
\index{$P_\infty$, asymptotic operator}
$P_\infty := \one -Q_\infty$.

\begin{lemma}  \label{lem:cornershift}
 Let $v_n \in \widehat 
\cM^A(\og_p,\ug_q;H_{\delta_n},J)$ with $\delta_n\to 0$, 
 $n\to \infty$ and $s_1^n<s_2^n$ be shifts such that
$v_n(\cdot+s_1^n,\cdot)\to u_1$, $v_n(\cdot+s_2^n,\cdot)\to u_2$
uniformly on compact sets, with 
$u_1\in \widehat \cM^{A_1}(S_{\gamma_1},S_\gamma; H,J)$ and 
$u_2\in \widehat \cM^{A_2}(S_\gamma,S_{\gamma_2}; H,J)$. Any two
sequences of shifts $s_1^n < s_+^n < s_-^n < s_2^n$ satisfying 
$s_+^n-s_1^n\to\infty$, $s_2^n-s_-^n\to\infty$ and
\begin{equation} \label{eq:deltasmall}
\delta_n(s_+^n-s_1^n)\to 0, \qquad \delta_n(s_2^n-s_-^n)\to 0,
\end{equation}
have the property that 
$$
v_n(\cdot+s_+^n,\cdot) \to \uev(u_1), \qquad v_n(\cdot+s_-^n,\cdot)\to
\oev(u_2)
$$
uniformly on compact sets. 
\end{lemma}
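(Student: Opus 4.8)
The plan is to analyse $v_n$ on the long ``neck'' joining the profile near $s_1^n$ to the one near $s_2^n$, where $v_n$ lies in a small tubular neighbourhood $\cN_\eta(S_\gamma)$ of $S_\gamma$. On $\cN_\eta(S_\gamma)$ I use the coordinates $(\vartheta,z)\in S^1\times\R^{2n-1}$ of the Appendix, write $v_n=(\vartheta_n,z_n)$, set $Z_n:=(\vartheta_n-\theta,\,z_n)$ and split $Z_n=P_\infty Z_n+Q_\infty Z_n$; let $a_n(s)\in S_\gamma$ be the position along $S_\gamma$ recorded by $P_\infty Z_n(s,\cdot)$ (equivalently the nearest-point projection of $v_n(s,\cdot)$ to $S_\gamma$), well defined for $\eta$ small. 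The mechanism of the proof is this: the transverse part $q_n:=Q_\infty Z_n$ decays exponentially away from the two ends of the neck, while $a_n$ moves at speed $O(\delta_n)$ up to a term governed by $q_n$; thus the drift of $a_n$ between $s_1^n$ and $s_+^n$ is $O(\delta_n(s_+^n-s_1^n))$ plus a small correction, and the hypothesis $\delta_n(s_+^n-s_1^n)\to 0$ makes it vanish. Since $v_n(\cdot+s_1^n,\cdot)\to u_1$ and $u_1(s,\cdot)\to\uev(u_1)\in S_\gamma$ as $s\to+\infty$, the value of $a_n$ near $s_1^n$ converges to $\uev(u_1)$, hence so does $a_n(s_+^n)$; together with $q_n$ being small there this gives $v_n(\cdot+s_+^n,\cdot)\to\uev(u_1)$, and the assertion for $s_-^n$ is the mirror image.

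Fixing $\eta$, taking $T_0>0$ large and then $n$ large, the convergences $v_n(\cdot+s_1^n,\cdot)\to u_1$, $v_n(\cdot+s_2^n,\cdot)\to u_2$ and the asymptotic convergence of $u_1,u_2$ to orbits in $S_\gamma$ put $v_n(s_1^n+T_0,\cdot)$ and $v_n(s_2^n-T_0,\cdot)$ into $\cN_{\eta/2}(S_\gamma)$ with $C^1$-small transverse part. Projecting the Floer equation for $H_{\delta_n}$ along the splitting, $q_n$ solves a $\dbar$-type equation whose asymptotic operator $Q_\infty A_\infty Q_\infty$ is invertible --- the one-dimensional kernel of $A_\infty$ lying entirely in the $P_\infty$-direction. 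The exponential dichotomy for this operator, which is exactly where the asymptotic estimates of the Appendix (Proposition~\ref{prop:asymptotic}) and the transversal nondegeneracy of $S_\gamma$ enter, bounds
\[
\|q_n(s,\cdot)\|_{C^1}\ \le\ C\,\eps_n\bigl(e^{-\kappa(s-s_1^n-T_0)}+e^{-\kappa(s_2^n-T_0-s)}\bigr),\qquad s\in[\,s_1^n+T_0,\ s_2^n-T_0\,],
\]
with a uniform spectral gap $\kappa>0$, uniform $C$, and $\eps_n:=\|q_n(s_1^n+T_0,\cdot)\|_{C^1}+\|q_n(s_2^n-T_0,\cdot)\|_{C^1}$ whose $\limsup$ as $n\to\infty$ is bounded by the transverse deviation of $u_1(T_0,\cdot)$ and $u_2(-T_0,\cdot)$ from $S_\gamma$, hence tends to $0$ as $T_0\to\infty$.

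For the motion along $S_\gamma$ one has $|\dot a_n(s)|\le C\delta_n+C\|q_n(s,\cdot)\|_{C^1}$: the $\delta_n$-term because the perturbation of $H_{\delta_n}$ has $C^1$-norm $O(\delta_n)$, and the $q_n$-term because for the \emph{unperturbed} Hamiltonian $H$ all of $S_\gamma$ consists of stationary Floer cylinders, so the motion of $a_n$ due to the unperturbed equation is driven only by the transverse deviation. Integrating from $s_1^n+T_0$ to any $s\in[s_+^n-R,\,s_+^n+R]$ (for fixed $R>0$) gives $|a_n(s)-a_n(s_1^n+T_0)|\le C\delta_n(s-s_1^n)+\frac{2C}{\kappa}\eps_n$. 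Given $\eps>0$, choose $T_0$ with $\limsup_n\eps_n<\eps$; then, since $\delta_n(s_+^n-s_1^n)\to 0$ and $s-s_1^n\le s_+^n-s_1^n+R$, the right-hand side is $<3\eps$ for $n$ large, while $a_n(s_1^n+T_0)\to\uev(u_1)$ as first $n\to\infty$ and then $T_0\to\infty$. So $a_n(s)\to\uev(u_1)$ uniformly for $s\in[s_+^n-R,s_+^n+R]$, and with $\|q_n(s,\cdot)\|_{C^1}\to 0$ there we get $v_n(s,\cdot)\to\uev(u_1)$ in $C^1(S^1)$; elliptic bootstrapping from the Floer equation upgrades this to $v_n(\cdot+s_+^n,\cdot)\to\uev(u_1)$ uniformly on compact sets. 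The claim for $s_-^n$ follows verbatim from the right end, using $\delta_n(s_2^n-s_-^n)\to 0$, $s_-^n-s_1^n\ge s_+^n-s_1^n\to\infty$ and $s_2^n-s_+^n\ge s_2^n-s_-^n\to\infty$ for the error term.

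The point I expect to require care is obtaining the transverse estimate with constants uniform over the \emph{unbounded}, $n$-dependent neck while at the same time knowing that $v_n$ is confined to $\cN_\eta(S_\gamma)$ there. The clean resolution is to run the argument of the last two paragraphs as a continuation argument on the maximal sub-interval of $[s_1^n+T_0,\,s_2^n-T_0]$ on which $v_n$ stays in $\cN_\eta(S_\gamma)$: the drift bound then forces $v_n$ to remain in $\cN_{\eta/2}(S_\gamma)$ on that sub-interval, so it cannot terminate before $s_+^n+R$ (resp.\ after $s_-^n-R$), which yields the confinement and the convergence simultaneously. Everything else is standard Morse--Bott bookkeeping combined with the asymptotic estimates quoted from the Appendix.
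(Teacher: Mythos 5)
Your overall strategy on the neck matches the paper's proof: split the Floer cylinder into transverse ($Q_\infty$) and tangential ($P_\infty$) parts, obtain exponential (cosh) decay of the transverse part from both ends of the neck, bound the tangential drift $a_n$ by integrating a pointwise estimate, and use $\delta_n(s_+^n-s_1^n)\to 0$ to kill the accumulated drift. Your formulation of the drift bound --- integrating $|\dot a_n|\le C\delta_n+C\|q_n\|$ directly --- is in fact a slightly cleaner route than the paper's, which compares $P_\infty Z$ with the gradient flow $\varphi^{\delta f_\gamma}$ via Proposition~\ref{prop:intervaldelta}, has to carry the Gronwall factor $e^{M\delta_n(s_+^n-s_1^n)}$, and runs a maximality argument over the interval $I_n(\eps)$; for this lemma both give the same total bound.

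The genuine gap is in the confinement step, exactly the point you flag at the end. The continuation argument you propose --- ``the drift bound then forces $v_n$ to remain in $\cN_{\eta/2}(S_\gamma)$ on the maximal sub-interval'' --- is not valid. The drift bound controls the \emph{tangential} motion $a_n$ along $S_\gamma$, whereas membership in a tubular neighbourhood of $S_\gamma$ is a constraint on the \emph{transverse} component $q_n$. The transverse component is controlled by the cosh decay estimate, but that estimate requires a priori control of $\|q_n\|$ at \emph{both} endpoints of the interval on which it is applied; at the moving endpoint $t_n$ of the maximal sub-interval one only knows $\|q_n(t_n)\|=\eta$ by maximality, and the cosh ratio equals $1$ at the endpoints, so the estimate yields $\|q_n(s)\|\le C\max(\eps_n,\eta)$ with no self-improvement near $t_n$. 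The continuation does not close. The paper achieves confinement by an independent action/compactness argument: if $v_n$ left a fixed small neighbourhood of $S_\gamma$ at a sequence of points deep in the neck, reparametrizing there and passing to a subsequence would produce a Floer trajectory $v$ for $(H,J)$ whose action, squeezed for every $K$ between $\cA_H(u_2(\cdot-K,\cdot))$ and $\cA_H(u_1(\cdot+K,\cdot))$, equals $\cA_H(\gamma)$ for all $s$; hence $v$ is the constant cylinder over an orbit of that action, which by injectivity of the action spectrum lies in $S_\gamma$, contradicting the definite distance of the limit point from $S_\gamma$. Some independent argument of this kind must precede the dichotomy/drift estimates; they cannot be bootstrapped into producing the confinement themselves.
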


\begin{proof} We claim that there exists $K>0$ such that
  $v_n([s_1^n+K,s_2^n-K]\times S^1)$ is contained in a given small
  neighbourhood of $S_\gamma$. If that was not the case, we could find
  a sequence $K_n\to \infty$ and a sequence
  $(s_n,\theta_n)\in[s_1^n+K_n,s_2^n-K_n]\times S^1$
  such that $\textrm{dist}(v_n(s_n,\theta_n),S_\gamma)$ is bounded
  away from zero. Up to a subsequence, $v_n(\cdot+s_n,\cdot)$ converges
  to some Floer trajectory $v$ which must be nonconstant. On the other hand, for any
  $s\in \R$ and for any $K>0$ we have, for $n$ large enough,  
$$
\cA_{H_{\delta_n}} \big(v_n(s+s_2^n - K,\cdot)\big)
\!\le\! 
\cA_{H_{\delta_n}} \big(v_n(s+s_n,\cdot)\big)
\!\le\! 
\cA_{H_{\delta_n}} \big(v_n(s+s_1^n +K,\cdot)\big), 
$$
and in the limit $\cA_H(u_2(s-K,\cdot)) \le \cA_H(v(s,\cdot)) \le
\cA_H(u_1(s+K,\cdot))$. We let $K$ go to infinity and obtain 
$\cA_H(\gamma) \le \cA_H(v(s,\cdot)) \le \cA_H(\gamma)$. 
This holds for all $s\in\R$ and therefore the cylinder $v$ is
constant over some element of $\cP(H)$, a contradiction which proves the
claim. 

By~\eqref{eq:pointwiseintervalQdelta} in the proof of
Proposition~\ref{prop:intervaldelta} applied to $v_n$ on
$[s_1^n+K,s_2^n-K]$ we get  
\begin{equation} \label{eq:normaldir}
|Q_\infty v_n(s,\theta)|\le C \max(\|Q_\infty v_n (s_1^n+K)\|,
\|Q_\infty v_n (s_2^n-K)\|).  
\end{equation} 

Let $\gamma_+$ be the limit in $S_{\gamma}$ of $v_n(s_+^n,\cdot)$, and
let $I_n(\epsilon):=[s_+^n(\epsilon),s_+^n]\subset [s_1^n+K,s_+^n]$ be 
the maximal subinterval containing $s_+^n$ such 
that $P_\infty v_n(s)$, $s\in I_n(\epsilon)$ 
is at distance at least $\epsilon$ from the critical points of
$f_{\gamma}$, except maybe $\gamma_+$ (if the latter is a
critical point). By the second part of Proposition~\ref{prop:intervaldelta}
applied to $v_n$ on $I_n(\epsilon)$ we obtain 
$$
|\vartheta\circ v_n(s,\theta)-\theta- \varphi_{\delta
s}^{f_{\gamma}}(\theta_0)|\le C \max(\|Q_\infty v_n(
s_+^n(\epsilon))\|, \|Q_\infty v_n(s_+^n)\|) e^{M\delta_n(s_+^n-s_1^n)}.
$$ 
Since $\delta_n(s_+^n-s_1^n)\to 0$ and
taking into account~\eqref{eq:normaldir} we get 
\begin{equation} \label{eq:estimateK}
|\vartheta\circ v_n(s,\theta)-\vartheta\circ \gamma_+(\theta)| \le
C_1 \max(\|Q_\infty v_n(s_1^n+K)\|, \|Q_\infty v_n(s_2^n-K)\|). 
\end{equation}
For $K$ large enough the right hand term becomes so small that 
the distance between $P_\infty v_n(s)$, $s\in I_n(\epsilon)$ and the
critical points of $f_{\gamma}$, except possibly $\gamma_+$,
is strictly bigger than $\epsilon$, hence $I_n(\epsilon)= [s_1^n
+K,s_+^n]$ by maximality (this holds for $K$ large enough).  
Applying~\eqref{eq:estimateK} to $s=s_1^n+K$ we obtain 
$$
|\vartheta\circ v_n(s_1^n+K,\theta)-\vartheta\circ
\gamma_+(\theta)| \le C_1 \max (\|Q_\infty v_n(s_1^n+K)\|, \|Q_\infty
v_n(s_2^n-K)\|).  
$$
Passing to the limit in the above inequality we obtain 
$$
|\vartheta\circ u_1(K,\theta)-\vartheta\circ
\gamma_+(\theta)| \le C_1 \max(\|Q_\infty u_1(K)\|, \|Q_\infty
u_2(-K)\|).  
$$
Letting $K\to \infty$ we obtain $\uev(u_1)=\gamma_+$. That this
implies uniform convergence on compact sets to the constant cylinder
over $\uev(u_1)$ can be seen in two ways: either one notices that the
above estimates hold uniformly when $s_+^n$ is replaced with
$s_+^n+K_+$, where $K_+$ is a bounded constant, or one uses the fact
that a Floer trajectory passing through a periodic orbit is
necessarily a constant cylinder, by unique continuation applied to the
infinite jet at that orbit~\cite[Theorem 2.3.2]{McDS}.

A similar argument proves the assertion involving $\oev(u_2)$.
\end{proof}

\begin{lemma} \label{lem:connect}
 Let $v_n\in \widehat \cM^A(p,q;H_{\delta_n},J)$ with $\delta_n\to 0$,
$n\to \infty$. Assume we are given two sequences of shifts $s_1^n <
s_2^n$ such that $v_n(\cdot+s_i^n,\cdot)$, $i=1,2$ converge uniformly on 
compact sets to constant cylinders $u_{\gamma_i}$ over orbits
$\gamma_i$ belonging to the same family $S_\gamma$. Then
there exists a (possibly broken) gradient trajectory of $f_\gamma$
starting at $\gamma_1$ and ending at $\gamma_2$. Moreover, the length
of the gradient trajectory is $T=\lim_{n\to \infty}
\delta_n(s_2^n-s_1^n)$. 
\end{lemma}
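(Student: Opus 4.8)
The plan is to combine the asymptotic analysis of the Appendix with the action estimates along $v_n$, much as in the proof of Lemma~\ref{lem:cornershift}. First I would observe that, since $v_n(\cdot+s_i^n,\cdot)$ converges to a constant cylinder over $\gamma_i\in S_\gamma$, the cylinder $v_n([s_1^n,s_2^n]\times S^1)$ eventually enters and stays in a fixed small tubular neighbourhood of $S_\gamma$: if not, a diagonal argument produces a nonconstant Floer trajectory $v$ squeezed between the two constant cylinders in the action filtration, forcing $\cA_H(\gamma)\le\cA_H(v(s,\cdot))\le\cA_H(\gamma)$ for all $s$, hence $v$ constant, a contradiction. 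This reduces the problem to analysing the behaviour of $v_n$ in the coordinates $(\vartheta,z)$, where the Floer equation reads $\p_s Z+J\p_\theta Z+Sz=0$.

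Next I would invoke Proposition~\ref{prop:intervaldelta} (the same input used in Lemma~\ref{lem:cornershift}): on $[s_1^n,s_2^n]$ the normal component $Q_\infty v_n$ decays, being controlled by $\max(\|Q_\infty v_n(s_1^n)\|,\|Q_\infty v_n(s_2^n)\|)$, which tends to $0$ since $v_n(\cdot+s_i^n,\cdot)\to u_{\gamma_i}$. Hence, after rescaling $s$ by $\delta_n$, the $S^1$-direction component $\vartheta\circ v_n(s,\theta)-\theta$ is, up to errors going to zero, governed by the gradient flow $\varphi^{f_\gamma}_{\delta_n s}$ of $f_\gamma$ on $S_\gamma\cong S^1$. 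Setting $T:=\limsup_{n}\delta_n(s_2^n-s_1^n)$ (and first assuming it is finite), I would pass to the limit along a subsequence: the reparametrised curves $s\mapsto v_n(s_1^n+s/\delta_n,\cdot)$ converge, uniformly on compact subsets of $[0,T]$, to a gradient trajectory of $f_\gamma$ of length $T$ running from $\gamma_1$ to $\gamma_2$. If $T=\infty$ one breaks the interval at intermediate times where $v_n$ returns near a critical point of $f_\gamma$ and extracts a broken gradient trajectory; a standard bound on the number of critical values of the perfect Morse function $f_\gamma$ (there are only two) bounds the number of breaks, so the limit is a finite (possibly broken) gradient trajectory. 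Finally, to pin down the length I would use that the action drop $\cA_{H_{\delta_n}}(v_n(s_1^n,\cdot))-\cA_{H_{\delta_n}}(v_n(s_2^n,\cdot))$ is, to leading order in $\delta_n$, equal to $\delta_n$ times the energy of the gradient segment of $f_\gamma$, which forces $T=\lim_n\delta_n(s_2^n-s_1^n)$ (in particular the limit exists, not merely the $\limsup$).

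The main obstacle will be making the passage to the gradient flow rigorous \emph{uniformly} over the possibly very long interval $[s_1^n,s_2^n]$: the naive estimate on $\vartheta\circ v_n-\theta-\varphi^{f_\gamma}_{\delta_n s}(\theta_0)$ from Proposition~\ref{prop:intervaldelta} carries an exponential factor $e^{M\delta_n(s_2^n-s_1^n)}$, which is only harmless on subintervals where $\delta_n\cdot(\text{length})$ stays small. So the real work is the same bootstrap as in Lemma~\ref{lem:cornershift}: subdivide $[s_1^n,s_2^n]$ into controlled pieces on which $P_\infty v_n$ stays away from $\mathrm{Crit}(f_\gamma)$ (except at the endpoints), apply the estimate on each piece, and use the smallness of $\max(\|Q_\infty v_n(s_1^n)\|,\|Q_\infty v_n(s_2^n)\|)$ together with the maximum-principle-type bound~\eqref{eq:normaldir} to propagate the control across pieces. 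Once this is in place the identification of the limit with a (broken) gradient trajectory of $f_\gamma$ and of its length with $\lim_n\delta_n(s_2^n-s_1^n)$ is essentially the finite-dimensional Morse-theory statement for flow lines with cascades, and follows as in~\cite{Fr,B}.
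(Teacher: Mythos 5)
Your outline follows the paper's proof step for step: the action-filtration argument confining $v_n([s_1^n,s_2^n]\times S^1)$ to a tubular neighbourhood of $S_\gamma$, the appeal to Proposition~\ref{prop:intervaldelta} to control $Q_\infty v_n$ and compare $\vartheta\circ v_n$ with $\varphi^{\delta_n f_\gamma}_{\cdot}$, and the subdivision of $[s_1^n,s_2^n]$ into maximal subintervals on which $P_\infty v_n$ stays away from $\mathrm{Crit}(f_\gamma)$. Two points, however, are off. You identify the factor $e^{\delta M(s_1-s_0)}$ as the obstacle and propose to handle it by keeping $\delta_n\cdot(\textrm{length})$ small on each piece, but that bound would fail here since $\delta_n(s_2^n-s_1^n)\to T$, which need not be small. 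The paper never confronts the exponential factor in this lemma at all: the maximal subintervals are chosen so that $P_\infty v_n$ stays away from \emph{all} critical points, where the improved estimate~\eqref{eq:boundz} (with no exponential factor) applies, and the case where the starting orbit $\gamma_1$ is itself a critical point is reduced to the first case by running the argument backwards from the first exit time. (The exponential factor must be controlled only in Lemma~\ref{lem:cornershift}, and there precisely by the hypothesis $\delta_n(s_\pm^n-s_1^n)\to 0$.) Your last sentence does name the correct decomposition, so this is a misdiagnosis of where the difficulty lies rather than a wrong plan.

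The second point is a genuine gap: the action argument you invoke to ``pin down $T$'' does not do so. The action drop over $[s_1^n,s_2^n]$ is, to leading order in $\delta_n$, equal to $\delta_n\big(f_\gamma(\gamma_2)-f_\gamma(\gamma_1)\big)$, which is indeed the energy of the gradient segment; but the energy determines only the endpoints, not the flow time (a gradient segment running into a critical point has finite energy and $T=\infty$). In the paper $T$ comes out directly from the pointwise estimate: since $\vartheta\circ v_n(s,\cdot)$ tracks $\varphi^{f_\gamma}_{\delta_n(s-s_1^n)}(\gamma_1)$ up to an error controlled by $\max\big(\|Q_\infty v_n(s_1^n)\|,\|Q_\infty v_n(s_2^n)\|\big)\to 0$, the convergence $v_n(s_2^n,\cdot)\to\gamma_2$ forces $\delta_n(s_2^n(\eps)-s_1^n)\to T(\eps)$, and the monotonicity of $s_2^n(\eps)$ in $\eps$ gives the existence of $T=\lim_{\eps\to 0}T(\eps)\in\overline\R_+$. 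No separate energy or action estimate is needed, and none would suffice to identify $T$.
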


\begin{proof}
  We claim that, for $n$ large enough,
$v_n([s_1^n,s_2^n]\times S^1)$
is entirely contained in 
an arbitrarily small neighbourhood of $\gamma$. By
contradiction, if this fails we can reparametrize the sequence $v_n$
so that it converges to a nonconstant Floer trajectory $v\in
\cM^B(\og ',\ug ';H,J)$ for some class $B\in H_2(M;\Z)$ and some $\og
',\ug '\in \cP(H)$ such that their actions satisfy
$\cA_H(\gamma) \le \cA_H(\ug ') < \cA_H(\og ') \le
\cA_H(\gamma)$, which is impossible. 

We first assume that $\gamma_1$ is not a critical point of
$f_\gamma$. Let $\epsilon >0$ be fixed and denote by 
$I_n(\epsilon)=[s_1^n,s_2^n(\epsilon)]\subset [s_1^n,s_2^n]$ the
maximal subinterval containing $s_1^n$ such that 
the distance between $P_\infty v_n(s)$, $s\in I_n(\epsilon)$ 
and $\textrm{Crit}(f_\gamma)$ is at least $\epsilon$. 
We can apply Proposition~\ref{prop:intervaldelta} to $v_n$ and 
$I_n(\epsilon)$. In particular, for some sequence
$\theta_n\in S^1$ we have 
$$
\lim_{n\to\infty} 
\sup_{(s,\theta)\in I_n(\epsilon)\times S^1} 
|\vartheta\circ v_n(s,\theta) -\theta -
\varphi_s^{\delta_n f_\gamma} (\theta_n)| =0.
$$
Since $v_n(s_1^n,\cdot)$ converges to
$\gamma_1$, we also have 
\begin{equation} \label{eq:limsup}
\lim_{n\to\infty} 
\sup_{(s,\theta)\in I_n(\epsilon)\times S^1} 
|\vartheta\circ v_n(s,\theta) -\theta -
\varphi_{\delta_n (s-s_1^n)}^{f_\gamma} (\gamma_1)| =0.
\end{equation}
Modulo passing to a subsequence we know that
$v_n(s_2^n(\epsilon),\cdot)$ converges, which together
with~\eqref{eq:limsup} implies that $\delta_n(s_2^n(\epsilon)-s_1^n)$
converges to $T(\epsilon)\in \R_+$. This holds for each $\epsilon>0$
and, since $s_2^n(\epsilon) < s_2^n(\epsilon')$ if $\epsilon >
\epsilon '$, the limit $\lim_{\epsilon \to 0} T(\epsilon)=T\in
\overline \R_+$ exists. Then $\varphi_s^{f_\gamma}(\gamma_1)$, $s\in
[0,T]$ is a gradient trajectory starting at $\gamma_1$. 

If $T$ is
finite then this trajectory, and therefore $v_n(I_n(\epsilon) \times
S^1)$ stay at a fixed distance from $\textrm{Crit}(f_\gamma)$ for $n$
large enough. Hence $I_n(\epsilon)=I_n$ for $\epsilon$ sufficiently
small and we are done. 
If $T$ is infinite and the limit $\lim_{s\to \infty}
\varphi_s^{f_\gamma}(\gamma_1)$ is equal to $\gamma_2$, we are also
done. Otherwise we are in the next case, with shifts $\widetilde s_1^n
:= \lim_{\epsilon \to 0} s_2^n(\epsilon)$ and $\widetilde
s_2^n:=s_2^n$. 

We now assume that $\gamma_1$ is a critical point of $f_\gamma$ and
$\gamma_1\neq \gamma_2$. Given $\epsilon >0$ we denote by 
$I_n(\epsilon)=[s_1^n,s_2^n(\epsilon)]\subset [s_1^n,s_2^n]$ the
maximal subinterval containing $s_1^n$ such that 
the distance between $P_\infty v_n(s)$, $s\in I_n(\epsilon)$ 
and $\textrm{Crit}(f_\gamma)\setminus \{\gamma_1\}$ 
is at least $\epsilon$. For $\epsilon>0$ small enough the loops
$P_\infty v_n(s_2^n(\epsilon))$ are at a distance bigger than
$\epsilon$ from $\gamma_1$ and, up to a subsequence,
$v_n(s_2^n(\epsilon),\cdot)$ converges to some $\widetilde \gamma_2
\in S_\gamma$ which is not a critical point of $f_\gamma$. The same
argument as in the previous case applied ``backwards'' to the shifts
$s_1^n < s_2^n(\epsilon)$ produces a negative gradient trajectory
running from $\widetilde \gamma_2$ to some critical point $\widetilde
\gamma_1$. By definition of $I_n(\epsilon)$, we must have $\widetilde
\gamma_1=\gamma_1$ and we thus obtain a gradient trajectory from
$\gamma_1$ to $\widetilde \gamma_2$. We are now in the first case with
shifts $\widetilde s_1^n := s_2^n(\epsilon)$ and $\widetilde
s_2^n:=s_2^n$. 

We successively apply the above two cases in order to produce a broken
gradient trajectory from $\gamma_1$ to $\gamma_2$. This is a finite
process since a broken trajectory has a finite number of nonconstant 
fragments. 
\end{proof}

\begin{proposition} \label{prop:compact}
 Let 
$v_n\in 
 \widehat \cM^A(p,q;H_{\delta_n},J)$ with $\delta_n\to 0$, 
$n\to \infty$. There 
 exists a broken Floer trajectory with gradient 
fragments 
 $\overline \u$ and a subsequence (still denoted
 by 
$v_n$) such that $v_n\to \overline \u$.  
\end{proposition}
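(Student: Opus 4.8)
The plan is to run the standard Floer--Gromov compactness argument for the sequence $v_n$ and to recognise, using Lemmas~\ref{lem:cornershift} and~\ref{lem:connect}, that on the degenerate necks the limiting object is governed by the gradient flow of the Morse functions $f_\gamma$; we pass to a subsequence finitely many times without further notice. First I would record the a priori bounds. Each $H_{\delta_n}$ agrees with $H$ for $t$ large and has the same maximal slope $\alpha$, so the maximum principle of~\cite{V} confines all the $v_n$ to a fixed compact subset of $\widehat W$, and by~\eqref{eq:asph} no $J$-holomorphic sphere can bubble off. Since $H_{\delta_n}-H=\delta_n g$ for a fixed compactly supported $g$ we have $H_{\delta_n}\to H$ in $C^\infty$, and the energy identity $\cE(v_n)=\cA_{H_{\delta_n}}(\og_p)-\cA_{H_{\delta_n}}(\ug_q)$ yields a uniform bound $\cE(v_n)\le C$, the actions of $\og_p,\ug_q\in\cP(H_{\delta_n})$ depending on $\delta_n$ only up to $O(\delta_n)$. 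Because $\cP_\lambda^{\le\alpha}$ is finite and $H|_W$ is $C^2$-small, the $1$-periodic orbits of $X_H$ that may occur as asymptotes of limiting trajectories take finitely many action values, so there is an energy gap $\hbar>0$ below which no nonconstant Floer trajectory for $(H,J)$ --- and, for $\delta_n$ small, for $(H_{\delta_n},J)$ --- can exist.

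Next I would extract the ``sublevels''. By the usual inductive rescaling procedure of Floer--Gromov compactness, adapted to the Morse--Bott asymptotics by means of the asymptotic estimates of the Appendix (in the spirit of~\cite{BM}), after passing to a subsequence there are an integer $m\ge 0$ and shift sequences along which $v_n$ converges in $C^\infty_{\mathrm{loc}}$ to \emph{nonconstant} Floer trajectories for $(H,J)$ --- which we list as $u_m,\dots,u_1$ in order of decreasing action, with consecutive shifts separating to infinity --- such that $\sup|\p_s v_n|\to 0$ off bounded windows around these shifts. The bound $C$ and the gap $\hbar$ force $m<\infty$. By the asymptotic analysis each $u_i$ is asymptotic to $1$-periodic orbits of $X_H$, hence $u_i\in\widehat\cM^{A_i}(S_{\gamma_i},S_{\gamma_{i-1}};H,J)$ with $\gamma_m=\og$ and $\gamma_0=\ug$, or $u_1\in\widehat\cM^{A_1}(S_{\gamma_1},\tq;H,J)$ when $q\in\mathrm{Crit}(H)$. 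Conservation of the relative homology class under $C^0$-limits gives $\sum_iA_i=A$, and no Floer energy is lost on the necks --- there $v_n$ is asymptotically the gradient flow of $\delta_nf_\gamma$, whose Floer energy over a neck is $O(\delta_n)$ --- so all of $\lim\cE(v_n)$ is carried by the $u_i$. If $m=0$ the whole sequence converges on compact sets to a constant cylinder, and the construction below, applied to the single ``neck'' $]-\infty,+\infty[$, produces directly the purely gradient object $\u=(c_0)$.

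Finally I would insert the gradient fragments and break into levels. There remain $m+1$ necks: the $m-1$ between consecutive sublevels, and the two semi-infinite ones beyond the outermost sublevels, where $v_n\to\og_p$ and $v_n\to\ug_q$. On each neck, Lemma~\ref{lem:cornershift} shows that suitable sub-shifts of $v_n$ converge to the constant cylinders over the two adjacent asymptotes --- $\uev(u_{i+1})$ and $\oev(u_i)$, which lie in a common family $S_\gamma$ (or are a common constant orbit) --- and Lemma~\ref{lem:connect}, applied to these two shift sequences, produces a possibly broken gradient trajectory of $f_\gamma$ joining them, of length $T\in\overline\R_+$ equal to $\lim_n\delta_n$ times the width of the neck. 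We record a finite fragment $c_i$ when $T<\infty$ and this trajectory is unbroken; wherever $T=\infty$ or the gradient trajectory passes through a critical point of $f_\gamma$, we break $\overline\u$ into a new level at that critical point. The semi-infinite necks give, in the same way, the semi-infinite fragments $c_m$ from $p$ to $\oev(u_m)$ and $c_0$ from $\uev(u_1)$ to $q$. Concatenating and regrouping by level yields $\overline\u=(\u_\ell,\dots,\u_1)$ with $\u_k\in\widehat\cM(p_k,q_k;H,\{f_\gamma\},J)$, $q_k=p_{k-1}$, $p_\ell=p$, $q_1=q$; the incidence relations $\oev(u_i)=\uev(c_i)$ and $\uev(u_i)=\oev(c_{i-1})$ of Definition~\ref{defi:MBbroken} hold by Lemma~\ref{lem:cornershift}; finitely many levels and sublevels occur because of the energy bound; and $v_n\to\overline\u$ in the sense of Definition~\ref{def:convubar} by construction. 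Any fragment failing the stability condition of Definition~\ref{defi:stable} can be absorbed into a neighbour, so $\overline\u$ may be taken stable.

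The genuinely analytic core --- that on a degenerate neck the directions transverse to $S_\gamma$ decay while the loop parameter drifts by precisely the gradient flow of $\delta_nf_\gamma$ --- is already packaged in Lemmas~\ref{lem:cornershift} and~\ref{lem:connect}, and through them in the asymptotic estimates of the Appendix. Thus the hard part here is organisational rather than analytic: choosing the sub-shifts inside each neck so that the intermediate gradient segments have well-defined (possibly infinite) lengths and together exhaust $\R$, and checking that neither energy nor homology escapes in the limit, so that the resulting $\overline\u$ genuinely belongs to the compactified moduli space of broken Floer trajectories with gradient fragments.
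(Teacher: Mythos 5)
Your proof follows essentially the same route as the paper's: establish the uniform energy bound from $H_{\delta_n}\to H$, extract the nonconstant sublevels $u_i$ from Floer--Gromov compactness (with no sphere bubbling because of~\eqref{eq:asph}), then use Lemma~\ref{lem:cornershift} to locate sub-shifts on each neck converging to the constant cylinders over $\uev(u_i)$ and $\oev(u_{i+1})$, and Lemma~\ref{lem:connect} to produce the gradient fragments, finally regrouping into levels at critical points. The organisational details you add (the energy gap $\hbar$, the observation that no energy is lost on the necks, the explicit case $m=0$) are all harmless elaborations of what the paper leaves implicit, and the assembly of $\overline\u$ from the $u_i$ and $c_i$ matches the paper's final paragraph.
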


\begin{proof}
 The energy $\cE(v_n):=\cE_{J,H_{\delta_n}}(v_n)$ defined 
in~\eqref{eq:E} satisfies 
 $$
 \cE(v_n)=-\int_{S^1}H_{\delta_n}(\theta,\og_p(\theta))\,d\theta + 
 \int_{S^1}H_{\delta_n}(\theta,\ug_q(\theta))\,d\theta.
 $$ 
 Since $H_{\delta_n}\to H$ we infer that $\cE(v_n)$ is uniformly 
 bounded. 

 Floer's compactness theorem~\cite[Proposition~3c]{F} applies to 
our situation and provides a collection of Floer trajectories 
$u_i$, $i=1,\ldots,m$ for the pair $(H,J)$ together with 
holomorphic spheres attached to them, as well as shifts $(s_i^n)$ such
that $v_n(\cdot+s_i^n,\cdot)$ converges to $u_i$ 
and its associated holomorphic
spheres in the sense of nodal curves. Condition~\eqref{eq:asph}
implies symplectic asphericity $\langle \om,\pi_2(\widehat W)\rangle
=0$, therefore holomorphic spheres in $(\widehat W,J)$ are constant
and the shifted $v_n$ converge to $u_i$ uniformly on compact
sets.

 Because the action spectrum of $\partial W$ was assumed to be 
discrete and injective the trajectories $u_i$ connect with each 
other, in the sense that $\oev(u_i)$ and $\uev(u_{i+1})$ belong 
to the same family of trajectories $S_{\gamma_i}$,
$i=1,\ldots,m-1$. Moreover, $\oev(u_m)$ belongs to $S_{\og}$ and  
$\uev(u_1)$ belongs to $S_{\ug}$. 

By Lemma~\ref{lem:cornershift} there exist shifts $s_{i,\pm}^n$ 
such that $v_n(\cdot +
s_{i,+}^n,\cdot)$ converges to the constant cylinder over $\uev(u_i)$,
and $v_n(\cdot + s_{i,-}^n,\cdot)$ converges to the constant cylinder
over $\oev(u_i)$. 
 Applying Lemma~\ref{lem:connect} 
with shifts $s_{i,+}^n < s_{i-1,-}^n$, $i=2,\ldots,m$ 
and $n$ large enough, we obtain 
broken gradient trajectories $c_{i-1}$ starting at
$\uev(u_i)$ and ending at $\oev(u_{i-1})$. 
Let now $s_-^n$, $s_+^n$ be shifts such that 
$v_n(\cdot+s_-^n,\cdot)\to p$ and $v_n(\cdot+s_+^n,\cdot)\to q$. 
Applying Lemma~\ref{lem:connect} with shifts
$s_-^n < s_{m,-}^n$ and with shifts $s_{1,+}^n < s_+^n$ we obtain broken 
gradient
trajectories $c_m$ starting at $p$ and ending at $\oev(u_m)$ and $c_0$
starting at $\uev(u_1)$ and ending at $q$. 
Since all $S_\gamma$ are 
circles of periodic orbits, the broken gradient trajectories $c_i$,
$i=0,\ldots,m$ consist each of a single fragment. 

The construction of a
stable broken Floer trajectory with gradient fragments out of the data
$c_i$, $u_i$ is straightforward and goes as follows. The collection of
points of the form $\uev(u_{i+1})$, $\oev(u_i)$ which are critical
points of $f_{\gamma_i}$ determine a partition 
 $$
 (c_{m_\ell,\ell}, 
u_{m_\ell,\ell}, c_{m_\ell-1,\ell}, \ldots,c_{1,\ell},
u_{1,\ell},c_{0,\ell}),\ldots,
 (c_{m_1,1},u_{m_1,1},\ldots,u_{1,1},c_{0,1})
 $$
 of the ordered tuple $(c_m,u_m,\ldots,c_1,u_1,c_0)$. Note that the
$c_{m_k,k}$ and $c_{0,k}$ may either be missing or be constant and
exactly one of $c_{0,k}$ and $c_{m_{k-1},k-1}$ is missing. In such a
situation we set $c_{m_k,k}$ or $c_{0,k}$ to be a constant trajectory
at the relevant critical point, defined on a semi-infinite interval. 
\end{proof}

\subsection{Gluing for Morse-Bott moduli spaces} \label{sec:gluing}

We prove in this subsection the assertions (i-ii) of
Theorem~\ref{thm:degen}. The following notation was introduced
in the previous subsection. For $\gamma \in \cP(H)$ we choose coordinates
$(\vartheta, z) \in S^1 \times \R^{2n-1}$ 
parametrizing a tubular neighbourhood of $\gamma$, such that
$\vartheta \circ \gamma(\theta)
= \theta$ and $z \circ \gamma(\theta) = 0$. Given a smooth function
$f_\gamma : S_\gamma \to \R$, we denote by $\varphi^{f_\gamma}_s$ the
gradient flow of $f_\gamma$ with respect to the natural metric on $S^1$.
The orthogonal projection onto the $1$-dimensional kernel of the
asymptotic operator at $\gamma\in\cP(H)$ is denoted by
$P_\infty$, and we denote $Q_\infty:=\one - P_\infty$. 

Let $p > 2, d > 0$ and $\delta > 0$. Let
$\cB^A_\delta = \cB_\delta^{1,p,d}(\og_p,\ug_q,A; H, \{ f_\gamma \})$ 
\index{$\cB^A_\delta = \cB_\delta^{1,p,d}(\og_p,\ug_q,A; H, \{ f_\gamma \})$}
be the space of proper maps $u : \R \times S^1 \to \widehat W$ which
are locally in $W^{1,p}$ and satisfy
\begin{enum}
\item the map $u$ converges uniformly in $\theta$ as $s \to 
\pm\infty$ to $\ug_q$,
respectively $\og_p$, and represents the homology class $A\in H_2(W;\Z)$;

\item there exist tubular neighbourhoods $\oU$ and $\uU$ of $\og$ and $\ug$
respectively, parametrized by $(\vartheta,z)\in S^1 \times \R^{2n-1}$ such that
\begin{eqnarray*}
\vartheta \circ u(s,\theta) - \theta - \varphi^{\delta 
f_{\og}}_s(\otheta_0) &\in& \! \!
W^{1,p}(]-\infty, -s_0]
\times S^1, \R ; e^{d|s|} ds \, d\theta) , \\
z \circ u(s,\theta) &\in& \! \! W^{1,p}(]-\infty, -s_0] \times S^1,
\R^{2n-1} ; e^{d|s|} ds \, d\theta) , \\
\vartheta \circ u(s,\theta) - \theta - \varphi^{\delta
f_{\ug}}_s(\utheta_0) &\in& \! \! W^{1,p}([s_0,\infty[
\times S^1, \R ; e^{d|s|} ds \, d\theta) , \\
z \circ u(s,\theta) &\in& \! \! W^{1,p}([s_0,\infty[ \times S^1,
\R^{2n-1} ; e^{d|s|} ds \, d\theta) ,
\end{eqnarray*}
for some $s_0 > 0$ sufficiently large and
some $\otheta_0, \utheta_0 \in S^1$ satisfying
\begin{equation} \label{eq:astheta}
\lim _{s\to -\infty} \varphi_s^{f_\gamma}(\otheta_0) = p, \qquad
\lim _{s\to +\infty} \varphi_s^{f_\gamma}(\utheta_0) = q.
\end{equation}
\end{enum}
Then $\cB^A_\delta$ is a Banach manifold and, for $d > 0$ sufficiently
small, it contains
the moduli spaces $\cM^A(\og_p,\ug_q;H_\delta,J)$ for all $J \in \cJ$ (see
Proposition \ref{prop:asymptoticdelta} in the Appendix).
Let $\cE \to \cB^A_\delta$ be the Banach vector bundle with fiber
$\cE_{(u,J)} = L^p(\R \times S^1, u^* T\widehat W; e^{d|s|} ds \, d\theta)$.
Let $\dbar_{H_\delta,J} : \cB^A_\delta \to \cE$ be the section defined by
$$
\index{$\dbar_{H_\delta,J}$}
\dbar_{H_\delta,J}(u) := \partial_s u + J_\theta (\partial_\theta u -
X_{H_\delta}) .
$$

Then $\cM^A(\og_p,\ug_q;H_\delta,J) =
\dbar_{H_\delta,J}^{-1}(0)$. From now on
we fix $J \in \Jreg(H)$. In order to prove (i) in
Theorem~\ref{thm:degen} we need to show that the vertical
differential
$D_u : T_u \cB^A_\delta \to \cE_u$ defined by~(\ref{eq:D}) is
surjective for all
$u \in \cM^A(\og_p,\ug_q;H_\delta,J)$ when $\delta > 0$ is
sufficiently small and the expected dimension
of the moduli space is zero. We have
$$
T_u \cB^A_\delta = W^{1,p}(\R\times S^1,u^*T\widehat
W;e^{d|s|}ds\,d\theta) \oplus
\oV_u \oplus \uV_u,
$$
where $\oV_u$, $\uV_u$ are real vector spaces of dimension
$$
\dim\, \oV_u = \ind(p), \qquad \dim\, \uV_u = 1-\ind(q).
$$
When their dimension is nonzero $\oV_u$ and $\uV_u$ are respectively
generated by
two sections of $u^*T\widehat W$ of the form
$$
(1-\beta(s,\theta)) \nabla f_{\og}(\vartheta \circ u(s,0)) \quad
\mathrm{and} \quad
\beta(s,\theta)\nabla f_{\ug}(\vartheta \circ u(s,0)),
$$
with $\beta(s,\theta)=\beta(s)$ a smooth cutoff
function\index{$\beta$, cutoff function}
which vanishes
for $s\le 0$ and is equal to $1$ for $s\ge 1$. The fact that
$\oV_u$ and $\uV_u$ have varying dimensions is a consequence of
condition~(\ref{eq:astheta}).

We shall prove surjectivity of $D_u$ by showing that the elements of
the moduli space
$\cM^A(\og_p,\ug_q;H_\delta,J)$ can be approximated, for $\delta>0$
small enough,
by gluing the elements of  $\cM^A(S_{\og},S_{\ug};H,J)$ with fragments
of gradient trajectories of the Morse functions
$f_\gamma$.

Given $a,b \in \overline{\R}$, $a < b$ we define intervals
$$
I(a,b) = \left\{\begin{array}{ll}
[a, b] & \mathrm{if } \ a,b \in \R, \\
]-\infty,b] & \mathrm{if } \ a = -\infty, \ b \in \R , \\
{} [a,\infty[ & \mathrm{if } \ a \in \R, \ b = \infty .
\end{array}\right.
$$
For $b-a>4$ and $|\epsilon| < 1$, we let
$h_{a,b,\epsilon}:\R\to I(a,b) \subset \R$
\index{$h_{a,b,\epsilon}$} 
be a collection of smooth increasing functions such that
$h_{a,b,\epsilon}(s)=a$ if $s\le a-\epsilon/2$, $h_{a,b,\epsilon}(s)=b$ if $s\ge
b+\epsilon/2$ and $h_{a,b,\epsilon}(s):= s$
if $a-\epsilon/2+1<s<b+\epsilon/2-1$. 
 We can of course make the family
$\{h_{a,b,\epsilon}\}$ depend smoothly on 
$a$, $b$ and $\epsilon$. We define $k_{a,b,\epsilon}(s):=\frac d
{d\sigma} |_{\sigma=0} h'_{a-\sigma,b+\sigma,\epsilon}(s)
\index{$k_{a,b,\epsilon}$} 
$. The support of
$k_{a,b,\epsilon}$ is contained in $[a-\epsilon/2,a-\epsilon/2+1] \, \cup \,
[b+\epsilon/2-1,b+\epsilon/2]$. We may assume without loss of
generality that $h'_{a,b,\epsilon}$ and $k_{a,b,\epsilon}$ are
uniformly bounded. 

\medskip 

\noindent {\bf Convention.} If $\epsilon=0$ we shall omit it from all
subsequent decorations, and we set $\epsilon=0$ if $a=-\infty$ or
$b=+\infty$.   

\medskip 

\begin{figure}[ht]
         \begin{center}
\scalebox{0.9}{\input{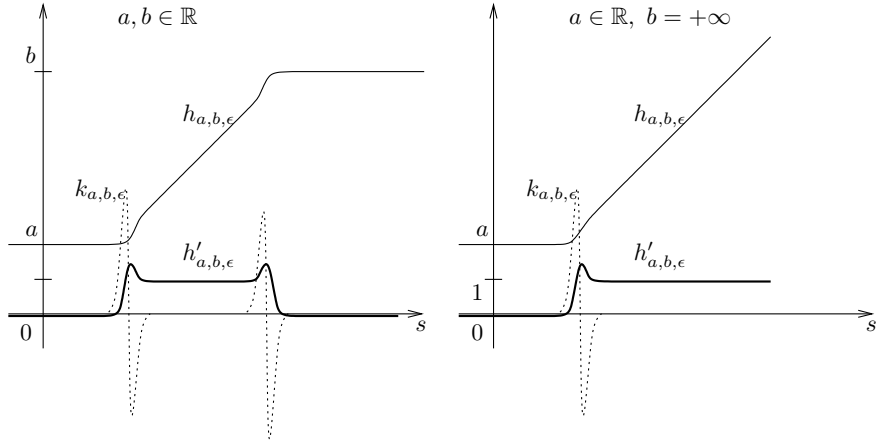}}
\caption{The reparametrization function $h_{a,b,\epsilon}$ 
and its derivatives. \label{fig:h}}
         \end{center}
\end{figure}

Let $\gamma \in \cP_\lambda$ and $c:I(a,b)\to S_\gamma \subset
\widehat W$ be a fragment 
of gradient trajectory for the function
$f_\gamma$, i.e. $\dot c = \nabla f_\gamma \circ c$.
We define the corresponding {\bf gradient cylinder}
$$
u_{\delta, \gamma, a, b,\epsilon} : \R \times S^1 \to S_\gamma \subset
\widehat W 
\index{$u_{\delta, \gamma, a, b,\epsilon}$, gradient cylinder}
$$
by the equation
\begin{equation} \label{eq:udelta}
\vartheta \circ u_{\delta, \gamma, a, b,\epsilon}(s,\theta)=
\vartheta \circ c (\delta h_{\frac a \delta,\frac b \delta,\frac
  \epsilon \delta}(s)) + \theta.
\end{equation}
Then $\displaystyle \lim_{s\to-\infty} \vartheta \circ u_{\delta, 
\gamma, a, b,\epsilon}(s,\theta) =
\vartheta \circ c (a) + \theta$ and
$\displaystyle \lim_{s\to+\infty} \vartheta \circ u_{\delta, \gamma, 
a, b,\epsilon}(s,\theta) =
\vartheta \circ c (b) + \theta$.

For $\gamma\in\cP_\lambda$
we define Banach manifolds \index{$\cB'_\delta$|(}
$\cB^{1,p,d}_\delta(S_\gamma,S_\gamma;f_\gamma)$,
$\cB^{1,p,d}_\delta(p,S_\gamma;f_\gamma)$, $p\in\mathrm{Crit}(f_\gamma)$
and $\cB^{1,p,d}_\delta(S_\gamma,q;f_\gamma)$,
$q\in\mathrm{Crit}(f_\gamma)$  consisting of maps
$u:\R\times S^1\to \widehat W$ which are locally of class $W^{1,p}$,
whose asymptotics are translates of $\gamma$, which represent the zero
homology class
and which
satisfy the following asymptotic conditions.
\begin{enum}
\item for $\cB^{1,p,d}_\delta(S_\gamma,S_\gamma;f_\gamma)$:     
there exists a neighbourhood $U$ of $S_\gamma$
together with a parametrization $(\vartheta,z) : U \to S^1 \times
\R^{2n-1}$ such that
\begin{eqnarray*}
\vartheta \circ u(s,\theta) - \theta -\otheta_0 &\in&
W^{1,p}(]-\infty, -s_0]
\times S^1, \R ; e^{d|s|} ds \, d\theta) , \\
z \circ u(s,\theta) &\in& W^{1,p}(]-\infty, -s_0] \times S^1,
\R^{2n-1} ; e^{d|s|} ds \, d\theta) , \\
\vartheta \circ u(s,\theta) - \theta - \utheta_0 &\in&
W^{1,p}([s_0,\infty[
\times S^1, \R ; e^{d|s|} ds \, d\theta) , \\
z \circ u(s,\theta) &\in& W^{1,p}([s_0,\infty[ \times S^1, \R^{2n-1} ;
e^{d|s|} ds \, d\theta) ,
\end{eqnarray*}
for some $\otheta_0, \utheta_0 \in S^1$ and some $s_0 > 0$. Moreover,
there exists $T>0$ such that
\begin{equation} \label{eq:T}
\varphi_T^{f_\gamma}(\otheta_0)=\utheta_0;
\end{equation}

\item for $\cB^{1,p,d}_\delta(p,S_\gamma;f_\gamma)$: there exists a
  neighbourhood $U$ of $S_\gamma$
parametrized by $(\vartheta,z)\in S^1 \times \R^{2n-1}$ such that
\begin{eqnarray*}
\vartheta \circ u(s,\theta) - \theta - \varphi_s^{\delta
f_\gamma}(\otheta_0)
&\in& \! \! W^{1,p}(]-\infty, -s_0]
\times S^1, \R ; e^{d|s|} ds \, d\theta) , \\
z \circ u(s,\theta) &\in& \! \! W^{1,p}(]-\infty, -s_0] \times S^1,
\R^{2n-1} ; e^{d|s|} ds \, d\theta) , \\
\vartheta \circ u(s,\theta) - \theta - \utheta_0 &\in& \! \!
W^{1,p}([s_0,\infty[
\times S^1, \R ; e^{d|s|} ds \, d\theta) , \\
z \circ u(s,\theta) &\in& \! \!
W^{1,p}([s_0,\infty[ \times S^1, \R^{2n-1} ; e^{d|s|} ds \, d\theta) ,
\end{eqnarray*}
for some $\otheta_0, \utheta_0 \in S^1$ such that
$\lim_{s\to-\infty}\varphi_s^{f_\gamma}(\otheta_0)=
\lim_{s\to-\infty}\varphi_s^{f_\gamma}(\utheta_0)=
p$ and some $s_0 > 0$;

\item for $\cB^{1,p,d}_\delta(S_\gamma,q;f_\gamma)$: there exists a
  neighbourhood $U$ of $S_\gamma$
parametrized by $(\vartheta,z)\in S^1 \times \R^{2n-1}$ such that
\begin{eqnarray*}
\vartheta \circ u(s,\theta) - \theta - \otheta_0
&\in& \! \! W^{1,p}(]-\infty, -s_0]
\times S^1, \R ; e^{d|s|} ds \, d\theta) , \\
z \circ u(s,\theta) &\in& \! \! W^{1,p}(]-\infty, -s_0] \times S^1,
\R^{2n-1} ; e^{d|s|} ds \, d\theta) , \\
\vartheta \circ u(s,\theta) - \theta - \varphi_s^{\delta f_\gamma}(\utheta_0)
&\in& \! \! W^{1,p}([s_0,\infty[
\times S^1, \R ; e^{d|s|} ds \, d\theta) , \\
z \circ u(s,\theta) &\in& \! \! W^{1,p}([s_0,\infty[ \times S^1,
\R^{2n-1} ; e^{d|s|} ds \, d\theta) ,
\end{eqnarray*}
for some $\otheta_0, \utheta_0 \in S^1$ such that
$\lim_{s\to\infty}\varphi_s^{f_\gamma}(\utheta_0)=
\lim_{s\to\infty}\varphi_s^{f_\gamma}(\otheta_0)=
q$ and some $s_0 > 0$.
\end{enum}
We will designate one of the above three spaces by $\cB'_\delta$. 
\index{$\cB'_\delta$|)} 
We define evaluation maps $\oev$ and $\uev$ on $\cB'_\delta$ by
$$
\oev(u)=\lim_{s\to-\infty} u(s,\cdot), \qquad
\uev(u)=\lim_{s\to+\infty} u(s,\cdot).
$$
Any map
$u = u_{\delta, \gamma, a, b,\epsilon}$ belongs to a suitable space
$\cB'_\delta$, depending on $a$, $b$ being finite or not.
The tangent space $T_u\cB'_\delta$ has a natural
decomposition
\begin{equation} \label{eq:TucB'delta} 
T_u\cB'_\delta = W^{1,p,d}(\R\times S^1,u^* T\widehat W) \oplus \oV'_u
\oplus \uV'_u, 
\end{equation} 
where $\oV'_u,\uV'_u$ are real vector spaces of dimensions
\begin{equation} \label{eq:dimoV'uV'}
\dim \oV'_u = \left\{\begin{array}{ll} 1& \mbox{if } a\in\R, \\
\ind(p) & \mbox{if } a=-\infty,
\end{array} \right.
\dim \uV'_u = \left\{\begin{array}{ll} 1& \mbox{if } b\in\R, \\
1-\ind(q) & \mbox{if } b=+\infty.
\end{array} \right.
\end{equation} 
When the dimensions are respectively nonzero the generators of
$\oV'_u,\uV'_u$ are sections given as follows.
\begin{enum}
\item for $\cB^{1,p,d}_\delta(S_\gamma,S_\gamma;f_\gamma)$ the sections are
$$
(1-\beta(s,\theta))X_H(\gamma(\theta+\otheta_0))
\mbox{ and }
\beta(s,\theta)X_H(\gamma(\theta+\utheta_0));
$$

\item for $\cB^{1,p,d}_\delta(p,S_\gamma;f_\gamma)$ the sections are
$$
(1-\beta(s,\theta))\nabla f_\gamma(\vartheta\circ u(s,0))
\mbox{ and }
\beta(s,\theta)X_H(\gamma(\theta+\utheta_0));
$$

\item for $\cB^{1,p,d}_\delta(S_\gamma,q;f_\gamma)$ the sections are
$$
(1-\beta(s,\theta))X_H(\gamma(\theta+\otheta_0))
\mbox{ and }
\beta(s,\theta) \nabla f_\gamma(\vartheta\circ u(s,0)).
$$
\end{enum}

We recall that $\beta(s,\theta)=\beta(s)$ is a smooth cutoff 
function\index{$\beta$, cutoff function} 
which
vanishes for $s\le0$ and is equal to $1$ for $s\ge 1$. 
The norm on $T_u\cB'_\delta$ is chosen such that
the norm of the above generators of $\oV'_u$, $\uV'_u$ is equal 
to $1$.
Let $\cE\to \cB'_\delta$ be the Banach vector bundle with fiber
$$
\cE_u = L^p(\R\times S^1,u^*T\widehat W; e^{d|s|}ds\,d\theta).
$$
We are interested in the family of sections
$\dbar_{a,b,\epsilon}:=\dbar_{H'_{a,b,\epsilon},J}:\cB'_\delta\to \cE$, with
\begin{eqnarray}
\index{$\dbar_{a,b,\epsilon}:=\dbar_{H'_{a,b,\epsilon},J}$}
\index{$H'_{a,b,\epsilon}$}
H'_{a,b,\epsilon} & = & H + h'_{\frac a \delta,\frac b \delta,\frac
  \epsilon \delta}(s) (H_\delta -H)
\nonumber \\
& = & H + \delta h'_{\frac a \delta,\frac b \delta,\frac \epsilon
  \delta}(s) \rho 
f_\gamma(\ell_\gamma \vartheta - \ell_\gamma \theta).  \label{eq:Hprime}
\end{eqnarray}
Here we use the definition~(\ref{eq:Hdelta}) of $H_\delta$. This is a
three-parameter family in case (i) and a two-parameter family in cases
(ii) and (iii). Its main feature is that
$$
\dbar_{a,b,\epsilon}(u_{\delta, \gamma, a, b,\epsilon})=0.
$$
We note that neither of the operators
$\dbar_{H,J}$ and $\dbar_{H_\delta,J}$ defines a section
$\cB'_\delta\to \cE$ if $a$ or $b$ is infinite.
The vertical differential
$D_u:= D_u^{a,b,\epsilon}: T_u\cB'_\delta\to \cE_u$ of each
of the sections $\dbar_{a,b,\epsilon}$
is given by formula~(\ref{eq:D}) and is
a Fredholm operator whose index has the following values (see
also~(\ref{eq:indMB})).
\begin{enum}
\item for $\cB^{1,p,d}_\delta(S_\gamma,S_\gamma;f_\gamma)$
$$
\mathrm{ind}(D_u) = (\mu_{RS}(\gamma)-\frac 12) -
(\mu_{RS}(\gamma)+\frac 12) +2=1,
$$
\item for $\cB^{1,p,d}_\delta(p,S_\gamma;f_\gamma)$
$$
\mathrm{ind}(D_u) = (\mu_{RS}(\gamma) - \frac 12) -
(\mu_{RS}(\gamma)+\frac 12) +\ind(p)+1= \ind(p),
$$
\item for $\cB^{1,p,d}_\delta(S_\gamma,q;f_\gamma)$
$$
\mathrm{ind}(D_u) = (\mu_{RS}(\gamma)-\frac 12) -
(\mu_{RS}(\gamma)+\frac 12) +1 + 1-\ind(q) =1-\ind(q).
$$
\end{enum}
In formulas (ii) and (iii) the asymptotics of the operator obtained 
by conjugation
with $e^{\frac d p |s|}$ do not depend on $\ind(p)$, $\ind(q)$
because, for $\delta$ small, the exponential weight $\frac d p$
overrides the contribution of the
perturbation $H_\delta-H$.

\begin{proposition}  \label{prop:Surj_udelta}
Let $u = u_{\delta,\gamma,a,b,\epsilon} \in \cB'_\delta$. 
The operator  
$$
D_u:W^{1,p}(\R\times S^1,u^*T\widehat
W;e^{d|s|})\oplus\oV_u'\oplus\uV_u'\to L^p(\R\times S^1,u^*T\widehat
W;e^{d|s|} dsd\theta) 
$$ 
is surjective for $\delta>0$ small enough. 
\end{proposition}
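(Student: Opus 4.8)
The plan is to analyse $D_u$ in a frame adapted to $S_\gamma$ and to identify it, up to a genuinely lower order perturbation, with the direct sum of a nondegenerate \emph{normal} Cauchy--Riemann operator and a \emph{tangential} Morse-type operator. I fix the tubular coordinates $(\vartheta,z)\in S^1\times\R^{2n-1}$ used in this section; since $u=u_{\delta,\gamma,a,b,\epsilon}$ has image in $S_\gamma=\{z=0\}$ and is obtained from a fixed loop in $S_\gamma$ by sliding it along $S_\gamma$ at speed $O(\delta)$, the bundle $u^*T\widehat W$ carries the splitting $u^*\langle X_H,JX_H\rangle\oplus u^*\xi$. Expanding $D_u$, given by~\eqref{eq:D}, in this splitting, the contributions of $\nabla_s$, $J\nabla_\theta$ and of the linearisation $-J\nabla_\zeta X_H$ of the Hamiltonian term reproduce on each factor, up to terms that decay exponentially in $|s|$ or are $O(\delta)$, the asymptotic model, whose normal part is nondegenerate and whose tangential part has the one-dimensional kernel $\langle X_H\rangle$; the remaining off-diagonal coupling between the two factors is either constant in $(s,\theta)$ and maps the tangential factor into the normal one (this part is lower triangular and hence harmless once the normal block is invertible), or $O(\delta)$ coming from the perturbation $H_\delta-H$ inside $H'_{a,b,\epsilon}$. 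It is therefore enough to prove, uniformly in $\delta$ small, that the normal block is an isomorphism and that the tangential block together with the finite-dimensional summands $\oV'_u$, $\uV'_u$ is surjective.

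For the normal block, conjugating by $e^{\frac{d}{p}|s|}$ gives an operator of the form $\p_s+A_\delta(s)$, where, trivialising $u^*\xi$ by the linearised Reeb flow, each $A_\delta(s)$ is conjugate to the $\xi$-part of the asymptotic operator $A_\infty$ shifted by $\mp d/p$. By transversal nondegeneracy of $\gamma$ the operator $A_\infty$ restricted to $\xi$ has trivial kernel, so for $d$ small each $A_\delta(s)$ is self-adjoint with spectrum bounded away from $0$ by a constant $c>0$ uniform in $s$ and $\delta$, and $\|\dot A_\delta\|_\infty=O(\delta)$ because $u$ slides along $S_\gamma$ at speed $O(\delta)$. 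Integrating the cross term in $\|(\p_s+A_\delta)\zeta\|_{L^2}^2$ and using self-adjointness yields the adiabatic estimate $\|(\p_s+A_\delta)\zeta\|^2\geq(c^2-\|\dot A_\delta\|_\infty)\|\zeta\|^2$, hence a uniform lower bound for $\delta$ small, and likewise for the formal adjoint; so the normal block is an isomorphism, the passage from $L^2$ to the weighted $L^p$ setting being routine via the elliptic estimates already used for the Fredholm property.

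For the tangential block I expand in Fourier modes in $\theta$. Every nonzero mode gives a constant-coefficient operator (up to $O(\delta)$ and exponentially decaying terms) whose symbol eigenvalues are real and bounded away from $0$, the $\p_\theta$ term dominating both the $O(\delta)$ perturbation and the weight, so these modes are invertible. The zero mode is, after conjugation, the linearisation of the perturbed gradient-flow equation $\dot c=\nabla f_\gamma\circ c$ along the fragment $c$: its component transverse to $S_\gamma$ is invertible on the weighted space (asymptotic coefficient $\mp d/p$), while its component tangent to $S_\gamma$ is $\p_s-\delta h'(s)w(s)$ with $w$ built from $f_\gamma''$ along $c$, a compactly supported perturbation of $\p_s$ which has a one-dimensional cokernel on the weighted space. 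This cokernel is exactly covered by the generators of $\oV'_u$ and $\uV'_u$, which point in the $X_H$-direction and thus live in the tangential zero mode; the index bookkeeping preceding the statement (giving $\mathrm{ind}(D_u)\in\{1,\ind(p),1-\ind(q)\}$) confirms the dimensions match. Assembling the pieces -- normal block invertible, tangential block plus corrections surjective, off-diagonal coupling either lower triangular or $O(\delta)$ -- yields a right inverse for $D_u$ when $\delta$ is small.

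The point where the argument is delicate, and where the original version had a gap, is the uniformity in $\delta$: as $\delta\to0$ the cylinder $u_{\delta,\gamma,a,b,\epsilon}$ does not converge to a single model over all of $\R\times S^1$ -- it sweeps out the entire fragment $c([a,b])$ -- so one cannot simply perturb off a fixed invertible operator, and each estimate above must hold with constants independent of $\delta$. This is what forces the adiabatic/slowly-varying estimate for the normal block, the Fourier/rescaling analysis for the tangential block, and the careful bookkeeping of the $J$-induced coupling near the orbit; the precise small exponential weight $d$ is what turns the degenerate asymptotic directions into Fredholm data while leaving exactly the single tangential degree of freedom that Morse theory must account for.
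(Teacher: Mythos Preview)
Your block-decomposition argument is in the right spirit and, once the details are filled in, does yield surjectivity; however, it is a different and heavier route than the paper's. The paper does not build a right inverse at all for this proposition. Instead it chooses a trivialization in which $X_H$ and $\partial/\partial t$ are constant and in which the zero-order term $S(s,\theta)$ is block diagonal on $\xi\oplus\langle X_H\rangle\oplus\langle\partial/\partial t\rangle$ up to $O(\delta)$, and then computes $\ker D_u$ directly: a convexity estimate $\partial_s^2\|\zeta^\perp\|^2_{L^2}\ge c^2\|\zeta^\perp\|^2_{L^2}$ forces the component orthogonal to $X_H$ of any kernel element to vanish, after which the kernel reduces to that of the linearised Morse operator $\partial_s+\delta h'f''_\gamma$ on $\R$, whose solution space is one-dimensional and can be tested by hand against the domain in each of the three cases. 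In every case one finds $\dim\ker D_u=\mathrm{ind}(D_u)$, hence surjectivity. Your approach instead proves invertibility of the normal block and surjectivity of the tangential block separately; this works, but the ``constant off-diagonal coupling'' you invoke is actually absent in the paper's trivialization, so that step is unnecessary, and your argument that $\oV'_u\oplus\uV'_u$ covers the one-dimensional cokernel of the $X_H$ zero mode is asserted rather than checked.

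More importantly, your final paragraph is based on a misreading of the statement. Proposition~\ref{prop:Surj_udelta} asks only for surjectivity for $\delta$ small; it makes no claim about uniform bounds on a right inverse as $\delta\to 0$. The uniformity issue you describe---that $u_{\delta,\gamma,a,b,\epsilon}$ sweeps out the whole fragment and does not converge to a single model, forcing a genuinely adiabatic analysis---is precisely the content of the \emph{next} proposition, Proposition~\ref{prop:Surjectivity_udelta}, where the modified norms $\|\cdot\|_{1,\delta}$ with the shifted weights $w_\delta$ are introduced for exactly this purpose and where the gap you allude to was located. For the present statement the paper's kernel computation is both shorter and entirely sufficient; your adiabatic estimates are not needed here, though they foreshadow what comes next.
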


\begin{proof} 
 In order to compute $D_u$ we choose $\nabla$ to be the Levi-Civita
connection corresponding to a (split) metric given by
$(d\lambda+dt\wedge\lambda)(\cdot,J\cdot)$.
It is a general fact that the operator $D_u$ can be written in a
unitary trivialization of $u^*T\widehat W$ as
$$
(D_u\zeta) (s,\theta) = \p_s \zeta + J_0\p_\theta\zeta + 
S(s,\theta)\zeta(s,\theta),
$$
where $J_0$ is the standard complex structure on $\R^{2n}$ and $S$ is
asymptotically symmetric as $s\to\pm\infty$. We can choose the
trivialization so that $X_H$ and
$\p/\p t$ correspond to constant vectors in $\R^{2n}$. We denote
$\oS:=\lim_{s\to -\infty}S(s,\cdot)$. 
In this situation the matrix $S$ has the following properties:
\begin{enum}
\item $\|S(s,\theta) -
  \oS(\vartheta\circ
  u(s,\theta)-\vartheta\circ c(a))\|$ is
  bounded by a constant multiple of $\delta$. This is because, for
  $s\in \R$, the restriction of $u$ to $[s-1,s+1]\times S^1$ is
  $\delta$-close to the constant cylinder over the orbit
  $u(s,\cdot)\in S_\gamma$. 
\item the action of $S(s,\theta)$ on the (constant) vector of $\R^{2n}$ 
corresponding to
$X_H$ is multiplication by
$$
\delta k(s):=
\delta h'_{\frac a \delta,\frac b \delta,\frac \epsilon \delta}(s) 
f''_\gamma(\vartheta(u(s,\theta))-\theta) ,
$$
and this expression goes to zero with $\delta$.
\item the matrix $S(s,\theta)$ sends the subspace corresponding to $\xi$ to 
itself and sends
$\p/\p t$ on a multiple of the form $(E+\delta F(s,\theta))\p/\p t$, with
$E>0$ and $F$ a bounded function of $(s,\theta)$.
This follows from~(\ref{eq:D}), (\ref{eq:XHR}) and the fact that
$\nabla_{\p/\p t} R_\lambda=0$ and $\nabla_vR_\lambda \in\xi$,
$v\in\xi$;
\item there is a constant $C>0$ such that $\|S'(s,\theta)\|\le C\delta$ for
all $s\in\R$ and $\theta\in S^1$. This follows from~(\ref{eq:udelta})
due to the presence of the factor $\delta$ in front of the
reparametrization function $h_{\frac  a \delta,\frac b \delta,\frac
  \epsilon \delta}$. 
\end{enum}

We characterize now the kernel of $D_u$. We first show that each
$\zeta\in\ker\,D_u$ is a multiple of the (constant) vector
corresponding to $X_H$, or that its component $\zeta^\perp$ on the
orthogonal complement vanishes. Let $F(s)$ denote the self-adjoint
operator $J_0\p_\theta + S(s,\theta)$, so that $D_u=\p_s+F(s)$. If
$\zeta\in \ker\,D_u$ we have $(\p_s - F(s))(\p_s+F(s))\zeta=0$, i.e.
$$
\p_s^2 \zeta - F(s)^2\zeta + S'(s)\zeta = 0.
$$
By taking the scalar product in $L^2(S^1,\R^{2n})$
with $\zeta^\perp$ and using property (ii) for $S$ we get
$$
\langle \zeta^\perp, \p_s^2\zeta^\perp \rangle - \|F(s)\zeta^\perp\|
^2 + \langle \zeta^\perp, S'(s)\zeta^\perp\rangle =0.
$$
The Morse-Bott assumption and property (i) guarantee that
$\|F(s)\zeta^\perp\|_{L^2}\ge c\|\zeta^\perp\|_{L^2}$ for some $c>0$. We obtain
\begin{eqnarray*}
\p_s^2 \|\zeta^\perp\|^2_{L^2} & \ge & 2 \langle \zeta^\perp,
\p_s^2\zeta^\perp\rangle _{L^2} \ \ge \ 2(c^2-C\delta)
\|\zeta^\perp\|^2_{L^2} \ \ge \ c^2  \|\zeta^\perp\|^2_{L^2}
\end{eqnarray*}
if $\delta>0$ is sufficiently small. In particular
$\|\zeta^\perp\|^2_{L^2}$ can have no local maximum on $\R$. Since
$\|\zeta^\perp\|^2_{L^2}\to0$ as $s\to\pm\infty$ we deduce that
$\zeta^\perp\equiv 0$.

We now show that all elements of $\ker\,D_u$ are independent of $\theta$.
Let $\zeta\in\ker\,D_u$. Because $\zeta^\perp=0$ we have
$\p_s\zeta+J_0\p_\theta\zeta + \delta k(s)\zeta=0$, with
$\p_s\zeta+\delta k(s)\zeta$ and $\p_\theta\zeta$ pointwise colinear
with $X_H$. Hence
$\p_s\zeta+\delta k(s)\zeta=0$ and $\p_\theta\zeta=0$.

This shows that the elements of $\ker\,D_u$ also belong to the kernel
of the linearized
Morse operator
$$
\zeta\mapsto \p_s\zeta + \delta h'_{\frac a \delta,\frac b
  \delta,\frac \epsilon \delta}(s)
f''_\gamma(\vartheta\circ c(\delta h_{\frac a \delta,\frac b
  \delta,\frac \epsilon \delta}))\zeta.
$$
This is a differential equation on $\R$ for which the Cauchy problem
has a unique solution. Hence the space of solutions is one-dimensional
in $C^\infty(\R,\R)$ and, in order to determine the dimension of
$\ker\,D_u$, we just have to check whether the solutions belong or not
to its domain.

If $a$ and $b$ are finite the solutions are constant near $\pm\infty$,
hence belong to the domain of $D_u$ and $\dim\,\ker\,D_u =1$. If
$a=-\infty$ (and $b$ is finite)
we distinguish two cases: either $p$ is a maximum, in which case
$f''_\gamma(p)<0$, the solutions are unbounded near $-\infty$ and
$\ker\,D_u=0$, or $p$ is a minimum, in which case $f''_\gamma(p)>0$,
the solutions coincide near $-\infty$ with the elements of $\oV_u'$ and
$\dim\,\ker\,D_u =1$. Hence $\dim\,\ker\,D_u = \ind(p)$. A similar
argument
shows that $\dim\,\ker\,D_u = 1-\ind(q)$ if $b=+\infty$ (and $a$ is
finite). In all cases we have
$$
\dim\,\ker\,D_u = \ind(D_u),
$$
so that $D_u$ is surjective.
\end{proof} 

Up to a translation, the defining interval $I(a,b)$ of a gradient
cylinder can be considered to be $[-T/2,T/2]$, $T>0$ in case (i), or
$]-\infty,1]$, $[-1,\infty[$ in cases (ii) and (iii) respectively. 
We shall thus assume in the sequel that the parameters $a,b$ take the
values
$$
a=-T/2,\ b=T/2 \ \mbox{ for } T>0, \quad \mbox{ or } a=-\infty,\ b=1, \quad
\mbox{ or } a=-1,\ b=+\infty.
$$

We consider a tuple $(\gamma,a,b,\epsilon)$ 
and the gradient cylinder
$u:=u_\delta:=u_{\delta,\gamma,a,b,\epsilon}$ for $\delta$ small
enough. Let $(s_\delta)$ be a family of parameters such that $s_\delta
\le s_\delta^*$ and $\frac {s_\delta} {s^*_\delta}\to 1$ as $\delta
\to 0$, where 
$$
s^*_\delta := \left\{\begin{array}{ll} 
(T+\epsilon)/2\delta,& \mbox{ if } a=-T/2,\ b=T/2,\\
1/\delta,& \mbox{ otherwise}.
\end{array}\right.
$$
In particular we have $s_\delta\to\infty$ as $\delta\to 0$. Our goal
now is to define modified norms $\|\cdot\|_{1,\delta}$ and
$\|\cdot\|_\delta$ on the domain and target of the operators
$D_u=D_{u_\delta}$ such that they admit uniformly bounded right
inverses with respect to $\delta\to 0$. Let $w_\delta:\R\to\R^+$ be
the weight function defined by 
\begin{equation} \label{eq:wdelta} 
w_\delta(s)=\left\{\begin{array}{ll} 
e^{d||s|-s_\delta|}, & \mbox{ if } a \mbox { and } b \mbox{ are
finite}, \\ 
e^{d|s-s_\delta|}, & \mbox{ if } a=-\infty \mbox { and } b \mbox{ is
finite}, \\ 
e^{d|s+s_\delta|}, & \mbox{ if } a \mbox{ is
finite and } b=\infty.
\end{array}\right.
\end{equation}
\begin{figure}[ht]
         \begin{center}
\input{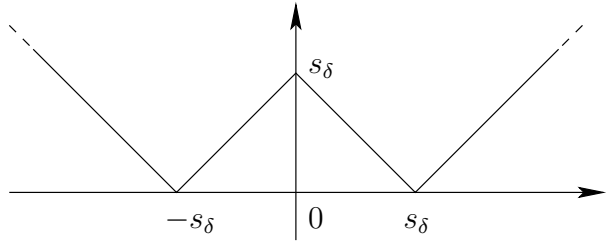}
\caption{Weight function $ ||s|-s_\delta|$ for $a,b$ finite
  (logarithmic scale). \label{fig:weight}}  
         \end{center}
\end{figure}

The new norm $\|\cdot\|_\delta$ on the target of $D_u$ is the
$L^p$-norm with weight $w_\delta$, and we emphasize it by writing the
target as 
$$
L^p(\R\times S^1,u^*T\widehat W;w_\delta(s)dsd\theta). 
$$

Let $\oV'_{u,\delta}$, $\uV'_{u,\delta}$ be vector spaces of the
same dimension as $\oV'_u$, $\uV'_u$, given by~\eqref{eq:dimoV'uV'},
and which, when their dimension is nonzero, are spanned by the
following sections.   

\begin{enum}
\item if $a,b$ are both finite the sections are
$$
(1-\beta(s+s_\delta,\theta))X_H(\gamma(\theta+\otheta_0))
\mbox{ and }
\beta(s-s_\delta,\theta)X_H(\gamma(\theta+\utheta_0));
$$

\item if $a=-\infty$ and $b$ is finite the sections are
$$
(1-\beta(s-s_\delta,\theta))\nabla f_\gamma(\vartheta\circ u(s,0))
\mbox{ and }
\beta(s-s_\delta,\theta)X_H(\gamma(\theta+\utheta_0));
$$

\item if $a$ is finite and $b=+\infty$ the sections are
$$
(1-\beta(s+s_\delta,\theta))X_H(\gamma(\theta+\otheta_0))
\mbox{ and }
\beta(s+s_\delta,\theta) \nabla f_\gamma(\vartheta\circ u(s,0)).
$$
\end{enum}

In case $a=-\infty$, $b$ finite or $a$ finite, $b=+\infty$ we define
the new norm $\|\cdot\|_{1,\delta}$ on the domain of $D_u$ by
splitting it as   
$$
\mathrm{dom}\, D_u = W^{1,p}(\R\times S^1,u^*T\widehat
W;w_\delta(s)dsd\theta)\oplus \oV'_{u,\delta} \oplus \uV'_{u,\delta}
$$
and setting the norm of the above generators of $\oV'_{u,\delta}$,
$\uV'_{u,\delta}$ to be equal to $1$. 

In case $a,b$ are finite we split the domain of $D_u$ as above and
further modify the weighted norm on the $W^{1,p}$-space. We recall
from the proof of Proposition~\ref{prop:Surj_udelta} that $\ker D_u$
is $1$-dimensional and is spanned by a section $\zeta_\delta$ which is
constant for $|s|\ge s^*_\delta$. We normalize $\zeta_\delta$ by
requiring that its value at $0$ be equal to the constant vector
corresponding to $X_H$. Let $\langle\cdot,\cdot\rangle$ be the
scalar product in $L^2(S^1)$. For an element $\zeta\in
W^{1,p}(\R\times S^1,u^*T\widehat W;w_\delta(s)ds d\theta)$ we denote  
$$
\kappa_\delta:=\frac {\langle
  \zeta(0,\cdot),\zeta_\delta(0,\cdot)\rangle}
{\langle \zeta_\delta(0,\cdot), \zeta_\delta(0,\cdot) \rangle }. 
$$
We denote 
$$
\chi_\delta(s,\theta):=
\beta(s+s_\delta)\beta(-s+s_\delta)\zeta_\delta(s,\theta) 
$$
and 
define
the norm $\|\cdot\|_{1,\delta}$ on $W^{1,p}(\R\times
S^1,u^*T\widehat W;w_\delta(s)dsd\theta)$ by 
$$
\|\zeta\|_{1,\delta}:=\|\zeta-\kappa_\delta \chi_\delta\|_{1,p,\delta}
+|\kappa_\delta|. 
$$
Here $\|\cdot\|_{1,p,\delta}$ is the weighted norm on $W^{1,p}(\R\times
S^1,u^*T\widehat W;w_\delta(s)dsd\theta)$.

\begin{proposition}  \label{prop:Surjectivity_udelta}
Let $u=u_\delta = u_{\delta,\gamma,a,b,\epsilon}$ as above. There
exists $\delta_2 \in ] 0, \delta_0]$ such that the operator 
$$
D_u: (\mathrm{dom}\, D_u,\|\cdot\|_{1,\delta}) \to
(L^p(\R\times S^1,u^*T\widehat W;w_\delta(s) dsd\theta),
\|\cdot\|_\delta) 
$$ 
is surjective and has a uniformly bounded right inverse
$Q_u=Q_{u_\delta}$ for $\delta \in ] 0, \delta_2 ]$.
\end{proposition}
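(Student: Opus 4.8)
The plan is to build $Q_u$ block by block and then absorb the coupling between blocks by a Neumann series, the whole point being to keep every constant independent of $\delta$ even though the domain of $u_\delta$ has length of order $s^*_\delta\to\infty$. Recall from the proof of Proposition~\ref{prop:Surj_udelta} that in a suitable unitary trivialization $D_u=\p_s+J_0\p_\theta+S(s,\theta)$ and that, by properties~(i)--(iv) of $S$ established there, $D_u$ splits up to an error of size $O(\delta)$ into a \emph{scalar} operator $D_u^0=\p_s+\delta k(s)$ in the $X_H$-direction and an operator $D_u^\perp$ in the complementary directions $\xi\oplus\langle\p/\p t\rangle$ whose ``asymptotic-type'' operators are \emph{uniformly nondegenerate} along the whole neck --- here the transverse nondegeneracy of $\gamma$ and the sign condition $h''-h'>0$ enter, the $J_0$-coupling between $\langle X_H\rangle$ and the $\p/\p t$-line being harmless because the latter is nondegenerate. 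Fix a small $d>0$ once and for all. If $Q_u^0$ and $Q_u^\perp$ are right inverses of $D_u^0$ and $D_u^\perp$ with norms bounded independently of $\delta$, then $\tilde Q_u:=Q_u^0\oplus Q_u^\perp$ satisfies $D_u\tilde Q_u=\one+R_\delta\tilde Q_u$ with $\|R_\delta\tilde Q_u\|=O(\delta)\le\tfrac12$ once $\delta\le\delta_2$ for some $\delta_2\in\,]0,\delta_0]$, and $Q_u:=\tilde Q_u(\one+R_\delta\tilde Q_u)^{-1}$ is then the desired uniformly bounded right inverse.

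For $D_u^\perp$ I would argue directly on $W^{1,p}(\R\times S^1,\xi\oplus\langle\p/\p t\rangle;w_\delta\,ds\,d\theta)$. Conjugation by $w_\delta^{1/p}$, which is Lipschitz with $|(\log w_\delta)'|=d$ almost everywhere, replaces $D_u^\perp$ by an operator of the same form on the \emph{unweighted} $L^p(\R\times S^1)$ but with its zeroth order term shifted by a scalar of absolute value $d/p$; for $d$ small this does not destroy the uniform nondegeneracy, so the conjugated operator is uniformly surjective with a right inverse whose norm depends only on the nondegeneracy modulus, $d$ and $p$. Transporting back by $w_\delta^{-1/p}$ gives $Q_u^\perp$. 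The essential feature is that \emph{no dependence on the neck length appears}: the estimate lives on the whole line with uniformly nondegenerate coefficients (equivalently, the Green's kernel of $D_u^\perp$ decays faster than $w_\delta^{1/p}$ grows on either side of each valley $s=\pm s_\delta$, so the weighted estimate does not accumulate over the neck).

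For the scalar part $D_u^0\zeta=\p_s\zeta+\delta k(s)\zeta$, note first that $\delta\!\int_\R|k(s)|\,ds\le\|f''_\gamma\|_\infty(b-a)+O(\epsilon)$ is uniformly bounded, so $D_u^0$ is a uniformly bounded perturbation of $\p_s$. If $a=-\infty$ (resp. $b=+\infty$), near the infinite end $D_u^0$ is the linearization of the negative gradient flow of $f_\gamma$ at $p$ (resp. $q$), which is hyperbolic, and classical Morse theory supplies a uniformly bounded inverse there --- consistently with $\dim\oV'_{u,\delta}=\ind(p)$, resp. $\dim\uV'_{u,\delta}=1-\ind(q)$ --- which is matched at the finite end by the integration below. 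If $a,b$ are both finite, $D_u^0$ has the one-dimensional near-kernel spanned by $\zeta_\delta$ (constant for $|s|\ge s^*_\delta$), and this is exactly why the norm on the domain was modified to $\|\zeta\|_{1,\delta}=\|\zeta-\kappa_\delta\chi_\delta\|_{1,p,\delta}+|\kappa_\delta|$: the cut-off kernel $D_u^0\chi_\delta$ is supported where $\beta'$ lives, i.e. near $s=\pm s_\delta$ where $w_\delta=O(1)$, hence has $\|\cdot\|_\delta$-norm bounded above and below uniformly in $\delta$ and controls the $\kappa_\delta$-slot; on the residual $\zeta-\kappa_\delta\chi_\delta$, which is $L^2(S^1)$-orthogonal to $\zeta_\delta(0,\cdot)$ at $s=0$ by the very definition of $\kappa_\delta$, one solves $\p_s\zeta+\delta k(s)\zeta=\eta^0$ by explicit integration from $s=0$ against the integrating factor $\exp\big(-\int_0^s\delta k\big)$, whose logarithm is uniformly bounded, and the weight $w_\delta$ --- equal to $1$ at $s=\pm s_\delta$ and growing exponentially outward --- forces the resulting $\|\cdot\|_{1,p,\delta}$-estimate to hold with a $\delta$-independent constant. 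This produces the uniformly bounded $Q_u^0$ and completes the construction.

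The hard part is precisely this uniformity as $\delta\to0$. For the nondegenerate directions it is delivered by the peaked weight $w_\delta$ of~\eqref{eq:wdelta}, which replaces one long weak estimate by two short strong ones meeting at the valleys $s=\pm s_\delta$; for the degenerate $X_H$-direction, where there is \emph{no} spectral gap to lean on, it is delivered by the modified norm that quotients out the single reparametrization mode $\zeta_\delta$. Verifying that $\chi_\delta$ together with the functional $\zeta\mapsto\kappa_\delta$ interacts correctly with the weighted $W^{1,p}$-norm uniformly as $\delta\to0$ is the delicate point on which the argument really turns.
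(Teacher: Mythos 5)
Your global strategy — split $D_u$ into blocks, build right inverses block by block, absorb the coupling by a Neumann series, and use the peaked weight $w_\delta$ plus the compensated norm $\|\cdot\|_{1,\delta}$ to keep all constants $\delta$-independent — is the right strategy and is what the paper does. But the specific block decomposition you choose does not actually decouple, and this breaks the Neumann-series step.

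You split $u^*T\widehat W$ as $\langle X_H\rangle\oplus\big(\xi\oplus\langle\partial/\partial t\rangle\big)$ and claim the off-block error is $O(\delta)$. That is false. Properties (i)--(iv) of $S(s,\theta)$ do make the zeroth-order part $S$ block-diagonal up to $O(\delta)$, but the term $J_0\partial_\theta$ is \emph{not} block-diagonal for your decomposition: since $\langle X_H\rangle$ and $\langle\partial/\partial t\rangle$ are $J_0$-conjugate in a unitary trivialization (they are $\widehat\om$-dual), $J_0\partial_\theta$ maps $\theta$-dependent $X_H$-valued sections to $\partial/\partial t$-valued sections and vice versa, with operator norm $O(1)$ as a map $W^{1,p}\to L^p$. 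So $R_\delta := D_u - (D_u^0\oplus D_u^\perp)$ has $\|R_\delta\widetilde Q_u\|=O(1)$, not $O(\delta)$, and the Neumann series $(\one + R_\delta\widetilde Q_u)^{-1}$ has no reason to converge. Calling the $J_0$-coupling ``harmless because the latter is nondegenerate'' is an assertion, not an argument: nondegeneracy of the $\partial/\partial t$ line gives an estimate on $\zeta^{\partial_t}$ once you already know the $X_H$-source term, but there is a genuine two-way interaction on all nonzero Fourier modes in $\theta$. The paper avoids this entirely by taking the $J_0$-invariant splitting $\xi\oplus L$ with $L=\langle\partial/\partial t, X_H\rangle$, so that $J_0\partial_\theta$ respects the blocks; the $2\times 2$ block operator $D''_u$ on $L$ is then handled as a single unit (against the constant model $D''_0$, using the shift operators $T_\delta$ and half-cylinder gluing), and the only place the modified norm $\|\cdot\|_{1,\delta}$ is needed is inside that $L$-block.

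There is also a secondary gap in your treatment of $D_u^\perp$. After conjugating by $w_\delta^{1/p}$ you obtain an operator $\partial_s+J_0\partial_\theta+S(s,\theta)\mp(d/p)\one$ on the unweighted space, and you assert it is ``uniformly surjective'' because the asymptotic matrices are uniformly nondegenerate. But this operator is \emph{not} $\R$-invariant: $S(s,\theta)$ drifts along the neck by an amount of order $\delta\cdot|s|$, which over a neck of length $\sim T/\delta$ accumulates to $O(T)$, so the conjugated operator is not a small perturbation of any single $\R$-invariant operator. Pointwise nondegeneracy of $A(s)$ for each $s$ does not by itself imply surjectivity of $\partial_s+A(s)$ (sign-flipping one-dimensional examples give a nontrivial cokernel). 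The paper circumvents this by subdividing the neck into a $\delta$-\emph{independent} number of subintervals on which the variation of $S$ is small, inverting the corresponding $\R$-invariant models $D'(s_i)$ on each, and gluing the inverses via the linear gluing estimate; this yields a uniform bound $2C\widetilde C^{2m-1}$ with $m$ independent of $\delta$. Your Green's-kernel heuristic is pointing in the right direction but is not a proof, and in particular is not visibly using the slow-variation property $\|S'\|\le C\delta$ that makes the subdivision argument work.
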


\begin{proof}
We choose a unitary trivialization of $u^*T\widehat W$ as in the proof
of Proposition~\ref{prop:Surj_udelta}, so that $X_H$ and
$\partial/\partial t$ correspond to constant vectors in $\R^{2n}$, and
so that the operator $D_u$ takes the form 
$$
(D_u\zeta) (s,\theta) = \p_s \zeta + J_0\p_\theta\zeta + 
S(s,\theta)\zeta(s,\theta).
$$
Here $J_0$ is the standard complex structure on $\R^{2n}$ and $S$ is
asymptotically symmetric as $s\to\pm\infty$. The matrix $S(s,\theta)$
can be written as $S'(s,\theta)\oplus S''(s,\theta)$ with respect to the
splitting $\xi\oplus\langle \p/\p t, X_H\rangle$, so that
the operator $D_u$ is also split with respect to the decomposition
$\xi\oplus L$, where $L:=\langle \p/\p t, X_H\rangle$. It is therefore 
enough to find uniformly bounded right inverses for each of the
surjective operators 
$$
D'_u:W^{1,p}(\R\times S^1,u^*\xi;w_\delta(s)dsd\theta)\to
L^p(\R\times S^1,u^*\xi;w_\delta(s)dsd\theta),
$$
$$
D''_u: W^{1,p}(\R\times
S^1,u^*L;\|\cdot\|_{1,\delta})\oplus\oV'_{u,\delta}\oplus\uV'_{u,\delta}\to 
L^p(\R\times S^1,u^*L;w_\delta(s)dsd\theta).
$$
Here we use the fact that the norm
$\|\cdot\|_{1,\delta}$ coincides with the weighted $W^{1,p}$-norm on
sections with values in the subbundle $u^*\xi$. Note that $D'_u$ is an
isomorphism since it has index $0$, whereas
$\ind(D''_u)=\ind(D_u)$ is either $0$ or $1$.  

\medskip 

We treat $D''_u$ and consider first the case of a semi-infinite
gradient trajectory. The two possible cases are entirely similar, and
we assume without loss of generality that $a=-\infty$, $b=1$. Let 
$$
S''_0:=\left(\begin{array}{cc} E & 0 \\ 0 & 0 \end{array}\right),
$$
so that $\lim_{\delta\to 0}S''(s,\theta)=S''_0$ uniformly in
$(s,\theta)$. Consider the operator 
$$
D''_{0,\delta}: W^{1,p}(\R\times
S^1,u^*L;w_\delta(s)dsd\theta)\oplus\oV'_{u,\delta}\oplus\uV'_{u,\delta}\to  
L^p(\R\times S^1,u^*L;w_\delta(s)dsd\theta)
$$
defined by $D''_{0,\delta}:=\p_s+J_0\p_\theta +S''_0$. As in the proof of
Proposition~\ref{prop:Surj_udelta} one sees that
$D''_{0,\delta}$ is surjective, and we claim that it admits a right inverse
$Q''_{0,\delta}$ that is uniformly bounded with respect to
$\delta$. Indeed, let $Q''_0$ be a right inverse of $D''_0:=D''_{0,\delta=1}$
and consider the shift operators 
$$
(T_\delta\zeta)(s):=\zeta(s+s_\delta)
$$
acting from $\mathrm{dom}(D''_{0,\delta})\to\mathrm{dom}(D''_0)$ and
from $L^p(w_\delta(s) dsd\theta)\to L^p(e^{d|s|}dsd\theta)$. It follows
from the definitions of $\|\cdot\|_{1,\delta}$ and $\|\cdot\|_\delta$
that the operators $T_\delta$ are isometries, and we have
$D''_{0,\delta} =T_\delta^{-1} D''_0 T_\delta$ since $D''_0$ is
independent of  $s\in\R$. Hence $Q''_{0,\delta}=T_\delta^{-1}
Q''_0T_\delta$ is a right inverse for $D''_{0,\delta}$ such that 
$\|Q''_{0,\delta}\|=\|Q''_0\|$, and the claim is proved. 

Now, if $\delta$ is small enough we
have $\|S''(s,\theta)-S''_0\|\le 1/2\|Q''_0\|$, $s\in\R$
and therefore $\|D''_u - D''_{0,\delta}\|\le 1/2\|Q''_0\|$. This implies that
$$
\|D''_uQ''_{0,\delta} - \mathrm{Id}\| = \|D''_uQ''_{0,\delta} -
D''_{0,\delta}Q''_{0,\delta}\| \le \frac 12. 
$$
Thus $D''_uQ''_{0,\delta}$ is invertible and the norm of its inverse is $\le
2$. Finally a right inverse for $D''_u$ is given by $Q''_{0,\delta}
(D''_uQ''_{0,\delta})^{-1}$ and has norm $\le 2 \|Q''_0\|$.

\medskip 

We now treat the case $a=-T/2$, $b=T/2$ for $T>0$. Let
$\ou:=u_{\delta,\gamma,-(T+\epsilon)/2,0}$,
$\uu:=u_{\delta,\gamma,0,(T+\epsilon)/2}$ and  
$$
\oD'':=D''_\ou:W^{1,p}(\R\times S^1,\ou^*L;e^{d|s|}dsd\theta)\oplus 
\oV'_\ou \oplus\uV'_\ou \to L^p(e^{d|s|}dsd\theta),
$$
$$
\uD'':=D''_\uu:W^{1,p}(\R\times S^1,\uu^*L;e^{d|s|}dsd\theta)\oplus
\oV'_\uu \oplus\uV'_\uu \to L^p(e^{d|s|}dsd\theta).
$$
The same argument as above, using the constant operator $D''_0$, shows
that $\oD''$ and $\uD''$ admit right inverses which are uniformly
bounded with respect to $\delta\to 0$. Both operators have index $1$
and it follows from the description of their kernels given in the proof of
Proposition~\ref{prop:Surj_udelta} that their restrictions to 
$W^{1,p}\oplus\oV'_u$, respectively $W^{1,p}\oplus \uV'_u$ are
isomorphisms. We choose the right inverses $\oQ''$, $\uQ''$ to be the
inverses of their respective restrictions. 

Let $\uzeta\in\ker \oD''$, $\ozeta\in\ker \uD''$ be two sections
such that their values at $+\infty$ and respectively $-\infty$ are
equal to the (constant) vector corresponding to $X_H$ in the chosen
trivialization of $u^*T\widehat W$. Let $\uV'$, $\oV'$ be the
$1$-dimensional vector spaces spanned by $\beta\uzeta$ and
$(1-\beta)\ozeta$ respectively. Setting the norm of these generators
to be equal to $1$ defines a new norm on $\mathrm{dom}(\oD'')$ and
$\mathrm{dom}(\uD'')$, which we emphasize by decomposing the latter as 
$$
\mathrm{dom}(\oD'')=W^{1,p}(\R\times S^1,\ou^*L;e^{d|s|}dsd\theta)\oplus 
\oV'_u \oplus\uV',
$$
$$
\mathrm{dom}(\uD'')=D''_\uu:W^{1,p}(\R\times S^1,\uu^*L;e^{d|s|}dsd\theta)\oplus
\oV' \oplus\uV'_u.
$$
It follows from our special choice of the right inverses $\oQ''$,
$\uQ''$ that the latter are also uniformly bounded with respect to
this new norm as $\delta\to 0$. 

Let $D'':=\oD''\#_\delta \uD''$ be the operator obtained by gluing
$\oD''$ cut at $s_\delta$ and $\uD''$ cut at $-s_\delta$, with the
$\|\cdot\|_{1,\delta}$-norm on its domain and the
$\|\cdot\|_\delta$-norm on its target. It follows as
in~\cite[Proposition~5]{BM} that the right inverses $\oQ''$, 
$\uQ''$ give rise to a uniformly bounded right inverse $Q''$ for
$D''$ as $\delta\to 0$. On the other hand, we have $\|D''_u-D''\|\to 0$ as
$\delta\to 0$, and we obtain a uniformly bounded right inverse for
$D''_u$ by the previous formula 
$Q''_u:=Q''(D''_uQ'')^{-1}$. We note
that, upon gluing, the exponential 
weights at $\pm\infty$ for $\oD''$, $\uD''$ give rise to
the peak in the weight function $w_\delta$ for $D''$, and the
fibered sum operation on $\uV'$, $\oV'$, on which the norm is fixed,
is responsible for the appearance of the distinguished cutoff section
$\zeta_\delta$ leading to the modified norm $\|\cdot \|_{1,\delta}$. 

\medskip

We now treat $D'_u$ and start by making a few general remarks. For
each $s_0\in \R$ the operator 
$$
D'(s_0):=\p_s + J_0\p_\theta +S(s_0,\theta):W^{1,p}(\R\times
S^1,u^*\xi;dsd\theta)\to L^p(\R\times S^1,u^*\xi;dsd\theta)
$$
is $\delta$-close to the $\R$-invariant operator with nondegenerate 
asymptotics corresponding to the constant cylinder over the orbit $u(s_0, \cdot)$.
Hence, for $\delta > 0$ small enough, both operators are 
isomorphisms~\cite[Lemma~2.4]{Sa}. Moreover, this property also
holds in the presence of weights $e^{d|s|}$, $e^{ds}$ or $e^{-ds}$. 
For the weight $e^{d|s|}$ we argue as follows. The operator is
still Fredholm between the $W^{1,p}$ and $L^p$ spaces with weights, of
the same index $0$. Since the corresponding $W^{1,p}$ space is
contained in $W^{1,p}(\R\times S^1,u^*\xi;dsd\theta)$ we infer that
the operator is injective, hence an isomorphism. For the
weight $e^{ds}$ we argue as follows. Multiplication by 
$e^{\frac d p s}$ determines linear isomorphisms 
$M:W^{1,p}(\R\times S^1,u^*\xi;e^{ds}dsd\theta)\to 
W^{1,p}(\R\times S^1,u^*\xi;dsd\theta)$ and $M:L^p(\R\times
S^1,u^*\xi;e^{ds}dsd\theta)\to L^p(\R\times S^1,u^*\xi;dsd\theta)$.  
The operator $M^{-1}D'(s_0)M$ is an isomorphism and, for $\zeta\in
W^{1,p}(\R\times S^1,u^*\xi;e^{ds}dsd\theta)$, we have 
$$
M^{-1}D'(s_0)M\zeta = D'(s_0)\zeta+\frac d p \zeta.
$$
Since $d > 0$ is small as in Proposition \ref{prop:asymptoticdelta} 
and $p > 2$, the operator $M^{-1}D'(s_0)M$ is $\R$-invariant and
has nondegenerate asymptotics, hence is an 
isomorphism~\cite[Lemma~2.4]{Sa}. An analogous
reasoning using the multiplication by $e^{-\frac d p s}$ proves the
claim for the weight $e^{-ds}$.  

We now prove that $D'_u$ admits a uniformly bounded right inverse in
the case $a=-T/2$, $b=T/2$, $s_\delta=(T+\epsilon)/2\delta$. We recall
the notation 
$\ou:=u_{\delta,\gamma,-(T+\epsilon)/2,0}$,
$\uu:=u_{\delta,\gamma,0,(T+\epsilon)/2}$ and set  
$$
\oD':=D'_\ou:W^{1,p}(\R\times S^1,\ou^*\xi;e^{d|s|}dsd\theta)\to
L^p(\R\times S^1,\ou^*\xi;e^{d|s|}dsd\theta),
$$
$$
\uD':=D'_\uu:W^{1,p}(\R\times S^1,\uu^*\xi;e^{d|s|}dsd\theta)\to
L^p(\R\times S^1,\uu^*\xi;e^{d|s|}dsd\theta).
$$

We claim that each of the operators $\oD'$, $\uD'$ is an isomorphism
with uniformly bounded right inverse as $\delta\to 0$. We give the
proof for $\oD'$ since the proof for $\uD'$ is entirely analogous. We
choose a finite number of points $-\infty=s_{-m}<s_{-m+1}<\dots
<s_{-1}<0=s_0<s_1<\dots<s_{m+1}=+\infty$ such that
$\|S(s,\theta)-S(s',\theta)\|\le 1/4C$ for 
all $\theta\in S^1$ and $s,s'\in[s_i,s_{i+1}]$, $i=-m,\dots,m$, with
$C>0$ a constant to be chosen below. Let
$$
b_{i-1}:=a_i:=c^{-1}(u(s_i,0)), \qquad i=-m,\dots,m+1.
$$
We consider the operators
\begin{eqnarray*}
D'_i& :=& D'_{u_{\delta,\gamma,a_{i-1},b_{i-1}}}, \quad i=-m+1,\dots,-1,\\
D'_0&:=&D'_{u_{\delta,\gamma,a_{-1},b_0}},\\
D'_i&:=&D'_{u_{\delta,\gamma,a_i,b_i}}, \qquad \quad i=1,\dots,m.
\end{eqnarray*}
For each $i=-m+1,\dots,m$ we denote by $u_i=u_{i,\delta}$ the gradient
cylinder corresponding to the operator $D'_i$.
The domain and range of the operators $D'_i$ are as follows:
$$D'_i:W^{1,p}(\R\times S^1,u_i^*\xi;e^{-ds}dsd\theta)\to
L^p(\R\times S^1,u_i^*\xi;e^{-ds}dsd\theta), \quad i<0,
$$
$$
D'_0:W^{1,p}(\R\times S^1,u_0^*\xi;e^{d|s|}dsd\theta)\to
L^p(\R\times S^1,u_0^*\xi;e^{d|s|}dsd\theta),
$$
$$
D'_i:W^{1,p}(\R\times S^1,u_i^*\xi;e^{ds}dsd\theta)\to L^p(\R\times 
S^1,u_i^*\xi;e^{ds}dsd\theta), \quad i>0.
$$
We have seen that $D'(s_0)$ is an isomorphism for all $s_0\in \R$ if
one uses any of the weights $e^{d|s|}$, $e^{ds}$, $e^{-ds}$. Since 
$S(s_0,\cdot)$ belongs to a compact set of loops of matrices we infer
that the norm of the inverse $Q'(s_0):=D'(s_0)^{-1}$ is uniformly
bounded with respect to $s_0\in\R$ for each of these three weights. 
We choose
$C:=
\max_{\mathrm{weight}\in\{e^{d|s|},e^{ds},e^{-ds}\}}\max_{s_0\in\R}\|Q'(s_0)\|$.  

The same argument as for $D''_u$ shows that the inverse  
of each 
$D'_i$ is bounded by
$2C$ independently of $\delta$. We glue together the operators $D'_i$ into
$\widetilde D'$ by cutting at $a_i/\delta$ and $b_i/\delta$.
Then $\widetilde D'$ is still surjective and the norm of its inverse 
is bounded
by $2C\widetilde C^{2m-1}$, with $\widetilde C$ a universal constant
(see~\cite[Proposition~3.9]{Sa}). Note that our choice of weights for
the operators $D'_i$ is such that the resulting weight for the domain
and target of $\widetilde D'$ is still $e^{d|s|}$. On the other hand
we have 
$$
\|\widetilde D' - \oD'\|\to 0, \qquad \delta\to 0.
$$
This is because the two operators coincide outside $2m-1$ intervals of
length $2$, where the variation of $S$ tends to zero as $\delta\to
0$. As a consequence the inverse of $\oD'$ is also
uniformly bounded when $\delta$ is small enough.

We now glue the operator $\oD'$ cut at $s_\delta$ with the operator
$\uD'$ cut at $-s_\delta$, and denote the resulting operator by $D'$. 
The argument in~\cite[Proposition~5]{BM} shows that $D'$ admits
a uniformly bounded right inverse $Q'$, provided one uses the weight
$w_\delta(s)$ on its domain and target. On the other hand 
$$
\|D'_u-D'\|\to 0, \qquad \delta\to 0
$$
since the two operators differ on a segment of length $2$ where the
variation of $S$ goes to zero. We infer that
$D'_u$ also admits a uniformly bounded right inverse. 

The cases when $a=-\infty$, $b=1$ or $a=-1$, $b=\infty$ follow now
easily by combining the proof of the existence of uniformly bounded
right inverses for the operators $\oD'$ with the previous use of a
shift operator $(T_\delta\zeta)(s)=\zeta(s\pm s_\delta)$.
\end{proof}

\begin{remark}
 Note that, if $a=-T/2$, $b=T/2$, Our construction of a right inverse
 for $D''$ in the proof of Proposition~\ref{prop:Surjectivity_udelta} is 
 such that its norm is uniformly bounded as $\delta\to 0$ even if one 
 uses the ``non-compensated'' norm $\|\cdot\|_{1,p,\delta}$ instead of 
 $\|\cdot\|_{1,\delta}$. However, our choice of the norm $\|\cdot\|_{1,\delta}$
 will be essential in the proof of Proposition \ref{prop:deltageom}.
\end{remark} 

In order to describe the pregluing construction it is convenient to
work with a single section over $\cB'_\delta$ rather than with a
family of sections. We recall that,
up to a translation, the defining interval $I(a,b)$ of a gradient
cylinder can be considered to be $[-T/2,T/2]$, $T>0$ in case (i), or
$]-\infty,1]$, $[-1,\infty[$ in cases (ii) and (iii) respectively. We
are therefore led to consider the section
\begin{equation} \label{eq:dbarprime}
\dbar:\cB'_\delta\to \cE
\end{equation}
defined by
$\dbar:=\dbar_{-\infty,1}$ and $\dbar := \dbar_{-1,\infty}$ in cases
(ii) and (iii), and by
$$
\dbar (u) = \dbar_\epsilon(u) := \dbar _{-T_u/2,T_u/2,\epsilon}(u)
$$
in case (i). Here $T_u >0$ is the time needed to flow along the gradient
of $f_\gamma$ from the negative limit to the positive
limit of $u$ (see~\eqref{eq:T}).

\begin{remark} \label{rmk:dbarT}
  In case (i) the section $\dbar$ can be described as follows. The one
parameter family of sections $\dbar_T:=\dbar_{-T/2,T/2,\epsilon}$ gives rise to
a section denoted $\{\dbar_T\}$\index{$\{\dbar_T\}$} 
of the pull-back bundle
$\textrm{pr}_1^*\cE \to \cB'_\delta \times \R^+$.
There is a codimension one embedding
$\iota:\cB'_\delta\to\cB'_\delta \times \R^+$ given by $\iota(u)=
(u,T_u)$, the composition $\textrm{pr}_1\circ \iota$ is the
identity and we have
$$
\dbar = \{\dbar_T\}|_{\im\,\iota}.
$$
The situation is summarized in the following commutative diagram.
$$
\xymatrix
@C=40pt
@R=30pt@W=1pt@H=1pt
{
\cE \ar[r] \ar[d] & \textrm{pr}_1^*\cE \ar[r] \ar[d] & \cE \ar[d] \\
\cB'_\delta \ar@{^(->}[r]^{\hspace{-3mm}\iota}
\ar@/^1.3pc/ @{->}[u]^{\dbar}
&
\cB'_\delta\times \R^+ \ar[r]^{\quad \textrm{pr}_1}
\ar@/^1.3pc/ @{->}[u]^{\{\dbar_T\}}
& \cB'_\delta
}
$$
\end{remark}

Given $u\in\cB'_\delta$
we denote by $D'_u : T_u\cB'_\delta \to \cE_u$
the vertical differential of $\dbar$. In cases (ii) and (iii) we have
seen that $D'_u$ is a Fredholm operator of index $\ind(p)$ and
$1-\ind(q)$ respectively. In case (i) the vertical differential can be
computed explicitly as follows. The vertical differential of
$\{\dbar_T\}$, denoted by $D\{\dbar_T\}$, is
\begin{eqnarray*}
D\{\dbar_T\}(u,T)\cdot (\zeta,\tau) & = &
D_u^{-T/2,T/2,\epsilon}\zeta - \tau (JX_{H_\delta - H}) \frac d {dT}
h'_{-T/2\delta, T/2\delta,\epsilon/\delta} (s) \\
& = &  D_u^{-T/2,T/2,\epsilon}\zeta - \frac \tau {2\delta}
(JX_{H_\delta - H}) k_{-T/2\delta, T/2\delta,\epsilon/\delta}(s).
\end{eqnarray*}
Let us write a section $\zeta\in T_u\cB'_\delta$ as
$\zeta=\zeta^0 + a\ozeta + b\uzeta$, with $\zeta^0\in W^{1,p,d}$,
$a,b\in\R$ and $\ozeta$, $\uzeta$ being the distinguished
generators of $\oV'_u$, $\uV'_u$ respectively.
The vertical differential $D'_u$ acts by
\begin{eqnarray*}
D'_u\zeta & = & D\{\dbar_T\}(u,T_u)\cdot (\zeta,dT_u\cdot
\zeta) \\
& = & D_u^{-T_u/2,T_u/2,\epsilon}\zeta - \frac {dT_u\cdot\zeta} {2\delta}
(JX_{H_\delta - H}) k_{-T_u/2\delta, T_u/2\delta,\epsilon/\delta}(s).
\end{eqnarray*}
One can explicitly compute
$$
dT_u \cdot\zeta = dT_u\cdot(a\ozeta + b\uzeta) =
\frac {\dot c(-T_u/2) b - \dot c(T_u/2)a} {\dot c(-T_u/2) \cdot
\dot c(T_u/2)},
$$
where $c:\R\to S_\gamma$ is the gradient trajectory satisfying
$c(-T_u/2)=\otheta_0$, $c(T_u/2)=\utheta_0$ and $\dot c$ is the
derivative with respect to the $X_H$-parametrization of $S_\gamma$.

\begin{proposition} \label{prop:support_of_section}
   Let $T>0$ and
$u=u_{\delta,\gamma,-T/2,T/2,\epsilon}$. The index of $D'_u$ is equal to
$1$, its kernel has dimension $2$ and a
complement of $\im\,D'_u$ is spanned by a section supported in
$$
[-(T+\epsilon)/2\delta,-(T+\epsilon)/2\delta +1] \times S^1 \quad \bigcup \quad
[(T+\epsilon)/2\delta-1,(T+\epsilon)/2\delta] \times S^1.
$$
Morever, $D'_u$ admits a right inverse
defined on its image which is uniformly 
bounded with respect to $\delta\to0$.
\end{proposition}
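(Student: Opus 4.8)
The plan is to analyze $D'_u$ in terms of the already-understood operator $D''_u = D_u^{-T/2,T/2,\epsilon}$ from Proposition~\ref{prop:Surj_udelta}, together with the rank-one correction term coming from the $T$-dependence of the reparametrization function. First I would record the index count: $D'_u$ is a compact perturbation of $D_u^{-T/2,T/2,\epsilon}$ (the correction term $\zeta \mapsto -\frac{dT_u\cdot\zeta}{2\delta}(JX_{H_\delta-H})k_{-T_u/2\delta,T_u/2\delta,\epsilon/\delta}$ has finite-dimensional range, since $dT_u$ factors through the two-dimensional space $\oV'_u\oplus\uV'_u$), so $\mathrm{ind}(D'_u) = \mathrm{ind}(D_u^{-T/2,T/2,\epsilon}) = 1$ by the index computation in case~(i) preceding Proposition~\ref{prop:Surj_udelta}. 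Since $D_u^{-T/2,T/2,\epsilon}$ is surjective with one-dimensional kernel (Proposition~\ref{prop:Surj_udelta}), I would then show $\dim\ker D'_u = 2$ by exhibiting the kernel explicitly: the section $\zeta_\delta$ spanning $\ker D_u^{-T/2,T/2,\epsilon}$ still lies in $\ker D'_u$ because $dT_u\cdot\zeta_\delta = 0$ (one checks $\zeta_\delta$ has no $\oV'_u,\uV'_u$ component, or directly that the shift it generates is tangent to $\mathrm{im}\,\iota$ in the sense of Remark~\ref{rmk:dbarT}); and a second kernel element is produced by the infinitesimal reparametrization $\partial_T$ of the gradient trajectory, i.e. one solves $D_u^{-T/2,T/2,\epsilon}\zeta = \frac{dT_u\cdot\zeta}{2\delta}(JX_{H_\delta-H})k$ using the right inverse, the point being that the inhomogeneity is precisely compensated by the variation of the gradient cylinder $u_{\delta,\gamma,-T/2,T/2,\epsilon}$ as $T$ varies. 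Equivalently, using the diagram in Remark~\ref{rmk:dbarT}, $\ker D'_u$ is the preimage under $\iota_*$ of $\ker D\{\dbar_T\}$, and $D\{\dbar_T\}$ has index $2$ on $\cB'_\delta\times\R^+$; transversality of $\iota$ to $\{\dbar_T\}^{-1}(0)$ gives the clean dimension count.

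Next I would identify the cokernel. Since $\mathrm{ind}(D'_u)=1$ and $\dim\ker D'_u = 2$, $\mathrm{coker}\,D'_u$ is one-dimensional. To locate a spanning section of a complement of $\mathrm{im}\,D'_u$, I would use that $D'_u$ differs from the surjective operator $D_u^{-T/2,T/2,\epsilon}$ only by the rank-one term supported — through the support of $k_{-T/2\delta,T/2\delta,\epsilon/\delta}$ — in the two bands $[-(T+\epsilon)/2\delta,-(T+\epsilon)/2\delta+1]\times S^1$ and $[(T+\epsilon)/2\delta-1,(T+\epsilon)/2\delta]\times S^1$ (this is exactly the stated support of $k_{a,b,\epsilon}$ recorded just after its definition). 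A standard argument then shows: if $\eta\in L^q$ annihilates $\mathrm{im}\,D'_u$, it annihilates $\mathrm{im}\,D_u^{-T/2,T/2,\epsilon}$ modulo the rank-one term, and since the latter is surjective, $\eta$ is $L^q$-orthogonal to everything except along the direction picked out by the correction, forcing a representative of the cokernel to be supported in those two bands. I would make this precise by writing $D'_u = D_u^{-T/2,T/2,\epsilon} - K$ with $K$ of rank one and support in the two bands, and observing that on the orthogonal complement of a suitable finite-dimensional space the two operators agree, so the cokernel class can be chosen supported where $K$ is.

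Finally, for the uniformly bounded right inverse as $\delta\to 0$, I would leverage Proposition~\ref{prop:Surjectivity_udelta}: restricting $D'_u$ to a codimension-one subspace of $\mathrm{dom}\,D'_u$ complementary to the extra kernel direction, one obtains an operator with the same image, and the uniformly bounded right inverse $Q_{u_\delta}$ for $D_u^{-T/2,T/2,\epsilon}$ (with the $\|\cdot\|_{1,\delta}$, $\|\cdot\|_\delta$ norms) constructed there can be corrected by the bounded rank-one perturbation: since $\|K\|$ is bounded independently of $\delta$ (the factor $1/\delta$ in front of $k$ is compensated by the fact that $dT_u$ itself carries the opposite scaling, as visible in the explicit formula for $dT_u\cdot(a\ozeta+b\uzeta)$ and the definition of the generators of $\oV'_{u,\delta},\uV'_{u,\delta}$), one has $D'_u Q_{u_\delta} = \mathrm{Id} - KQ_{u_\delta}$ with $KQ_{u_\delta}$ of rank $\le 1$, and a Neumann-type correction on the image yields the desired right inverse with norm bounded uniformly in $\delta$. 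I expect the main obstacle to be precisely this last scaling bookkeeping: verifying that the rank-one correction term $K$, which superficially blows up like $1/\delta$, is in fact uniformly bounded in the rescaled norms $\|\cdot\|_{1,\delta}\to\|\cdot\|_\delta$, and that the complement of $\mathrm{im}\,D'_u$ can be chosen with $\delta$-uniformly bounded projection — this is where the careful choice of the norm $\|\cdot\|_{1,\delta}$ (flagged in the remark after Proposition~\ref{prop:Surjectivity_udelta} as essential for Proposition~\ref{prop:deltageom}) does the work.
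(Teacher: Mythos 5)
Your overall approach --- view $D'_u$ as the rank-one perturbation $D_u^{-T_u/2,T_u/2,\epsilon}-K$, with $K\zeta=(dT_u\cdot\zeta)\eta$ and $\eta=\frac1{2\delta}(JX_{H_\delta-H})\,k_{-T_u/2\delta,T_u/2\delta,\epsilon/\delta}$ supported in the two end-bands --- is exactly the paper's, which packages the needed algebra into Lemma~\ref{lem:abstract} with $\phi=D_u^{-T_u/2,T_u/2,\epsilon}$, $\ell=dT_u$, $x_y=\zeta^2$ and $y=\eta$. Two points need repair in the kernel/cokernel discussion. The parenthetical claim that $\zeta_\delta$ has no $\oV'_u,\uV'_u$ component is false: that kernel element is asymptotically constant equal to $\dot c(\mp T/2)\neq 0$, so both components are nonzero; the identity $dT_u\cdot\zeta_\delta=0$ holds because of an exact cancellation in the formula for $dT_u\cdot(a\ozeta+b\uzeta)$ when $(a,b)=(\dot c(-T/2),\dot c(T/2))$, and it is your second alternative (tangency of the shift to $\im\,\iota$) that is the correct reason. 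Also, to pin down $\langle\eta\rangle$ as a complement of $\im\,D'_u$ you need the normalization $dT_u\cdot\zeta^2=1$, which follows from $D_u^{-T_u/2,T_u/2,\epsilon}\zeta^2=\eta$ together with $\zeta^2\in\ker D'_u$; your duality paragraph never extracts it, and the sentence ``forcing a representative of the cokernel to be supported in those two bands'' does not follow from the observations that precede it.

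The serious gap is the right inverse. You write $D'_uQ_{u_\delta}=\mathrm{Id}-KQ_{u_\delta}$ and propose a ``Neumann-type correction'', but $\mathrm{Id}-KQ_{u_\delta}$ is \emph{not} invertible and $\|KQ_{u_\delta}\|\geq 1$: its kernel is $\langle\eta\rangle$, because $dT_u(Q_{u_\delta}\eta)=1$ by the very normalization above, and any $\mathrm{Id}-\lambda(\cdot)v$ with $\lambda(v)=1$ kills $v$. Your flagged concern about the $1/\delta$ scaling of $K$ is also a red herring (the $1/\delta$ is compensated inside $\eta$ by $H_\delta-H=O(\delta)$, not by any scaling of $dT_u$, which is $\delta$-independent). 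The actual argument is structural and needs no estimate at all: for \emph{any} right inverse $Q$ of $\phi$, the operator $Q\phi$ is the projection onto $\im\,Q$ along $\ker\phi$, and since $\ker\phi\subset\ker\ell$ this projection preserves $\ker\ell$, so $Q(\phi(\ker\ell))\subset\ker\ell$; hence for $z\in\im\,D'_u=\phi(\ker\ell)$ one has $D'_uQz=\phi Qz-\ell(Qz)\eta=z-0=z$. Thus $Q_{u_\delta}|_{\im\,D'_u}$ is already the sought right inverse, with no correction and with the $\delta$-uniform bound inherited verbatim from Proposition~\ref{prop:Surjectivity_udelta}. This is precisely Lemma~\ref{lem:abstract}(ii).
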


\begin{proof}
  The first order differential operators
$D'_u$ and $D_u^{-T_u/2,T_u/2,\epsilon}$ differ by a term of order zero, hence
their indices are equal and $\ind\, D'_u=1$.

The operator $D\{\dbar_T\}(u,T_u)$ is surjective and has index
$2$. As a consequence
$\dim\ker D'_u \le \dim\ker D\{\dbar_T\}(u,T_u) = 2$.
Let $c:\R\to S_\gamma$ be the gradient curve
defining $u=u_{\delta,\gamma,-T/2,T/2,\epsilon}$. For
$\sigma$ close to zero we define $c^\sigma(s):=c(\sigma+s)$ and
denote by
$u_1^\sigma:=u_{\delta,\gamma,-T/2,T/2,\epsilon}^\sigma$ the gradient cylinder
defined by $c^\sigma$. Then
$\dbar(u_1^\sigma)=\dbar_T(u_1^\sigma)=0$, hence $\zeta^1:=\frac d
{d\sigma} |_{\sigma=0} u_1^\sigma \in \ker D'_u$. We also define
$u_2^\sigma := u_{\delta,\gamma,-(T+\sigma)/2,(T+\sigma)/2,\epsilon}$
to be the gradient cylinder associated to $c$. Then
$\dbar(u_2^\sigma)=\dbar_{T+\sigma}(u_2^\sigma)=0$, hence
$\zeta^2:=\frac d {d\sigma} |_{\sigma=0} u_2^\sigma \in \ker
D'_u$. Since $\zeta^1$ and $\zeta^2$ are linearly
independent, we infer that $\dim\ker D'_u=2$.

We claim that the section
$\eta:=\frac 1 {2\delta} (JX_{H_\delta - H}) k_{-T_u/2\delta,
T_u/2\delta,\epsilon/\delta}(s)$ spans a complement of $\im\,D'_u$.
This follows from~(i) in Lemma~\ref{lem:abstract} below with
$\ell:=dT_u$, $\phi:=D_u^{-T_u/2,T_u/2,\epsilon}$, $\widetilde \phi:=D'_u$,
$y:=\eta$ and $x_y:=\zeta^2$. That $D'_u$ admits a uniformly
bounded  
right inverse defined on its image follows from~(ii) in
Lemma~\ref{lem:abstract} and the fact that  
$D_u^{-T_u/2,T_u/2,\epsilon}$ has a uniformly bounded right inverse by 
Proposition~\ref{prop:Surjectivity_udelta}.   
\end{proof}

\begin{lemma} \label{lem:abstract} 
  Let $\phi:E\to F$ be a surjective map of Banach vector spaces,
$\ell:E\to \R$ be a nonzero linear functional, $y=\phi(x_y)\in F$
be fixed and $\widetilde \phi:E\to F$ be defined by
$$
\widetilde \phi (x)=\phi(x) - \ell(x)y.
$$
We assume that  $\ker \phi \subset \ker \ell$. 
% $\ker\phi$ is finite dimensional and
Then $\im\,\widetilde \phi=\phi(\ker \ell)$ if and only if $\ell(x_y)=1$, in 
which case the following hold.
\begin{description} 
\item[(i)] The element $y$ spans a complement of $\im\,\widetilde\phi$.
\item[(ii)] If $Q:F\to E$ is a right inverse 
for $\phi$, then $Q|_{\phi(\ker\ell)}$ 
is a right inverse for $\widetilde \phi$ defined on its image.  
\end{description} 
\end{lemma}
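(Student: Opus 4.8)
This is a very concrete linear-algebra lemma, so the proof is mostly a matter of unwinding definitions. Here is how I would organize it.

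\medskip

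The plan is to first handle the equivalence ``$\im\,\widetilde\phi = \phi(\ker\ell)$ iff $\ell(x_y)=1$'', and then derive (i) and (ii) as easy consequences. For the forward direction of the main equivalence, I would compute $\widetilde\phi(x) = \phi(x) - \ell(x)\phi(x_y) = \phi(x - \ell(x)x_y)$ for every $x \in E$. Since the element $x - \ell(x)x_y$ ranges over all of $E$ as $x$ does (it is the identity minus a rank-one projection-type map, but more simply: given any $z\in E$, take $x = z + \ell(z)(1-\ell(x_y))^{-1}x_y$ if $\ell(x_y)\neq 1$, or note directly below), one sees immediately that $\im\,\widetilde\phi \subseteq \phi(E) = F$, and that $\im\,\widetilde\phi \subseteq \phi(\ker\ell)$ is what we want to pin down. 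The cleanest route: note $\ell(x - \ell(x)x_y) = \ell(x)(1 - \ell(x_y))$. So if $\ell(x_y)=1$ then $x - \ell(x)x_y \in \ker\ell$ for all $x$, giving $\im\,\widetilde\phi \subseteq \phi(\ker\ell)$; conversely, since $\phi$ is surjective and $\ker\phi\subseteq\ker\ell$, the linear functional $\ell$ factors as $\bar\ell\circ\phi$ for a nonzero $\bar\ell$ on $F$, and for $w\in\ker\ell$ choose $x$ with $\phi(x)=\phi(w)\in F$, $x\in\ker\ell$ (possible since $\ker\phi\subseteq\ker\ell$); then $\widetilde\phi(x) = \phi(x) - \ell(x)y = \phi(w)$, so $\phi(\ker\ell)\subseteq\im\,\widetilde\phi$, yielding equality. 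For the reverse implication, if $\ell(x_y)\neq 1$, take $x$ with $\ell(x)\neq 0$ (exists since $\ell\neq 0$); then $\widetilde\phi(x) = \phi(x - \ell(x)x_y)$ and $\ell(x-\ell(x)x_y) = \ell(x)(1-\ell(x_y)) \neq 0$, so this element of $\im\,\widetilde\phi$ is not in $\phi(\ker\ell)$ unless it also lies there by coincidence — to rule that out cleanly I would instead argue: $y = \phi(x_y) \in \im\,\widetilde\phi$ would force $y \in \phi(\ker\ell)$, but actually the sharper statement is that $\im\,\widetilde\phi$ always contains $\phi(\ker\ell)$ plus possibly $y$, and equals $\phi(\ker\ell)$ exactly when $y\in\phi(\ker\ell)$, which given the factorization $\ell = \bar\ell\circ\phi$ means $\bar\ell(y)=0$, i.e. $\ell(x_y)=0\cdot$\ldots — here I need to be slightly careful, so let me just say the expected clean statement is $\ell(x_y)=1$ and track signs in the write-up.

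\medskip

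Assuming the equivalence, part (i) follows: since $\bar\ell(y) = \ell(x_y) = 1 \neq 0$, the functional $\bar\ell$ on $F$ is nonzero, $\phi(\ker\ell) = \ker\bar\ell$ has codimension one in $F$, and $y\notin\ker\bar\ell$, so $F = \im\,\widetilde\phi \oplus \R y$. For part (ii), let $Q:F\to E$ be a right inverse for $\phi$, so $\phi Q = \mathrm{Id}_F$. For $v\in\im\,\widetilde\phi = \phi(\ker\ell) = \ker\bar\ell$ we have $\ell(Qv) = \bar\ell(\phi Qv) = \bar\ell(v) = 0$, hence $Qv\in\ker\ell$ and therefore $\widetilde\phi(Qv) = \phi(Qv) - \ell(Qv)y = \phi(Qv) = v$. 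Thus $Q|_{\phi(\ker\ell)}$ is a right inverse for $\widetilde\phi$ on its image, as claimed.

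\medskip

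I do not expect a genuine obstacle here; the only point requiring care is the ``only if'' direction, where one must verify that when $\ell(x_y)\neq 1$ the image $\im\,\widetilde\phi$ genuinely exceeds $\phi(\ker\ell)$ rather than merely failing to be contained in it. The argument above via the factorization $\ell = \bar\ell\circ\phi$ (valid precisely because $\ker\phi\subseteq\ker\ell$) reduces everything to the one-dimensional quotient $F/\ker\bar\ell$, where it becomes transparent: $\widetilde\phi(x)$ has $\bar\ell$-value $\ell(x)(1-\ell(x_y))$, which is surjective onto $\R$ as $x$ varies iff $\ell(x_y)\neq 1$, forcing $\im\,\widetilde\phi = F \supsetneq \phi(\ker\ell)$. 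I would present the proof in exactly this order: establish $\ell = \bar\ell\circ\phi$, compute $\bar\ell(\widetilde\phi(x))$, read off the equivalence, then deduce (i) and (ii).
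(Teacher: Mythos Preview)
Your proof is correct. The organizing device you introduce --- the factorization $\ell = \bar\ell \circ \phi$ through the surjection $\phi$, valid precisely because $\ker\phi \subset \ker\ell$ --- is not used in the paper, which works directly with elements. Both arguments rest on the same computation $\ell(x - \ell(x)x_y) = \ell(x)(1 - \ell(x_y))$; you phrase it as $\bar\ell(\widetilde\phi(x)) = \ell(x)(1 - \ell(x_y))$, which reduces the whole question to the one-dimensional quotient $F/\ker\bar\ell$ and makes the ``only if'' direction transparent (if $\ell(x_y)\neq 1$ then $\bar\ell\circ\widetilde\phi$ is surjective, so $\im\widetilde\phi = F \supsetneq \ker\bar\ell = \phi(\ker\ell)$). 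The paper instead first observes the easy inclusion $\phi(\ker\ell) \subseteq \im\widetilde\phi$ and then argues the forward direction element-wise. Your route pays off most in part (ii): you get $\ell(Qv) = \bar\ell(\phi Qv) = \bar\ell(v) = 0$ in one line, whereas the paper establishes the slightly stronger identity $\im Q \cap \ker\ell = Q(\phi(\ker\ell))$ by noting that $Q\phi$ is the projection onto $\im Q$ along $\ker\phi \subset \ker\ell$. The paper's approach is marginally more self-contained (no auxiliary map $\bar\ell$), but yours is shorter and conceptually clearer.
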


\begin{proof}
  We first note that $\im\,\widetilde \phi \supseteq \phi(\ker \ell)$.
  Let us now assume that $\im\,\widetilde \phi=\phi(\ker\ell)$. For
$x\notin\ker \ell$ we obtain $\phi(x)-l(x)\phi(x_y)\in\phi(\ker\ell)$,
hence $x-l(x)x_y\in\ker\ell$, implying $\ell(x)-\ell(x)\ell(x_y)=0$
and $\ell(x_y)=1$. Conversely, if $\ell(x_y)=1$ we obtain
$x-\ell(x)x_y\in\ker\ell$ for any $x\in E$, hence $\widetilde \phi(x)
=\phi(x-\ell(x)x_y) \in \phi(\ker\ell)$.

   The element $y$ does not belong to $\phi(\ker\ell)$ because $y=\phi(x_y)$
with $\ell(x_y)=1$ and the preimage $x_y$ is well-defined up to an
element of $\ker\phi\subset\ker\ell$. This proves the equivalence in the 
statement of the Lemma, as well as~(i). 

To prove~(ii) we need to show that $Q(\phi(\ker\ell))\subset \ker\ell$. We prove the 
stronger statement $\im\,Q\cap \ker\ell=Q(\phi(\ker\ell))$. The inclusion 
$\im\,Q\cap \ker\ell\subset Q(\phi(\ker\ell))$ follows from the observation that, 
given $x=Qz$ with $\ell(x)=0$, we have $z=\phi(Qz)=\phi(x)\in \phi(\ker\ell)$. 
On the other hand note that $Q\phi$ is the projection to $\im\, Q$ along $\ker \phi$.
Since $\ker\phi \subset \ker \ell$, it follows that 
$Q\phi(\ker\ell) \subset \im \, Q \cap \ker \ell$.
%On the other hand $\mathrm{codim}\,Q(\phi(\ker\ell))=1+\dim\,\ker\phi$, whereas 
%$\mathrm{codim}(\im\,Q\cap \ker\ell)=1+\mathrm{codim}(\im\,Q)=1+\dim\,\ker\phi$ 
%because $\im\,Q$ is not contained in $\ker\ell$. Since $\dim\ker\phi<\infty$ we 
%obtain $\im\,Q\cap \ker\ell=Q(\phi(\ker\ell))$.
\end{proof}

%
% A REMARK ON ON SOLUTIONS OF $\dbar_{H'_{a,b},J}=0$
%
%
%\begin{remark} \label{rmk:gradient_cyl_moduli} \rm
%  For each choice of $a<b$ in $\overline\R$ not simultaneously
%infinite, and for each $\delta>0$
%we define moduli spaces
%$$
%\cM'_{\delta,a,b}(S_\gamma,S_\gamma;H,J) \subset \cB'_\delta
%$$
%consisting of elements $u\in\cB'_\delta$ such that
%$\dbar_{H'_{a,b},J}(u)=0$. Then
%$u_{\delta,\gamma,a,b}\in\cM'_{\delta,a,b}(S_\gamma,S_\gamma;H,J)$ and
%Proposition~\ref{prop:Surjectivity_udelta} implies that the connected
%component of $u_{\delta,\gamma,a,b}$ is obtained by reparametrizing the
% gradient curve $c:\R\to S_\gamma$ whose restriction to $[a,b]$ defines
%$u_{\delta,\gamma,a,b}$ by~\eqref{eq:udelta}.
%\end{remark}
%

We describe now the pre-gluing construction for elements of the
Morse-Bott moduli spaces and gradient cylinders of the form
$u_{\delta,\gamma,a,b,\epsilon}$. We define the space
$$
\widetilde\cB_\delta:=
\widetilde\cB^{1,p,d}_\delta(\og_p,S_{\gamma_{m-1}},\ldots,S_{\gamma_1},
\ug_q,A;H,\{f_\gamma\})
\index{$\widetilde\cB_\delta$}
$$
consisting of tuples $\widetilde w:=(u_1,\ldots,u_m,v_0,\ldots,v_m)$
satisfying the following conditions.
\begin{enum}
\item $u_i \in\cB^{1,p,d}(S_{\gamma_i},S_{\gamma_{i-1}},A_i;H)$, 
$i=1,\ldots,m$,
with $S_{\gamma_0}:=S_{\ug}$, $S_{\gamma_m}:=S_{\og}$, 
$S_{\gamma_i} \neq S_{\gamma_{i-1}}$, $i = 1, \ldots, m$
and $A_1+\ldots+A_m=A$;
\item $v_0\in \cB^{1,p,d}_\delta(S_{\ug},q;f_{\ug})$,
$v_i\in\cB^{1,p,d}_\delta(S_{\gamma_i},S_{\gamma_i};f_{\gamma_i})$ for
$i=1,\ldots,m-1$, and
$v_m\in \cB^{1,p,d}_\delta(p,S_{\og};f_{\og})$;
\item $\oev(v_{i-1})=\uev(u_i)$ and $\uev(v_i)=\oev(u_i)$ for $i=1,\ldots,m$;
\item $\oev(v_0)$ belongs to the stable manifold of $q$, and $\uev(v_m)$
belongs to the unstable manifold of $p$. 
\end{enum}
By the definition of the spaces
$\cB^{1,p,d}_\delta(S_{\gamma_i},S_{\gamma_i};f_{\gamma_i})$ we have 
$\oev(v_i)\neq \uev(v_i)$ for $i=1,\dots,m-1$. We denote by $T_i>0$
the unique positive real number such that
$\varphi^{f_{\gamma_i}}_{T_i} (\oev(v_i))=\uev(v_i)$, where
$\varphi^{f_\gamma}_s$ is the gradient flow of $f_\gamma$.

\begin{figure}[ht]
         \begin{center}
\input{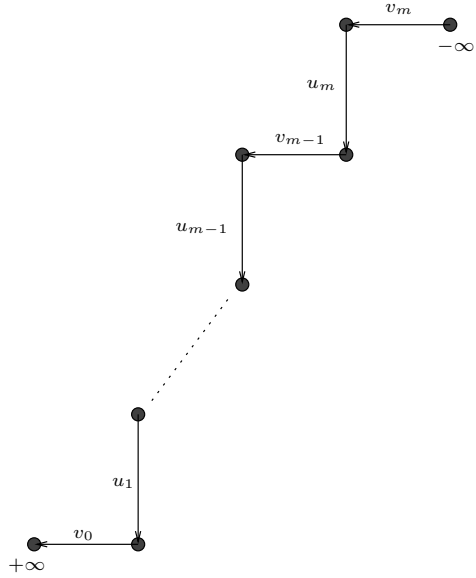}
\caption{Broken Morse-Bott trajectory $\widetilde w$. \label{fig:wtilde}}
         \end{center}
\end{figure}

Let us choose a tubular neighbourhood $U_\gamma\subset \widehat W$ for each
$\gamma\in\cP(H)$, parametrized by $(\vartheta,z)\in S^1 \times \R^{2n-1}$.
Given any subset
$$
\cK\subset \widetilde\cB^{1,p,d}_\delta
(\og_p,S_{\gamma_{m-1}},\ldots,S_{\gamma_1}, \ug_q,A;H,\{f_\gamma\})
$$
for which there exists $s_0>0$ such that, for $|s|\ge s_0$,
the components of any $\widetilde w \in \cK$ belong
to the respective tubular neighbourhoods of their asymptotics, we
construct, for $\delta>0$ small enough and $\epsilon_i\in\R$,
$i=1,\ldots,m-1$  small enough in absolute value, 
a pre-gluing map
$$
G_{\delta,\oeps}:\cK \to
\cB^{1,p,d}_\delta(\og_p,\ug_q,A;H,\{f_\gamma\}), \quad 
\oeps:=(\epsilon_1,\ldots,\epsilon_{m-1}). 
$$

Let $\beta:\R\to[0,1]$ be\index{$\beta$, cutoff function}  
a smooth increasing cutoff function vanishing on $]-\infty,0]$ and
identically equal to $1$ on $[1,\infty[$.
Define the gluing profile $R=R(\delta)$ by 
\begin{equation}
\index{$R=R(\delta)$, gluing profile}
R:=\frac 1 {2d} \ln\big(\frac 1 \delta \big).
\end{equation} 
We define for $i=1,\ldots,m$ the
maps $\widehat u_i:[-R,R]\times S^1\to \widehat W$ by
$$
\widehat u_i(s,\theta)\! :=  \! \left\{\begin{array}{lr}
\!\left\{\begin{array}{l}
  \!\! z(s,\theta) = \beta(s+R) z\circ u_i(s,\theta), \\
  \!\! \vartheta(s,\theta)  =
  \theta + \beta(s+R)(\vartheta\circ u_i(s,\theta) - \theta),
\end{array}\right.
  & \hspace{-7mm} s\in[-R,-R+1], \\
 \! u_i(s,\theta), & \hspace{-11mm} s\in[-R+1,R-1], \\
  \!\!\left\{\begin{array}{l}
  \!\! z(s,\theta) =  \beta(-s+R) z\circ u_i(s,\theta), \\
\! \vartheta(s,\theta) =  \theta + \beta(-s+R)(\vartheta\circ
u_i(s,\theta) - \theta),
\end{array}\right.
  & \hspace{-1mm} s\in[R-1,R].
\end{array}\right.
$$
We define for $i=1,\ldots,m-1$
the maps
$$
\widehat v_i:[- (T_i+\epsilon_i) / 2\delta,(T_i+\epsilon_i)
/2\delta]\times S^1\to \widehat W 
$$
by the analogous formulas in which we replace $R$ by $\frac {T_i+\epsilon_i} {2\delta}$.
We also define
$$
\widehat v_0:[-1/\delta,+\infty[ \times S^1 \to \widehat W
$$
by
$$
\widehat v_0(s,\theta):=  \left\{\begin{array}{lr}
\left\{\begin{array}{l}
  z(s,\theta) = \beta(s+\frac 1 \delta) z\circ v_0(s,\theta), \\
  \vartheta(s,\theta)  =
  \theta \!+\! \beta(s\!+\!\frac 1 \delta)(\vartheta\!\circ\! v_0(s,\theta) \!-\! \theta),
\end{array}\right.
  & \hspace{-5mm} s\in[-\frac 1 \delta,-\frac 1 \delta+1], \\
  v_0(s,\theta), & \hspace{-11mm} s\in[-\frac 1 \delta + 1, +\infty[,
\end{array}\right.
$$
as well as
$$
\widehat v_m:]-\infty,1/\delta] \times S^1 \to \widehat W
$$
by the analogous formula with $s$ replaced by $-s$ and $v_0$ replaced 
by $v_m$. 
Finally, we define
$$
G_{\delta,\oeps}(\widetilde w)
\index{$G_{\delta,\oeps}(\widetilde w)$, pre-glued curve}
$$
as the catenation
$\widehat v_m,\widehat u_m,\widehat v_{m-1},
\ldots,\widehat u_1, \widehat v_0$.
The catenation of these maps is performed in the above order and with
(obvious) shifts
$$
0=s_{v_m} < s_{u_m} < s_{v_{m-1}} < \ldots < s_{u_1} < s_{v_0}
$$
in the domain defined by 
\begin{eqnarray} \label{eq:shifts} 
s_{u_j}&=& s_{v_j} + \ell_j, \\
s_{v_{j-1}} &=& s_{u_j} + \ell_{j-1} \nonumber 
\end{eqnarray} 
for $j=1,\dots,m$. Here we denote 
\begin{equation} \label{eq:elli}
\ell_i:=R+(T_i+\eps_i)/2\delta
\end{equation} 
for $i=0,\dots,m$, with the convention $T_m=T_0=2$ and
$\eps_m=\eps_0=0$. We have in particular 
$$
\widehat v_i(s,\theta) =G_{\delta,\oeps}(\widetilde w)(s+s_{v_i},\theta), \quad
(s,\theta)\in \mathrm{dom}(\widehat v_i), \quad i=0,\dots,m,
$$
$$
\widehat u_j(s,\theta) =G_{\delta,\oeps}(\widetilde w)(s+s_{u_j},\theta), \quad
(s,\theta)\in \mathrm{dom}(\widehat u_j),\quad j=1,\dots,m.
$$
Given $\u=(c_m,u_m,\ldots,u_1,c_0)\in
\widehat \cM^A(p,q;H,\{f_\gamma\},J)$, we denote by 
$$
G_{\delta,\oeps}(\u)
$$ 
the element $G_{\delta,\oeps}(\tw)\in\cB_\delta$, where
$\tw:=(v_m,u_m,\ldots,u_1,v_0)$ and
$v_i:=u_{\delta,\gamma_i,a_i,b_i,\epsilon_i}$, $i=0,\ldots,m$ is the
gradient cylinder corresponding to the gradient trajectory
$c_i:I(a_i,b_i)\to S_{\gamma_i}$. 

The section $\dbar_{H_\delta,J}(G_{\delta,\oeps}(\widetilde w))$ belongs to the
space
$$
L^p(\R\times S^1,G_{\delta,\oeps}(\widetilde w)^*T\widehat W;
g_{\delta,\oeps}(s)dsd\theta), 
$$
where the continuous function 
$g_{\delta,\oeps}(s)$\index{$g_{\delta,\oeps}(s)$, weight function for
gluing} 
is the catenation of the following functions:
\begin{enum}
\item $g_{\delta,u_i}(s):=e^{d|s|}$ 
on the domain $[-R,R]$ of $\widehat u_i$;
\item
$g_{\delta,\epsilon_i,v_i}(s) = e^{d\left| |s|- s_{i,\delta} \right|}$ 
on the domain $[- (T_i+\epsilon_i)/{2\delta},(T_i+\epsilon_i) /{2\delta}]$
of $\widehat v_i$, where $s_{i,\delta} = \frac{T_i+\epsilon_i}{2\delta} - R
\le s^*_{i,\delta} = \frac{T_i+\epsilon_i}{2\delta}$, $i=1,\ldots,m-1$;
%where 
%$$
%g_{\delta,\epsilon_i,v_i}(s)\! :=\! \left\{ \begin{array}{ll}
%e^{d(s-s_{i,\delta}^+)}, & s\in[0,(T_i+\epsilon_i) / 2\delta],\\
 %& \mbox{with } (T_i+\epsilon_i) /{2\delta} - s_{i,\delta}^+ = R,\\
%e^{d(|s|-s_{i,\delta}^-)}, & s\in[-(T_i+\epsilon_i) / 2\delta,0],\\
% & \mbox{with } (T_i+\epsilon_i) /{2\delta} - s_{i,\delta}^- = R.
%\end{array}\right.
%$$
\item $g_{\delta,v_0}(s):=e^{d|s+s_{0,\delta}|}$
on the domain $[-1/\delta,+\infty[$ of $\widehat v_0$,
where $s_{0,\delta} = 1/\delta - R \le s^*_{0,\delta} = 1/\delta$;
\item $g_{\delta,v_m}(s):=e^{d|s-s_{m,\delta}|}$
on the domain $]-\infty, 1/\delta]$ of $\widehat v_m$,
with $s_{m,\delta} = 1/\delta - R \le s^*_{m,\delta} = 1/\delta$. 
\end{enum}
We denote the norm on the above $L^p$ space with weight
$g_{\delta,\oeps}$ by $\|\cdot\|_\delta$, omitting in the notation the 
dependence on the numbers $T_i+\epsilon_i$, $i=1,\ldots,m-1$. 
We 
define
a norm $\|\cdot\|_{1,\delta}$ on the space 
$$
W^{1,p}(\R\times S^1,G_{\delta,\oeps}(\widetilde w)^*T\widehat W;
g_{\delta,\oeps}(s)dsd\theta)
$$
as follows.  For $j=1\dots,m$ let
\begin{equation} \label{eq:kappa} 
\okappa_j = \frac{\langle \zeta(s_{u_j}-R, \cdot), X_H \rangle}
{\langle X_H, X_H \rangle}, \qquad 
\ukappa_j = \frac{\langle \zeta(s_{u_j}+R, \cdot),
  X_H \rangle}
{\langle X_H, X_H \rangle},
\end{equation} 
where $\langle \cdot, \cdot \rangle$ is the inner product in
$L^2(S^1)$. Here $s_{u_j}-R$ and $s_{u_j}+R$ are the coordinates of
the catenation circles between $\widehat u_j$ 
and $\widehat v_j$, respectively $\widehat u_j$ and $\widehat
v_{j-1}$. For $i = 1, \ldots, m-1$ let
$$
\kappa_i = \frac{\langle \zeta(s_{v_i}, \cdot),
\zeta_{i,\delta}(0,\cdot) \rangle} 
{\langle  \zeta_{i,\delta}(0,\cdot),  \zeta_{i,\delta}(0,\cdot) \rangle},
$$
where the section $\zeta_{i,\delta}$ generates the kernel of the operator
$D_{v_i}$ as in Proposition \ref{prop:Surj_udelta}.
The norm $\|\cdot\|_{1,\delta}$ is then defined by
\begin{eqnarray}
\lefteqn{\| \zeta \|_{1,\delta}:=} \nonumber \\ 
\! && \! \big\| \zeta - \sum_{j=1}^m 
\okappa_j
\beta(-s+s_{u_j}) \beta(s-s_{u_j}+2R)  X_H \label{eq:norm1delta} \\
&& \quad \qquad - \ \ukappa_j
\beta(s-s_{u_j}) \beta(-s+s_{u_j}+ 2R)  X_H  \nonumber \\
&& -\sum_{i=1}^{m-1} \kappa_i
\beta(s-s_{v_i}+\ell_i-2R) \beta(-s + s_{v_i} + \ell_i -2R)
\zeta_{i,\delta}(\cdot-s_{v_i},\cdot)
\big\|_{W^{1,p}(g_{\delta,\oeps})} \nonumber \\
&& + \sum_{j=1}^m \big(|\okappa_j | + | \ukappa_j |\big) + 
\sum_{i=1}^{m-1} | \kappa_i | . \nonumber 
\end{eqnarray}
Here $\ell_j$ is defined by~\eqref{eq:elli}, 
$\beta:\R\to [0,1]$ is\index{$\beta$, cutoff function} 
the smooth cutoff function which vanishes
on $]-\infty,0]$ and is equal to $1$ on $[1,\infty[$, and
$\|\cdot\|_{W^{1,p}(g_{\delta,\oeps})}$ is the $W^{1,p}$-norm with
weight $g_{\delta,\oeps}$ on $W^{1,p}(e^{d|s|}dsd\theta)$. The graph
of the function 
\begin{eqnarray*}
\lefteqn{\beta(-s+s_{u_j}) \beta(s-s_{u_j}+2R) +
\beta(s-s_{u_j}) \beta(-s+s_{u_j}+2R) } \\
&& + \beta(s-s_{v_j}+\ell_j-2R) \beta(-s + s_{v_j} + \ell_j -2R) \\
&& + \beta(s-s_{v_{j-1}}+\ell_{j-1}-2R) \beta(-s + s_{v_{j-1}} +
\ell_{j-1} -2R) 
\end{eqnarray*}
is depicted in Figure~\ref{fig:norm1delta}.   

\begin{remark} \label{rmk:norm1delta} \rm The definition of
  $\|\cdot\|_{1,\delta}$ is such that the norm of the gluing map $G$
  constructed in the proof of Proposition~\ref{prop:gluing} below is
  uniformly bounded with respect to $\delta\to 0$.  
\end{remark} 

\begin{figure}
         \begin{center}
\scalebox{1}{\input{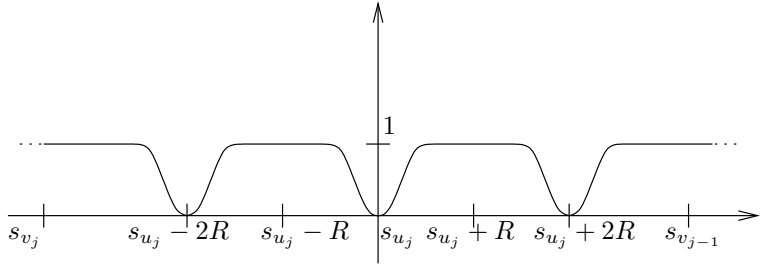}}
\caption{The definition of $\|\cdot\|_{1,\delta}$. \label{fig:norm1delta}}
         \end{center}
\end{figure}

\begin{proposition} \label{prop:catenation}
Let $\widetilde w\in \widetilde \cB_\delta$ and 
$\oeps(\delta):=(\epsilon_1(\delta),\ldots,\epsilon_{m-1}(\delta))$
be such that 
\begin{enum}
\item $\epsilon_i(\delta)\to 0$, $\delta\to 0$ for $i=1,\ldots,m-1$;
\item $u_i\in \cM^{A_i}(S_{\gamma_i},S_{\gamma_{i-1}};H,J)$, $i=1,\ldots,m$;
\item the components $v_i$ are of the form $u_{\delta,
\gamma_i,a_i,b_i,\epsilon_i}$, with $b_i=-a_i=T_i/2$ for $i=1,\ldots,m-1$, 
$b_0=+\infty$, $a_0=-1$, $\epsilon_0=0$ and $b_m=1$, $a_m=-\infty$,
$\epsilon_m=0$. 
\end{enum}
Then
$$
\lim_{\delta\to 0} \ \|\dbar_{H_\delta,J}(G_{\delta,\oeps}(\widetilde w))\|_\delta=0.
$$
\end{proposition}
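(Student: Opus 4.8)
The plan is to estimate $\dbar_{H_\delta,J}\big(G_{\delta,\oeps}(\widetilde w)\big)$ separately on the $2m+1$ (shifted) domains of the maps $\widehat v_m,\widehat u_m,\widehat v_{m-1},\dots,\widehat u_1,\widehat v_0$ out of which $G_{\delta,\oeps}(\widetilde w)$ is catenated, and to show that on each of them the $L^p$-norm with the corresponding weight tends to $0$ as $\delta\to0$; since $m$ is fixed (it is part of the data of $\widetilde w$) this yields the conclusion. Two elementary facts are used throughout: $\|H_\delta-H\|_{C^1}\le C\delta$, so $|X_{H_\delta}-X_H|\le C\delta$ uniformly on $\widehat W$ (with $X_{H_\delta}-X_H$ supported in $\bigcup_\gamma U_\gamma$); and the gluing profile $R=R(\delta)=\tfrac1{2d}\ln(1/\delta)$ is calibrated so that $e^{dR}=\delta^{-1/2}$.

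On the interior part $[-R+1,R-1]\times S^1$ of a Floer piece $\widehat u_i$ one has $\widehat u_i=u_i$, and since $u_i$ solves the Floer equation for $(H,J)$, $\dbar_{H_\delta,J}\widehat u_i=J(X_H-X_{H_\delta})(u_i)$ is pointwise $\le C\delta$; as the weight is $e^{d|s|}\le e^{dR}$ on $[-R,R]$, integration of $(C\delta)^pe^{dp|s|}$ over $[-R,R]$ gives $O(\delta^pe^{dpR})=O(\delta^{p/2})$. On each of the two collars $[\pm(R-1),\pm R]\times S^1$ the map $\widehat u_i$ differs from $u_i$ by the cut-off that replaces $u_i$ by its asymptotic circle, so $\dbar_{H_\delta,J}\widehat u_i$ is bounded there, in the weighted $L^p$-norm over the collar, by $C\delta\,e^{dR}=C\delta^{1/2}$ (the intrinsic term) plus a fixed multiple of the $W^{1,p}$-norm with weight $e^{d|s|}$ of the deviation of $u_i$ from that asymptotic circle over $[R-1,R]\times S^1$. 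This last quantity is the tail of the convergent weighted integral expressing $u_i\in\cB^{1,p,d}$ — equivalently, by the exponential asymptotic estimates of Proposition~\ref{prop:asymptotic} it is $O(e^{-\mu_0 R})$ with $\mu_0>d$ — and since $e^{-\mu_0 R}e^{dR}=e^{-(\mu_0-d)R}\to0$ it tends to $0$. This collar estimate, in which the growth $e^{dR}$ of the weight at the catenation circle is overcome by the decay of $u_i$ towards its asymptotic orbit, is the heart of the proof.

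For a finite gradient-cylinder piece $\widehat v_i$, $1\le i\le m-1$, recall that $v_i=u_{\delta,\gamma_i,a_i,b_i,\epsilon_i}$ satisfies $\dbar_{H'_{a_i,b_i,\epsilon_i},J}(v_i)=0$, that $h'_{a_i/\delta,b_i/\delta,\epsilon_i/\delta}\equiv1$ on $[-(T_i+\epsilon_i)/2\delta+1,(T_i+\epsilon_i)/2\delta-1]$ and that $1-h'_{a_i/\delta,b_i/\delta,\epsilon_i/\delta}$ is supported on the two collars $[\pm((T_i+\epsilon_i)/2\delta-1),\pm(T_i+\epsilon_i)/2\delta]$, which are exactly the regions where the cut-off in the definition of $\widehat v_i$ acts. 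On the interior part $h'=1$ forces $H'_{a_i,b_i,\epsilon_i}(s,\cdot)=H_\delta$, hence $\dbar_{H_\delta,J}\widehat v_i=\dbar_{H'_{a_i,b_i,\epsilon_i},J}v_i=0$ there exactly; this is where the particular form of $h'$ and the definition of the gradient cylinder are essential. On each collar $\dbar_{H_\delta,J}\widehat v_i$ equals the residual term $(1-h'_{a_i/\delta,b_i/\delta,\epsilon_i/\delta})\,J(X_{H_\delta}-X_H)(v_i)$, which is $\le C\delta$, plus the cut-off correction, which is also $O(\delta)$ because $v_i$ takes values in $S_{\gamma_i}$ (so its normal part vanishes identically) and its $\vartheta$-coordinate $\vartheta\circ c_i(\delta h_{\dots}(s))+\theta$ changes at speed $O(\delta)$ in $s$, hence deviates from the endpoint value of $c_i$ by $O(\delta)$ over a collar of $s$-length $1$; against the weight $e^{d\,||s|-s_{i,\delta}|}\le e^{dR}$ this contributes $O(\delta^{p/2})$. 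The semi-infinite pieces $\widehat v_0$ and $\widehat v_m$ are handled identically: on the half-line where $h'\equiv1$ one has $\dbar_{H_\delta,J}=0$, and the single length-$1$ collar contributes $O(\delta^{p/2})$.

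Adding the $O(\delta^{p/2})$ and $o(\delta^{p/2})$ contributions of the $2m+1$ pieces yields $\|\dbar_{H_\delta,J}(G_{\delta,\oeps}(\widetilde w))\|_\delta\to0$, which is the claim; all the bounds are uniform for $|\epsilon_i(\delta)|$ small, as granted by hypothesis~(i). The only genuinely delicate estimate is the collar estimate for the $\widehat u_i$, which is where the asymptotic analysis of the Appendix and the precise choice of gluing profile $R=\tfrac1{2d}\ln(1/\delta)$ enter.
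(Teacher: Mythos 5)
Your proof follows essentially the same approach as the paper's: split the domain of $G_{\delta,\oeps}(\widetilde w)$ into the interiors and the length-one collars of the $\widehat u_i$ and $\widehat v_i$, show the weighted $L^p$-norm on each is $o(1)$ as $\delta\to0$, and sum over the finitely many ($2m+1$) pieces. The key estimates are identical (perturbation term $O(\delta)$ against the worst-case weight $e^{dR}=\delta^{-1/2}$; exponential decay $e^{-rR}$, $r>d$, of the Floer pieces near their asymptotic circles supplied by Proposition~\ref{prop:asymptotic}; exact vanishing of $\dbar_{H_\delta,J}\widehat v_i$ on the interior of the gradient-cylinder pieces). One small notational caveat: in the collar estimate for $\widehat u_i$ you write ``the $W^{1,p}$-norm with weight $e^{d|s|}$ of the deviation \dots\ is $O(e^{-\mu_0 R})$'' and then multiply by $e^{dR}$; what is actually $O(e^{-\mu_0 R})$ with $\mu_0=r>d$ is the pointwise/$C^1$ size of the deviation over the collar, and it is \emph{this} that one multiplies by the weight $e^{dR}$ to obtain $e^{(d-r)R}\to0$ (a weighted $W^{1,p}$-norm would not be multiplied by the weight again). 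The substance of the argument is correct, matching case~(ii) of the paper's proof.
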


\begin{proof}
We must check that $\|\dbar_{H_\delta,J}(G_{\delta,\oeps}(\widetilde
w))|_{I\times S^1}\|_\delta\to 0$ as $\delta\to 0$ when $I\subset \R$
is an interval of the following type.
\begin{enum}
\item $I=[-R+1,R-1]$ is contained in the domain of $\widehat u_i$. Then
$\dbar_{H_\delta,J}(\widehat u_i)=-J(X_{H_\delta}-X_H)\circ \widehat
u_i$. The norm of this map is pointwise bounded by a constant multiple
of $\delta$. Hence its $\delta$-norm is bounded by a constant multiple
of $\delta e^{dR} \to 0$, $\delta\to 0$;
\item $I=[-R,-R+1]$ or $I=[R-1,R]$ is contained in the domain of 
$\widehat u_i$. We have
$\dbar_{H_\delta,J}(\widehat u_i) = \dbar_{H,J}(\widehat u_i)
-J(X_{H_\delta}-X_H)\circ \widehat u_i$. The second term is bounded as
in (i).
The term $\dbar_{H,J}(\widehat u_i)$ is pointwise bounded by
the norms of $z\circ \widehat u_i$, $\vartheta\circ \widehat u_i -
\theta$ and of their derivatives. By Proposition~\ref{prop:asymptotic}
their $\delta$-norm is bounded
by a constant multiple of $e^{(d-r)R}\to 0$, $\delta\to 0$;
\item $I=[- (T_i+\epsilon_i) / {2\delta}+1,(T_i+\epsilon_i)
  /{2\delta} -1]$ for $i=1,\ldots m-1$, or
$I=[-1/\delta+1,+\infty[$ or $I=]-\infty,1/\delta-1]$ and is contained
in the domain of some $\widehat v_i$. Since  
  $\dbar_{H'_{-T_i/2,T_i/2,\epsilon_i},J}(\widehat v_i) =0$ 
  and $H'_{-T_i/2,T_i/2,\epsilon_i} = H_\delta$ for $s \in I$,
  we already have $\|\dbar_{H_\delta,J}(G_{\delta,\oeps}(\widetilde
w))|_{I\times S^1}\|_\delta = 0$;  
\item $I=[- (T_i+\epsilon_i) / 2\delta,-(T_i+\epsilon_i)
  /2\delta +1]$ or $I=[(T_i+\epsilon_i) / 2\delta -1,(T_i+\epsilon_i) /2\delta]$
for $i=1,\ldots m-1$, or $I=[-1/\delta, -1/\delta+1]$, or
$I=[1/\delta-1, 1/\delta]$ and is contained in the domain of some
$\widehat v_i$. Then $\dbar_{H_\delta,J}(\widehat v_i)$ involves only
$\vartheta\circ \widehat v_i - \theta$, its derivative with respect to
$s$ and $\delta\nabla f_{\gamma_i}$. By formula~(\ref{eq:udelta}) the
norm of these expressions is pointwise bounded by a constant multiple
of $\delta$, therefore their $\delta$-norms are bounded by $\delta
  e^{dR}\to 0$ as $\delta\to 0$. 
\end{enum}
\end{proof}

%Alex rephrased the statement, replacing in particular
% $\|\cdot\|_\infty$ by $\|\cdot\|$ and 
% $\oV'$, $\uV'$ by $\oV$, $\uV$.

\begin{proposition} 
\label{prop:deltageom}
 Let $[\widetilde v_n] \in \cM^A(\og_p,\ug_q;H_{\delta_n},J)$ with
 $\delta_n\to 0$, $n\to \infty$ and let $[\u]\in
 \cM^A(p,q;H,\{f_\gamma\},J)$ be a broken Floer trajectory
 of level $\ell=1$ whose intermediate gradient fragments
 $c_1,\ldots,c_{m-1}$ are nonconstant. Then $[\widetilde v_n]\to[\u]$,
 $n\to\infty$ if and only if there exist 
\begin{itemize}
\item representatives $ v_n\in
 [\widetilde v_n]$, $\v\in[\u]$, 
\item real parameters
 $\oeps^n=(\epsilon_1^n,\ldots,\epsilon_{m-1}^n)$ with
 $\epsilon_i^n\to 0$, $n\to \infty$, 
\item vector fields 
$\zeta_n\in T_{G_{\delta_n,\oeps^n}(\v)}\cB_\delta$
  with
$\zeta_n=(\zeta_n^0,\overline \zeta_n,\underline \zeta_n)$, such that
$$
\|\zeta_n\|_{1,\delta_n} := \|\zeta_n^0\|_{1,\delta_n} + \|\overline
\zeta_n\| + \|\underline \zeta_n\| \to 0, \qquad n\to \infty,
$$
\end{itemize}
satisfying 
$$
v_n:=\exp_{G_{\delta_n,\oeps^n}(\v)}(\zeta_n).
$$
\end{proposition}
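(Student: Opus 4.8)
The plan is to prove the equivalence in two steps, using the $C^{0}$ estimates to feed the weighted Sobolev estimates, and to read off the parameters $\oeps^{n}$ and the representatives from the geometric convergence of Definition~\ref{def:convubar}.

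\emph{The easy implication.} Suppose $v_{n}=\exp_{G_{\delta_{n},\oeps^{n}}(\v)}(\zeta_{n})$ with $\epsilon_{i}^{n}\to 0$ and $\|\zeta_{n}\|_{1,\delta_{n}}\to 0$. The first observation is that the gluing weight $g_{\delta,\oeps}$ built from the functions following Proposition~\ref{prop:catenation} satisfies $g_{\delta,\oeps}\ge 1$ everywhere, so the weighted $W^{1,p}$-norm dominates the unweighted one on any fixed compact set; since the correction sections subtracted in~\eqref{eq:norm1delta} are uniformly $C^{0}$-bounded while their coefficients $\okappa_{j},\ukappa_{j},\kappa_{i}$ tend to $0$, the Sobolev embedding ($p>2$) forces $\zeta_{n}\to 0$ in $C^{0}$ on compact sets. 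On the other hand, with the shifts $s_{u_{i}},s_{v_{i}}$ of~\eqref{eq:shifts}, on any fixed compact interval the pre-glued curve $G_{\delta_{n},\oeps^{n}}(\v)$ eventually coincides with (a fixed translate of) $u_{i}$, resp.\ with the gradient cylinder $v_{i}$, and $\epsilon_{i}^{n}\to 0$; hence $G_{\delta_{n},\oeps^{n}}(\v)(\cdot+s_{u_{i}},\cdot)\to u_{i}$ uniformly on compact sets. Composing, $v_{n}(\cdot+s_{u_{i}},\cdot)\to u_{i}$, i.e.\ $[\widetilde v_{n}]\to[\u]$.

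\emph{The hard implication: extracting the data.} Assume $[\widetilde v_{n}]\to[\u]$. Fix representatives $v_{n}$ and shifts $(s_{i}^{n})$, $i=1,\dots,m$, with $v_{n}(\cdot+s_{i}^{n},\cdot)\to u_{i}$ uniformly on compact sets. By Lemma~\ref{lem:cornershift} there are shifts at which $v_{n}$ converges to the constant cylinders over $\uev(u_{i})$ and $\oev(u_{i-1})$; the analysis in the proof of Lemma~\ref{lem:connect} then shows that the portion of $v_{n}$ in between stays in a tubular neighbourhood of $S_{\gamma_{i-1}}$, that its $\vartheta$-component tracks the flow $\varphi^{f_{\gamma_{i-1}}}$ reparametrized by $\delta_{n}$, and that $\delta_{n}$ times the neck length converges to the length $T_{i-1}$ of $c_{i-1}$ — here the hypotheses $\ell=1$ and that $c_{1},\dots,c_{m-1}$ are nonconstant guarantee that each neck contributes a single honest gradient fragment, so no further breaking occurs. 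Defining $\epsilon_{i}^{n}$ as the difference between $\delta_{n}$ times the actual neck length and $T_{i}$, we get $\epsilon_{i}^{n}\to 0$ and the pre-gluing parameters $\ell_{i}=R(\delta_{n})+(T_{i}+\epsilon_{i}^{n})/2\delta_{n}$ as in~\eqref{eq:elli}. Translating the representatives of the $u_{i}$ and reparametrizing the $c_{i}$ (this uses exactly the shift freedom in each piece) we may arrange that at the catenation circles $s=s_{u_{j}}\pm R$ and $s=s_{v_{i}}$ the curve $v_{n}$ and $G_{\delta_{n},\oeps^{n}}(\v)$ sit over orbits of $S_{\gamma_{j}}$ whose distance tends to $0$. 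This fixes $\v\in[\u]$ and $\oeps^{n}$; combining it with the asymptotic estimates of the Appendix (Propositions~\ref{prop:asymptotic} and~\ref{prop:asymptoticdelta}) gives that the $C^{0}$ distance between $v_{n}$ and $G_{\delta_{n},\oeps^{n}}(\v)$ tends to $0$, so that $\zeta_{n}:=\exp^{-1}_{G_{\delta_{n},\oeps^{n}}(\v)}(v_{n})$ is well defined for $n$ large.

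\emph{The hard implication: the weighted estimate.} It remains to prove $\|\zeta_{n}\|_{1,\delta_{n}}\to 0$. One estimates $\zeta_{n}$ block by block along the catenation $\widehat v_{m},\widehat u_{m},\dots,\widehat u_{1},\widehat v_{0}$. On each $\widehat u_{i}$-block, $v_{n}$ and $u_{i}$ both converge exponentially with rate $r$ to their common asymptotic orbit in $S_{\gamma_{i}}$, $S_{\gamma_{i-1}}$ (Proposition~\ref{prop:asymptotic}), while the weight there is $e^{d|s|}$ with $d<r$ (Proposition~\ref{prop:asymptoticdelta}); this controls the weighted $W^{1,p}$-norm of $\zeta_{n}$ on the block once the $X_{H}$-direction at the catenation circles is split off, which is precisely the role of the coefficients $\okappa_{j},\ukappa_{j}$, and these themselves tend to $0$ by the matching arranged above. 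On each neck block $\widehat v_{i}$ one invokes Proposition~\ref{prop:intervaldelta}: the $Q_\infty$-component of $v_{n}$ decays exponentially in $|s-s_{i,\delta}|$ from the two ends of the neck at a rate exceeding $d$, hence is integrable against the peaked weight $e^{d\left||s|-s_{i,\delta}\right|}$, while the slow $P_\infty$-component differs from the gradient cylinder $v_{i}$ only by a multiple of the reparametrization section $\zeta_{i,\delta}$ of Proposition~\ref{prop:Surj_udelta} plus a $Q_\infty$-controlled error; that multiple is exactly the coefficient $\kappa_{i}$ subtracted in~\eqref{eq:norm1delta}, and $\kappa_{i}\to 0$ again by the matching. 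The semi-infinite blocks $\widehat v_{0},\widehat v_{m}$ are handled as the $\widehat v_{i}$-blocks together with the standard exponential convergence near $q$, $p$, which also forces the finite-dimensional pieces $\overline\zeta_{n}$, $\underline\zeta_{n}$ to tend to $0$. Assembling these contributions gives $\|\zeta_{n}^{0}\|_{1,\delta_{n}}\to 0$ and $|\okappa_{j}^{n}|+|\ukappa_{j}^{n}|+|\kappa_{i}^{n}|\to 0$, hence $\|\zeta_{n}\|_{1,\delta_{n}}\to 0$.

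\emph{Main obstacle.} The crux is the neck estimate. Because the asymptotic operator $A_\infty$ has one-dimensional kernel, the naive exponential-decay argument applies only to $Q_\infty\zeta_{n}$; the $P_\infty$-part does not decay but follows the slow Morse flow, and it is for this reason that the norm~\eqref{eq:norm1delta} is built from the peaked weights $g_{\delta,\oeps}$ together with the correction sections $X_{H}$ and $\zeta_{i,\delta}$. Getting the bookkeeping right — deciding which Appendix estimate governs which block, and verifying that, after the representatives of the $u_{i}$ and $c_{i}$ are normalized and the $\epsilon_{i}^{n}$ are \emph{chosen} so the necks of $G_{\delta_{n},\oeps^{n}}(\v)$ have exactly the length dictated by $v_{n}$, all the correction coefficients genuinely vanish in the limit — is the technical heart of the argument; it is the freedom in $\oeps^{n}$ that decouples the gluing parameters from the intrinsic data of $[\u]$.
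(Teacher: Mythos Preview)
Your proposal is correct and follows essentially the same route as the paper: the block decomposition along $\widehat v_m,\widehat u_m,\dots,\widehat v_0$, the $Q_\infty/P_\infty$ splitting fed by Proposition~\ref{prop:intervaldelta} on the necks, and the absorption of the slow $P_\infty$-drift into the correction coefficients $\okappa_j,\ukappa_j,\kappa_i$ of~\eqref{eq:norm1delta} are exactly the paper's argument. The one slip is that on the $\widehat u_i$-blocks you invoke Proposition~\ref{prop:asymptotic} for $v_n$, but $v_n$ solves the $H_{\delta_n}$-equation; the paper instead applies Proposition~\ref{prop:intervaldelta} on the enlarged interval $[s_{i+1}^n+K,\,s_i^n-K]$ and then takes the double limit $n\to\infty$ followed by $K\to\infty$ (using $d<\rho p$) to kill the remaining $C(K)^{p}e^{dK}$ term.
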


\begin{proof}
 We first prove the converse implication, namely that convergence in
 norm implies geometric convergence. We define shifts $(s_i^n)$,
 $i=1,\ldots,m$ inductively by
$$
s_m^n:=1/\delta_n + R_n, \qquad s_i^n:= s_{i+1}^n + 2R_n +
(T_i+\epsilon_i^n)/\delta_n. 
$$
We claim that $v_n(\cdot + s_i^n,\cdot)\to u_i$, $n\to \infty$ 
uniformly on compact sets. Let $R_0>0$ be fixed. By assumption
$$
\|\zeta_n^0(\cdot +
s_i^n,\cdot)|_{[-R_0,R_0]\times
  S^1}\|_{1,\delta_n}\to 0,   
\quad n\to \infty.
$$
By the Sobolev embedding theorem this implies 
$$
\|\zeta_n^0(\cdot +
s_i^n,\cdot)|_{[-R_0,R_0]\times
  S^1}\|_{C^0}\to 0, \quad n\to \infty. 
$$
Since
$$
G_{\delta_n,\oeps^n}(\v)(\cdot+s_i^n,\cdot)|_{[-R_0,R_0]\times
  S^1}=u_i|_{[-R_0,R_0]\times S^1}
$$ 
for $n$ sufficiently large, the conclusion follows. 
 
We now prove the direct implication. Let us pick a representative 
$$\v=(c_m,u_m,c_{m-1},\ldots,u_1,c_0)\in[\u]$$
and let $T_i$, $i=0,\ldots,m$ be the lengths of the intervals of
definition of $c_i$, with the convention $T_0=T_m=+\infty$.
We also choose arbitrary representatives $v_n\in[\widetilde v_n]$.  
By assumption there exist shifts $(s_i^n)$ such that
$v_n(\cdot+s_i^n,\cdot)$ converges to $u_i$ uniformly on compact
sets. We define 
\begin{equation}
 \epsilon_i^n := \delta_n(s_i^n-s_{i+1}^n-2R_n) - T_i, \qquad
 i=1,\ldots,m-1. 
\end{equation}
By Lemma~\ref{lem:connect} we have $\epsilon_i^n\to 0$, $n\to\infty$. 
We define partitions of the real line 
$$
-\infty=a_m^n \le b_m^n \le a_{m-1}^n \le \ldots \le
a_0^n \le
b_0^n = +\infty  
$$
by $b_m^n:=1/\delta_n$ and 
$$
a_{i-1}^n:=b_i^n + 2R_n, \quad b_{i-1}^n:= a_{i-1}^n +
(T_{i-1}+\epsilon_{i-1}^n)/\delta_n, \quad  i=1,\ldots,m.  
$$
We define a sequence of shifts $(s^n)$ by 
$$
s^n:=s_m^n - 1/\delta_n - R_n
$$
and we still denote by $v_n$ the shifted sequence
$v_n(\cdot+s^n,\cdot)$. 

\medskip 

We first show the existence of a unique
vector field $\zeta_n$ satisfying
$v_n=\exp_{G_{\delta_n,\oeps^n}(\v)}(\zeta_n)$. For that it is enough
to prove 
\begin{equation} \label{eq:ucI}
\lim_{n\to \infty} \sup_{s\in I_n,\theta\in S^1}
\mathrm{dist}(v_n(s,\theta), G_{\delta_n,\oeps^n}(\v)(s,\theta)) =0,
\end{equation}
where $I_n$ is an interval of the following form:
\begin{enum}
 \item $[b_i^n,a_{i-1}^n]$, $i=1,\ldots,m$;
 \item $[a_i^n,b_i^n]$, $i=1,\ldots,m-1$;
 \item $[b_m^n-K/\delta_n,b_m^n]$ or $[a_0^n,a_0^n+K/\delta_n]$, for
 any $K>0$.
\end{enum} 
The asymptotic behaviour of $v_n$ and $G_{\delta_n,\oeps^n}(\v)$
ensures that $\zeta_n$ is an element of the relevant $W^{1,p}$-space. 

We prove case (i) by contradiction. Assume that there exists
$\epsilon>0$ and a sequence $(\widetilde s_n,\widetilde \theta_n)\in
[b_i^m,a_{i-1}^n]\times S^1$ such that 
$$
\textrm{dist}(v_n(\widetilde s_n,\widetilde
\theta_n),G_{\delta_n,\oeps^n}(\v)(\widetilde s_n,\widetilde
\theta_n)) \ge \epsilon.
$$
Since~\eqref{eq:ucI} is satisfied if one replaces $v_n$ by
$u_i(\cdot-s_i^n,\cdot)$ (by definition of
$G_{\delta_n,\oeps^n}(\v)$), we also have 
\begin{equation} \label{eq:vntoui}
\textrm{dist}(v_n(\widetilde s_n,\widetilde
\theta_n),u_i(\widetilde s_n-s_i^n,\widetilde
\theta_n)) \ge \epsilon/2
\end{equation}
for $n$ large enough. By the assumption of uniform convergence on
compact sets $v_n(\cdot+s_i^n,\cdot)\to u_i(\cdot,\cdot)$, up to
passing to a subsequence we can assume that 
$\widetilde s_n-s_i^n\to\pm\infty$. 
We treat the case $\widetilde s_n-s_i^n\to\infty$, the other case
being similar. Since $\widetilde s_n\in [b_i^n,a_{i-1}^n]$ and
$\delta_n (a_{i-1}^n-b_i^n)=2\delta_n R_n\to 0$, we have
$\delta_n(\widetilde s_n-s_i^n)\to 0$. By Lemma~\ref{lem:cornershift}
we infer that $v_n(\cdot+\widetilde s_n,\cdot)\to \uev(u_i)$, which
means 
$$
\lim_{n\to\infty} v_n(\widetilde s_n,\cdot) =
\lim_{n\to\infty}u_i(\widetilde s_n-s_i^n,\cdot)
$$
and this contradicts~\eqref{eq:vntoui}. 

Note that the above proof shows that $v_n(\cdot+a_{i-1}^n,\cdot)\to
\uev(u_i)$ and $v_n(\cdot+b_i^n,\cdot)\to \oev(u_i)$, $i=1,\ldots,m$
uniformly on compact sets. 

We now prove case (ii). Let us fix $1\le i \le m-1$. An action
argument as the one in the proof of Lemma~\ref{lem:connect} shows that
$v_n(I_n\times S^1)$ is entirely contained in a small neighbourhood of
$S_{\gamma_i}$. We apply Proposition~\ref{prop:intervaldelta} to $v_n$
and $I_n\times S^1$ to obtain 
$$
\lim_{n\to\infty} \sup_{(s,\theta)\in I_n\times S^1} |z\circ v_n
(s,\theta)| =0
$$
and 
$$
\lim_{n\to\infty} \sup_{(s,\theta)\in I_n\times S^1} |\vartheta\circ v_n
(s,\theta) - \theta -
\varphi_{\delta_n(s-a_i^n)}^{f_{\gamma_i}}(\uev(u_{i+1}))| =0.
$$
The same two equations hold, by definition, if one replaces $v_n$ by
$G_{\delta_n,\oeps^n}(\v)$, and the conclusion follows. 

We now prove (iii). We treat only the case 
$I_n=[a_0^n,a_0^n+K/\delta_n]$, the other case being similar. 
An action argument as above shows that
$v_n(\cdot+a_0^n+K/\delta_n,\cdot)$ converges uniformly on compact
sets to a constant cylinder over some orbit $\gamma\in
S_{\gamma_0}$. By Lemma~\ref{lem:connect} we know that
$\gamma=\varphi_K^{f_{\gamma_0}} (\uev(u_1))$, and in particular is
not a critical point of $f_{\gamma_0}$. Now the conclusion follows in
the same way as in case (ii). 

\medskip 

We now show that
\begin{equation} \label{eq:In}
\lim _{n\to \infty} \| \zeta_n|_{I_n\times S^1} \|_{1,\delta_n} =0
\end{equation} 
in each of the cases (i)-(iii). We denote in the sequel
$$
|\zeta(s,\theta)|_1 := |\zeta(s,\theta)| + |\nabla _s
\zeta(s,\theta)| + |\nabla_\theta \zeta(s,\theta)| .
$$

We first consider case (i). Let us fix $K>0$ large enough. For $n$
large enough we can write  
$$
I_n=[s_i^n-R_n,s_i^n-K] \cup [s_i^n-K,s_i^n+K] \cup
[s_i^n+K,s_i^n+R_n]. 
$$

We first note that 
\begin{eqnarray*} \label{eq:estimatemiddle}
\int_{s_i^n-K}^{s_i^n+K} |\zeta(s,\theta)|_1^p \ 
  g_{\delta_n,\oeps^n}(s) dsd\theta &=&  \int_{s_i^n-K}^{s_i^n+K}
  |\zeta(s,\theta)|_1^p \ e^{d|s-s_i^n|}  dsd\theta \\ 
&\le & \sup_{\stackrel{\theta\in S^1} {s\in[s_i^n-K,s_i^n+K]}} 
 |\zeta(s,\theta)|_1^p \cdot e^{dK}.
\end{eqnarray*}
Since $v_n(\cdot+s_i^n,\cdot)$ and
$G_{\delta_n,\oeps^n}(\v)(\cdot+s_i^n,\cdot)$ converge uniformly on
compact sets together with their derivatives to $u_i$, the last term
goes to zero as $n\to\infty$. 

In order to estimate the integral on the interval
$[s_i^n-R_n,s_i^n-K]$ we apply Proposition~\ref{prop:intervaldelta}
on $[s_{i+1}^n+K,s_i^n-K]$ to $v_n$ to obtain 
$$
|z\circ v_n(s,\theta)|_1 \le  C(K) 
\frac {\cosh(\rho(s-\frac
{s_{i+1}^n+s_i^n}2))} {\cosh(\rho( \frac {s_i^n - s_{i+1}^n} 2
-K))}  \le  C_1C(K)e^{\rho(s-s_i^n+K)} 
$$
and
$$
|\vartheta\circ
v_n(s,\theta)-\theta-\varphi_{\delta_n(s-b_i^n)}^{f_{\gamma_i}}(p_i^n)|_1
\le  C_1C(K)e^{\rho(s-s_i^n+K)}, 
$$
where $|\cdot|_1$ stands for the pointwise $C^1$-norm, for some
$p_i^n \in S_{\gamma_i}$ such that $p_i^n \to \oev(u_i)$, $n \to \infty$.
 Similar estimates hold, by definition, if one replaces $v_n$ by
$G_{\delta_n,\oeps^n}(\v)$ and $p_i^n$ with $\oev(u_i)$.
Hence we obtain 
\begin{equation} \label{eq:zetan}
|\zeta_n(s,\theta) - \okappa_i^n X_H |_1
\le  C_1C(K)e^{\rho(s-s_i^n+K)},
\end{equation}
where $\okappa_i^n \to 0$ as $n \to \infty$ and
\begin{eqnarray}
\lefteqn{C(K) = C\max(\|Q_\infty v_n(s_{i+1}^n+K)\|, 
\|Q_\infty v_n(s_i^n-K)\|,} \nonumber \\
&& \qquad \qquad \|Q_\infty \widetilde v_n(s_{i+1}^n+s_n+K)\|, 
\|Q_\infty \widetilde v_n(s_i^n+s_n-K)\|). \label{eq:CK} 
\end{eqnarray}
We obtain 
\begin{eqnarray*} \label{eq:estimateup}
\lefteqn{ \int_{s_i^n-R_n}^{s_i^n-K} |\zeta_n(s,\theta)  
- \okappa_i^n X_H |_1^p \ g_{\delta_n}(s) dsd\theta} \\
& = & \int_{s_i^n-R_n}^{s_i^n-K} |\zeta_n(s,\theta)  - \okappa_i^n X_H |_1^p \ 
e^{-d(s-s_i^n)} dsd\theta \\
& \le & C_2C(K)^pe^{dK}.  
\end{eqnarray*}
A similar estimate holds when the interval of integration is 
$[s_i^n+K,s_i^n+R_n]$, with $C(K)$ replaced with $C'(K)$. 
Letting $n\to \infty$ we obtain 
\begin{eqnarray*}
 \lim _{n\to \infty} && \hspace{-1cm} \int_{I_n\times S^1}
|\zeta_n(s,\theta)  - \okappa_i^n \beta(-s+s_i^n) X_H  
- \ukappa_i^n \beta(s-s_i^n) X_H |_1^p \ 
g_{\delta_n}(s) dsd\theta  \\
&\le& C_2(C(K)^p+C'(K)^p) e^{dK}. 
\end{eqnarray*}

We let now $K\to \infty$. Proposition~\ref{prop:intervaldelta} implies
that, for $K>K'$, we have $C(K')\le C_3C(K)e^{-\rho(K'-K)}$, hence
$C(K)^p e^{dK}\to 0$ as $K\to \infty$ because $d<\rho p$. 
The equality \eqref{eq:In} follows.

We now consider case (ii). We fix $K>0$
large enough and apply Proposition~\ref{prop:intervaldelta} on the
interval 
$[s_{i+1}^n+K,s_i^n-K] \supset [s_{i+1}^n+R_n,s_i^n-R_n]=I_n$ to
obtain as in case (i) 
\begin{equation*}
|\zeta_n(s,\theta) - \kappa_i^n \zeta_{i,\delta}(s,\theta) |_1 \le
C(K) \frac {\cosh(\rho(s-\frac
{s_{i+1}^n+s_i^n}2))} {\cosh(\rho( \frac {s_i^n - s_{i+1}^n} 2
-K))},
\end{equation*}
where $C(K)$ is given by~\eqref{eq:CK},
$\zeta_{i,\delta}(s-\frac {s_{i+1}^n+s_i^n} 2)$ generates the kernel
of the linearized operator 
corresponding to gradient trajectory $c_i$ as in Proposition 
\ref{prop:Surj_udelta} and $\kappa_i^n \zeta_{i,\delta}(b_i^n,\cdot) 
= \okappa_i^n X_H$. In particular, we have
$\kappa_i^n \to 0$, $n \to \infty$. We get 
$$
\int_{s_{i+1}^n+R_n}^{s_i^n-R_n} 
|\zeta_n(s,\theta) - \kappa_i^n \zeta_{i,\delta} |_1^p \
g_{\delta_n}(s) dsd\theta 
\le  C_2C(K)^p e^{(d-\rho p)(R_n-K)}.  
$$  
The last term goes to zero as $n\to \infty$.
Equality~\eqref{eq:In} follows now as in case~(i). Case (iii) is
entirely similar to case (ii).  

\medskip 

In order to complete the proof of $\|\zeta_n\|_{1,\delta_n}\to 0$,
$n\to \infty$, it is enough to show that $\|\zeta_n|_{I_n\times
  S^1}\|_{1,\delta_n}\to 0$ if $I_n=]-\infty,b_m^n-K/\delta_n]$ or
$I_n=[a_0^n+K/\delta_n,+\infty[$, for any $K>1$. The two
cases are entirely similar and we give the argument only for
$I_n=]-\infty,b_m^n-K/\delta_n]$. By
Proposition~\ref{prop:asymptoticdelta}, for $n$ sufficiently large we
have  
$v_n(s,\theta)=\exp_{u_{\delta_n,\gamma_m,-\infty,1}(s,\theta)}
(\eta_n(s,\theta))$, with $\eta_n=(\eta_n^0,\overline \eta_n)$, 
$\eta_n^0 \in W^{1,p}(I_n \times
S^1,u_{\delta_n,\gamma_m,-\infty,1}^*T\widehat
W;e^{r|s|}ds\;d\theta)$, $\overline \eta_n\in \oV'$. Since
$v_n(b_n^m,\cdot)\to \oev(u_m)$ we have $\|\overline \eta_n\|_\infty
\to 0$. Since
$G_{\delta_n,\oeps^n}(\v)=u_{\delta_n,\gamma_m,-\infty,1}$ on $I_n$,
we obtain $\zeta_n=\eta_n$, so that $\|\overline \zeta_n\|\to 0$. The
fact that $\|\zeta_n^0\|_{1,\delta_n}\to 0$ follows from the fact that
$d<r$.  
\end{proof}

\medskip

We explain now how to construct a right inverse for
$D_{G_{\delta,\oeps}(\widetilde w)}$ which is uniformly bounded with respect
to $\delta\to 0$. The space $\widetilde \cB_\delta$ is a Banach
manifold whose tangent space at $\widetilde w$ is
\begin{equation} \label{eq:Ttw}
T_{\widetilde w} \widetilde\cB_\delta
=  T_{v_m} \cB'_\delta  \ _{d\uev}\oplus_{d\oev} T_{u_m} \cB \
_{d\uev}\oplus_{d\oev} T_{v_{m-1}} \cB'_\delta \ _{d\uev}\oplus
\ldots \oplus_{d\oev} T_{v_0} \cB'_\delta.
\end{equation}
Recall that the fibered sum of two vector spaces $W_1$, $W_2$ with
respect to linear maps $f_i:W_i\to W$ is the vector space
$$
W_1 \ _{f_1}\oplus_{f_2} W_2 := \{(w_1,w_2)\in W_1\oplus W_2  :
f_1(w_1)=f_2(w_2)
\}.
$$
If $(W_1,\|\cdot\|_1)$, $(W_2,\|\cdot\|_2)$ and $W$ are normed
vector spaces, and $f_1$, $f_2$ are continuous linear maps, then 
$W_1 \ _{f_1}\oplus_{f_2} W_2$ is a closed subspace of $W_1\oplus
W_2$ and inherits the norm $\|\cdot\|_1 +\|\cdot\|_2$ from $W_1\oplus W_2$.
In our case
$$
d\uev: T_{v_m} \cB'_\delta = W^{1,p,d}\oplus\oV'\oplus \uV' \to
T_{\uev(v_m)}S_{\gamma_m}
$$
factors through the projection on $\uV'$, and similarly for the other
evaluation maps.
Therefore the above fibered sum only affects the summands $\oV$,
$\uV$, $\oV'$, $\uV'$, so that $T_{\widetilde w} \widetilde\cB_\delta$
is a subspace of codimension $2m$ in
$$
T_{v_m} \cB'_\delta  \oplus T_{u_m} \cB
\oplus T_{v_{m-1}} \cB'_\delta\oplus
\ldots \oplus T_{v_0} \cB'_\delta.
$$
As above, the norm on $T_{\tw}\widetilde \cB_\delta$ is induced from the
ambient space. Recall that the $W^{1,p}$-component has  
weight $e^{d|s|}$ for each $T_{u_j}\cB$, weight $e^{d\left| |s|-s_{i,\delta}\right|}$ 
for each $T_{v_i}\cB'_\delta$, $i=1, \ldots, m-1$, 
weight $e^{d\left| s+s_{0,\delta}\right|}$
for $i=0$ and weight $e^{d\left| s-s_{m,\delta}\right|}$
for $i=m$, with $s_{i,\delta}$ as in the definition of $g_{\delta,\oeps}$. 

The sections $\dbar_{H,J}:\cB \to \cE$ and $\dbar:\cB'_\delta\to \cE$
defined by~\eqref{eq:dbarHdelta} and~\eqref{eq:dbarprime} give rise to
a section over $\widetilde \cB_\delta$. We denote its
vertical differential
by
$$
D_{\widetilde w}:T_{\widetilde w} \widetilde\cB_\delta \to
L^{p,d}(v_m^*T\widehat W) \oplus L^{p,d}(u_m^*T\widehat W) \oplus
\ldots \oplus
L^{p,d}(v_0^*T\widehat W),
\index{$D_{\widetilde w}$, fibered sum linearized operator}
$$
where
$$
L^{p,d}(v_i^*T\widehat W):= L^p(\R\times S^1,v_i^*T\widehat
W;g_{\delta,\epsilon_i,v_i}(s)dsd\theta),
$$
$$
L^{p,d}(u_i^*T\widehat W):= L^p(\R\times S^1,u_i^*T\widehat
W;g_{\delta,u_i}(s)dsd\theta).
$$

%We recall that the condition $\{f_\gamma\}\in\Freg(H,J)$ ensures that
%the pairs of evaluation maps $(\oev,\uev)$ defined on the same moduli
%space $\cM^A(S_{\og},S_{\ug};H,J)$ are transverse to
%the cells $W^u(p)\times W^s(q)$, $p\in \textrm{Crit}(f_{\og})$,
%$q\in \textrm{Crit}(f_{\ug})$. In particular $\oev$, $\uev$ are
%transverse to the unstable, respectively stable manifolds of the
%critical points of $f_{\og}$, $f_{\ug}$ (see Lemma~\ref{lem:Freg}).

\begin{lemma} \label{lem:Dwtilde}
Let $J\in\Jreg(H)$ and
  $\{f_\gamma\}\in\Freg(H,J)$. Let
  $\oeps=\!(\epsilon_1,\ldots,\epsilon_{m-1})$ and let $\widetilde w \in
  \widetilde \cB_\delta$ be as in
  Proposition~\ref{prop:catenation}. 
The image of the operator $D_{\widetilde w}$ has codimension
$m-1$ and admits a complement spanned by sections
$\eta_i \in
L^{p,d}(v_i^*T\widehat W)$, $i=1,\ldots,m-1$ which
are respectively supported in
$$
[-(T_i+\epsilon_i)/2\delta, -(T_i+\epsilon_i)/2\delta +1] \times S^1 \cup 
[(T_i+\epsilon_i)/2\delta-1,(T_i+\epsilon_i)/2\delta]\times S^1\!.
$$
The
operator $D_\tw$ admits a right inverse $Q_\tw$ defined on its 
image and whose norm is uniformly bounded with respect to 
$\delta\to 0$.
\end{lemma}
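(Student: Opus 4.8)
The plan is to realize $D_{\widetilde w}$ as the restriction of an \emph{uncoupled} operator to the fibre-product subspace of~\eqref{eq:Ttw} and to read off its cokernel by a snake-lemma argument. Set
$$
\mathbf D \ := \ \bigoplus_{i=1}^{m} D_{u_i} \ \oplus \ \bigoplus_{i=0}^{m} D'_{v_i}
$$
on the full product $\bigoplus_{i=1}^{m} T_{u_i}\cB \oplus \bigoplus_{i=0}^{m} T_{v_i}\cB'_\delta$, inside which $T_{\widetilde w}\widetilde\cB_\delta$ is, by construction, the subspace of codimension $2m$ cut out by the matching conditions $d\oev(v_{i-1}) = d\uev(u_i)$, $d\uev(v_i) = d\oev(u_i)$, $i=1,\dots,m$; the norms on $T_{\widetilde w}\widetilde\cB_\delta$ and on the target of $D_{\widetilde w}$ are those induced from the product (weight $e^{d|s|}$ on each $u_i$-block, the peaked weights $g_{\delta,\epsilon_i,v_i}$ and the compensated norm~\eqref{eq:norm1delta} on the $v_i$-blocks). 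First I would gather the properties of the factors: by Proposition~\ref{prop:Jreg} each $D_{u_i}$ is surjective, and since $\cB$ and its weight are independent of $\delta$ it has a $\delta$-independent right inverse; by Proposition~\ref{prop:Surjectivity_udelta} the semi-infinite operators $D'_{v_0}$, $D'_{v_m}$ are surjective with right inverses bounded uniformly in $\delta$; and by Proposition~\ref{prop:support_of_section} each $D'_{v_i}$ with $1\le i\le m-1$ has index $1$, two-dimensional kernel, one-dimensional cokernel spanned by the section $\eta_i$ supported near $|s|=(T_i+\epsilon_i)/2\delta$, and a uniformly bounded right inverse on its image. Hence $\mathbf D$ is onto a subspace of codimension $m-1$ whose complement is $\bigoplus_{i=1}^{m-1}\langle\eta_i\rangle$, and it admits a uniformly bounded right inverse $\mathbf Q$ on its image.

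Next I would run the snake lemma. Let $N$ be the direct sum of the $2m$ lines $TS_{\gamma(c)}$, one for each catenation circle $c$ (so $N\cong\R^{2m}$), and let $\rho$ record the $2m$ mismatches, so that
$$
0 \longrightarrow T_{\widetilde w}\widetilde\cB_\delta \longrightarrow \bigoplus_{i=1}^{m} T_{u_i}\cB \oplus \bigoplus_{i=0}^{m} T_{v_i}\cB'_\delta \stackrel{\rho}{\longrightarrow} N \longrightarrow 0
$$
is exact; $\rho$ is onto because, as observed after~\eqref{eq:Ttw}, each evaluation differential factors through and surjects onto the finite-dimensional summands $\oV,\uV,\oV',\uV'$. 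Applying the snake lemma to this row, the trivial row $0\to\bigoplus L^{p,d}=\bigoplus L^{p,d}\to 0$, and the vertical maps $D_{\widetilde w}$, $\mathbf D$, $0$ gives
$$
0 \to \ker D_{\widetilde w} \to \ker\mathbf D \stackrel{\rho}{\to} N \to \coker D_{\widetilde w} \to \coker\mathbf D \to 0 ,
$$
the second arrow being the mismatch map restricted to $\ker\mathbf D$. Consequently, if this restriction is surjective then $\im D_{\widetilde w}=\im\mathbf D$ and $\coker D_{\widetilde w}=\coker\mathbf D=\bigoplus_{i=1}^{m-1}\langle\eta_i\rangle$: this is both the codimension $m-1$ and the description of a complement of $\im D_{\widetilde w}$ by the $\eta_i$, sitting in the $v_i$-summands of the target with the asserted support. (For the record $\ind D_{\widetilde w}=\ind\mathbf D-2m=1$ by the index formulas already established and Lemma~\ref{lem:maslov}, so $\dim\ker D_{\widetilde w}=m$, the expected dimension of the parametrized moduli space.)

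So the crux is the surjectivity of $\rho|_{\ker\mathbf D}$ onto $N$. Here I would identify the individual kernels: $\ker D_{u_i}\cong T_{[u_i]}\cM^{A_i}(S_{\gamma_i},S_{\gamma_{i-1}};H,J)$ together with the Floer $\R$-shift; $\ker D'_{v_0}$ and $\ker D'_{v_m}$ with the tangent spaces of $W^s(q)\subset S_{\ug}$ and $W^u(p)\subset S_{\og}$ (from the description of kernels in the proof of Proposition~\ref{prop:Surjectivity_udelta}); and, for $1\le i\le m-1$, $\ker D'_{v_i}$ with the two-plane spanned by the translation of the gradient segment $c_i$ and by the variation of its flow time $T_i$ (Proposition~\ref{prop:support_of_section}). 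Unwinding the requirement that the images of these kernels under the evaluation differentials jointly fill $N$, it becomes exactly the transversality of the $\oev$ to the $W^u(p)$, of the $\uev$ to the $W^s(q)$, and of the fibre-product evaluation maps~\eqref{eq:transvf} to the products $W^u(p)\times W^s(q)$ — the conditions defining $\{f_\gamma\}\in\Freg(H,J)$ — supplemented, when a simple asymptotic orbit is involved, by the transversality built into $J\in\Jreg(H)$. Since as $\delta\to 0$ (with $\oeps\to 0$) these evaluation maps converge to the corresponding maps for the limiting gradient fragments, transversality at $\delta=0$ propagates, with uniform rank, to all sufficiently small $\delta$.

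It remains to build the right inverse $Q_{\widetilde w}$ on $\im D_{\widetilde w}$, uniformly bounded as $\delta\to0$. Given an element of $\im D_{\widetilde w}$, I would first apply $\mathbf Q$ factorwise, producing a solution of the uncoupled equation whose $2m$ mismatches are bounded by a $\delta$-independent constant times the norm of the element, and then correct it by the unique element of $\ker\mathbf D$ supplied by a fixed right inverse of the surjective $\rho|_{\ker\mathbf D}$. The latter is uniformly bounded because $\rho|_{\ker\mathbf D}$ has constant finite rank and, after the isometric shift rescalings $T_\delta\zeta=\zeta(\cdot+s_\delta)$ of Proposition~\ref{prop:Surjectivity_udelta}, the finite-dimensional spaces in play carry $\delta$-independent norms; this is exactly the step for which the peaked weight $g_{\delta,\oeps}$ and the compensated norm $\|\cdot\|_{1,\delta}$ of~\eqref{eq:norm1delta} were arranged, and the estimate runs as in~\cite[Proposition~5]{BM} and~\cite[Proposition~3.9]{Sa}. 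The hardest point is the previous paragraph — converting the geometric transversality encoded in $\Freg(H,J)$ and $\Jreg(H)$ into the algebraic surjectivity of $\rho|_{\ker\mathbf D}$ with bounds uniform in $\delta$, which requires a careful accounting of which degree of freedom in each $\ker D_{u_i}$, $\ker D'_{v_i}$ cancels which of the $2m$ mismatches, keeping track of the gradient-flow directions along the circles $S_{\gamma_j}$.
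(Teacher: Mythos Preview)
Your proof is correct and takes essentially the same route as the paper, with the snake lemma serving as a clean organizational device for what the paper does by hand. The paper directly verifies $\im D_{\widetilde w}=\im\mathbf D$ by choosing arbitrary preimages under the block operators and then adjusting them by kernel elements to satisfy the $2m$ matching conditions---treating the cases $m>1$ and $m=1$ separately, the latter requiring exactly the pair-of-evaluations transversality in~\eqref{eq:transvf}; this is precisely your surjectivity of $\rho|_{\ker\mathbf D}$. For the right inverse the paper is slightly more concrete than you are: it fixes a specific $2m$-dimensional subspace $K\subset\ker\mathbf D$ spanned by the generators $\zeta_m,\zeta_0,\zeta_i^1,\zeta_i^2$ coming \emph{only} from the $v_i$-blocks, sets $Q_{\widetilde w}=P\circ\mathbf Q$ with $P$ the projection onto $\mathrm{dom}(D_{\widetilde w})$ parallel to $K$, and bounds $\|P\|$ via the explicit inequality $\|\mathbf x\|_{1,\delta}\le C\sum(|d\oev(x_i)|+|d\uev(x_i)|)$ for $\mathbf x\in K$---which is the precise content of your assertion that, after the shift isometries, the finite-dimensional spaces carry $\delta$-independent norms. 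Your citations of~\cite[Proposition~5]{BM} and~\cite[Proposition~3.9]{Sa} are slightly off target: those are gluing estimates (used upstream in Proposition~\ref{prop:Surjectivity_udelta}), whereas the bound needed here is purely the finite-dimensional linear algebra fact just stated.
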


\begin{proof}
We show that
\begin{equation} \label{eq:imDwtilde}
\im\,D_{\widetilde w} = \im\,D'_{v_m} \oplus \im\, D_{u_m} \oplus
\im\, D'_{v_{m-1}} \oplus \ldots \oplus \im\,D'_{v_0}=:E.
\end{equation}
By definition we have $\im\,D_{\widetilde w} \subset E$.
Let us now choose $(x_m,y_m,\ldots,x_0)\in E$ and $\widetilde x_i$
and $\widetilde y_j$ such that $D'_{v_i}(\widetilde x_i)=x_i$, $
D_{u_j}(\widetilde y_j)=y_j$.
We need to modify $\widetilde x_i$ and $\widetilde y_j$ by elements
lying in the kernels of the corresponding operators so that
\begin{equation} \label{eq:matching}
d\oev(\widetilde y_j)=d\uev(\widetilde x_j), \quad
d\uev(\widetilde y_j)=d\oev(\widetilde x_{j-1}), \qquad j=1,\ldots,m.
\end{equation}

Let us first assume $m>1$. We have
$$
T_{v_m}\cM'_{\delta,1,-\infty}(S_{\og},S_{\og};H,J)
\times T_{u_m}\cM^{A_m}(S_{\og},S_{\gamma_{m-1}};H,J) = 
\ker D'_{v_m} \times \ker D_{u_m}
$$
and, because $\{f_\gamma\}\in\Freg(H,J)$, the map
$$
(d\uev,d\oev):\ker D'_{v_m} \times \ker D_{u_m} \to
T_{\uev(v_m)}S_{\og} \times T_{\oev(u_m)}S_{\og}
$$
is transverse to the diagonal. We can therefore modify $\widetilde
x_m$ and $\widetilde y_m$ so that
$d\oev(\widetilde y_m)=d\uev(\widetilde x_m)$. Similarly
the map
$$
(d\uev,d\oev):\ker D_{u_1} \times \ker D'_{v_0} \to
T_{\uev(u_1)}S_{\ug} \times T_{\oev(v_0)}S_{\ug}
$$
is transverse to the diagonal and we can modify $\widetilde y_1$,
$\widetilde x_0$ in order to achieve $d\uev(\widetilde
y_1)=d\oev(\widetilde x_1)$. For $i=1,\ldots,m-1$ the maps
$$
(d\oev,d\uev):\ker D'_{v_i} \to T_{\oev(v_i)}S_{\gamma_i} \times
T_{\uev(v_i)}S_{\gamma_i}
$$
are surjective and we can modify $\widetilde x_i$ so
that~\eqref{eq:matching} is satisfied.

If $m=1$ the regularity hypothesis on ${f_\gamma}$ ensures that
the map
\begin{eqnarray*}
(d\uev,d\oev,d\uev,d\oev) & : &
\ker D'_{v_1} \times \ker D_{u_1} \times \ker D'_{v_0} \\
&& \to T_{\uev(v_1)}S_{\og} \times T_{\oev(u_1)}S_{\og}
\times T_{\uev(u_1)}S_{\ug} \times T_{\oev(v_0)}S_{\ug}
\end{eqnarray*}
is transverse to the product of the diagonals in the first two and in
the last two factors. We can therefore modify simultaneously
$\widetilde x_1$, $\widetilde y_1$, $\widetilde x_0$ in order to
achieve~\eqref{eq:matching}. Therefore~\eqref{eq:imDwtilde} is proved.
It then follows from
Proposition~\ref{prop:support_of_section} that the image of $D_\tw$
has codimension $m-1$ and is spanned by sections $\eta_i\in
L^{p,d}(v_i^*T\widehat W)$ supported in the desired
intervals. 

We now prove that $D_\tw$ admits a uniformly bounded right inverse
defined on its image. We observe that $D_\tw$ is the restriction
to $\mathrm{dom}(D_\tw)$ of the direct sum of operators
$D:=D'_{v_m}\oplus D_{u_m} \oplus D'_{v_{m-1}} \oplus \dots \oplus
D_{u_1} \oplus D'_{v_0}$.  
Let $\zeta_m$, $\zeta_0$ be generators of $\ker D'_{v_m}$, $\ker
D'_{v_0}$ and, for $i=1,\dots,m-1$, let $\zeta_i^1$,
$\zeta_i^2$ be the basis of $\ker D'_{v_i}$ constructed in
Proposition~\ref{prop:support_of_section}. We denote by $K$ the vector 
space spanned by these $2m$ sections, viewed as elements of
$\mathrm{dom}(D)$. Then $\dim\, K=2m$ and $K$ is a
complement of $\mathrm{dom}(D_\tw)$. Let
$P:\mathrm{dom}(D)\to\mathrm{dom}(D_\tw)$ be the projection parallel
to $K$, let $Q_{u_j}$, $j=1,\dots,m$ be
uniformly bounded right inverses for $D_{u_j}$, let $Q_{v_i}$,
$i=0,\dots,m$ be uniformly bounded right inverses for $D'_{v_i}$
defined on their images as in
Proposition~\ref{prop:support_of_section}, and denote 
$Q:=Q_{v_m}\oplus Q_{u_m} \oplus Q_{v_{m-1}} \oplus \dots \oplus
Q_{u_1}\oplus Q_{u_0}$. Since $K\subset \ker D$ the operator $P\circ 
Q:\im(D)=\im(D_\tw)\to\mathrm{dom}(D_\tw)$ is a right inverse for
$D_\tw$ defined on its image, and we claim that its norm is uniformly
bounded for $\delta\to 0$. The norm of $Q$ is uniformly
bounded for $\delta\to0$, so that it is enough to prove
that the norm of $P$ is uniformly bounded for $\delta\to 0$. 

The sections $\zeta_0$, $\zeta_m$ and $\zeta_i^1$, $\zeta_i^2$ for
$i=1,\dots,m-1$ have the property that their respective asymptotic
values (obtained by applying $d\oev$ and $d\uev$) are not
simultaneously zero. Moreover, the same is true for any linear
combination of $\zeta_i^1$ and $\zeta_i^2$ for $i=1,\dots,m-1$. 
As a consequence, there exists a uniform constant $C>0$ such
that, for any $\x=(x_m,0,x_{m-1},\dots,0,x_0)\in K$, we have 
\begin{equation} \label{eq:oevuev}
\|\x\|_{1,\delta}
\le C \big( 
|d\uev(x_m)|+|d\oev(x_0)| + 
\sum_{i=1}^{m-1}
|d\uev(x_i)|+|d\oev(x_i)|
\big). 
\end{equation} 
Given $v\in\mathrm{dom}(D)$ we have $P(v)=v+w$ for some vector $w\in
K$ which is uniquely
determined by the asymptotic values of the components of $v$, and it
follows from~\eqref{eq:oevuev} that 
$$
  \|w\|_{1,\delta}\le C\|v\|_{1,\delta}.
$$
We obtain 
$$
\frac {\|P(v)\|_{1,\delta}} {\|v\|_{1,\delta}} =\frac
{\|v+w\|_{1,\delta}} {\|v\|_{1,\delta}} \le 1+C, 
$$
so that the norm of $P$ is uniformly bounded by $1+C$. This proves the
Lemma. 
\end{proof}

\begin{proposition} \label{prop:gluing}
  Let $J\in\Jreg(H)$ and
  $\{f_\gamma\}\in\Freg(H,J)$. Let $\widetilde w \in \widetilde
\cB_\delta$ and
  $\oeps(\delta)=(\epsilon_1(\delta),\ldots,\epsilon_{m-1}(\delta))$
  be as in Proposition~\ref{prop:catenation}. The operator 
  \begin{eqnarray*}
  D_{G_{\delta,\oeps}(\widetilde w)}&: &
  W^{1,p}(\R\times S^1,G_{\delta,\oeps}(\widetilde w)^*T\widehat
  W;g_{\delta,\oeps}(s)dsd\theta) \oplus \oV_{v_m}' \oplus \uV_{v_0}'
  \\
  && \qquad \qquad \to \
  L^p(\R\times S^1,G_{\delta,\oeps}(\widetilde w)^*T\widehat W;g_{\delta,\oeps}(s)dsd\theta)
  \end{eqnarray*}
  is surjective and admits a right inverse
  $Q_\delta=Q_{\delta,\oeps,\widetilde w}$\index{$Q_\delta$, right
  inverse for gluing theorem}
  whose $\delta$-norm is uniformly bounded with respect to $\delta\to 0$.
\end{proposition}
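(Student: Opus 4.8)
The plan is to run the standard linear gluing scheme of \cite[Proposition~5]{BM} and \cite[Proposition~3.9]{Sa}, the new feature being that the ``broken'' operator one glues from carries an $(m-1)$--dimensional cokernel which has to be absorbed. The inputs are: the uniformly bounded right inverse $Q_{\widetilde w}$ of the fibered--sum operator $D_{\widetilde w}$ on its image, together with the explicit cokernel $\coker D_{\widetilde w}=\langle\eta_1,\ldots,\eta_{m-1}\rangle$, from Lemma~\ref{lem:Dwtilde}; the surjectivity of the fixed--parameter operators $D_{v_i}^{-T_i/2,T_i/2,\epsilon_i}$ with uniformly bounded right inverse from Proposition~\ref{prop:Surjectivity_udelta}, together with the relations $D_{v_i}^{-T_i/2,T_i/2,\epsilon_i}\zeta_i^2=\eta_i$ and $dT_{v_i}(\zeta_i^2)=1$ for the length--change section $\zeta_i^2\in\ker D'_{v_i}$ from Proposition~\ref{prop:support_of_section} and Lemma~\ref{lem:abstract}; the estimate $\|\dbar_{H_\delta,J}(G_{\delta,\oeps}(\widetilde w))\|_\delta\to 0$ from Proposition~\ref{prop:catenation}; and the exponential asymptotic estimates of Propositions~\ref{prop:asymptotic} and~\ref{prop:asymptoticdelta}. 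The first step is to set up cut--and--paste maps: the domain $\R\times S^1$ of $G_{\delta,\oeps}(\widetilde w)$ splits into the bands $[s_{v_i}-\ell_i,s_{v_i}+\ell_i]$ and $[s_{u_j}-R,s_{u_j}+R]$ on which $G_{\delta,\oeps}(\widetilde w)$ coincides with $\widehat v_i$, $\widehat u_j$, consecutive bands overlapping in segments of length one centred at the catenation circles, and choosing cutoff functions subordinate to this cover one builds a ``breaking'' map $\iota^\ast:L^p(g_{\delta,\oeps})\to\bigoplus_i L^{p,d}(v_i^\ast T\widehat W)\oplus\bigoplus_j L^{p,d}(u_j^\ast T\widehat W)$ (restrict and translate) and a ``reglueing'' map $G^\ast:T_{\widetilde w}\widetilde\cB_\delta\to\mathrm{dom}\,D_{G_{\delta,\oeps}(\widetilde w)}$ (multiply the components of a tangent vector by the complementary cutoffs, sum them, and reassemble the $\oV'$, $\uV'$ pieces into $\oV'_{v_m}$, $\uV'_{v_0}$). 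The peaked weights $g_{\delta,u_j}$, $g_{\delta,\epsilon_i,v_i}$ (which paste together exactly to $g_{\delta,\oeps}$) and the compensated norm $\|\cdot\|_{1,\delta}$ of~\eqref{eq:norm1delta} (cf.\ Remark~\ref{rmk:norm1delta}) are arranged precisely so that $\|\iota^\ast\|$, $\|G^\ast\|$ and the various right inverses below are bounded uniformly in $\delta$.

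The error estimate is the usual one. Writing $D_\oplus:=D'_{v_m}\oplus D_{u_m}\oplus\cdots\oplus D'_{v_0}$ for the direct sum of the piecewise linearizations, one checks that $D_{G_{\delta,\oeps}(\widetilde w)}\circ G^\ast$ and $(\text{reassemble})\circ D_\oplus\circ(\text{break})$ differ only through the commutators of $D_\oplus$ with the cutoffs --- these are supported in the overlap segments, where the pieces decay in the $C^1$--norm like $e^{-r\cdot(\text{distance to the seam})}$ by Propositions~\ref{prop:asymptotic} and~\ref{prop:asymptoticdelta} while the weight grows at most like $e^{dR}$, hence they contribute $O(e^{(d-r)R})\to 0$ since $d<r$ --- and through the fact that $\dbar_{H_\delta,J}(G_{\delta,\oeps}(\widetilde w))\neq 0$, which is $o(1)$ in $\|\cdot\|_\delta$ by Proposition~\ref{prop:catenation}. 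Consequently $T_0:=G^\ast\circ Q_{\widetilde w}\circ\pi\circ\iota^\ast$, where $\pi$ projects onto $\im D_{\widetilde w}$ along $\langle\eta_1,\ldots,\eta_{m-1}\rangle$, is an approximate right inverse: $D_{G_{\delta,\oeps}(\widetilde w)}\,T_0=\mathrm{Id}-\Pi+o(1)$, where $\Pi$ is the (approximate) projection onto the span of the reassembled obstruction sections $\widehat\eta_i$, each of which is supported in the two length--one bands of $\widehat v_i$ adjacent to its seams with $\widehat u_{i+1}$ and $\widehat u_i$.

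The main obstacle is to show that each $\widehat\eta_i$ in fact lies in $\im D_{G_{\delta,\oeps}(\widetilde w)}$, with a uniformly bounded preimage, so that $\Pi$ can be killed. The key observation is that on the domain of $\widehat v_i$ the glued operator $D_{G_{\delta,\oeps}(\widetilde w)}$ coincides with the fixed--parameter operator $D_{v_i}^{-T_i/2,T_i/2,\epsilon_i}$ up to terms of order $\delta$: the two linearizations agree where $h'_{-T_i/2\delta,T_i/2\delta,\epsilon_i/\delta}\equiv 1$, and near the seams they differ only by the zero--order term $X_{H_\delta}-X_H=O(\delta)$ (cf.\ the estimates on $S-\oS$ in the proof of Proposition~\ref{prop:Surjectivity_udelta}). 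Since $D_{v_i}^{-T_i/2,T_i/2,\epsilon_i}$ is surjective with uniformly bounded right inverse $Q_{v_i}^{-T_i/2,T_i/2,\epsilon_i}$, cutting off $Q_{v_i}^{-T_i/2,T_i/2,\epsilon_i}(\eta_i)$ (equivalently a cutoff of $\zeta_i^2$) to the domain of $\widehat v_i$ produces, for $i=1,\ldots,m-1$, a section $\nu_i$ on $G_{\delta,\oeps}(\widetilde w)$ with $D_{G_{\delta,\oeps}(\widetilde w)}\nu_i=\widehat\eta_i+o(1)$; here it is essential, as in the remark preceding Proposition~\ref{prop:deltageom}, that the compensated norm $\|\cdot\|_{1,\delta}$ be used, its distinguished cutoff section being exactly what keeps $\|\nu_i\|_{1,\delta}$ bounded as $\delta\to 0$. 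Adjoining the $\nu_i$ to $T_0$ yields an approximate right inverse $\widetilde Q_\delta$ with $\|D_{G_{\delta,\oeps}(\widetilde w)}\widetilde Q_\delta-\mathrm{Id}\|\le 1/2$ for $\delta$ small enough.

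It then follows as in Proposition~\ref{prop:Surjectivity_udelta} that $D_{G_{\delta,\oeps}(\widetilde w)}\widetilde Q_\delta$ is invertible with inverse of norm $\le 2$, so that $Q_\delta:=\widetilde Q_\delta\,(D_{G_{\delta,\oeps}(\widetilde w)}\widetilde Q_\delta)^{-1}$ is a right inverse whose norm is uniformly bounded as $\delta\to 0$; in particular $D_{G_{\delta,\oeps}(\widetilde w)}$ is surjective. The genuinely delicate points are bookkeeping ones: verifying that every constant entering the estimates --- in $\|\iota^\ast\|$, $\|G^\ast\|$, $\|Q_{\widetilde w}\|$, $\|Q_{v_i}^{-T_i/2,T_i/2,\epsilon_i}\|$, the commutator bounds, and the norm of $\Pi$ --- is genuinely independent of $\delta$, which is the whole purpose of the peaked weights and of the norms $\|\cdot\|_\delta$, $\|\cdot\|_{1,\delta}$, and checking that the reassembled sections $\widehat\eta_i$ remain linearly independent so that $\Pi$ is well defined and bounded.
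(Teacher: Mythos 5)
Your proof takes a genuinely different route from the paper's. Where you build a naive approximate right inverse $T_0$ from the broken right inverse $Q_{\widetilde w}$ and then explicitly \emph{solve} $D_{G_{\delta,\oeps}(\widetilde w)}\nu_i\approx\widehat\eta_i$ to kill the residual projection $\Pi$, the paper never solves for a preimage of $\eta_i$ at all: its ``mixing map'' $P$ projects each $x_i$ onto $\im D'_{v_i^\delta}$ and reroutes the discarded $\eta_i$--component (after translation and cutoff) into the neighbouring $y_j$'s, where it is absorbed because the operators $D_{u_j^\delta}$ are genuinely surjective, with no cokernel. The entire approximate inverse is then $G\circ Q_{\widetilde w^\delta}\circ P\circ S$, and the cokernel never has to be inverted directly. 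Your decomposition via $T_0 + \sum_i c_i\nu_i$ is a legitimate alternative, and the ingredients you list (Lemma~\ref{lem:Dwtilde}, Proposition~\ref{prop:Surjectivity_udelta}, Proposition~\ref{prop:catenation}, the exponential decay from the Appendix) are exactly the right ones.

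However, the key claim you make in the penultimate paragraph is wrong and needs repair. You assert that the compensated norm $\|\cdot\|_{1,\delta}$, via ``its distinguished cutoff section,'' keeps $\|\nu_i\|_{1,\delta}$ uniformly bounded. This misreads both the construction and the remark you cite. The compensated section built into~\eqref{eq:norm1delta} is $\zeta_{i,\delta}$, which generates $\ker D_{v_i}$ --- the \emph{shift} direction $\zeta_i^1$ of Proposition~\ref{prop:support_of_section}. Your $\nu_i$, on the other hand, is (up to kernel) the \emph{length--change} direction $\zeta_i^2$, which vanishes at $s_{v_i}$, so its $\kappa_i$--coefficient is zero and the compensation does nothing for it. In fact, the remark following Proposition~\ref{prop:Surjectivity_udelta} says precisely that the compensation is \emph{not} needed for uniform boundedness of the right inverse (it is needed later, for Proposition~\ref{prop:deltageom}). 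And $\|\nu_i\|_{1,\delta}$ is genuinely unbounded: $\zeta_i^2$ is of size $O(1)$ in bands of length $O(1)$ adjacent to the $\widehat v_i$--seams, where the peaked weight $g_{\delta,\epsilon_i,v_i}$ equals $e^{dR}=\delta^{-1/2}$, giving $\|\nu_i\|_{1,\delta}\sim\delta^{-1/(2p)}\to\infty$; and the same estimate gives $\|\widehat\eta_i\|_\delta\sim\delta^{-1/(2p)}\to\infty$. Your argument can be salvaged, but the bookkeeping has to be done in terms of \emph{ratios}: it is the quantities $\|\nu_i\|_{1,\delta}/\|\widehat\eta_i\|_\delta$ (bounded by $\|Q_{v_i}\|$ plus small commutator contributions) and $|c_i(\eta)|\le C\|\eta\|_\delta/\|\widehat\eta_i\|_\delta$ that are uniformly controlled, and these combine to give both $\|\widetilde Q_\delta\|\le C$ and $\|D\widetilde Q_\delta-\mathrm{Id}\|\le\tfrac12$. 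Likewise, your statement ``$D_{G_{\delta,\oeps}(\widetilde w)}\nu_i=\widehat\eta_i+o(1)$'' is only correct if the $o(1)$ is understood relative to $\|\widehat\eta_i\|_\delta$; as an absolute estimate in $\|\cdot\|_\delta$ the cutoff commutator and the $O(\delta)$ perturbation terms are of size $O(1)$, not $o(1)$. The paper's mixing map circumvents all of this scaling analysis by never isolating the direction $\eta_i$ in the first place.
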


\begin{proof} Our proof is modelled on the proof of the gluing theorem
for holomorphic spheres by McDuff and
Salamon~\cite[Ch.~10]{McDS}
%(see also~\cite[Prop.~3.9]{Sa} and~\cite[Lemma~2d.2]{F})
. Let
$$
v_m^\delta,u_m^\delta,v_{m-1}^\delta,
\ldots,u_1^\delta, v_0^\delta
$$
be the extensions of $\widehat v_m,\widehat u_m,\widehat v_{m-1},
\ldots,\widehat u_1, \widehat v_0$ to $\R\times S^1$ defined by the
same formulas. Note that
\begin{eqnarray*}
u_j^\delta(s,\theta)=u_j(s,\theta), &  s\in[-R+1,R-1], \\
v_m^\delta(s,\theta)=v_m(s,\theta), & s \notin
[1/\delta-1,1/\delta], \\
v_0^\delta(s,\theta)=v_0(s,\theta), &
s \notin [-1/\delta,-1/\delta+1]
\end{eqnarray*}
and $v_i^\delta(s,\theta)=v_i(s,\theta)$ for $s$ outside
$[-(T_i+\epsilon_i)/2\delta,-(T_i+\epsilon_i)/2\delta +1] \cup
[(T_i+\epsilon_i)/2\delta-1,(T_i+\epsilon_i)/2\delta]$ 
and $i=1,\ldots,m-1$. The difference between $v_i^\delta$ and $v_i$ on
the one hand, and that between $u_j^\delta$ and $u_j$ on the other
hand is exponentially small as $\delta\to 0$. This implies that the
operators $D_{u_j^\delta}$, $D'_{v_0^\delta}$ and $D'_{v_m^\delta}$
are surjective for $\delta$ small enough and admit uniformly bounded
right inverses, while the operators
$D'_{v_i^\delta}$, $i=1,\ldots,m-1$ have a codimension one image with
a supplement spanned by a smooth section $\eta_i$ supported in
$[-(T_i+\epsilon_i)/2\delta,-(T_i+\epsilon_i)/2\delta +1]\times S^1 \cup
[(T_i+\epsilon_i)/2\delta-1,(T_i+\epsilon_i)/2\delta] \times S^1$, and
admit uniformly bounded ``right inverses'' defined on their image. It
follows that the vertical differential $D_{\widetilde w^\delta}$
satisfies the conclusions of Lemma~\ref{lem:Dwtilde}, where
$\widetilde w^\delta :=
(u_1^\delta,\ldots,u_m^\delta,v_0^\delta,\ldots,v_m^\delta)$. In
particular, it admits a uniformly bounded right inverse defined on
its image, which we denote by $Q_{\widetilde w ^\delta}$
(see~\cite[Lemma~10.6.1]{McDS} for a similar statement
in the case of holomorphic spheres). This means that there exists a
constant $c_0>0$ such that
$$
\|Q_{\widetilde w ^\delta} \x \|_{W^{1,p,d}} \le c_0 \|
\x \|_{L^{p,d}}
$$
for all $\x\in \im\, D_{\widetilde w^\delta}$ and $\delta>0$.

We define an operator $T_\delta$ by the commutative diagram
$$
\xymatrix
@C=60pt
@R=30pt@W=1pt@H=1pt
{T_{\widetilde w^\delta}\widetilde \cB _\delta \ar[d]_G &
  L^{p,d}(\widetilde w^{\delta *}T\widehat W)
\ar[l]_-{Q_{\widetilde w ^\delta}
\circ P} \\
\mathrm{dom}(D_{G_{\delta,\oeps}(\tw)})
&
L^p(\R\times S^1,G_{\delta,\oeps}(\widetilde w)^*T\widehat
W;g_{\delta,\oeps}(s)dsd\theta)
\ar[u]_{S} \ar[l]^-{T_\delta}
}
$$
where 
$$
L^{p,d}(\widetilde w^{\delta *}T\widehat W) 
:=
L^{p,d}({v_m^\delta}^*T\widehat W) \oplus 
L^{p,d}({u_m^\delta}^*T\widehat W) 
\oplus\ldots \oplus 
L^{p,d}({v_0^\delta}^*T\widehat W).
$$
In the rest of the proof we shall omit the subscript $\oeps$
from $G_{\delta,\oeps}$ and $g_{\delta,\oeps}$. An element of 
$L^{p,d}(\widetilde w^{\delta *}T\widehat W)$ is denoted by 
$$
\x = 
(x_m,y_m,\ldots,x_0).
$$
The {\it mixing map} $P$, the {\it splitting map} $S$ and the {\it
  gluing map} $G$ are defined below, and we shall prove that $P,S,G$
are uniformly bounded with respect to $\delta\to
0$. 
We shall also prove that $T_\delta$ is an approximate right inverse
for $D_{G_\delta(\widetilde w)}$, i.e.
\begin{equation} \label{eq:Tdelta}
  \|D_{G_\delta(\widetilde w)}T_\delta \eta - \eta \|_\delta   \le
\frac 1 2 \|\eta\|_\delta
\end{equation}
for $\delta$ sufficiently small and $\eta \in
L^p(\R\times S^1,
G_\delta(\widetilde w)^*T\widehat W;g_\delta(s)dsd\theta)$.
This implies that $D_{G_\delta(\widetilde w)}T_\delta$ is
invertible (with the norm of its inverse bounded by $2$), 
and $T_\delta (D_{G_\delta(\widetilde w)}T_\delta)^{-1}$ is a 
right inverse for $D_{G_\delta(\widetilde w)}$. Since $P,S,G$ are
uniformly bounded, the norm of $T_\delta (D_{G_\delta(\widetilde w)}T_\delta)^{-1}$
is bounded by a constant multiple of $\|Q_{\widetilde
w^\delta}\|$, hence is uniformly bounded and the conclusion of the
Proposition follows.

\medskip

For every $L>0$ we fix a smooth function\index{$\beta_L$, cutoff function}
$$
\beta_L:\R\to [0,1]
$$
which vanishes for $s\le 0$, which is constant
equal to $1$ for $s \ge L$ and whose derivative is bounded
by $2/L$. We moreover require that, for $L$ large enough, the 
function $\beta_L$ vanishes for $s\le 1$. 

We define the mixing map $P\index{$P$, mixing map}$. Let
$$
p_i :   L^{p,d}(\widetilde w^{\delta *}T\widehat W) \to \im \,
D'_{v_i^\delta}, \qquad i=0,\dots,m
$$
be the projection on $L^{p,d}((v_i^\delta)^*T\widehat W)$ followed 
by the projection 
on $\im \, D'_{v_i^\delta}$ parallel to $\eta_i$. Recall the
definition~\eqref{eq:elli} of $\ell_i$ for $i=0,\dots,m$ and let
\begin{eqnarray*}
& & \qquad \qquad q_j :   L^{p,d}(\widetilde w^{\delta *}T\widehat W) 
\to \im \, D_{u_j^\delta},\\
q_j(\x)(s,\theta) & := & y_j(s,\theta)\\
 & & + \ 
\beta_1(s-\ell_j)\cdot\Big( (\one-p_j)(x_j)\Big)(s-\ell_j,\theta) \\
& & + \ (1- \beta_1(s-\ell_{j-1}))\cdot\Big( 
(\one-p_{j-1})(x_{j-1})\Big)(s-\ell_{j-1},\theta)
\end{eqnarray*}
for $j=1,\dots,m$. We define 
$$
P:L^{p,d}(\widetilde w^{\delta *}T\widehat W)\to \im \, 
D_{\widetilde w^\delta}
$$
by
$$
P:= p_m + q_m + p_{m-1} + \ldots + 
q_1 + p_0.
$$
The norm of $P$ is uniformly bounded with respect to $\delta\to
0$
since the norm of each $p_i$ is uniformly bounded by $1$. 

\medskip 

We define now the splitting map
$$
S(\eta) := \x = (x_m,y_m,\ldots,x_0).
\index{$S$, splitting map}
$$
We recall the definition~\eqref{eq:shifts} of the catenation shifts
$$
0=s_{v_m} < s_{u_m} < s_{v_{m-1}} < \ldots < s_{u_1} < s_{v_0},
$$
and set
$$
x_m(s,\theta) := \beta_1(1/\delta-s)\eta(s,\theta),
$$
$$
x_0(s,\theta) :=
\beta_1(1/\delta+s)
\eta(s+ s_{v_0},\theta),
$$
and, for $i=1,\ldots,m-1$, $j=1,\ldots,m$,
$$
y_j(s,\theta) := \left\{
\begin{array}{ll}
(1-\beta_1(-R-s))\eta(s+s_{u_j},\theta), &
s\le 0, \\
(1-\beta_1(-R+s))\eta(s+s_{u_j},\theta), &
s\ge 0,
\end{array}\right.
$$
$$
x_i(s,\theta) := \left\{
\begin{array}{ll}
\beta_1((T_i+\epsilon_i)/2\delta+s)\eta(s+s_{v_i},\theta), &
s\le 0, \\
\beta_1((T_i+\epsilon_i)/2\delta-s)\eta(s+s_{v_i},\theta), &
s\ge 0.
\end{array}\right.
$$
It follows from the definition that the norm of $S$ is
uniformly bounded by $1$.

\medskip

We define now the gluing map 
$\zeta:=G(\widetilde \x)\index{$G$, gluing map}$, 
$\widetilde 
\x=(\widetilde x_m,\widetilde y_m,\widetilde
x_{m-1},\ldots,\widetilde x_0)\in T_{\widetilde w^\delta}\widetilde
\cB_\delta$ by ``slowly interpolating'' the components of $\widetilde \x$.
For $j=1,\ldots,m$, $i=1,\ldots,m-1$ we put 
\begin{equation} \label{eq:zeta}  
\zeta(s,\theta) := \left\{ \begin{array}{lrcl}
\widetilde x_m(s,\theta), & -\infty < & s & \le 1/\delta - R/2, \\
\widetilde y_j(s-s_{u_j},\theta), & s_{u_j} - R/2 \le & s & \le s_{u_j} +
R/2,\\
\widetilde x_i(s-s_{v_i},\theta), 
& s_{v_i} - \ell_i + 3R/2 \le & s & \le s_{v_i} + \ell_i - 3R/2, \\
\widetilde x_0(s-s_{v_0},\theta), & s_{v_0}-1/\delta + R/2 \le & s & < +\infty.
\end{array}\right.
\end{equation}
The above formula leaves out two types of intervals, on which the
actual interpolation takes place (see Figure~\ref{fig:gluing}).
\begin{itemize}
\item If $s_{v_j} + \ell_j - 3R/2  \le s \le s_{u_j} -R/2$ (interval of
length $R$), we define
\begin{eqnarray*}
\lefteqn{\zeta(s,\theta)  :=  \widetilde x_j(+\infty,\theta)}  \\
& + & \!\!\!(1-\beta_{\frac{R} 2}(s-s_{v_j}
- \ell_j  + R))
\big(
\widetilde x_j(s-s_{v_j},\theta)-\widetilde x_j(+\infty,\theta)
\big) \\
& + &  \!\!\!(1-\beta_{\frac{R}2}(-s+s_{u_j}-R))
\big(
\widetilde y_j(s-s_{u_j},\theta)-\widetilde y_j(-\infty,\theta)
\big).
\end{eqnarray*}
\item If $s_{u_j}+R/2\le s \le s_{v_{j-1}} - \ell_{j-1} + 3R/2$ (interval
of length $R$), we define
\begin{eqnarray*}
\lefteqn{\zeta(s,\theta)  :=  \widetilde x_{j-1}(-\infty,\theta)} 
\\
&\!\!+&\!\!\!\!\!(1\!-\!\beta_{\frac{R} 2}(-s\!+\!s_{v_{j-1}} 
\!-\! \ell_{j-1} + R))
\big(
\widetilde x_{j-1}(s-s_{v_{j-1}},\theta)-\widetilde x_{j-1}(-\infty,\theta)
\big) \\
&\!\!+&\!\!\!\!\!(1-\beta_{\frac{R}2}(s-s_{u_j}-R))
\big(
\widetilde y_j(s-s_{u_j},\theta)-\widetilde y_j(+\infty,\theta)
\big).
\end{eqnarray*}
\end{itemize}

\begin{figure}
         \begin{center}
\scalebox{0.95}{\input{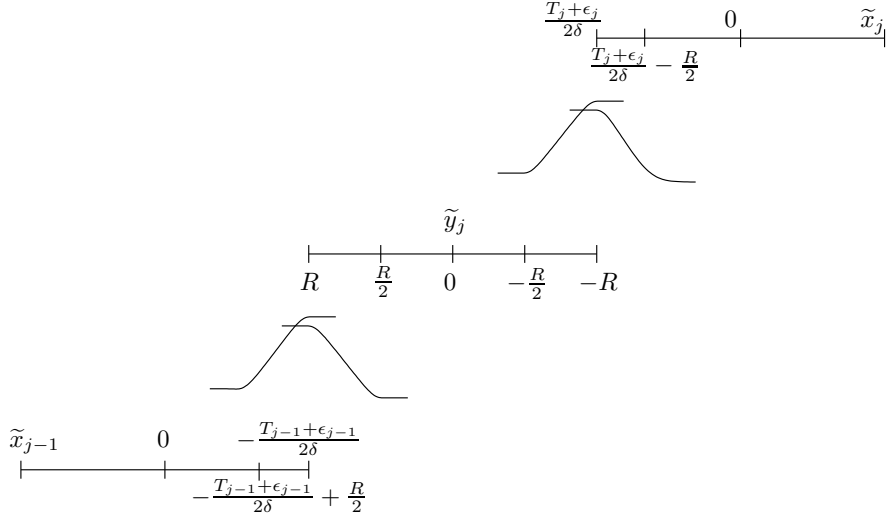}}
\caption{The gluing map $G$. \label{fig:gluing}}
         \end{center}
\end{figure}

\noindent The section $\zeta$ is indeed of class $W^{1,p}$ because
$$
\widetilde y_j(-\infty,\theta) = \widetilde x_j(+\infty,\theta),
\qquad \widetilde
y_j(+\infty,\theta)=\widetilde x_{j-1}(-\infty,\theta).
$$
That the norm of $G$ is uniformly bounded
with respect to $\delta\to
0$ follows directly from the definition~\eqref{eq:Ttw} of the norm on
$T_{\tw^\delta}\widetilde \cB_\delta$, 
as well as from the definition~\eqref{eq:norm1delta} of the norm
$\|\cdot\|_{1,\delta}$ on $\mathrm{dom}(D_{G_{\delta,\oeps}(\tw)})$
(see also Remark~\ref{rmk:norm1delta}).

\medskip 

Let us now prove the estimate~\eqref{eq:Tdelta}. On each of the 
intervals appearing 
in~\eqref{eq:zeta} we have 
$(D_{G_\delta(\widetilde w)}T_\delta\eta)(s,\theta)
=\eta(s,\theta)$ 
and we are therefore left to examine intervals of the type 
$[s_{v_j} 
+ \ell_j - 3R/2, s_{u_j} -R/2]$ and $[s_{u_j}+R/2, s_{v_{j-1}} - 
\ell_{j-1} + 3R/2]$. 
We treat only the first case since the second one 
is entirely similar.

Upon applying the operator $D_{G_\delta(\widetilde w)}$ to $\zeta$ 
we obtain five types of terms as following. 
\begin{itemize}
 \item $D_{G_\delta(\widetilde w)} \widetilde x_j(+\infty,\theta)$. 
Since 
 $\widetilde x_j(+\infty,\theta)$ does not depend on $s$ we 
can view 
 $D_{G_\delta(\widetilde w)}$ as a family of operators on 
$S^1$. Then we have 
 \begin{eqnarray*}
  \|D_{G_\delta(\widetilde 
w)} \widetilde x_j(+\infty,\theta)\|_\delta & = &  
\|(D_{G_\delta(\widetilde w)} - D_{v_j(+\infty,\theta)})\widetilde 
x_j(+\infty,\theta)\|_\delta \\
  & \le & \|D_{G_\delta(\widetilde 
w)} - D_{v_j(+\infty,\theta)}\|_\delta
  \|\widetilde 
x_j(+\infty,\theta)\| \\
    & \le & C(\delta)\|\eta\|_\delta. 
\end{eqnarray*}
 Here $D_{v_j(+\infty,\theta)}$ denotes the 
linearized operator at the constant cylinder $v_j(+\infty,\theta)$,  
the norm $\|\widetilde x_j(+\infty,\theta)\|$ 
is induced
from the ($1$-dimensional) space $\uV'_{v_j}$, and 
$$
C(\delta)\to 0, \qquad \delta\to 0.
$$
This last statement 
and the last inequality follow from 
\begin{eqnarray*}
\lefteqn{\|D_{G_\delta(\widetilde w)} \!-\! 
D_{v_j(+\infty,\theta)}\|_\delta } \\
& \!\!\!\le & \!\!\!C( \| \widehat v_j \!-\! 
v_j(+\infty,\theta)\|_{L^{1,p,d}([(T_j+\epsilon_j)/2\delta - R/2,(T_j+\epsilon_j)/2\delta]\times   
S^1)} \\
& & \quad + \ 
\| \widehat u_j \!-\! 
u_j(-\infty,\theta)\|_{L^{1,p,d}([-R,-R/2]\times 
S^1)})
\end{eqnarray*}
and the fact that the intervals of integration 
migrate to $\pm\infty$. The above 
inequality makes crucial use of 
the fact that the weight $g_\delta$ on the necks is 
given by the 
exponential weight of the ambient spaces $\cB_\delta$, $\cB'_\delta$. 
Moreover, we have $\|\widetilde x_j(+\infty,\theta)\|\le 
\|\widetilde \x\|\le C\|\eta\|_\delta$ 
because $Q_{\widetilde w}$, 
$P$ and $S$ are uniformly bounded with respect to $\delta$.
\item 
$-\beta'_{R/2}(s-s_{v_j} -\ell_j + R)\big(\widetilde 
x_j(s-s_{v_j},\theta)-
\widetilde x_j(+\infty,\theta)\big)$, as well as 
$\beta'_{R/2}(-s+s_{u_j}-R))
\big(\widetilde y_j(s-s_{u_j},\theta)-\widetilde 
y_j(-\infty,\theta)\big)$. The $\delta$-norm 
of each of these two 
terms is bounded by $C(\delta)\|\eta\|_\delta$, with 
$C(\delta)\to 
0$ as $\delta\to 0$. To see this we first use that 
$|\beta'_{R/2}| 
\le 4/R \to 0$, $\delta\to 0$. Secondly we use that 
$\|\widetilde x_j(s-s_{v_j},\theta)-\widetilde 
x_j(+\infty,\theta)\|\le \| \widetilde \x\|
\le C\|\eta\|_\delta$ and 
$\|\widetilde y_j(s-s_{u_j},\theta)-\widetilde 
y_j(-\infty,\theta)\|\le \| \widetilde \x\|
\le C\|\eta\|_\delta$.
\item $(1-\beta_{R/2}(s-s_{v_j} 
-\ell_j + R))D_{G_\delta(\widetilde w)}
\big(\widetilde 
x_j(s-s_{v_j},\theta)-\widetilde x_j(+\infty,\theta)\big)$ 
and 
$(1-\beta_{R/2}(-s+s_{u_j}-R))D_{G_\delta(\widetilde w)}
\big(\widetilde y_j(s-s_{u_j},\theta)-\widetilde 
y_j(-\infty,\theta)\big)$. The parts 
involving $\widetilde 
x_j(+\infty,\theta)=\widetilde y_j(-\infty,\theta)$ are 
bounded by 
$C(\delta)\|\eta\|_\delta$ as above. On the other hand 
we write 
$$
D_{G_\delta(\widetilde w)}\widetilde x_j= 
(D_{G_\delta(\widetilde w)}-D_{\widetilde w^\delta})\widetilde x_j + 
D_{\widetilde w^\delta}\widetilde x_j
$$
and similarly for 
$D_{G_\delta(\widetilde w)}\widetilde y_j$. The first term  
of such 
a sum is bounded by $C(\delta)\|\eta\|_\delta$ as above, with 
$C(\delta)\to  0$, $\delta\to 0$. We are left with
\begin{eqnarray*}
(1-\beta_{R/2})D_{\widetilde w^\delta}
\widetilde 
x_j(s-s_{v_j},\theta) + 
(1-\beta_{R/2})D_{\widetilde 
w^\delta}
\widetilde y_j(s-s_{u_j},\theta)&& \\
=\ \Big((P\circ 
S)_{v_j}\eta\Big)(s-s_{v_j},\theta) + 
\Big((P\circ 
S)_{u_j}\eta\Big)(s-s_{u_j},\theta)&=& \eta.
\end{eqnarray*}
Here we 
denote by $(P\circ S)_{v_j}$, $(P\circ S)_{u_j}$ the components 
of $P\circ S$ in $L^{p,d}({v_j^\delta}^*T\widehat W)$ and 
$L^{p,d}({u_j^\delta}^*T\widehat W)$ respectively. 
The first equality uses the fact that $1-\beta_{R/2}\equiv 1$ on the
support of $(P\circ S)_{v_j}\eta$ and 
on the support of $(P\circ S)_{u_j}\eta$, as well as 
$D_{\widetilde w^\delta}\circ Q_{\widetilde w^\delta}=\one$.
\end{itemize}  
As a conclusion we have 
\begin{equation*} 
\|D_{G_\delta(\widetilde 
w)}T_\delta \eta - \eta \|_\delta   \le
C(\delta) \|\eta\|_\delta, \qquad C(\delta)\to 0, \ \delta\to 0,
\end{equation*}
and the estimate~\eqref{eq:Tdelta} holds for $\delta$ small enough.
\end{proof}

We shall use the following quantitative form of the implicit 
function theorem from 
McDuff and 
Salamon~\cite[A.3.4]{McDS}.

\begin{theorem} \label{thm:implicit}
Let $X$ and $Y$ be Banach spaces, $U\subset X$ be an open set, and 
$f:U\to Y$ be a continuously differentiable map. Let $x_0\in U$ be such 
that $D:=df(x_0):X\to Y$ is surjective and has a bounded right 
inverse $Q:Y\to X$. Choose positive constants $\eps$ and $c$ such 
that $\|Q\|\le c$, $B_\eps(x_0)\subset U$, and 
\begin{equation} 
\label{eq:implicit1}
  \|x-x_0\|<\eps \quad \Longrightarrow \quad 
\|df(x)-D\|\le 1/2c.
\end{equation}
 Then, for any $x_1\in X$ 
satisfying 
 \begin{equation} \label{eq:implicit2}
 \|f(x_1)\|<\eps/4c, \qquad \|x_1-x_0\|<\eps/8,
 \end{equation}
 there 
exists a unique $x\in X$ such that 
 \begin{equation} 
\label{eq:implicit3}
  f(x)=0,\quad x-x_1\in\im\, Q, \quad \|x-x_0\| 
\le \eps.
 \end{equation}
 Moreover, $\|x-x_1\|\le 
2c\|f(x_1)\|$.
\end{theorem}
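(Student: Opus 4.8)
The plan is to solve $f(x)=0$ by a contraction argument, looking for $x$ of the prescribed form $x=x_1+Qy$ with $y\in Y$. First I would introduce the map $\Phi(y):=y-f(x_1+Qy)$, defined for $y$ in a closed ball $\overline B_r(0)\subset Y$ small enough that $x_1+Qy$ remains inside $B_\eps(x_0)$. A fixed point $y^\ast$ of $\Phi$ is precisely a point with $f(x_1+Qy^\ast)=0$, and $x:=x_1+Qy^\ast$ will then be the desired solution, automatically satisfying $x-x_1=Qy^\ast\in\im\,Q$.

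The analytic heart of the argument is that $\Phi$ is a $1/2$-contraction on $\overline B_r(0)$. Differentiating and using $DQ=\one$ one gets $D\Phi(y)=\one-df(x_1+Qy)\circ Q=(D-df(x_1+Qy))\circ Q$, so that $\|D\Phi(y)\|\le\frac{1}{2c}\cdot c=\frac12$ whenever $\|x_1+Qy-x_0\|<\eps$, by~\eqref{eq:implicit1} together with $\|Q\|\le c$. I would then set $r:=\eps/2c$ and check two things: that $\|x_1+Qy-x_0\|\le\|x_1-x_0\|+\|Qy\|<\eps/8+cr=5\eps/8<\eps$ for every $y\in\overline B_r(0)$, so the contraction estimate is valid on the whole ball; and that $\Phi$ maps $\overline B_r(0)$ into itself, since $\|\Phi(y)\|\le\|\Phi(0)\|+\frac12\|y\|\le\|f(x_1)\|+\frac{\eps}{4c}<\frac{\eps}{4c}+\frac{\eps}{4c}=r$ by~\eqref{eq:implicit2}. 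The Banach fixed point theorem then yields a unique $y^\ast\in\overline B_r(0)$ with $\Phi(y^\ast)=y^\ast$; setting $x:=x_1+Qy^\ast$ gives $f(x)=0$, $x-x_1\in\im\,Q$ and $\|x-x_0\|\le 5\eps/8\le\eps$. The quantitative bound comes for free from the fixed point relation: $\|y^\ast\|=\|\Phi(y^\ast)\|\le\|f(x_1)\|+\frac12\|y^\ast\|$ forces $\|y^\ast\|\le 2\|f(x_1)\|$, whence $\|x-x_1\|=\|Qy^\ast\|\le c\|y^\ast\|\le 2c\|f(x_1)\|$.

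For uniqueness I would argue directly. If $x$ and $x'$ both satisfy $f(x)=f(x')=0$, $x-x_1,x'-x_1\in\im\,Q$ and $\|x-x_0\|,\|x'-x_0\|\le\eps$, then the fundamental theorem of calculus along the segment from $x'$ to $x$ gives $0=f(x)-f(x')=L(x-x')$, where $L:=\int_0^1 df(x'+t(x-x'))\,dt$. This segment stays in the closed $\eps$-ball about $x_0$, touching its boundary at most at an endpoint, so~\eqref{eq:implicit1} yields $\|L-D\|\le\frac{1}{2c}$ and hence $LQ=\one+(L-D)Q$ is invertible, with $\|(L-D)Q\|\le\frac12$. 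Since $x-x'$, being a difference of two elements of $\im\,Q$, can be written as $x-x'=Qw$ for some $w\in Y$, the identity $0=L(x-x')=LQw$ forces $w=0$, so $x=x'$.

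I expect the only genuine work to be the juggling of constants in the second paragraph: choosing $r$ so that simultaneously $\Phi$ is a self-map of $\overline B_r(0)$ and the point $x_1+Qy$ never leaves the region $B_\eps(x_0)$ where~\eqref{eq:implicit1} applies. The uniqueness step is essentially formal, the possible non-injectivity of $Q$ being harmless because $LQ$ is an isomorphism while the difference $x-x'$ genuinely lies in $\im\,Q$.
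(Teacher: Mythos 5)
Your proof is correct and is the standard contraction-mapping (modified Newton iteration) argument for the quantitative implicit function theorem; the paper itself offers no proof of this statement and simply cites McDuff--Salamon, where exactly this argument (fixed point of $\Phi(y)=y-f(x_1+Qy)$, using $D\Phi=(D-df)Q$ to get a $\tfrac12$-contraction on a ball of radius $\eps/2c$) appears, so there is nothing internal to compare against, and your constants all check out. One small imprecision in the uniqueness step: in a Banach space whose unit ball is not strictly convex (e.g.\ a sup norm), the segment from $x'$ to $x$ can lie \emph{entirely} on the sphere $\{\|\,\cdot\,-x_0\|=\eps\}$, so the remark that it ``touches the boundary at most at an endpoint'' is false in general; the inequality $\|L-D\|\le 1/2c$ nonetheless holds, because the set $\{x\in U:\|df(x)-D\|\le 1/2c\}$ is relatively closed in $U$ (by continuity of $df$), contains the open ball $B_\eps(x_0)$, and therefore contains $U\cap\overline{B_\eps(x_0)}$, where the segment must lie for $f(x)=f(x')=0$ even to be meaningful. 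The correct justification is thus continuity of $df$, not a geometric picture of the segment.
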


The above theorem will be used within the following setup. 
Consider an element $[\u]\in \cM^A(p,q;H,\{f_\gamma\},J)$ and denote
$u_0:=G_{\delta,\oeps}(\u)$. 
Given $\eps>0$ we
denote by $B_\eps(0)$ the ball of radius $\eps$ centered at
$0$ in $W^{1,p}(\R\times S^1,u_0^*T\widehat W;\|\cdot\|_{1,\delta})$,
where $\|\cdot\|_{1,\delta}$ is defined by~\eqref{eq:norm1delta}. 
For $\zeta\in W^{1,p}(\R\times S^1,u_0^*T\widehat
W;\|\cdot\|_{1,\delta})$ we write
\begin{eqnarray*}
\zeta &= &\zeta_1 + \sum_{j=1}^m \okappa_j
\beta(-s+s_{u_j})\beta(s-s_{u_j}+2R)X_H  \\
&& + \ \sum_{j=1}^m \ukappa_j
\beta(s-s_{u_j}) \beta(-s+s_{u_j}+ 2R)  X_H  \\
&&+ \ \sum_{i=1}^{m-1} \kappa_i
\beta(s-s_{v_i}+\ell_i-2R) \beta(-s + s_{v_i} + \ell_i -2R)
\zeta_{i,\delta}(\cdot-s_{v_i},\cdot)
\end{eqnarray*}
with $\ell_i=R+(T_i+\epsilon_i)/2\delta$ and $\zeta_{i,\delta}$ the
generator of $\ker \, D_{v_i}$ whose value at $0$ is the vector field
$X_H$ along $\gamma_i$. Then 
$$
\|\zeta\|_{1,\delta}=\|\zeta_1\|_{W^{1,p}(g_{\delta,\oeps})} + \sum_{j=1}^m
(|\okappa_j|+|\ukappa_j|) + \sum_{i=1}^{m-1} |\kappa_i|.
$$
We denote 
$$
\tzeta := \zeta _1 + \sum_{j=1}^m \big(\okappa_j
\beta(-s+s_{u_j})\beta(s-s_{u_j}+2R)X_H + \ukappa_j
\beta(s-s_{u_j}) \beta(-s+s_{u_j}+ 2R)  X_H\big),
$$
so that $\tzeta(s_{v_i},\cdot)$ is $L^2$-orthogonal to
$\zeta_{i,\delta}(0,\cdot)$. For each $i=1,\dots,m-1$ 
we consider the smooth cutoff function 
$$
\rho_{i,\delta,\oeps}(s):= 
\beta(s-s_{v_i}+\ell_i-2R)
\beta(-s+s_{v_i}+\ell_i-2R), 
$$
so that $\rho_{i,\delta,\oeps}$ 
vanishes outside $[ s_{v_i} -\frac {T_i+\epsilon_i} {2\delta}
,s_{v_i} + \frac {T_i+\epsilon_i} {2\delta}]$ and 
$\rho_{i,\delta,\oeps}\equiv 1$ on the interval $[ s_{v_i} -\frac
{T_i+\epsilon_i} {2\delta} + 
1,s_{v_i} + \frac {T_i+\epsilon_i} {2\delta} -1]$. 

We define $\varphi_\zeta(u_0):\R\times
S^1 \to \widehat W$ by 
$$
\varphi_\zeta(u_0)(s,\theta):= \left\{\begin{array}{ll}
u_0(s,\theta),& s_{u_j}-R\le s \le s_{u_j}+R,\\
\varphi_{\rho_{i,\delta,\oeps}(s)\kappa_i}^{f_{\gamma_i}}
(u_0(s,\cdot))(\theta),
& s_{v_i} - 
\frac {T_i+\epsilon_i} {2\delta} \le s \le s_{v_i} + \frac {T_i+\epsilon_i}
{2\delta}. 
\end{array}\right.
$$
Note that the last formula can also be written in the chart
$(\vartheta,z)$ around $S_{\gamma_i}$ as $\vartheta\circ
\varphi_\zeta(u_0)(s,\theta) = \vartheta \circ
\varphi_{\rho_{i,\delta,\oeps}(s)\kappa_i}^{f_{\gamma_i}}(u_0(s,0))
+\theta$. Given a vector field $\xi$ along $u_0$ we define the vector
field $\varphi_{\zeta *}\xi$ along $\varphi_\zeta(u_0)$ by 
$$
\varphi_{\zeta *}\xi(s,\theta)  := \left\{\begin{array}{ll}
\xi(s,\theta),& s_{u_j}-R\le s \le s_{u_j}+R,\\
\varphi_{\rho_{i,\delta,\oeps}(s)\kappa_i *}^{f_{\gamma_i}}
(u_0)\xi(s,\theta),
& s_{v_i} - 
\frac {T_i+\epsilon_i} {2\delta} \le s \le s_{v_i} + \frac {T_i+\epsilon_i}
{2\delta}. 
\end{array}\right. 
$$

We define a map
\begin{equation} \label{eq:chart}
\Phi:B_\eps(0)\to
\cB_\delta=\cB_\delta^{1,p,d}(\og_p,\ug_q,A;H,\{f_\gamma\}) 
\end{equation}
by 
$$
\Phi(\zeta):=\exp_{\varphi_\zeta(u_0)}(\varphi_{\zeta *}\tzeta). 
$$
Since $\rho_{i,\delta,\oeps}$ is precisely the coefficient of
$\zeta_{i,\delta}$ in our splitting for $\zeta$, it follows that
$d\Phi(0)=\mathrm{Id}$. Hence, for $\eps>0$ small enough the map
$\Phi$ is a diffeomorphism onto its image, i.e. a chart.

We denote $X:=W^{1,p}(\R\times S^1,u_0^*T\widehat
W;\|\cdot\|_{1,\delta})$, $U:=B_\eps(0) \subset X$, $Y:=L^p(\R\times
S^1,u_0^*T\widehat W; g_{\delta,\oeps}dsd\theta)$, $x_0=0$. 
For $\eps>0$ small enough the Banach bundle $\cE\to
\cB_\delta$ can be trivialized over the image of $\Phi$ as
$B_\eps(0)\times Y$, and we denote by $f:B_\eps(0)\to Y$ the
section $\dbar_{H_\delta,J}\circ \Phi$ read
in this trivialization. Then $df(0)=D_{u_0}$ is surjective and has a
right inverse $Q_\delta$ whose $\delta$-norm is uniformly bounded with
respect to $\delta\to 0$ by Proposition~\ref{prop:gluing}. In order for
the hypotheses of Theorem~\ref{thm:implicit} to be satisfied we need
to check that~\eqref{eq:implicit1} holds.

\begin{lemma} \label{lem:2der}
There exists a constant $C > 0$ independent of $\delta$ such that,
for all $x \in B_\eps(0)$, we have
\begin{equation*} 
\| df(x) - df(0) \| \le C \| x \|_{1,\delta} .
\end{equation*}
\end{lemma}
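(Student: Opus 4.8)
The plan is to treat $f = \dbar_{H_\delta,J}\circ \Phi$ as a nonlinear first-order differential operator read in the chart $\Phi$ around $u_0 = G_{\delta,\oeps}(\u)$, and to estimate its second derivative by a uniform (in $\delta$) bound, using the fact that the only $\delta$-dependence enters either through the weight $g_{\delta,\oeps}$, which behaves coherently under $D_{u_0}$ and its right inverse, or through the gradient reparametrization pieces $\varphi_{\rho_{i,\delta,\oeps}(s)\kappa_i}^{f_{\gamma_i}}$, whose $C^2$-size is controlled independently of $\delta$ because $\rho_{i,\delta,\oeps}$ has bounded derivatives and $f_{\gamma_i}$ is fixed. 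First I would write out $df(x)-df(0)$ pointwise. The operator $\dbar_{H_\delta,J}$ is of the form $\p_s u + J_\theta(u)(\p_\theta u - X_{H_\delta}(u))$, and after composing with the exponential map $\exp$ and with $\varphi_\zeta$, $\varphi_{\zeta *}$, the vertical differential at a section $x$ is a zeroth- plus first-order operator whose coefficients depend smoothly on $x$ and its first derivatives, on $J$, $H_\delta$ and their derivatives, and on $\Phi$-data. The standard computation (as in~\cite[Ch.~3, Appendix~A]{McDS}) shows that
$$
(df(x) - df(0))\zeta = \int_0^1 \frac{d}{dr} df(rx)\zeta \, dr,
$$
and each $\frac{d}{dr}df(rx)\zeta$ is bounded pointwise by a universal smooth function of $(rx, \p_s(rx), \p_\theta(rx))$ times $(|\zeta| + |\nabla_s\zeta| + |\nabla_\theta\zeta|)$, together with a term of the same shape in which the roles of $x$ and $\zeta$ are swapped for the zeroth-order part. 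Multiplying by the weight $g_{\delta,\oeps}(s)^{1/p}$, raising to the power $p$, integrating, and applying H\"older's inequality then gives $\|df(x)\zeta - df(0)\zeta\|_\delta \le C \, \|x\|_{C^1} \, \|\zeta\|_{W^{1,p}(g_{\delta,\oeps})}$ on the neck and Floer parts.

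The point requiring care is to pass from the $C^1$-norm of $x$ to the weighted $W^{1,p}$-norm $\|x\|_{1,\delta}$, and to handle the distinguished directions. On the pieces $[s_{u_j}-R, s_{u_j}+R]$ the weight $g_{\delta,\oeps}(s) = e^{d|s|}$ is bounded below by $1$ and the weighted Sobolev embedding $W^{1,p}(g_{\delta,\oeps}) \hookrightarrow C^0$ holds with a constant independent of $\delta$ since $p > 2$; the same argument applies on each neck of $\widehat v_i$ where $g_{\delta,\oeps}(s) = e^{d||s|-s_{i,\delta}|} \ge 1$, and on the semi-infinite pieces with weights $e^{d|s\pm s_{m,\delta}|}$, $e^{d|s+s_{0,\delta}|}$. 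Thus $\|x\|_{C^0} \le C\|x\|_{1,\delta}$ uniformly in $\delta$; a similar argument after differentiating once gives control of $\|x\|_{C^1}$ on compact pieces, and on the long gradient cylinder intervals one uses that $x$ lies in $\mathrm{dom}\, D_{u_0}$ so that, modulo the finitely many fixed-norm generators and the sections $\kappa_i\zeta_{i,\delta}$ (which are honestly smooth and whose $C^1$-norm is $\le C|\kappa_i| \le C\|x\|_{1,\delta}$ after using that $\zeta_{i,\delta}$ is $\delta$-uniformly $C^1$-bounded by property~(iv) of $S$ in the proof of Proposition~\ref{prop:Surj_udelta}), one again applies the weighted embedding. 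For the finite-dimensional summands $\oV'_{v_m}$, $\uV'_{v_0}$ of the domain, the generators are fixed sections of the form $(1-\beta)\nabla f_{\og}$ and $\beta\nabla f_{\ug}$, so their contribution to $df(x)-df(0)$ is bounded by a constant times their coefficient, hence by $\|x\|_{1,\delta}$.

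The main obstacle, and the place where the bespoke norm $\|\cdot\|_{1,\delta}$ earns its keep, is the behaviour along the long gradient-cylinder necks of length $\sim T_i/\delta$: a naive weighted $W^{1,p}$-to-$C^0$ embedding there would produce a constant blowing up with the length of the interval. The resolution is that the $\|\cdot\|_{1,\delta}$-norm subtracts off exactly the slowly-varying component $\kappa_i\zeta_{i,\delta}$ before measuring the weighted $W^{1,p}$-norm of the remainder, and the remainder $\zeta_1$ decays exponentially away from the weight peaks (this is the content of the asymptotic estimates of the Appendix and of Proposition~\ref{prop:intervaldelta}); so on the neck one splits $x = \kappa_i\zeta_{i,\delta} + x_1$, controls $\kappa_i\zeta_{i,\delta}$ directly (smooth, $\delta$-uniformly bounded), and applies the weighted embedding only to $x_1$, whose weight is $e^{d||s|-s_{i,\delta}|}$ and which therefore admits a $\delta$-uniform embedding constant by translating by $s_{i,\delta}$. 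Assembling these estimates over the finitely many necks and Floer/gradient pieces, and noting that the geometry of the chart $\Phi$ (the maps $\varphi_\zeta$, $\varphi_{\zeta*}$, $\exp$) involves only fixed data plus the $\delta$-uniformly controlled functions $\rho_{i,\delta,\oeps}$, yields the claimed bound $\|df(x)-df(0)\| \le C\|x\|_{1,\delta}$ with $C$ independent of $\delta$, which is what we wanted.
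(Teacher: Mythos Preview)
Your argument has a genuine gap, precisely in the case the paper singles out in the Remark following the lemma. Your generic quadratic estimate gives a pointwise bound of the shape
\[
\bigl|(df(x)-df(0))\zeta\bigr|(s,\theta)\;\le\; C\,|x(s,\theta)|_1\,|\zeta(s,\theta)|_1,
\]
and then you integrate against the weight $g_{\delta,\oeps}$. You take care to split $x = \kappa_i\chi_{i,\delta} + x_1$ on the long gradient neck, but you never split $\zeta$ in the same way. Consider the case $x=\kappa_i^x\,\rho_{i,\delta,\oeps}\zeta_{i,\delta}$ and $\zeta=\kappa_i^\zeta\,\rho_{i,\delta,\oeps}\zeta_{i,\delta}$. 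Then $|x|_1\le C|\kappa_i^x|$ and $|\zeta|_1\le C|\kappa_i^\zeta|$ pointwise, but these sections are supported on the whole interval of length $\sim T_i/\delta$, where the weight peaks at $e^{d\,s_{i,\delta}}$. Your pointwise bound therefore gives
\[
\bigl\|(df(x)-df(0))\zeta\bigr\|_\delta\;\lesssim\;|\kappa_i^x|\,|\kappa_i^\zeta|\,\Bigl(\textstyle\int g_{\delta,\oeps}\Bigr)^{1/p}\;\sim\;|\kappa_i^x|\,|\kappa_i^\zeta|\,e^{d\,s_{i,\delta}/p},
\]
which blows up as $\delta\to 0$, while $\|x\|_{1,\delta}\|\zeta\|_{1,\delta}=|\kappa_i^x|\,|\kappa_i^\zeta|$. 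So your route cannot close the estimate in this direction.

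The paper's proof handles exactly this case by exploiting the special form of the chart $\Phi$, not merely its smoothness. When $\tx=0$ and $\tzeta=0$, one has $\Phi(x+t\zeta)=\varphi^{f_{\gamma_i}}_{(\kappa_i^x+t\kappa_i^\zeta)\rho_{i,\delta,\oeps}}(u_0)$, and on the interior of the neck (where $\rho_{i,\delta,\oeps}\equiv 1$) this is again a gradient cylinder of the same shape as $u_0$; the paper rewrites this as $u_0(\cdot+(\kappa_i^x+t\kappa_i^\zeta)\rho_{i,\delta,\oeps},\cdot)$ and observes that $\dbar_{H_\delta}$ of such a reparametrized curve is identically zero except on the two intervals of length~$1$ where $\rho_{i,\delta,\oeps}'\neq 0$. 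On those intervals the weight $g_{\delta,\oeps}$ is uniformly bounded, so the contribution to $\|(df(x)-df(0))\zeta\|_\delta$ is $\le C|\kappa_i^x|\,|\kappa_i^\zeta|$. This cancellation is invisible to a generic second-derivative bound; it is the reason the chart $\Phi$ is built from the gradient flow rather than from the exponential map. Your final sentence (``the geometry of the chart $\Phi$ \dots\ involves only fixed data plus the $\delta$-uniformly controlled functions $\rho_{i,\delta,\oeps}$'') is not enough: one must use that moving in the $\kappa_i$ direction keeps $\dbar_{H_\delta}\circ\Phi$ zero on the bulk of the neck. In short, you need to decompose $\zeta$ as well as $x$, and for the $\kappa_i$--$\kappa_i$ interaction you must invoke the localization argument of the paper's equation~\eqref{eq:crucial} rather than a Sobolev embedding.
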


\begin{remark}
The motivation for introducing the chart $\Phi$ is that we must use
the ``compensated'' norm $\|\cdot\|_{1,\delta}$. 
The lemma would fail if one used the usual exponential chart
$\zeta\mapsto \exp_{u_0}(\zeta)$ instead of $\Phi$, because the
estimate for the expression~\eqref{eq:crucial} in the proof below
would not hold. 
\end{remark}

\begin{proof} We need to prove the existence of a uniform constant
$C>0$ such that 
\begin{equation} \label{eq:der2}
\|D(\dbar_{H_\delta,J}\circ \Phi)(x)\cdot \zeta - D(\dbar_{H_\delta,J}\circ
\Phi)(0)\cdot \zeta\|_\delta \le C \|x\|_{1,\delta}
\|\zeta\|_{1,\delta}
\end{equation} 
for all $\zeta\in X$. We recall the decomposition
$\zeta=\tzeta+\sum_{i=1}^{m-1}\kappa_i \rho_{i,\delta,\oeps}
\zeta_{i,\delta}$, which satisfies
$\|\zeta\|_{1,\delta}=\|\tzeta\|_{1,\delta} +
\sum_{i=1}^{m-1}|\kappa_i|$. It is therefore enough to
prove~\eqref{eq:der2} separately for $\zeta=\tzeta$ and for $\zeta =
\rho_{i,\delta,\oeps} \zeta_{i,\delta}$, $i=1,\dots,m-1$. 
We abbreviate in the following computations $\dbar=\dbar_{H_\delta,J}$.

We first assume $\zeta = \rho_{i,\delta,\oeps}
\zeta_{i,\delta}$. Given $x=\tx + \sum_{j=1}^{m-1} \kappa_j
\rho_{j,\delta,\oeps}\zeta_{j,\delta}$ we have
\begin{eqnarray}
\lefteqn{D(\dbar \circ \Phi)(x)\zeta - D(\dbar \circ \Phi)(0)\zeta}
\nonumber \\
&=&D(\dbar \circ \Phi)(x)\zeta - D(\dbar \circ \Phi)(\sum_{j=1}^{m-1}
\kappa_j \rho_{j,\delta,\oeps}\zeta_{j,\delta})\zeta \label{eq:term1}\\
& & + D(\dbar \circ \Phi)(\sum_{j=1}^{m-1} \kappa_j
\rho_{j,\delta,\oeps}\zeta_{j,\delta})\zeta - D(\dbar \circ
\Phi)(0)\zeta \label{eq:term2}.
\end{eqnarray}
The term~\eqref{eq:term1} is further equal to 
\begin{eqnarray}
\lefteqn{\frac d {dt}\Big|_{t=0}
\dbar(\exp_{\varphi_{x+t\zeta}(u_0)}(\varphi_{x+t\zeta *}\tx)) -  
\frac d {dt}\Big|_{t=0} \dbar(\exp_{\varphi_{x+t\zeta}(u_0)}(0))} 
\nonumber \\
&=& \hspace{-2mm} 
D_{\exp_{\varphi_x(u_0)}(\varphi_{x *}\tx)} \cdot D_2
\exp_{\varphi_x(u_0)}(\varphi_{x *}\tx) \cdot
\nabla _t \varphi_{x+t\zeta *}\tx
\nonumber  \\
&& + \ D_{\exp_{\varphi_x(u_0)}(\varphi_{x *}\tx)} \cdot D_1
\exp_{\varphi_x(u_0)}(\varphi_{x *}\tx) 
\cdot \rho_{i,\delta,\oeps} \nabla f_{\gamma_i} (\varphi_x(u_0)) 
\nonumber  \\
&& - \ D_{\varphi_x(u_0)} \cdot D_1
\exp_{\varphi_x(u_0)}(0) \cdot \rho_{i,\delta,\oeps} \nabla
f_{\gamma_i} (\varphi_x(u_0)) \nonumber \\
&=& \hspace{-2mm}  \label{eq:term3}
D_{\exp_{\varphi_x(u_0)}(\varphi_{x *}\tx)} \cdot D_2
\exp_{\varphi_x(u_0)}(\varphi_{x *}\tx) \cdot
\nabla _t \varphi_{x+t\zeta *}\tx \\
&& + \  D_{\exp_{\varphi_x(u_0)}(\varphi_{x*}\tx)} \cdot (D_1
\exp_{\varphi_x(u_0)}(\varphi_{x*}\tx) - \cT\cdot D_1
\exp_{\varphi_x(u_0)}(0)) \nonumber \\
&& \hspace{5.5cm} \cdot 
\rho_{i,\delta,\oeps} \nabla f_{\gamma_i} (\varphi_x(u_0)) 
\nonumber \\
& & + \ 
(D_{\exp_{\varphi_x(u_0)}(\varphi_{x*}\tx)}\cdot \cT -
D_{\varphi_x(u_0)}) \cdot  
D_1 \exp_{\varphi_x(u_0)}(0) \cdot \rho_{i,\delta,\oeps} \nabla
f_{\gamma_i} (\varphi_x(u_0)).  \nonumber 
\end{eqnarray}
Here $\cT$ is the parallel transport in $\widehat W$ along the
geodesic $\tau\mapsto \exp_{\varphi_x(u_0)} (\tau \varphi_{x*}\tx)$,
$\tau\in [0,1]$, and we have $\rho_{i,\delta,\oeps} \nabla
f_{\gamma_i} (\varphi_x(u_0)) = \rho_{i,\delta,\oeps}
(\varphi^{f_{\gamma_i}}_{\rho_{i,\delta,\oeps}\kappa_i})_*
\zeta_{i,\delta}$. 

We study the first term in~\eqref{eq:term3}. We have pointwise bounds
$$
|\nabla _t \varphi_{x+t\zeta *}\tx|\le
C(1+|\kappa_i|)|\tx|,
$$
$$
|\nabla \nabla _t \varphi_{x+t\zeta *}\tx|\le
C(1+|\kappa_i|)(|\tx|+|\nabla\tx|)
$$
for some universal constant $C>0$. In particular 
$$
\|\nabla _t \varphi_{x+t\zeta *}\tx\|_{W^{1,p}(g_{\delta,\oeps})}\le
C\|\tx\|_{1,\delta}
$$
if $|\kappa_i|\le\|x\|_{1,\delta}\le \eps$, with $C>0$ a universal
constant. On the other hand the operators $D_2
\exp_{\varphi_x(u_0)}(\varphi_{x *}\tx):W^{1,p}(g_{\delta,\oeps})\to
W^{1,p}(g_{\delta,\oeps})$ and  $D_{\exp_{\varphi_x(u_0)}(\varphi_{x 
*}\tx)}:W^{1,p}(g_{\delta,\oeps})\to L^p(g_{\delta,\oeps})$ are
uniformly bounded if $\|x\|_\infty\le C\|x\|_{1,\delta}\le C\eps$
(we use here the Sobolev inequality). This implies that the
$\delta$-norm of the first term in~\eqref{eq:term3} is bounded by a
constant multiple of $\|\tx\|_{1,\delta}$.  

We now study the second term in~\eqref{eq:term3}. Let
$\tpar\cdot\tpar$ be the operator norm for 
continuous linear maps 
$$
W^{1,p}(\varphi_x(u_0)^*T\widehat W;
\|\cdot\|_{1,\delta}) \to
W^{1,p}(\exp_{\varphi_x(u_0)}(\varphi_{x*}\tx)^*T\widehat W;
g_{\delta,\oeps}ds d\theta). 
$$
We claim that $\tpar D_1 \exp_{\varphi_x(u_0)}(\varphi_{x*}\tx) - \cT\cdot D_1
\exp_{\varphi_x(u_0)}(0)\tpar \le C\|\tx\|_{1,\delta}$ for some uniform
constant $C>0$, provided $\|x\|_{1,\delta}\le\eps$. Indeed, since the
metric on $\widehat W$ varies smoothly, for any 
$\xi=\txi+\sum_{\ell=1}^{m-1}\kappa'_\ell
\rho_{\ell,\delta,\oeps}\nabla f_{\gamma_\ell}(\varphi_x(u_0))$ we
have pointwise bounds  
$$
\big|(D_1 \exp_{\varphi_x(u_0)}(\varphi_{x*}\tx) - \cT\cdot D_1
\exp_{\varphi_x(u_0)}(0))\txi\big| \le C |\tx||\txi|,
$$
$$
\big|\nabla(D_1 \exp_{\varphi_x(u_0)}(\varphi_{x*}\tx) - \cT\cdot D_1
\exp_{\varphi_x(u_0)}(0))\txi\big| \le C
(|\nabla\tx||\txi|+|\tx||\nabla\txi|), 
$$
$$
\big|(D_1 \exp_{\varphi_x(u_0)}(\varphi_{x*}\tx) -\cT\cdot D_1
\exp_{\varphi_x(u_0)}(0))\rho_{\ell,\delta,\oeps}\nabla
f_{\gamma_\ell}(\varphi_x(u_0)) \big|
\le C |\tx|, 
$$
$$
\big|\nabla(D_1 \exp_{\varphi_x(u_0)}(\varphi_{x*}\tx) -\cT\cdot D_1
\exp_{\varphi_x(u_0)}(0))\rho_{\ell,\delta,\oeps} \nabla
f_{\gamma_\ell}(\varphi_x(u_0))\big| \le C (|\tx|+|\nabla\tx|). 
$$
The claim then follows by integration with respect to the weight
$g_{\delta,\oeps}$ and by using the Sobolev inequalities
$\|\tx\|_{L^\infty}\le C\|\tx\|_{1,\delta}$ and
$\|\txi\|_{L^\infty}\le C\|\txi\|_{1,\delta}$.   
On the other hand, as already seen above, the operator
$D_{\exp_{\varphi_x(u_0)}(\varphi_{x*}\tx)}$ 
acting from the space
$W^{1,p}(g_{\delta,\oeps})$ to $L^p(g_{\delta,\oeps})$ is uniformly
bounded for $\|x\|_{1,\delta}\le \eps$, since its coefficients are
bounded. We infer that the $\delta$-norm of the second term
in~\eqref{eq:term3} is bounded by a constant multiple of
$\|\tx\|_{1,\delta}$. 

We finally study the third term in~\eqref{eq:term3}. We claim that 
$\|D_{\exp_{\varphi_x(u_0)}(\varphi_{x*}\tx)}\cdot \cT -
D_{\varphi_x(u_0)}\|\le C \|\tx\|_{1,\delta}$ for some uniform
constant $C>0$, provided $\|x\|_{1,\delta}\le\eps$. This follows from
the pointwise bounds  
$$
\big|(D_{\exp_{\varphi_x(u_0)}(\varphi_{x*}\tx)}\cdot \cT -
D_{\varphi_x(u_0)})\txi\big| 
\le C |\tx|(|\txi|+|\nabla \txi|),
$$
$$
\big|(D_{\exp_{\varphi_x(u_0)}(\varphi_{x*}\tx)}\cdot \cT -
D_{\varphi_x(u_0)})\rho_{\ell,\delta,\oeps}\nabla
f_{\gamma_\ell}(\varphi_x(u_0))\big|  \le C |\tx|
$$
by integrating with respect to the weight
$g_{\delta,\oeps}$ and by using the previous Sobolev inequalities. 
Since $D_1\exp_{\varphi_x(u_0)}(0)=\mathrm{Id}$, we infer that the
$\delta$-norm of the third term in~\eqref{eq:term3} is 
bounded by a constant multiple of $\|\tx\|_{1,\delta}$. 

As a conclusion, the $\delta$-norm of the expression
in~\eqref{eq:term1} is bounded by a constant multiple of
$\|\tx\|_{1,\delta}$. 

\medskip 

We now consider the expression in~\eqref{eq:term2}, which can be
written as
\begin{eqnarray}
\lefteqn{D(\dbar \circ \Phi)(\kappa_i
\rho_{i,\delta,\oeps}\zeta_{i,\delta})\zeta - D(\dbar \circ
\Phi)(0)\zeta}   \label{eq:crucial} \\
&=& \frac d {dt} \Big|_{t=0} \dbar(\varphi_{\kappa_i\zeta +
t\zeta}(u_0)) -\frac d {dt} \Big|_{t=0} \dbar(\varphi_{t\zeta}(u_0))
\nonumber \\
&=& \frac d {dt} \Big|_{t=0}
\dbar(u_0(\cdot+(\kappa_i+t)\rho_{i,\delta,\oeps},\cdot))
-  \frac d {dt} \Big|_{t=0}
\dbar(u_0(\cdot+t\rho_{i,\delta,\oeps},\cdot)). \nonumber 
\end{eqnarray}
Each term in the above difference is supported in the intervals
$[s_{v_i}-\frac {T_i+\epsilon_i} {2\delta},s_{v_i}-\frac
{T_i+\epsilon_i} {2\delta}+1]$ and $[s_{v_i}+\frac {T_i+\epsilon_i}
{2\delta}-1,s_{v_i}+\frac {T_i+\epsilon_i} {2\delta}]$. Moreover,
their difference is pointwise bounded by $C|\kappa_i|$ for some
uniform constant $C>0$. Since the weight $g_{\delta,\oeps}$ is
uniformly bounded on the above intervals of length $1$, we infer that
the $\delta$-norm of the expression in~\eqref{eq:term2} is bounded by
$C|\kappa_i|$, hence by $C\|x\|_{1,\delta}$ for some uniform constant
$C>0$. 

\medskip 

We now assume $\zeta=\tzeta$ and we again decompose $D(\dbar \circ
\Phi)(x)\zeta - D(\dbar \circ \Phi)(0)\zeta$ as the sum of the
expressions in~\eqref{eq:term1} and~\eqref{eq:term2}. 

The expression in~\eqref{eq:term1} can be written
\begin{eqnarray}
\lefteqn{\frac d {dt} \Big|_{t=0}
\dbar(\exp_{\varphi_x(u_0)}(\varphi_{x*} (\tx+t\tzeta)))
- \frac d {dt} \Big|_{t=0}
\dbar(\exp_{\varphi_x(u_0)}(\varphi_{x*}t\tzeta))} \nonumber \\
& = &  D_{\exp_{\varphi_x(u_0)}(\varphi_{x*}\tx)} \cdot 
D_2 \exp_{\varphi_x(u_0)}(\varphi_{x*}\tx) \cdot \varphi_{x*}\tzeta
\nonumber \\ 
& & \hspace{3cm} - \  
D_{\varphi_x(u_0)}\cdot D_2\exp_{\varphi_x(u_0)}(0) \cdot
\varphi_{x*}\tzeta \nonumber \\
&=& D_{\exp_{\varphi_x(u_0)}(\varphi_{x*}\tx)} \cdot 
(D_2 \exp_{\varphi_x(u_0)}(\varphi_{x*}\tx) - \cT \cdot
D_2\exp_{\varphi_x(u_0)}(0)) \cdot \varphi_{x*}\tzeta \nonumber \\
&& + \ (D_{\exp_{\varphi_x(u_0)}(\varphi_{x*}\tx)} \cdot \cT -
D_{\varphi_x(u_0)}) \cdot D_2\exp_{\varphi_x(u_0)}(0) \cdot
\varphi_{x*}\tzeta. \label{eq:term4}
\end{eqnarray}
Here $\cT$ denotes the same parallel transport map as above.
 
We claim that the $\delta$-norm of the first term in the
expression~\eqref{eq:term4}  
is bounded by $C\|\tx\|_{1,\delta}\|\tzeta\|_{1,\delta}$ when
$\|x\|_{1,\delta}\le\eps$, for some uniform constant $C>0$. 
We have the pointwise estimates
$$
|\varphi_{x*}\tzeta| \le C(1+\sum_{j=1}^{m-1}|\kappa_j|)|\tzeta|,
$$
$$
|\nabla\varphi_{x*}\tzeta| \le
C(1+\sum_{j=1}^{m-1}|\kappa_j|)(|\tzeta|+ |\nabla\tzeta|),
$$
which imply $\|\varphi_{x*}\tzeta\|_{W^{1,p}(g_{\delta,\oeps})} \le
C \|\tzeta\|_{1,\delta}$ for some uniform constant $C>0$, provided
$\|x\|_{1,\delta}\le\eps$. On the other hand, the pointwise estimates 
$$
\big|(D_2 \exp_{\varphi_x(u_0)}(\varphi_{x*}\tx) - \cT\cdot D_2
\exp_{\varphi_x(u_0)}(0))\xi\big| \le C |\tx||\xi|,
$$
$$
\big|\nabla(D_2 \exp_{\varphi_x(u_0)}(\varphi_{x*}\tx) - \cT\cdot D_2
\exp_{\varphi_x(u_0)}(0))\xi\big| \le C
(|\nabla\tx||\xi|+|\tx||\nabla\xi|) 
$$
show that the norm of the operator $D_2
\exp_{\varphi_x(u_0)}(\varphi_{x*}\tx) - 
\cT\cdot D_2 \exp_{\varphi_x(u_0)}(0)$ acting from
$W^{1,p}(g_{\delta,\oeps})$ to itself is bounded by
$C\|\tx\|_{1,\delta}$. Finally, we have already seen that the operator
$D_{\exp_{\varphi_x(u_0)}(\varphi_{x*}\tx)}$ acting between 
$W^{1,p}(g_{\delta,\oeps})$ and $L^p(g_{\delta,\oeps})$ is uniformly 
bounded, and the claim follows. 

We now claim that the $\delta$-norm of the second term in the
expression~\eqref{eq:term4}  
is also bounded by $C\|\tx\|_{1,\delta}\|\tzeta\|_{1,\delta}$ when
$\|x\|_{1,\delta}\le\eps$, for some uniform constant $C>0$. We have
the pointwise estimate 
$$
\big|(D_{\exp_{\varphi_x(u_0)}(\varphi_{x*}\tx)}\cdot \cT -
D_{\varphi_x(u_0)})\xi\big| 
\le C |\tx|(|\xi|+|\nabla \xi|),
$$
which implies that the norm of the operator 
$D_{\exp_{\varphi_x(u_0)}(\varphi_{x*}\tx)}\cdot \cT -
D_{\varphi_x(u_0)}$ acting from $W^{1,p}(g_{\delta,\oeps})$ to
$L^p(g_{\delta,\oeps})$ is bounded by $C\|\tx\|_{1,\delta}$ for some
uniform constant $C>0$. Since
$\|\varphi_{x*}\tzeta\|_{W^{1,p}(g_{\delta,\oeps})} \le C
\|\tzeta\|_{1,\delta}$ and $D_2\exp_{\varphi_x(u_0)}(0)=\mathrm{Id}$,
the claim follows.  

\medskip 

We finally study the term~\eqref{eq:term2} in the decomposition of 
$D(\dbar \circ \Phi)(x)\zeta - D(\dbar \circ \Phi)(0)\zeta$, which can
be written 
\begin{eqnarray*}
\lefteqn{\frac d{dt}\Big|_{t=0} \dbar(\exp_{\varphi_x(u_0)}\varphi_{x*}t\tzeta)
- \frac d{dt}\Big|_{t=0} \dbar (\exp_{u_0}t\tzeta)} \\
&=& D_{\varphi_x(u_0)}\cdot D_2\exp_{\varphi_x(u_0)}(0) \cdot
\varphi_{x*}\tzeta - D_{u_0}\cdot D_2\exp_{u_0}(0)\cdot \tzeta\\
&=& D_{\varphi_x(u_0)}\cdot\varphi_{x*}\tzeta - D_{u_0}\cdot\tzeta.
\end{eqnarray*}
This last expression is pointwise bounded by
$C(\sum_{j=1}^{m-1}|\kappa_j|)(|\tzeta |+|\nabla\tzeta|)$, which
implies that its $\delta$-norm is bounded by
$C\|x\|_{1,\delta}\|\tzeta\|_{1,\delta}$ for some uniform constant
$C>0$. 

This proves the lemma.
\end{proof}

\begin{proposition} \label{prop:family}
 Let $[\u]\in \cM^A(p,q;H,\{f_\gamma\},J)$. There exists 
$\delta_1>0$ and 
 a one-parameter family $[u_\delta]\in 
\cM^A(\og_p,\ug_q;H_\delta,J)$, $0<\delta <\delta_1$
 such that 
 $$
 [u_\delta]\to [\u], 
\quad \delta\to 0. 
 $$
 Here convergence is understood in the sense 
of Definition~\ref{def:convubar}.
 Moreover, if 
$\dim\,\cM^A(p,q;H,\{f_\gamma\},J)=0$ then 
the intermediate gradient fragments in $[\u]$ are nonconstant
 and the above one-parameter family is unique. 
\end{proposition}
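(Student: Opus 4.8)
The plan is to exhibit each $[\u]$ as the limit of a genuine family of $H_\delta$-Floer trajectories by feeding the preparatory results of this section into the quantitative implicit function theorem (Theorem~\ref{thm:implicit}). First I would dispose of the case $m=0$: there $[\u]$ is a single gradient line of some $f_\gamma$, running from its minimum $p$ to its maximum $q$, and $u_\delta$ is simply the gradient cylinder over it, reparametrized so as to have gradient speed $\delta$ (cf.\ \eqref{eq:udelta}); a direct check, using the description of $\cP(H_\delta)$ following \eqref{eq:Hdelta}, shows this is already an exact solution of the Floer equation for $(H_\delta,J)$, so the family and the convergence $[u_\delta]\to[\u]$ are tautological, and uniqueness in the $\dim=0$ case follows from Proposition~\ref{prop:compact} together with the fact (used in the proof of Lemma~\ref{lem:cornershift}) that a Floer trajectory meeting a periodic orbit is a constant cylinder.

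For $m\ge 1$ I would fix the pregluing parameter $\oeps=0$, set $u_0:=G_{\delta,0}(\u)\in\cB^A_\delta$, and apply Theorem~\ref{thm:implicit} to the section $f:=\dbar_{H_\delta,J}\circ\Phi$ read in the chart \eqref{eq:chart}, with $x_0=x_1=0$. Its three hypotheses are precisely the results just proved: $df(0)=D_{u_0}$ is surjective and has a $\delta$-uniformly bounded right inverse $Q_\delta$ by Proposition~\ref{prop:gluing}; Lemma~\ref{lem:2der} provides the quadratic estimate $\|df(x)-df(0)\|_\delta\le C\|x\|_{1,\delta}$, so \eqref{eq:implicit1} holds once $\eps$ is chosen small independently of $\delta$; and Proposition~\ref{prop:catenation} gives $\|f(0)\|_\delta=\|\dbar_{H_\delta,J}(u_0)\|_\delta\to 0$, so that the smallness condition \eqref{eq:implicit2} is met for all $\delta$ below some $\delta_1$. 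The theorem then yields, for each such $\delta$, a unique $\zeta_\delta\in\im Q_\delta$ with $f(\zeta_\delta)=0$ and $\|\zeta_\delta\|_{1,\delta}\le\eps$, and in fact $\|\zeta_\delta\|_{1,\delta}\le 2c\,\|f(0)\|_\delta\to 0$; putting $u_\delta:=\Phi(\zeta_\delta)$ gives the required family $[u_\delta]\in\cM^A(\og_p,\ug_q;H_\delta,J)$, its smooth dependence on $\delta$ following by applying the same theorem in the joint variable $(\zeta,\delta)$.

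To get $[u_\delta]\to[\u]$ in the sense of Definition~\ref{def:convubar} I would reproduce the ``converse implication'' argument from the proof of Proposition~\ref{prop:deltageom}: along the catenation shifts $s_i^\delta$ one has $u_0(\cdot+s_i^\delta,\cdot)=u_i$ on compact sets for $\delta$ small, and $\|\zeta_\delta(\cdot+s_i^\delta,\cdot)\|_{1,\delta}\to 0$ combined with the Sobolev embedding $W^{1,p}\hookrightarrow C^0$ — uniform in $\delta$ since the weight $g_{\delta,\oeps}$ is bounded below by $1$ — forces $u_\delta(\cdot+s_i^\delta,\cdot)\to u_i$ uniformly on compact sets. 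For the ``moreover'' statement: when $\dim\,\cM^A(p,q;H,\{f_\gamma\},J)=0$, a dimension count from the formula preceding \eqref{eq:bigM}, together with the transversality \eqref{eq:transvf}, rules out constant intermediate fragments, since such a fragment would place $[\u]$ in a stratum of strictly smaller expected dimension. For uniqueness of the family, given any $[u'_\delta]\to[\u]$ the direct implication of Proposition~\ref{prop:deltageom} applies (the hypotheses ``level $1$, nonconstant intermediate fragments'' now being satisfied) and writes $u'_\delta=\exp_{G_{\delta,\oeps'(\delta)}(\v)}(\zeta'_\delta)$ with $\oeps'(\delta)\to 0$ and $\|\zeta'_\delta\|_{1,\delta}\to 0$; since the variations of the $\epsilon_i$ lie in the kernels of the operators $D'_{v_i}$ (Proposition~\ref{prop:support_of_section}) and are therefore absorbed into $\im Q_\delta$, the local uniqueness clause of Theorem~\ref{thm:implicit} identifies $[u'_\delta]$ with $[u_\delta]$ for $\delta$ small.

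I do not expect any single step above to be the real obstacle: the genuinely hard analytic work — the $\delta$-uniform right inverse on the glued curves, and the quadratic estimate in the non-standard chart $\Phi$ forced by the ``compensated'' norm $\|\cdot\|_{1,\delta}$ — has already been done in Propositions~\ref{prop:gluing}, \ref{prop:support_of_section}, \ref{prop:deltageom} and Lemma~\ref{lem:2der}. The point that will require the most care is the uniqueness assertion, where one has to reconcile the freedom in the pregluing parameters $\oeps$ with the rigidity supplied by the implicit function theorem; this is also where the precise definition of $\|\cdot\|_{1,\delta}$ (arranged so that $\epsilon_i$-variations sit inside $\im Q_\delta$) does its work, and where I would be most careful to check that the representatives and shifts furnished by Proposition~\ref{prop:deltageom} match those used in the construction of $[u_\delta]$.
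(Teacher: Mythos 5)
The existence half of your argument — apply Theorem~\ref{thm:implicit} at $x_0=x_1=G_{\delta,0}(\u)$, with the surjectivity and uniform right inverse from Proposition~\ref{prop:gluing}, the quadratic estimate from Lemma~\ref{lem:2der}, and the smallness of $\dbar_{H_\delta,J}(G_\delta(\u))$ from Proposition~\ref{prop:catenation}, followed by the converse direction of Proposition~\ref{prop:deltageom} to get convergence — is exactly the paper's argument, and your observation that the $m=0$ case is tautological and the dimension count for nonconstancy are both fine. The genuine problem is the uniqueness step.

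The local uniqueness clause of Theorem~\ref{thm:implicit} only identifies $[u'_\delta]$ with $[u_\delta]$ if you can place a representative of $[u'_\delta]$ inside the ball $B_\eps(0)$ of the chart $\Phi$ based at $G_{\delta,0}(\u)$, measured in the norm $\|\cdot\|_{1,\delta,0}$ attached to that base point. But Proposition~\ref{prop:deltageom} only gives you a representative $v_\delta$ that is $\|\cdot\|_{1,\delta,\oeps'(\delta)}$-close to $G_{\delta,\oeps'(\delta)}(\v)$, where both the representative $\v\in[\u]$ and the pregluing parameters $\oeps'(\delta)$ may differ from the $(\u,0)$ used to construct $u_\delta$. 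The catenation shifts in $G_{\delta,\oeps}$ differ by $\epsilon_i/2\delta$, so even with $\oeps'(\delta)\to 0$ the distance $\|G_{\delta,\oeps'(\delta)}(\v)-G_{\delta,0}(\u)\|_{1,\delta,0}$ can be large (it is not controlled unless $\oeps'(\delta)=O(\delta)$, which you do not know), and the two norms $\|\cdot\|_{1,\delta,\oeps'}$ and $\|\cdot\|_{1,\delta,0}$ are different. Your claim that the $\epsilon_i$-variations ``lie in the kernels of $D'_{v_i}$ and are therefore absorbed into $\im Q_\delta$'' is also incorrect as stated: the image of a right inverse is complementary to, not containing, the kernel of the operator, and in any case the relevant kernel here is that of $D_{G_\delta(\tw)}$, which is one-dimensional and spanned by the shift direction, not by the $\epsilon_i$-variations. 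The paper bridges this gap with a continuation argument: it chooses a path $\v_t$ in $\widehat\cM^A(p,q;H,\{f_\gamma\},J)$ from $\u$ to $\v$, sets $y_t(\delta)=G_{\delta,t\oeps}(\v_t)$ with norms $\|\cdot\|_t$, verifies that $\eps$ and $c$ can be taken uniform in $t\in[0,1]$, and shows that the set of $t$ for which some representative of $[v_\delta]$ lies in the $\|\cdot\|_t$-ball of radius $\eps$ around $y_t(\delta)$ is nonempty, open, and closed. That connectedness argument — propagating the IFT uniqueness along a family of overlapping charts — is the content you are missing.
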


\begin{remark} \label{rmk:whyT>0} {\rm 
The fact that the intermediate gradient fragments in $[\u]$ are
nonconstant is the reason why we had to prove the gluing theorem only
in the case where the intermediate lengths of gradient trajectories
are strictly positive: $T_i>0$, $i=1,\ldots,m-1$, where $m$ is the
number of sublevels in $[\u]$. 
}
\end{remark}

\begin{proof}
 We choose a representative $\u=(c_m,u_m,\ldots,u_1,c_0)$ of $[\u]$
 and we apply Theorem~\ref{thm:implicit} in a chart of $\cB_\delta$
 as above. By Proposition~\ref{prop:gluing} the operator $D$ admits a 
 right inverse $Q_\delta$ which is uniformly bounded 
 with respect to $\delta$ by some constant $c$. By
 Lemma~\ref{lem:2der} there exists 
$\eps>0$ independent of $\delta$ such that 
condition~\eqref{eq:implicit1} is satisfied. We set
$x_0:=G_\delta(\u)$. By 
Proposition~\ref{prop:catenation} we have 
 $$
 \lim_{\delta\to 0} \|f(x_0)\| =0
 $$
 and therefore condition~\eqref{eq:implicit2} is 
satisfied on some open neighbourhood of $x_0$ if $\delta$ is small 
enough. Taking $x_1:=x_0$ in the statement of 
Theorem~\ref{thm:implicit} provides us with an element $x\in X$ 
satisfying~\eqref{eq:implicit3}. We set 
 $$
 u_\delta:=x. 
 $$
Then 
$[u_\delta]\in \cM^A(\og_p,\ug_q;H_\delta,J)$. Because 
$\|x-x_0\|\le 2c\|f(x_0)\|\to 0$ and $x_0=G_\delta(\u)\to \u$ by
construction, we infer by Proposition~\ref{prop:deltageom} 
that $[u_\delta]\to [\u]$, $\delta\to 0$. 

\medskip 

We now assume that the dimension of 
$\cM^A(p,q;H, \{f_\gamma\} ,J)$ is zero. We have
\begin{eqnarray*} 
\lefteqn{\dim \cM^A(\og_p,\ug_q;H_\delta,J) = } \\ 
& = & \mu(\og_p)-\mu(\ug_q) + 2\langle c_1(TW), A \rangle -1 \\
& = & \mu(\og) - \mu(\ug) + 2\langle c_1(TW), A \rangle -1 +\ind(p) -
\ind(q), 
\end{eqnarray*}
hence $\mu(\og) - \mu(\ug) + 2\langle c_1(TW), A \rangle =1 -\ind(p) +
\ind(q)\le 2$.  
On the other hand $\mu(\og) - \mu(\ug) + 2\langle c_1(TW), A \rangle =
\sum_{i=1}^m \mu(\gamma_i) - \mu(\gamma_{i-1}) + 2\langle c_1(TW), A_i \rangle$, 
where $m\ge 0$ is the number of sublevels of $\u$.   
Each of the summands is nonnegative by transversality, and the only possibilities 
occuring are the following:
\begin{enum}
\item each summand is zero, which means that all the Floer trajectories involved in $\u$ are rigid;
\item one of the summands is $1$ and the others vanish. Since $[\u]$ is rigid, the only 
nonrigid summand must be $u_0$ or $u_m$, while $c_0$, respectively $c_m$ have to be constant.;
\item two of the summands are $1$, and the others vanish. As above, the nonrigid summands must be 
$u_0$ and $u_m$, while $c_0$ and $c_m$ are constant;
\item one of the summands is $2$ and the others vanish. Since $[\u]$ is rigid we must have $m=1$ and $c_0$, $c_1$ have to be constant.
\end{enum}
In each of the cases (i-iii) the intermediate gradient trajectories have to be nonconstant by transversality of the evaluation maps~\eqref{eq:transvf}.

Let now $[\widetilde v_\delta]\to [\u]$, $\delta\to 0$. Since the
only possible intermediate gradient trajectory in $[\u]$ is
nonconstant, we can apply Proposition~\ref{prop:deltageom}. We obtain
representatives $v_\delta\in[\widetilde v_\delta]$, $\v\in[\u]$ and
functions $\oeps=\oeps(\delta)=\!(\epsilon_1(\delta), \ldots,
\epsilon_{m-1}(\delta))$ such that $v_\delta$, $G_{\delta,\oeps}(\v)$
belong to some $\|\cdot\|_{1,\delta,\oeps}\ $-chart in $\cB_\delta$ and 
\begin{equation} \label{eq:t=1}
\| v_\delta - G_{\delta,\oeps}(\v)\|_{1,\delta,\oeps}\to 0, \quad
\delta\to 0.
\end{equation}
We have to prove that $u_\delta$ and $v_\delta$ differ by a shift for
$\delta>0$ sufficiently small. 

Let us choose a continuous path $\v_t\in \widehat \cM^A(p,q;H,
\{f_\gamma\} ,J)$, $t\in [0,1]$ with $\v_0=\u$, $\v_1=\v$. We denote
$y_t=y_t(\delta):=G_{\delta,t\oeps}(\v_t)$ and
$\|\cdot\|_t:=\|\cdot\|_{1,\delta,t\oeps}$.
Note that, for each $\delta>0$, there exists a continuous function
$C_\delta:[0,1]\times [0,1]\to\R^+$ such that 
$$
\frac 1 {C_\delta(t,t')} \|\cdot\|_{t'} \le \|\cdot\|_t \le
C_\delta(t,t')\|\cdot\|_{t'}, \quad  t,t'\in[0,1]
$$
satisfying 
$C_\delta(t,t)=1$, $t\in[0,1]$. 
 
As $y_t(\delta)$ and $D_{y_t(\delta)}$ vary continuously with $t$ and
$\delta$, we can choose $\eps>0$ and $c>0$ so that the hypotheses
of the implicit function theorem~\ref{thm:implicit} are satisfied for
each $y_t(\delta)$, $t\in[0,1]$, $0<\delta<\delta_1$ and some suitable
constant $\delta_1>0$. After further shrinking $\delta_1$ we can also
assume that $\|f(y_t)\|_t<\eps/4c$ for all $t\in [0,1]$ and
$0<\delta<\delta_1$. Finally, in view of~\eqref{eq:t=1}, for some
smaller $\delta_1$ we can achieve $\|v_\delta - y_1(\delta)\|_1\le
\eps$. 

We define 
$$
I_\delta:=\{t\in [0,1] \, : \, \exists \, x\in[v_\delta], \ 
\|x-y_t(\delta)\|_t\le\eps\}, \qquad 0<\delta<\delta_1.
$$
We prove that $I_\delta=[0,1]$ by showing that it is a nonempty open
and closed subset of $[0,1]$. Note that $I_\delta$ is nonempty since
$1\in I_\delta$. We prove now that $I_\delta$ is closed. Assume that
$t_n\in I_\delta$ is such that $t_n \to t$. Let $x_n\in[v_\delta]$ be
such that $\|x_n-y_{t_n}(\delta)\|_{t_n}\le \eps$. By the
triangular inequality we see that $\|x_n-y_t(\delta)\|_t$ stays
bounded, hence the sequence of shifts defining $x_n$ is also bounded
and, up to a subsequence, we may assume that $x_n\to
x\in[v_\delta]$. Then 
\begin{eqnarray*}
 \|x_n - y_t(\delta)\|_t & \le & C_\delta(t,t_n)\|x_n -
 y_t(\delta)\|_{t_n} \\
& \le & C_\delta(t,t_n)(\|x_n - y_{t_n}(\delta)\|_{t_n} + 
\|y_{t_n}(\delta)-y_t(\delta)\|_{t_n}) \\
& \le & C_\delta(t,t_n)(\eps + 
\|y_{t_n}(\delta)-y_t(\delta)\|_{t_n}). 
\end{eqnarray*}
We pass to the limit $n\to \infty$ and obtain $\|x -
y_t(\delta)\|_t\le \eps$, hence $t\in I_\delta$. We prove now 
that $I_\delta$ is open. Let $t\in I_\delta$ and choose an open
interval $J$ containing $t$ such that $C_\delta(t',t)<2$ and
$\|y_{t'}(\delta)-y_t(\delta)\|_t < \eps/8$ for all $t'\in J$.  
Theorem~\ref{thm:implicit} applied to $x_0:=y_t(\delta)$ and 
$x_1:=y_{t'}(\delta)$ yields $x$ such that $f(x)=0$ and
$\|x-y_t(\delta)\|_t\le \eps$. The uniqueness statement in the
implicit function theorem ensures that the intersection of the space
of solutions with the $\|\cdot\|_t$-ball of radius $\eps$ centered
at $x_0$ is a graph over $\ker \, D$. Since   
$\dim\ker\,D=1$ and since translation in the $s$-variable already 
provides a $1$-parameter family of solutions, we infer that
$x\in[v_\delta]$. Moreover, the last statement in
Theorem~\ref{thm:implicit} gives $\|x-y_{t'}(\delta)\|_t\le
\eps/2$. Then $\|x-y_{t'}(\delta)\|_{t'}\le
C_\delta(t',t)\eps/2 < \eps$, so that $t'\in I_\delta$ and
$I_\delta$ is open. 

The upshot is that there exists $x\in [v_\delta]$ such that 
$\|x-y_0(\delta)\|_0\le\eps$, $0<\delta<\delta_1$. But
$y_0(\delta)= G_\delta(\u)$ and, again by the uniqueness statement in
the implicit function theorem, we get that $x$ and $u_\delta$ differ
by a shift. Hence $[u_\delta]=[v_\delta]$. 
\end{proof}

\begin{proof}[\bf Proof of Theorem~\ref{thm:degen}]
We first prove (i) and show the existence of $\delta_1$. 
Assume by contradiction that there exists a
sequence $\delta_n\to 0$ and Floer trajectories $v_n\in \widehat
\cM^A(\og_p,\ug_q;H_{\delta_n},J)$ such that $J$ is not regular for
$v_n$. By Proposition~\ref{prop:compact} we may assume, up to shifting
and passing to a subsequence, that $v_n\to \u\in\widehat
\cM^A(p,q;H,\{f_\gamma\},J)$. As seen in the proof of
Proposition~\ref{prop:family}, the limit $\u$ has nonconstant
intermediate gradient trajectories since $J$ is regular for $\u$. 
We can therefore apply
Proposition~\ref{prop:deltageom} and get parameters $\oeps^n$ and
vector fields $\zeta_n$ such that
$v_n=\exp_{G_{\delta_n,\oeps^n}(\u)}(\zeta_n)$ and
$\|\zeta_n\|_{1,\delta_n,\oeps^n} \to 0$. By
Proposition~\ref{prop:gluing} the operator
$D_{G_{\delta_n,\oeps^n}(\u)}$ is surjective and admits a right
inverse which is uniformly bounded with respect to $\delta$. We infer
that the operator $D_{v_n}$ is also surjective for $n$ large enough, a
contradiction. 

Let us prove (ii). Let $(\delta,v_\delta) \in \widehat
\cM^A_{]0,\delta_1[}(\og_p,\ug_q; H,\{f_\gamma\},J)$ and let
$I(\delta) \subset ]0,\delta_1[$ be a small relatively compact open
interval containing $\delta$. Since the norms $\|\cdot\|_{1,\delta'}$
are equivalent for $\delta'\in I(\delta)$, the space
$\cB_{I(\delta)}:=\bigcup_{\delta'\in I(\delta)} \{\delta'\} \times
\cB_{\delta'}$ is a Banach manifold. Similarly, there is a Banach
vector bundle $\cE_{I(\delta)}\to \cB_{I(\delta)}$ endowed with an
obvious section $\dbar_{H_{I(\delta)},J}$ whose restriction to
$\cB_{\delta'}$ is $\dbar_{H_{\delta'},J}$. The restriction of its
linearization $D_{(\delta,v_\delta)}$ at $(\delta,v_\delta)$ to
$T_{v_\delta}\cB_\delta$ is the surjective operator $D_{v_\delta}$ of
index $1$, hence $D_{(\delta,v_\delta)}$ is surjective and has index
$2$. Therefore $\ker \, D_{(\delta,v_\delta)}$ projects surjectively
onto $T_\delta I(\delta)=\R$ and the projection in (ii) is a submersion. 

We now prove (iii). Let us note that, by
Proposition~\ref{prop:family}, we have a map 
\begin{eqnarray*}
\cM^A(p,q;H,\{f_\gamma\},J) & \to & \pi_0
(\cM^A_{]0,\delta_1[}(\og_p,\ug_p;H,\{f_\gamma\},J)), \\
{}[\u] & \mapsto & C_{[\u]}:= \bigcup _{\delta\in ]0,\delta_1[}
\{(\delta,[u_\delta])\},  
\end{eqnarray*}
where $[u_\delta]$ is the uniquely defined one-parameter family of
Proposition~\ref{prop:family} such that $[u_\delta]\to[\u]$. This map
is injective because the limit of such a family $[u_\delta]$ as
$\delta\to 0$ is unique. In order to prove surjectivity, let
$C=\{(\delta,[v_\delta])\}$ be a connected component of
$\cM^A_{]0,\delta_1[}(\og_p,\ug_p;H,\{f_\gamma\},J)$. By
Proposition~\ref{prop:compact} there exists a sequence $\delta_n\to 0$
and $[\u] \in \cM^A(p,q;H,\{f_\gamma\},J)$ such that
$[v_{\delta_n}]\to [\u]$. By the uniqueness statement in
Proposition~\ref{prop:family} we get that $C=C_{[\u]}$. 
\end{proof}

\subsection{Coherent orientations} \label{sec:ori}

The structure of this section is as follows. We first present the
construction of coherent orientations in the usual Floer setting for
$(H_\delta,J)$ by adopting the point of view of~\cite{BM}. 
We construct coherent orientations on
the moduli spaces of Morse-Bott trajectories, out of which we get
orientations on the space of Morse-Bott trajectories with gradient
fragments. Finally, we prove Proposition~\ref{prop:signs}.

We denote $S^1:=\R/\Z$ and, for a path of symmetric matrices $S:S^1\to
M_{2n}(\R)$, we denote by $\Psi_S$ the unique solution of the Cauchy
problem  
\begin{equation} \label{eq:PsiS}
\dot \Psi (\theta) =
J_0 S(\theta) \Psi(\theta), \quad \Psi(0)= \one, \quad \theta\in
[0,1],
\end{equation} 
where $J_0$ is the standard complex structure on $\R^{2n}$. 
Then $\Psi_S$ is a path of symplectic matrices and we denote 
$$
\cS:=\{S:S^1\to M_{2n}(\R) \, : \, ^tS=S \mbox{ and }
\det(\one-\Psi_S(1))\neq 0\}.
\index{$\cS$, loops of symmetric matrices without degenerate directions}
$$

Let us denote by $E$ a symplectic vector bundle of rank $2n$ over $\C
P^1$, or $\R\times S^1$, or $\C$, with fixed trivializations in
neighbourhoods of infinity in the case of $\R\times S^1$ and $\C$. 
We denote by\index{$\cO$, space of CR operators|(} 
$$
\cO(\C P^1,E)
$$ 
the space of linear operators $D:W^{1,p}(\C
P^1,E)\to L^p(\C P^1,\Lambda^{0,1}E)$ of the form $(\p_x + J_0 \p_y +
S(z))d\bar z$ in a local trivialization of $E$, where $z=x+iy$ is a
local coordinate on $\C P^1$. Given $\oS,\uS\in\cS$ we denote by 
$$
\cO(\R\times S^1,E;\oS,\uS)
$$ 
the space of linear operators $D:W^{1,p}(\R\times
S^1,E)\to L^p(\R\times S^1,\Lambda^{0,1}E)$ of the form $(\p_s + J_0 \p_\theta +
S(s,\theta))(ds-id\theta)$ in a local trivialization of $E$, such that
$\lim_{s\to-\infty} S(s,\cdot) = \oS$ and $\lim_{s\to\infty}
S(s,\cdot)=\uS$ in the given trivializations of $E$. Given $S_0\in\cS$
we denote by 
$$
\cO_\pm(\C,E;S_0)
$$
the space of linear operators $D:W^{1,p}(\C,E)\to
L^p(\C,\Lambda^{0,1}E)$ of the 
form $(\p_x + J_0 \p_y + S(z))d\bar z$ in a local trivialization of
$E$ and such 
that, when expressed in holomorphic cylindrical coordinates
$(s,\theta)$ with $e^{\pm 2\pi(s+i\theta)}=z$ as 
$(\p_s+J_0\p_\theta+S(s,\theta))(ds-id\theta)$, we
have $\lim_{s\to \pm\infty} S(s,\theta) = S_0(\theta)$  
in the given trivialization of $E$. Intuitively, the space $\cO_+$
corresponds to the sphere with one positive puncture, while $\cO_-$
\index{$\cO$, space of CR operators|)} 
corresponds to the sphere with one negative puncture.  

It is a standard fact in the literature that each of the above spaces
$\cO$ is contractible and consists of Fredholm operators. Moreover, they
each come equipped with a canonical real line bundle $\Det(\cO)$ whose
fiber at $D$ is $\Det(D):=(\Lambda^{\max} \ker \, D) \otimes
(\Lambda^{\max} \coker \, D)^*$. Each of the bundles $\Det(\cO)$ is
trivial since the base is contractible. 

We now define gluing operations between elements of the above
spaces (see Figure~\ref{fig:ori}). Let $K\in \cO_+(\C,E;S_0)$ or
$K\in\cO(\R\times S^1,E;\oS,S_0)$, 
and $L\in \cO_-(\C,F;S_0)$ or $L\in\cO(\R\times S^1,F;S_0,\uS)$. 
Let us choose a cutoff function $\beta:\R\to [0,1]$ 
such that\index{$\beta$, cutoff function} 
$\beta(s)=0$ if $s\le 0$ and $\beta(s)=1$ if $s\ge 1$. 
Given $R>0$ large we define operators $K_R$ and $L_R$ by replacing $S$
in the asymptotic expressions of $K$ and $L$ by
$S_0+\beta(R-s)(S-S_0)$ and $S_0+\beta(R+s)(S-S_0)$ respectively. 
We cut out semi-infinite cylinders $\{s>R\}$ from the base of
$E$, $\{s<-R\}$ from the base of $F$, then identify their boundaries
using the coordinate $\theta$. We glue the vector bundles $E$ and $F$
using their given trivializations near infinity and denote the
resulting vector bundle by $E\# F$. We define $K\#_R L$ by
concatenating $K_R$ and $L_R$, so that $K\#_R L$ belongs to one of the
spaces $\cO(\C P^1,E\# F)$, $\cO_+(\C,E\# F; \uS)$, $\cO_-(\C,E\#
F;\oS)$, or $\cO(\R\times S^1,E\# F; \oS,\uS)$. 

\begin{figure}
         \begin{center}
\input{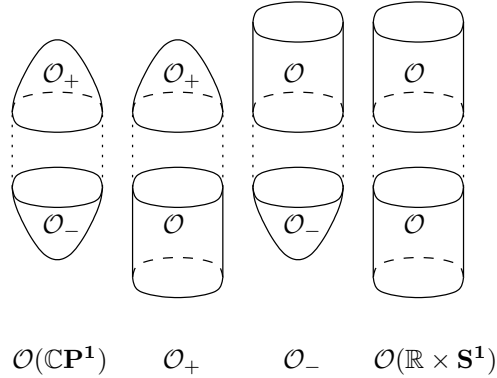}
\caption{The four possibilities of gluing ($\cO=\cO(\R\times S^1)$).
 \label{fig:ori}}
         \end{center}
\end{figure}

Following~\cite[Corollary~7]{BM}, for $R$ large enough there is a
natural isomorphism $\Det(K)\otimes \Det(L) \stackrel \sim \to
\Det(K\#_R L)$ defined up to homotopy. In particular, given
orientations $o_K$ of $\Det(K)$ and $o_L$ of $\Det(L)$, we induce a
canonical orientation $o_K\# o_L$ of $\Det(K\#_R L)$. Moreover, this
operation on orientations is associative~\cite[Theorem~10]{FH}. 

\medskip 

We describe now, following~\cite{BM}, a procedure for constructing
orientations on the spaces $\cO(\R\times S^1,E;\oS,\uS)$ which are
coherent with respect to the gluing operation, in the sense
of~\cite[Definition~11]{FH}. We denote by $\theta_n$ a trivial
symplectic vector bundle of rank $2n$. We first note that
each determinant bundle 
$\Det(\cO(\C P^1,E))$ is naturally oriented since $\cO(\C P^1,E)$ 
contains the connected space of complex linear operators and the latter
have kernels and cokernels which are canonically oriented as complex
vector spaces. We now choose arbitrary orientations of the determinant
bundles $\Det(\cO_+(\C,E;S_0))$ such that the trivialization of $E$ at
infinity extends to $\C$. 

\begin{remark} \label{rmk:Clin}
Note that, if $S_0$ commutes with $J_0$,
the set of $\C$-linear operators in $\cO_+(\C,E;S_0)$ forms a nonempty
convex set, hence $\Det(\cO_+(\C,E;S_0))$ has a canonical orientation. 
\end{remark} 

\noindent We induce orientations on the determinant
bundles $\Det(\cO_-(\C,E;S_0))$ such that the trivialization of $E$ at
infinity extends to $\C$ by requiring that the orientation induced by
gluing on $\Det(\cO(\C P^1,\theta_n))$ is the canonical one. Finally,
we induce orientations on $\Det(\cO(\R\times S^1,E;\oS,\uS))$ by
requiring that the orientation induced on $\Det(\cO(\C P^1,\theta_n
\# E \# \theta_n))$ by the gluing operation 
$$
\cO_+(\C,\theta_n;\oS)\times \cO(\R\times S^1,E;\oS,\uS)\times
\cO_-(\C,\theta_n;\uS) \to \cO(\C P^1,\theta_n \# E \# \theta_n)
$$
is the canonical one. It is proved in~\cite{BM} that this defines a
system of coherent orientations. 

The general procedure for inducing orientations of the spaces
of Floer trajectories $\widehat \cM^A(\og_p,\ug_q;H_\delta,J)$ out of
a system 
of coherent orientations goes as follows. Let $\overline
\Psi_p$, $\underline \Psi_q$ denote the linearizations of the
Hamiltonian flow of $H_\delta$ along $\og_p$, $\ug_q$ in their fixed
respective trivializations and let $\oS_p,\uS_q\in\cS$ be the
corresponding paths of symmetric matrices as in~\eqref{eq:PsiS}. Let
$E$ be a symplectic vector bundle over $\R\times S^1$ with fixed
trivializations at infinity and relative first Chern class equal to
$\langle c_1(T\widehat W),A\rangle$. For each $u\in \widehat
\cM^A(\og_p,\ug_q;H_\delta,J)$ there is an isomorphism of symplectic
vector bundles $\Phi_u:u^*T\widehat W\stackrel \sim \to E$,
chosen to depend continuously on $u$. There is a map 
$$
\widehat \cM^A(\og_p,\ug_q;H_\delta,J) \to \cO(\R\times
S^1,E;\oS_p,\uS_q), \quad u\mapsto \Phi_u\circ \widetilde D_u
\circ \Phi_u^{-1},
$$
where $\widetilde D_u$ has the same analytical expression as the
linearized operator 
$D_u:W^{1,p}(\R\times S^1,u^*T\widehat W;e^{d|s|}ds\,
d\theta)\oplus \oV_u \oplus \uV_u \to L^p(\R\times S^1,u^*T\widehat
W;e^{d|s|}ds\, d\theta)$ considered in Section~\ref{sec:gluing}. 
Under the assumption that $J$ is regular and because of elliptic
regularity, the operators $\widetilde D_u$ and $D_u$ have the same
kernel, consisting of smooth elements. Hence their determinant lines
are naturally isomorphic. It follows that the pull-back of $\Det(\cO)$
under the above map is naturally isomorphic to $\Lambda^{\max}T\widehat
\cM = \Lambda^{\max}\ker \, D_u$, and we get an orientation on
$\widehat \cM$.  

If the dimension is one, the space $\widehat \cM$ has a canonical
orientation given at each point $u$ by the vector field $\p_s
u$. Comparing this with the orientation constructed above associates
to each connected component $[u]$ of $\widehat \cM$ a sign
$\epsilon(u)$. 

\begin{lemma} \label{lem:cover}
 Let $S_1\in \cS$ and define $S_m(\theta):=S_1(m\theta)$. Assume
 $S_m\in\cS$ and define an
 automorphism $\phi_m$ of $\cO_+(\C,E;S_m)$ 
 by conjugation with the map $z\mapsto e^{2i\pi/m} z$. Then $\phi_m$
 is orientation reversing for $\Det(\cO_+(\C,E;S_m))$ if and only if 
 $m$ is even and the difference of Conley-Zehnder indices
 $\mu_{CZ}(\Psi_{S_m})-\mu_{CZ}(\Psi_{S_1})$ is odd.  
\end{lemma}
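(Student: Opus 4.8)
The strategy is to reduce the statement to a computation about determinant lines over $\C P^1$ via the gluing formalism set up above, exploiting that a $\C$-linear model operator can be used as a reference. First I would fix the simple operator $S_1$ and its $m$-fold cover $S_m(\theta)=S_1(m\theta)$, both assumed in $\cS$. The automorphism $\phi_m$ of $\cO_+(\C,E;S_m)$ is induced by precomposition with the biholomorphism $z\mapsto e^{2\pi i/m}z$ of $\C$; since this biholomorphism fixes $0$ and $\infty$ and is compatible with the cylindrical coordinate shift $\theta\mapsto\theta+1/m$ near infinity, it genuinely sends $\cO_+(\C,E;S_m)$ to itself. The key point is that $\phi_m$ is linear on the (contractible) space $\cO_+$, so its effect on $\Det(\cO_+(\C,E;S_m))$ is a single sign, and this sign can be computed at \emph{any} convenient operator in the space.

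Next I would glue. Choose an operator $L\in\cO_-(\C,F;S_m)$ and form $D\#_R L\in\cO(\C P^1,E\# F)$; the automorphism $\phi_m$ of the $\cO_+$ factor glues to an automorphism $\Phi_m$ of $\cO(\C P^1,E\# F)$, namely precomposition with a biholomorphism of $\C P^1$ that rotates the northern chart by $e^{2\pi i/m}$ and is the identity near the south pole (this requires interpolating the rotation to the identity across the neck, which is possible up to homotopy and hence does not affect the determinant line). By the coherence of the gluing isomorphism $\Det(D)\otimes\Det(L)\stackrel\sim\to\Det(D\#_RL)$, the sign by which $\phi_m$ acts on $\Det(\cO_+(\C,E;S_m))$ equals the sign by which $\Phi_m$ acts on $\Det(\cO(\C P^1,E\# F))$, which is intrinsically oriented. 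So the problem becomes: by what sign does a degree-one diffeomorphism of $\C P^1$ that ``rotates by $e^{2\pi i/m}$ near one puncture'' act on the canonical complex orientation of the determinant of a Cauchy--Riemann operator whose asymptotics at that puncture are $S_m$?

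To evaluate this I would compare the cover with the simple case: the $m$-fold branched cover $\pi_m:\C P^1\to\C P^1$, $z\mapsto z^m$ near the relevant point, intertwines an operator with asymptotics $S_1$ and its pullback with asymptotics $S_m$, and $\phi_m$ becomes the deck transformation of this cover. The determinant line of the pulled-back operator decomposes, via the $\Z/m$-action on sections, into the $m$ isotypic pieces; the trivial piece is (the complexification of) $\Det$ of the downstairs operator, and $\phi_m$ acts on each nontrivial piece by a root of unity whose contribution to the real determinant line is computed by counting eigenvalue crossings. The net sign is $(-1)$ to the number of ``$-1$ eigenvalues picked up'', which by the standard normal-form analysis of $\mu_{CZ}$ (the jump of the Conley--Zehnder index along a real negative eigenvalue of the linearized return map, exactly as in Remark~\ref{rmk:good and bad} and~\cite[Lemma~3.2.4]{U}) equals $\mu_{CZ}(\Psi_{S_m})-\mu_{CZ}(\Psi_{S_1})$ modulo $2$ when $m$ is even, and is even (no sign) when $m$ is odd. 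Assembling: $\phi_m$ reverses orientation iff $m$ is even and $\mu_{CZ}(\Psi_{S_m})-\mu_{CZ}(\Psi_{S_1})$ is odd.

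\textbf{Main obstacle.} The delicate step is the eigenvalue-crossing computation identifying the sign of the deck action on the nontrivial isotypic components with the parity of $\mu_{CZ}(\Psi_{S_m})-\mu_{CZ}(\Psi_{S_1})$; this requires a careful homotopy to a split normal form for the path $\Psi_{S_1}$ (block-diagonalizing into rotation blocks, hyperbolic blocks, and the problematic blocks with real eigenvalue $<-1$), tracking on each block how iteration multiplies eigenvalues and how $\phi_m$ permutes the corresponding kernel directions, and checking that the contributions of the ``safe'' blocks cancel in pairs while each block with an odd number of real eigenvalues $<-1$ contributes a sign precisely when $m$ is even. Everything else — contractibility of the $\cO$-spaces, the reduction by gluing, and the compatibility of the $\C P^1$ orientation with the complex-linear reference operators (cf.\ Remark~\ref{rmk:Clin}) — is formal given the results already established in this section.
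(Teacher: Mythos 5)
Your proposal identifies the two key ideas the paper also uses: that the action of $\phi_m$ on $\Det(\cO_+)$ can be read off at a $\phi_m$-invariant operator $K$ by decomposing $\ker K$ and $\coker K$ into eigenspaces of the $\Z/m$-action, and that the trivial eigenspace is pulled back from the downstairs operator $K_1$ while complex-conjugate pairs contribute even dimensions, so only the $(-1)$-eigenspace matters. That much matches the paper and is correct.

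Where you diverge — and where you create unnecessary work — is at the final step. You propose to identify the parity of the $(-1)$-eigenspace dimension with the parity of $\mu_{CZ}(\Psi_{S_m})-\mu_{CZ}(\Psi_{S_1})$ by gluing out to $\C P^1$ and then running a block-by-block normal-form analysis of the return map $\Psi_{S_1}$, tracking real eigenvalues $<-1$. You rightly flag this as the ``main obstacle.'' But this entire computation can be circumvented: since the $(-1)$-eigenspace of $\ker K$ has dimension $\equiv\dim\ker K-\dim\ker K_1\pmod 2$ and likewise for $\coker K$, the determinant of $\phi_m$ on $\Det(K)$ is $(-1)^{(\dim\ker K-\dim\ker K_1)+(\dim\coker K-\dim\coker K_1)}=(-1)^{\ind K-\ind K_1}$, and the Fredholm index formula for $\cO_+(\C,E;\cdot)$ immediately gives $\ind K-\ind K_1=\mu_{CZ}(\Psi_{S_m})-\mu_{CZ}(\Psi_{S_1})$. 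This one-line index argument is the paper's route; it replaces the eigenvalue-crossing/normal-form analysis entirely and is independent of any specific presentation of $\Psi_{S_1}$.

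The gluing detour to $\C P^1$ is also superfluous and slightly delicate: the diffeomorphism you want (rotation by $e^{2\pi i/m}$ near the north pole, identity near the south pole) is not holomorphic, so making the assertion that ``the sign by which $\phi_m$ acts on $\Det(\cO_+)$ equals the sign by which $\Phi_m$ acts on $\Det(\cO(\C P^1))$'' rigorous requires checking compatibility of this interpolated diffeomorphism with the gluing isomorphism $\Det(K)\otimes\Det(L)\to\Det(K\#_RL)$ and verifying that $\Phi_m$ acts trivially on $\Det(L)$; none of this is needed, since $\phi_m$ already acts directly on $\Det(\cO_+(\C,E;S_m))$ via its action on kernels and cokernels of invariant operators, which form a convex set.

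In short: your structural skeleton (eigenspace decomposition, identification of the trivial piece, parity of the $(-1)$-eigenspace, necessity of $m$ even) is correct and matches the paper's, but you should replace both the gluing reduction and the normal-form eigenvalue-crossing computation by the direct Fredholm-index observation — it collapses the ``delicate step'' you identified into a triviality.
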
 

\begin{proof}
 We start by explaining how
 $\phi_m$ acts on the orientations of the determinant bundle. The
 operators $K\in \cO_+(\C,E;S_m)$ which are 
 invariant under conjugation by $\phi_m$, i.e. $K(\zeta\circ
 \phi_m)\circ \phi_m^{-1}=K(\zeta)$ for all $\zeta$, form a convex
 and, in particular, connected set. Since $\phi_m$ acts on $\ker\, K$
 and $\coker \, K$, it also 
 acts on $\Det(K)$ and the induced action on orientations extends to
 $\Det(\cO_+(\C,E;S_m))$.

 There is a bijective correspondence between operators
 $K_1\in\cO_+(\C,E;S_1)$ and operators $K\in \cO_+(\C,E;S_m)$ which are
 invariant under conjugation by $\phi_m$, in which case the pull-back
 of $\ker \,  K_1$ under $z\mapsto z^m$ is the $1$-eigenspace of
 $\phi_m$ acting on $\ker\, K$. Since $\ker \, K$ splits as a
 direct sum of eigenspaces corresponding to the $m$-th roots of
 unity and since imaginary roots give rise to even-dimensional
 eigenspaces, we infer that the dimension of the $-1$-eigenspace
 has the parity of $\dim\ker\, K - \dim\ker\, K_1$. This fact is
 relevant in our situation since $\phi_m$
 reverses the orientation of $\ker\, K$ if and only if this dimension
 is odd. Similarly, $\phi_m$ reverses the orientation of $\coker\, K$
 if and only if $\dim\coker\, K - \dim\coker\, K_1$ is odd. As a
 conclusion, $\phi_m$ reverses the orientation of $\Det(K)$ if and only if
 $\ind(K)-\ind(K_1)=\mu_{CZ}(\Psi_{S_m})-\mu_{CZ}(\Psi_{S_1})$ is
 odd. This can happen of course only if $-1$ is an $m$-th root of
 unity, i.e. $m$ is even. 
\end{proof} 

\begin{remark} \label{rmk:modd} 
 The proof of Lemma~\ref{lem:cover} shows that, if $m$ is odd, the
 difference of Conley-Zehnder indices is automatically even.
\end{remark}

\begin{lemma} \label{lem:T}
 Let $S_1\in \cS$, define $S_m(\theta):=S_1(m\theta)$ and assume
 $S_m\in\cS$. Let $T\in\cO(\R\times
 S^1,\theta_n;S_m,S_m)$ be an element of the form 
$$
 T:= \p_s + J_0\p_\theta + S_m(\theta-\beta(s)/m),
$$
with $\beta:\R\to [0,1]$ a smooth function  
satisfying $\beta(s)=0$ near $-\infty$, $\beta(s)=1$ near $+\infty$ and with
derivative uniformly bounded by some small constant $c$. We denote by
$\cO$ one of the spaces $\cO_+(\C,E;S_m)$ or $\cO(\R\times
S^1,E;S,S_m)$, $S\in\cS$. The family $\psi=\{\psi_R\}$,
$R>0$ of automorphisms of $\cO$ defined by  
$$
 \psi_R(D):=D\#_R T
$$ 
induces an action on the orientations of $\Det(\cO)$
which is reversing if and
only if $m$ is even and the difference of Conley-Zehnder indices
 $\mu_{CZ}(\Psi_{S_m})-\mu_{CZ}(\Psi_{S_1})$ is odd. 
\end{lemma}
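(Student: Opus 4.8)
The plan is to reduce the statement to Lemma~\ref{lem:cover} by identifying, on determinant lines, the gluing operation $D\mapsto D\#_R T$ with conjugation by the deck transformation $z\mapsto e^{2\pi i/m}z$.

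First I would check that $T$ is a legitimate element of $\cO(\R\times S^1,\theta_n;S_m,S_m)$: its asymptotic matrix at $-\infty$ is $S_m(\theta)$ (since $\beta=0$ there), and at $+\infty$ it is $S_m(\theta-1/m)=S_1(m\theta-1)=S_1(m\theta)=S_m(\theta)$ by $1$-periodicity of $S_1$, so both ends carry $S_m$ as required. Next I would reduce the case $\cO=\cO(\R\times S^1,E;S,S_m)$ to the case $\cO=\cO_+(\C,E;S_m)$. Fix once and for all an operator $K\in\cO_+(\C,\theta_n;S)$ together with an orientation of $\Det(K)$. Gluing $K$ at the $S$-asymptote produces an isomorphism of oriented determinant line bundles from $\Det(\cO(\R\times S^1,E;S,S_m))$ onto $\Det(\cO_+(\C,E\#\theta_n;S_m))\cong\Det(\cO_+(\C,E;S_m))$, well defined up to homotopy. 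By associativity of the gluing operation on orientations (\cite{FH}) we have $(K\#D)\#_R T=K\#(D\#_R T)$, so this isomorphism intertwines $\psi_R$ on the two spaces. Since orientations of a real line are discrete, the sign of the induced action on $\Det(\cO)$ is therefore the same in both cases, and it suffices to treat $\cO=\cO_+(\C,E;S_m)$.

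For $\cO=\cO_+(\C,E;S_m)$ the key claim is that, for $R$ large, $\psi_R(D)=D\#_R T$ is conjugate --- by a diffeomorphism of the punctured plane that is the identity on the part carrying $D$ and agrees near the puncture with the angular rotation $\theta\mapsto\theta+1/m$, i.e. with $z\mapsto e^{2\pi i/m}z$ in planar coordinates --- to a representative of $\phi_m^{-1}(D)$. The mechanism is that $T$ is a ``roll'': as one passes through its neck, the change of variables $(s,\theta)\mapsto(s,\theta+\beta(s)/m)$ turns $\p_s+J_0\p_\theta+S_m(\theta-\beta(s)/m)$ into $\p_s+J_0\p_\theta+S_m(\theta)-\frac{\beta'(s)}{m}\p_\theta$, a compactly supported perturbation of the translation-invariant operator with asymptotic $S_m$; after this straightening the only residual effect of having inserted $T$ is that the whole angular coordinate on the $D$-side has been turned by $1/m$, which is exactly $\phi_m^{-1}$. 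Homotoping the residual perturbation away inside the relevant space of Fredholm operators (which does not change orientations, that space being connected) and invoking the $R$-large stability of the gluing isomorphism, I obtain that $\psi$ acts on $\Det(\cO_+(\C,E;S_m))$ in the same way as $\phi_m^{-1}$, hence --- $\phi_m^{-1}$ reversing orientation if and only if $\phi_m$ does --- in the same way as $\phi_m$. Lemma~\ref{lem:cover} then gives that this action is orientation reversing precisely when $m$ is even and $\mu_{CZ}(\Psi_{S_m})-\mu_{CZ}(\Psi_{S_1})$ is odd, which is the assertion.

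The main obstacle is the third paragraph: making the ``roll unfurling'' argument precise on the level of determinant lines, i.e. producing the conjugating map as an honest diffeomorphism of the glued domain which is the identity outside the neck, checking that the isomorphism it induces on determinant lines is compatible up to homotopy with the coherent gluing isomorphism of \cite{BM}, and verifying that the leftover first-order term $-\frac{\beta'(s)}{m}\p_\theta$ can be connected to $0$ through operators along which the determinant line varies continuously. All of this is routine given the machinery already set up in \cite{BM,FH}, but it is where the care is required; note that the smallness of $c$ plays no role in this topological statement, only $\int\beta'=1$ matters.
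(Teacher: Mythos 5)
Your proposal is correct and arrives at the same conclusion by the same overall strategy: reduce to $\cO_+(\C,\theta_n;S_m)$ by associativity of gluing, then show the orientation action of $\psi$ agrees with that of $\phi_m$ and cite Lemma~\ref{lem:cover}. The implementation, however, differs from the paper's. You perform a single change of coordinates $(s,\theta)\mapsto(s,\theta-\beta(s)/m)$ on the $T$-neck that unrolls the twist, leaving a compactly supported residual term $-\frac{\beta'(s)}{m}\p_\theta$ which is then homotoped to zero; the identification with $\phi_m^{\pm 1}$ then appears as a change of cylindrical trivialization near the puncture. (Minor signs: the sign in the change of variables and in $z\mapsto e^{\pm 2\pi i/m}z$ should be reversed to cancel the $\beta$-dependence in $S_m$, but since only the parity of the deck transformation matters for the orientation character, this is harmless.) The paper instead constructs an explicit one-parameter family of glued operators $D^V_t\#_R T_t$, where $D^V_t$ is a stabilization of $D$ conjugated by the rotation $r_t:z\mapsto e^{-2\pi it/m}z$ and $T_t$ has matrix $S_m(\theta-(t+(1-t)\beta(s))/m)$; this interpolates from $\psi_R(D^V)$ at $t=0$ to $\phi_m^{-1}(D^V)$ at $t=1$ through surjective operators, and one tracks a kernel basis $(\zeta_i\circ r_t)$ via a continuous family of right inverses $Q_t$. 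The two implementations are related — you move the $T$-part and fix $D$, while the paper moves $D$ by rotation and simultaneously unrolls $T$ — but the paper's version is more directly compatible with the determinant-line formalism because it stays inside $\cO_+(\C,\theta_n;S_m)$ throughout and never has to address a changed trivialization at the puncture. That question — how a puncture-only rotation interacts with the orientation on $\Det(\cO_+)$, given that the intermediate rotations $e^{-2\pi it/m}$ do not preserve the asymptote $S_m$ — is exactly the detail you flag in your closing paragraph, and the paper's device of using $T_t$ as a buffer with asymptote $S_m(\theta-t/m)$ at $-\infty$ and $S_m(\theta)$ at $+\infty$ is precisely what makes this a non-issue. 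Your proof can certainly be completed, but you would essentially be led back to something like the paper's family $T_t$ in order to keep the asymptotics within $\cS$ along the homotopy.
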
 

\begin{proof}
 Note that $T$ is an isomorphism if $c$ is small
 enough, by the same argument as the one for $D_u''$ in the proof of
 Proposition~\ref{prop:Surjectivity_udelta}. 

 We now explain what is the action of $\psi$ on the orientations
 of $\Det(\cO)$. Let $D\in\cO$ and let $V\subset L^p$ be a finite dimensional
 vector space spanned by smooth sections with compact support, such
 that $V+\im\, D=L^p$. We define the stabilization of $D$ by $V$ as 
$$
D^V:V\oplus W^{1,p}\to L^p, \qquad (v,\zeta)\mapsto v+D\zeta.
$$
 Then $D^V$ is a surjective Fredholm operator and there is a canonical
 isomorphism $\Det(D)\simeq \Lambda^{\max} \ker\, D^V \otimes
 \Lambda^{\max} V^*$. For $R$ large enough the glued operator 
 $D_R=D^V\#_R T :V\oplus W^{1,p}\to L^p$ is surjective with a uniformly
 bounded right inverse $Q_R$, and moreover the projection onto $\ker
 \, D_R$ given by $\one-Q_R D_R$ is an isomorphism when restricted to
 $\ker \, D^V$ (see~\cite[Corollary~6]{BM}, as well
 as~\cite[Proposition~9]{FH} for a slightly different setup). 
 Since $D^V\#_R T = (D\#_R T)^V$, this induces a
 natural isomorphism between $\Det(D)$ and $\Det(\psi_R(D))$. 
 
 The gluing of orientations is associative, hence it is enough to
 prove the statement for $\cO=\cO_+(\C,\theta_n;S_m)$. We claim that
 the action induced by $\psi$ is the same as the one induced by
 $\phi_m$ in Lemma~\ref{lem:cover}. Let us choose $D\in\cO$ which is
 $s$-independent for $s$ large enough, and let $D^V$ be a 
 surjective stabilization. We construct a continuous path in
 $\cO$ from $\psi_R(D^V):=\psi_R(D)^V$ to
 $\phi_m(D^V):=\phi_m(D)^V$ as follows. Let $D^V_t$ be the conjugation
 of $D^V$  by $r_t:\C\to\C$, $z\mapsto e^{-2i\pi t/m}z$, and let $T_t$
 be the operator 
 $\p_s+J_0\p_\theta+S_m(\theta-(t+(1-t)\beta(s))/m)$. Then $D^V_t\#_R
 T_t$ interpolates between $\psi_R(D^V)$ and $\phi_m(D^V)$ as $t$
 varies from $0$ to $1$. This is a path of surjective operators
 admitting a continuous family of right inverses $Q_t$. Given a basis
 $(\zeta_1,\ldots,\zeta_k)$ of 
 $\ker\,D^V$, a basis of $\ker\, D^V_t$ is given by
 $(\zeta_1,\ldots,\zeta_k)\circ r_t$. By projecting along $\im\, Q_t$
 we obtain a basis of $\ker\, D^V_t\#_R T_t$. For $t=1$, since
 $D^V_1=D^V_1\#_R T_1$, the elements $\zeta_i\circ r_1$ are preserved
 by the projection and form a basis of $\phi_m^{-1}(D^V)$ which is exactly
 the one giving the action of $\phi_m^{-1}$ (or $\phi_m$) on
 orientations, as explained in Lemma~\ref{lem:cover}. 
\end{proof} 

\begin{lemma} \label{lem:signsgrad}
 Let $\gamma \in \cP_\lambda^{\le \alpha}$ and $\gamma_p$, $\gamma_q$ be 
 the orbits corresponding to the minimum $p$ and maximum $q$ of $f_\gamma$ 
 respectively. For $\delta >0$ small enough, the moduli space
 $\cM^A(\gamma_p,\gamma_q;H_\delta,J)$ is empty if $A\neq 0$, while for
 $A=0$ it consists of exactly two elements $u_1$, $u_2$ corresponding
 to the two 
 gradient trajectories of $f_\gamma$ running from $p$ to
 $q$. Moreover, they satisfy 
 $$
 \epsilon(u_1) + \epsilon(u_2) = 
   \left\{\begin{array}{rl} 0, &  \mbox{ if } \gamma \mbox{ is a good
orbit}, \\ 
                             \pm 2, &  \mbox{ if } \gamma \mbox{ is a
bad orbit}. 
   \end{array}\right.
 $$
\end{lemma}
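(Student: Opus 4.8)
The plan is to analyze the zero-dimensional moduli space $\cM^A(\gamma_p,\gamma_q;H_\delta,J)$ for small $\delta$ via the Correspondence Theorem~\ref{thm:degen} together with Proposition~\ref{prop:signs}, and then to identify the relevant orientation sign using the self-gluing Lemmas~\ref{lem:cover} and~\ref{lem:T}. First I would observe that when $\mu(\gamma_q)=\mu(\gamma)$, $\mu(\gamma_p)=\mu(\gamma)+1$ (Lemma~\ref{lem:maslov}), the index condition $\mu(\gamma_p)-\mu(\gamma_q)+2\langle c_1(TW),A\rangle=1$ forces $\langle c_1(TW),A\rangle=0$; and since the intermediate sum $\mu(\og)-\mu(\ug)+2\langle c_1(TW),A\rangle$ over the sublevels of any $[\u]\in\cM^A(p,q;H,\{f_\gamma\},J)$ must equal $1-\ind(p)+\ind(q)=1-0+1=2$ with each summand $\geq 0$, we are in the case $m=0$: there are no genuine Floer sublevels, $\u=(c_0)$ is a single gradient trajectory of $f_\gamma$ from $p$ to $q$, living entirely inside the circle $S_\gamma$. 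This also forces $A=0$ (there is no room for a nonconstant Floer cylinder carrying homology), giving emptiness for $A\neq 0$. Since $f_\gamma$ is a perfect Morse function on $S_\gamma\cong S^1$ with one maximum and one minimum, there are exactly two such gradient trajectories $c_0^{(1)}$, $c_0^{(2)}$, hence — via the bijection of Theorem~\ref{thm:degen}(iii) and Proposition~\ref{prop:signs}(2), under which $\u=u_\delta$ and $\epsilon(\u)=\epsilon(u_\delta)$ — exactly two elements $u_1,u_2\in\cM^0(\gamma_p,\gamma_q;H_\delta,J)$ for $\delta$ small.

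Next I would reduce the sign count to orientation comparison. By Proposition~\ref{prop:signs}(2) the signs $\epsilon(u_1),\epsilon(u_2)$ agree with the signs $\bar\epsilon(\u_1),\bar\epsilon(\u_2)$ of the two gradient cylinders in the Morse-Bott complex, and the assertion to prove is precisely: $\epsilon(u_1)=-\epsilon(u_2)$ iff $\gamma$ is good, i.e. $\epsilon(u_1)+\epsilon(u_2)\in\{0\}$ in the good case and $\in\{\pm 2\}$ in the bad case. The strategy is to relate the two gradient cylinders by a reparametrization of $S_\gamma$: if $\gamma$ is the $\ell_\gamma$-fold cover of a simple orbit and $S_\gamma$ carries the natural $S^1$-action by shifts, then rotating the chart by $2\pi/\ell_\gamma$ (equivalently, conjugating the linearized operator $D_{u_\delta}$ by the deck transformation $z\mapsto e^{2i\pi/\ell_\gamma}z$ in the sphere-capping picture of Section~\ref{sec:ori}) interchanges the two gradient trajectories of the perfect Morse function $f_\gamma$, up to homotopy through the contractible spaces $\cO$. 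Therefore the relative sign $\epsilon(u_1)\epsilon(u_2)$ equals the effect of this deck transformation on the coherent orientation of the relevant determinant line. By Lemma~\ref{lem:cover} (applied with $m=\ell_\gamma$, $S_1$ the asymptotic symmetric-matrix path of the underlying simple orbit, $S_m=S_{\ell_\gamma}$ that of $\gamma$) this effect is orientation-reversing exactly when $m$ is even and $\mu_{CZ}(\Psi_{S_m})-\mu_{CZ}(\Psi_{S_1})$ is odd — and by Remark~\ref{rmk:good and bad} this is precisely the condition that $\gamma=\gamma^{2k}$ is a bad orbit. Conversely, when $\gamma$ is good the deck transformation preserves orientation, so the two signs differ and sum to $0$; when $\gamma$ is bad they coincide and sum to $\pm 2$.

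I would make the homotopy precise using Lemma~\ref{lem:T}, which packages exactly the self-gluing by the ``shift-by-$1/m$'' operator $T$ that implements the deck transformation at the level of capped operators. The gradient cylinder $u_{\delta,\gamma,a,b,\epsilon}$ has $\vartheta$-component $\theta + \varphi^{f_\gamma}$-flow, and precomposing with the rotation by $2\pi/\ell_\gamma$ of the $(\vartheta,z)$-chart sends one gradient cylinder to the other while conjugating $D_{u_\delta}$ by this rotation; gluing on the caps $\cO_+(\C,\theta_n;\oS_p)$ and $\cO_-(\C,\theta_n;\uS_q)$ at the two ends and tracking the induced action on $\Det(\cO(\C P^1,\theta_n\#E\#\theta_n))$ via associativity of the gluing of orientations reduces the computation to a single application of Lemma~\ref{lem:T} (or directly Lemma~\ref{lem:cover}) with $m=\ell_\gamma$. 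One small point to handle: the asymptotics $\oS_p,\uS_q$ at the min and max of $f_\gamma$ differ from the genuine asymptotic $S_{\ell_\gamma}$ of $\gamma_H$ by the small $\delta$-perturbation from $f_\gamma$, but since the perturbation is $C^1$-small and the spaces $\cO$ are contractible, the rotation action on orientations is unchanged in the limit $\delta\to 0$, and the parity of $\mu_{CZ}(\Psi_{S_{\ell_\gamma}})-\mu_{CZ}(\Psi_{S_1})$ — which is all that matters — is the relevant topological invariant.

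The main obstacle I expect is the careful bookkeeping in the last step: verifying that the $S^1$-shift on $S_\gamma$ that swaps the two gradient trajectories really corresponds, after capping and under the chosen coherent orientation scheme of Section~\ref{sec:ori}, to conjugation by the deck transformation $z\mapsto e^{2i\pi/\ell_\gamma}z$ in the precise hypothesis of Lemma~\ref{lem:cover}/Lemma~\ref{lem:T}, rather than to some other automorphism differing by an orientation-preserving homotopy. This requires being attentive to the trivializations $\Phi_\gamma$ used to define the Conley-Zehnder indices and to the compatibility conditions on the reference loops $l_a,l_b$, and to the fact that the perturbation $H_\delta-H$ does not affect the relevant parity; once that identification is in place, the conclusion is an immediate reading-off from Lemma~\ref{lem:cover} combined with the characterization of bad orbits in Remark~\ref{rmk:good and bad}.
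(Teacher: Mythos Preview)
Your overall strategy matches the paper's: use Theorem~\ref{thm:degen} for the counting, then Lemma~\ref{lem:T} for the sign. But there are two issues.

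First, some minor points. Citing Proposition~\ref{prop:signs}(2) here is circular: look at its proof --- the $m=0$ case there is established precisely by invoking Lemma~\ref{lem:signsgrad}. You should instead argue directly: by Theorem~\ref{thm:degen} each $[u_\delta]$ corresponds to some $[\u]$; since $\og=\ug=\gamma$ have the same action and action strictly decreases along nonconstant Floer trajectories, $[\u]$ contains no nonconstant Floer sublevel, hence $m=0$ and $A=0$. (Your dimension count also has a bookkeeping slip: with the paper's convention $\ind(p)=\dim W^u(p;\nabla f_\gamma)$, the minimum has $\ind=1$ and the maximum $\ind=0$, so the sum is $0$, not $2$; even corrected, this does not by itself exclude $m\ge 1$ --- the energy argument does.) Then observe that the gradient cylinders $u_{\delta,\gamma,-\infty,+\infty}$ are themselves $H_\delta$-Floer solutions, so the two moduli-space elements are exactly these.

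Second, and this is the real gap: your claim that ``rotating the chart by $2\pi/\ell_\gamma$ interchanges the two gradient trajectories'' (or ``sends one gradient cylinder to the other'') is incorrect. A rotation of the $\vartheta$-coordinate by $1/\ell_\gamma$ moves the critical points $p,q$ of $f_\gamma$, so it cannot carry $c_1$ to $c_2$; these are paths with fixed endpoints. The correct link between $u_1$ and $u_2$ is different. In the trivialization with $\Phi_\gamma(X_H)=(1,0)$, write $\overline D_{u_i}\in\cO(\R\times S^1,\theta_n;S_p,S_q)$ and note that $\ker\overline D_{u_i}$ is spanned by $(f_i,0)$ with $f_1>0$, $f_2<0$. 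Now glue $\overline D_{u_1}$ on the right with the shift operator $T$ of Lemma~\ref{lem:T}. The resulting operator has underlying path on $S_\gamma$ equal to $c_1$ concatenated with $(\gamma_q|_{[0,1/\ell_\gamma]})^{-1}$, i.e.\ $c_1$ followed by one backward loop around the simple orbit. \emph{This} concatenated path is homotopic rel endpoints to $c_2$ (the two arcs from $p$ to $q$ on $S^1$ differ by exactly one full winding), and that homotopy lifts to a continuous path of operators $D_t$ from $\overline D_{u_1}\#_R T$ to $\overline D_{u_2}$, each surjective with kernel spanned by $(f_t,0)$ of definite sign --- here the analysis of Proposition~\ref{prop:Surj_udelta}, with the weakened asymptotic condition~(i$'$), is what guarantees surjectivity along the whole path. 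Lemma~\ref{lem:T} says gluing with $T$ multiplies the coherent orientation by $\epsilon(\gamma)$, and tracking along $D_t$ then gives $\epsilon(u_1)=-\epsilon(\gamma)\,\epsilon(u_2)$.

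In short: the cap-rotation $\phi_m$ of Lemma~\ref{lem:cover} is not an operation that swaps the two cylinders; rather, Lemma~\ref{lem:T} identifies its orientation effect with that of gluing by $T$, and it is this gluing --- not a chart rotation --- that carries $\overline D_{u_1}$ into the homotopy class of $\overline D_{u_2}$. You have correctly located the tool but misidentified the geometric move that applies it.
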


\begin{proof} Let $c_1$, $c_2$ be the gradient trajectories of $f_\gamma$ 
 running from $p$ to $q$. By Theorem~\ref{thm:degen}, for
 $\delta>0$ small enough each element
 $[u_\delta]\in\cM^A(\gamma_p,\gamma_q;H_\delta,J)$ corresponds to a
unique Floer trajectory with gradient fragments $[\u]$ whose endpoints are
 $p$ and $q$. For energy reasons there can be no nonconstant Floer
 trajectory involved in $[\u]$ and therefore $[\u]$ is either $c_1$ or
 $c_2$. Since the cylinders $u_1$ and $u_2$ of the form
 $u_{\delta,\og,-\infty,+\infty}$ associated to $c_1$ and $c_2$ are
 already Floer trajectories for $H_\delta$, we infer that $[u_\delta]$
 equals either $[u_1]$ or $[u_2]$, and the homology class $A$ is
 necessarily zero. 
 Let us introduce the notation $\epsilon(\gamma):=1$ if $\gamma$ is a
 good orbit and $\epsilon(\gamma):=-1$ if $\gamma$ is a bad orbit. The
 conclusion of the Lemma is equivalent to the relation 
\begin{equation} \label{eq:signs}
\epsilon(u_1) = -\epsilon(\gamma)\epsilon(u_2).
\end{equation}

 Let us choose a symplectic trivialization $\Phi_\gamma:T\widehat
 W|_{S_\gamma}\to S_\gamma\times (\R\times \R^{2n-1})$ such that
 $\Phi_\gamma(X_H)=(1,0)$. We assume without loss of generality that $\dot
 c_1$ is a positive multiple of $X_H$, so that 
 $\Phi_\gamma(\p_s u_1)=(f_1,0)$ with $f_1>0$ and $\Phi_\gamma(\p_s
u_2)=(f_2,0)$ 
 with $f_2<0$. We denote by $\overline D_{u_1}$, $\overline D_{u_2}$
 the elements of $\cO(\R\times S^1,\theta_n; S_p,S_q)$ obtained by
 conjugation of $\widetilde D_{u_1}$, $\widetilde D_{u_2}$ with
 $\Phi_\gamma$. The main point is to 
 consider the operator $\psi(\overline D_{u_1})=\overline
 D_{u_1}\#_R T$, with $T$ as in 
 Lemma~\ref{lem:T}. A basis of $\Det(\overline D_{u_i})$
 corresponding to the coherent orientation is by definition  
 $\epsilon(u_i)(f_i,0)$, $i=1,2$. The image of this basis under the
 action of $\psi$ is given by $\epsilon(u_1)(f_1^\#,0)$, for some
 $f_1^\#\in W^{1,p}(\R\times S^1,\R)$ with $\|f_1^\#-f_1\|_{1,p}$
 arbitrarily small with $R\to\infty$, hence $f_1^\#>0$ for $R$ large
 enough. By Lemma~\ref{lem:T}, a basis of $\Det(\overline D_{u_1}\#_R
 T)$ corresponding to the coherent orientation is
 $\epsilon(\gamma)\epsilon(u_1)(f_1^\#,0)$. Finally, the operators
 $\overline D_{u_1}\#_R T$ and $\overline D_{u_2}$ can be connected
 by a continuous path of operators $D_t$, $t\in[0,1]$ satisfying
 properties (ii)-(iv) in the proof of
 Proposition~\ref{prop:Surjectivity_udelta}, as well as 
 the following weaker form of property (i) therein. 
\begin{itemize}
 \item[(i')] there exists a smooth path $c:\R\to S_\gamma$ with
 $c(\pm\infty)$ being fixed critical points of
 $f_\gamma$, such that $\|S(s,\theta)  - \oS(\theta+\vartheta\circ
 c(s)-\vartheta\circ c(-\infty))\|$ is bounded by a constant multiple
 of $\delta$.  
\end{itemize} 
The connected components of the set of operators satisfying (i') and
(ii)-(iv) are indexed by homotopy classes of paths $c$ as
above. Gluing $\overline D_{u_1}$ to $T$ has precisely the effect of
concatenating $c_1$ with $(\gamma_q|_{[0,1/m]})^{-1}$, which is homotopic
to $c_2$. The proof of Proposition~\ref{prop:Surjectivity_udelta}
works the same with the weaker assumption (i') and shows that the
operators $D_t$ are surjective and that $\ker\, D_t$ is generated by
an element of the form $(f_t,0)$, where $f_t\in W^{1,p}(\R\times S^1,\R)$
has constant sign for $t\in[0,1]$. We conclude that
$\epsilon(u_1)\epsilon(\gamma)f_1^\#$ and $\epsilon(u_2)f_2$ have the
same sign, hence~\eqref{eq:signs} is proved. 
\end{proof} 

\medskip 

We now generalize the construction of coherent orientations to the
moduli spaces of Morse-Bott trajectories with gradient
fragments. We define 
$\tcS$\index{$\tcS$, loops of symmetric matrices with one degenerate direction} 
to be the space of loops of
symmetric matrices $S:S^1\to M_{2n}(\R)$ such that the symplectic
matrix $\Psi_S(1)$ defined by~\eqref{eq:PsiS} has exactly one
eigenvalue equal to $1$, corresponding to the eigenspace $\R\oplus
0\subset \R\oplus \R^{2n-1}=\R^{2n}$. Let $\beta:\R\to[0,1]$ be a
smooth function equal to $0$ near $-\infty$ and equal to $1$ near
$+\infty$. We define $\oV$, $\uV$ to be the one-dimensional real
vector spaces generated by the vector-valued functions
$(1-\beta(s))(1,0)$ and $\beta(s)(1,0)$ respectively. 
In the following we denote by $W^{1,p,d}=W^{1,p}(e^{d|s|}ds\,
  d\theta)$, $L^{p,d}=L^p(e^{d|s|}ds\, d\theta)$. Given
$\oS,\uS\in\tcS$ we denote by
\index{$\tcO$, space of CR operators|(}   
$$
\tcO(\R\times S^1,E;\oS,\uS)
$$
the space of linear operators $D:W^{1,p,d}(\R\times
S^1,E)\oplus\oV\oplus\uV\to L^{p,d}(\R\times S^1,\Lambda^{0,1}E)$ of
the form $(\p_s+J_0\p_\theta+S(s,\theta))(ds-id\theta)$ in a local
trivialization of $E$, for which there exist $\otheta,\utheta\in\R/\Z$
such that $\lim_{s\to-\infty} S(s,\theta)=\oS(\theta+\otheta)$ and 
$\lim_{s\to\infty} S(s,\theta)=\uS(\theta+\utheta)$ in the given
trivializations at infinity of $E$. Given
$\oS_0\in\cS$, $\uS\in\tcS$ we denote by   
$$
\tcO^u(\R\times S^1,E;\oS_0,\uS)
$$
the space of linear operators 
$$
D:W^{1,p}(\R\times
S^1,E;g_+(s)ds \, d\theta)\oplus\uV\to L^p(\R\times
S^1,\Lambda^{0,1}E; g_+(s)ds \, d\theta)
$$ 
with $g_+(s):=\max(1,e^{ds})$, which are of the form
$(\p_s+J_0\p_\theta+S(s,\theta))(ds-id\theta)$ in a local 
trivialization of $E$, and for which there exists $\utheta\in\R/\Z$
such that $\lim_{s\to-\infty} S(s,\theta)=\oS_0(\theta)$ and 
$\lim_{s\to\infty} S(s,\theta)=\uS(\theta+\utheta)$ in the given
trivializations at infinity of $E$. Given
$\oS\in\tcS$, $\uS_0\in\cS$ we denote by   
$$
\tcO^s(\R\times S^1,E;\oS,\uS_0)
$$
the space of linear operators 
$$
D:W^{1,p}(\R\times
S^1,E; g_-(s)ds \, d\theta)\oplus\oV\to L^p(\R\times
S^1,\Lambda^{0,1}E; g_-(s) ds \, d\theta)
$$ 
with $g_-(s):=\max(1,e^{-ds})$, which are of the form
$(\p_s+J_0\p_\theta+S(s,\theta))(ds-id\theta)$ in a local 
trivialization of $E$, and for which there exists $\otheta\in\R/\Z$
such that $\lim_{s\to-\infty} S(s,\theta)=\oS(\theta+\otheta)$ and 
$\lim_{s\to\infty} S(s,\theta)=\uS_0(\theta)$ in the given
trivializations at infinity of $E$. Given $\widetilde S\in\tcS$
we denote by 
$$
\tcO_\pm(\C,E;\widetilde S)
$$
the space of linear operators $D:W^{1,p,d}(\C,E)\oplus V_\pm\to
L^{p,d}(\C,\Lambda^{0,1}E)$ of the 
form $(\p_x + J_0 \p_y + S(z))d\bar z$ in a local trivialization of
$E$ and such 
that, when expressed in holomorphic cylindrical coordinates
$(s,\theta)$ with $e^{\pm 2\pi(s+i\theta)}=z$ as 
$(\p_s+J_0\p_\theta+S(s,\theta))(ds-id\theta)$, there exists
$\theta_\pm\in\R/\Z$ so that $\lim_{s\to \pm\infty} S(s,\theta) =
\widetilde S(\theta+\theta_\pm)$   
in the given trivialization of $E$ near infinity. Here we use the
notation $V_+:=\oV$ and $V_-:=\uV$. 
\index{$\tcO$, space of CR operators|)}   

\medskip 

Due to the exponential weights, each of the above spaces $\tcO$
consists of Fredholm operators and comes equipped with a canonical
real line bundle $\Det(\tcO)$ whose fiber at $D$ is $\Det(D)$. 
Unlike in the nondegenerate case, the spaces $\tcO$ are not generally
contractible, hence we have to investigate the orientability of
$\Det(\tcO)$. 

Given $S\in \tcS$ we define $m=m(S)$ to be the maximal positive
integer such that $S(\theta+1/m)=S(\theta)$, $\theta\in\R/\Z$. The
number $m$ is infinite if and only if the loop $S$ is constant, in
which case the spaces $\tcO_\pm(\C,E;S)$, $\tcO^u(\R\times
S^1,E;\oS_0,S)$, $\tcO^s(\R\times S^1,E;S,\uS_0)$ are contractible. In the
following we shall restrict ourselves to nonconstant loops $S\in\tcS$,
in which case the above spaces have the homotopy type of $S^1$, while
$\tcO(\R\times S^1,E;\oS,\uS)$ has the homotopy type of $S^1\times
S^1$ (this is because they fiber over $S^1$, respectively
$S^1\times S^1$ with contractible fibers). We denote by $S_1\in\tcS$
the unique loop such that $S(\theta)=S_1(m\theta)$. 

\begin{lemma} \label{lem:tcap}
 Let $S\in \tcS$ be nonconstant. Then $\Det(\tcO_\pm(\C,E;S))$ is
 nonorientable if and only if $m$ is even and
 $\mu_{RS}(S)-\mu_{RS}(S_1)$ is odd. 
\end{lemma}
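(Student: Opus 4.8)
The plan is to reduce the orientability question for $\Det(\tcO_\pm(\C,E;S))$ to the behaviour of the determinant line under the deck transformation $\phi_m(z)=e^{2i\pi/m}z$, exactly as in Lemma~\ref{lem:cover}. Since $S$ is nonconstant, $\tcO_\pm(\C,E;S)$ has the homotopy type of $S^1$; the loop realizing the generator of $\pi_1$ can be taken to be the family obtained by rotating the asymptotic phase $\theta_\pm$ once around $\R/\Z$. Concretely, pick $D\in\tcO_\pm(\C,E;S)$ which is $\phi_m$-invariant (such $D$ form a convex, hence connected, nonempty set, by the same reasoning as in Lemma~\ref{lem:cover}), and consider the loop $t\mapsto D_t$, $t\in[0,1]$, where $D_t$ is the conjugation of $D$ by $z\mapsto e^{-2i\pi t/m}z$; this shifts the asymptotic phase $\theta_\pm$ by $t$, and $D_1=\phi_m^{-1}(D)\cdot(\ldots)$ agrees with $D$ after one full turn, so the $D_t$ genuinely form a loop in $\tcO_\pm(\C,E;S)$. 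Therefore $\Det(\tcO_\pm(\C,E;S))$ is orientable if and only if this loop preserves orientations of $\Det$, i.e. if and only if $\phi_m$ acts trivially on $\Det(D)$.

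The second step is then to compute the action of $\phi_m$ on $\Det(D)$ for $D\in\tcO_\pm(\C,E;S)$ degenerate. This is the Morse--Bott analogue of Lemma~\ref{lem:cover}, and the argument carries over almost verbatim: $\phi_m$ acts on $\ker D$ and $\coker D$ by permuting eigenspaces of the $m$-th roots of unity; imaginary roots contribute even-dimensional eigenspaces, so $\phi_m$ reverses the orientation of $\Det(D)$ if and only if the dimension of the $(-1)$-eigenspace on $\ker D$ minus that on $\coker D$ is odd, which has the parity of $\ind(D)-\ind(D_1)$, where $D_1\in\tcO_\pm(\C,E;S_1)$ corresponds to $D$ via pullback along $z\mapsto z^m$. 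Here one must be careful about the extra one-dimensional summand $V_\pm=\langle (1-\beta)(1,0)\rangle$ or $\langle\beta(1,0)\rangle$ in the domain: the constant vector $(1,0)$ spanning $\ker A_\infty$ is $\phi_m$-invariant, so adjoining $V_\pm$ to the domain does not change the parity of the $(-1)$-eigenspace dimension on the (extended) kernel. With the exponential weight $e^{d|s|}$ built into the definition of $\tcO_\pm$, the Fredholm index difference $\ind(D)-\ind(D_1)$ equals $\mu_{RS}(S)-\mu_{RS}(S_1)$ by the same index calculation as in~\eqref{eq:indMB} and Lemma~\ref{lem:maslov} (the degenerate direction shifts both Robbin--Salamon indices by the same $-\tfrac12$, which cancels in the difference). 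Consequently $\phi_m$ is orientation reversing precisely when $\mu_{RS}(S)-\mu_{RS}(S_1)$ is odd, and this forces $-1$ to be an $m$-th root of unity, i.e. $m$ even; conversely if $m$ is odd the index difference is automatically even (cf. Remark~\ref{rmk:modd}, in the Robbin--Salamon version), so the action is trivial.

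Combining the two steps: $\Det(\tcO_\pm(\C,E;S))$ is nonorientable iff the generating loop reverses orientation iff $\phi_m$ acts by $-1$ on $\Det(D)$ iff $m$ is even and $\mu_{RS}(S)-\mu_{RS}(S_1)$ is odd, which is the claim. The main obstacle I expect is not conceptual but bookkeeping: one has to verify carefully that the rotation loop $t\mapsto D_t$ really is a closed loop generating $\pi_1(\tcO_\pm(\C,E;S))\cong\Z$ (rather than, say, twice the generator), and that the determinant-line bundle is classified by the mod-$2$ reduction of the corresponding monodromy — this uses that $\tcO_\pm$ fibers over $S^1$ with contractible fiber, so $\Det(\tcO_\pm)$ is pulled back from a line bundle over $S^1$, whose orientability is detected by a single monodromy sign. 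Once that identification is in place, the eigenspace-parity computation is identical to Lemma~\ref{lem:cover}, and the index computation is identical to~\eqref{eq:indMB}.
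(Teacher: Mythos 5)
Your proposal is correct and follows essentially the same approach as the paper: reduce orientability to the monodromy of the rotation loop $t\mapsto D_t$ (which traces once around $\R/\tfrac 1m\Z$, hence generates $\pi_1(\tcO_+)$), then identify this monodromy with the $\phi_m$-action of Lemma~\ref{lem:cover} and read off its sign from the parity of an index difference. The one presentational difference is that the paper first factors off $V_+$ as a trivial line bundle and conjugates $D|_{W^{1,p,d}}$ by $e^{d|s|/p}$ to land in the \emph{nondegenerate} space $\cO_+(\C,E;S-\tfrac dp\one)$, at which point Lemma~\ref{lem:cover} applies verbatim and the formula $\mu_{CZ}(S-\tfrac dp\one)=\mu_{RS}(S)-\tfrac12$ gives the stated parity; you instead re-run the eigenspace-parity argument directly in the weighted, augmented domain, noting that the $\phi_m$-invariant generator of $V_\pm$ lies in the $+1$-eigenspace and that the $-\tfrac12$ shifts cancel in $\mu_{RS}(S)-\mu_{RS}(S_1)$. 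Both are the same computation; the paper's conjugation step simply lets it cite Lemma~\ref{lem:cover} rather than reproving it.
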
 

\begin{proof}
 We prove the statement only for $\tcO_+:=\tcO_\pm(\C,E;S)$, the proof
 of the other case being similar. The following two remarks will allow
 us to apply Lemma~\ref{lem:cover}. First, $\Det(D)$ is
 naturally isomorphic to $\Det(D|_{W^{1,p,d}})\otimes V_+$ and $V_+$
 is a trivial bundle over $\tcO_+$. Second, the operator
 $D|_{W^{1,p,d}}$ is conjugated to an operator $\widetilde D\in
 \cO_+(\C,E;S-\frac d p \one)$. Hence it is enough to study the
 orientability of the bundle $\widetilde \Det(\tcO_+)$ over $\tcO_+$
 with fiber $\Det(\widetilde  D)$. 

The bundle $\widetilde \Det(\tcO_+)$ is
 orientable if and only if its restriction to a loop generating
 $\pi_1(\tcO)$ is orientable. After choosing $D\in\tcO_+$ which is
 invariant under conjugation with $z\mapsto e^{-2i\pi/m}$, the
 conjugation of $D$ by $r_t:\C\to\C$, $z\mapsto e^{-2i\pi t/m}z$
 provides such a loop $D_t$, $t\in[0,1]$ with $D_0=D_1=D$. The
 orientation on $\Det(\widetilde D_1)$ obtained by continuation along
 the path $D_t$ from an orientation on $\Det(\widetilde D_0)$ is the
 same as the one induced by the action of $\phi_m^{-1}$ (or $\phi_m$)
 in Lemma~\ref{lem:cover}. Since $\mu_{CZ}(S-\frac d p
 \one)=\mu_{RS}(S)-1/2$ and $\mu_{CZ}(S_1-\frac d p
 \one)=\mu_{RS}(S_1)-1/2$, the statement follows from
 Lemma~\ref{lem:cover}. 
\end{proof} 

The same kind of argument gives the following result.

\begin{lemma} \label{cor:tcyl}
 Let $S,\oS,\uS\in \tcS$ be nonconstant and $\oS_0,\uS_0\in\cS$. The
 line bundles $\Det(\tcO(\R\times S^1,E;\oS,\uS))$,
 $\Det(\tcO^u(\R\times S^1,E;\oS_0,S))$, $\Det(\tcO^s(\R\times
 S^1,E;S,\uS_0))$ are nonorientable if and only if the condition in
 Lemma~\ref{lem:tcap} holds for $S$ and for one of $\oS,\uS$. \hfill{$\square$}
\end{lemma}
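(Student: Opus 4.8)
The plan is to reduce each of the three line bundles to the already-treated case of $\Det(\tcO_\pm(\C,E;S))$ by using the gluing operation on coherent orientations, exactly as in the construction of coherent orientations earlier in this section. The point is that each of the spaces in the statement fibers over a torus or a circle with contractible fiber, so orientability of the determinant bundle can be detected on the loops generating $\pi_1$, and each such loop is obtained (up to the already-established coherence isomorphisms) by rotating the asymptotic data near one puncture.

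First I would recall the fibration structure: for nonconstant $S\in\tcS$ the space $\tcO_\pm(\C,E;S)$ has the homotopy type of $S^1$, the generator of $\pi_1$ being the loop $D_t$ obtained by conjugating a $\phi_m$-invariant operator $D$ by $r_t:z\mapsto e^{-2i\pi t/m}z$, $t\in[0,1]$. Likewise $\tcO(\R\times S^1,E;\oS,\uS)$ has the homotopy type of $S^1\times S^1$, with the two circle factors coming from independently rotating the asymptotic trivialization at $-\infty$ by $\otheta$ and at $+\infty$ by $\utheta$; similarly $\tcO^u(\R\times S^1,E;\oS_0,S)$ and $\tcO^s(\R\times S^1,E;S,\uS_0)$ have the homotopy type of $S^1$, the single circle coming from rotating the degenerate asymptote. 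Hence $\Det$ of each of these bundles is orientable if and only if its restriction to each such generating circle is orientable.

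Next I would identify the restriction to a generating circle with the situation of Lemma~\ref{lem:tcap}. For the circle rotating the asymptote near a degenerate puncture $S$, I glue on fixed contractible-determinant data at the remaining punctures: for $\tcO^u(\R\times S^1,E;\oS_0,S)$ glue a fixed operator in $\cO_+(\C,\theta_n;\oS_0)$ at the nondegenerate end; for $\tcO^s$ glue a fixed operator in $\cO_-(\C,\theta_n;\uS_0)$; for $\tcO(\R\times S^1,E;\oS,\uS)$ treat the two circles separately, at each step gluing on a fixed operator in $\tcO_\mp(\C,\theta_n;\text{(other asymptote)})$ at the puncture being held fixed. Because the gluing isomorphisms on determinant lines are canonical up to homotopy and the glued-on factor is a fixed operator, continuation along the generating circle in the original space agrees with continuation along the corresponding circle in a space of the form $\tcO_\pm(\C,E\#\theta_n;S)$ (or with the rotation loop $D_t$ there); so the monodromy of $\Det$ around the generating circle coincides with the monodromy computed in the proof of Lemma~\ref{lem:tcap}. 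That monodromy is nontrivial precisely when $m(S)$ is even and $\mu_{RS}(S)-\mu_{RS}(S_1)$ is odd, which by Remark~\ref{rmk:good and bad} is the "bad orbit" condition. For the cylinder case this gives nonorientability iff the condition holds for $\oS$ or for $\uS$, as claimed; for $\tcO^u$ and $\tcO^s$ it gives nonorientability iff the condition holds for the unique degenerate asymptote $S$.

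The main obstacle I anticipate is purely bookkeeping rather than conceptual: one must check that the gluing isomorphism $\Det(K)\otimes\Det(L)\xrightarrow{\sim}\Det(K\#_R L)$ is genuinely $t$-equivariant along the rotation loop, i.e. that rotating the asymptotic trivialization at one puncture and then gluing is homotopic, through the family, to gluing and then rotating — so that the monodromy really transports unchanged across the glued-on factor. This is exactly the kind of statement proved in~\cite{BM} and~\cite{FH} for the stabilized operators and their right inverses $Q_R$, and the argument is the same as in the proof of Lemma~\ref{lem:T}; I would cite those results rather than reprove the equivariance. Once that compatibility is in hand, the equivalence in the statement follows immediately from Lemma~\ref{lem:tcap}. \hfill{$\square$}
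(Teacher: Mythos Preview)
Your proposal is correct, and its overall architecture---detect orientability on the generating loops of $\pi_1$ of each $\tcO$-space, then compute the monodromy around each loop---matches what the paper has in mind when it says ``the same kind of argument gives the following result'' and omits the proof.

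The one point of divergence is in how you compute the monodromy. The paper's implicit argument is to repeat the proof of Lemma~\ref{lem:tcap} verbatim for each of the three spaces: split off the trivial line bundles $\oV$, $\uV$ from the domain, conjugate the remaining operator on $W^{1,p,d}$ by the exponential weight $e^{\pm\frac d p s}$ to land in a space $\cO$ with nondegenerate asymptotics, and then invoke Lemma~\ref{lem:cover} directly on the rotation loop. This is self-contained and avoids any appeal to gluing. Your route instead glues on a fixed cap at the inactive puncture(s) and applies Lemma~\ref{lem:tcap} as a black box, which is more modular but requires the $t$-equivariance of the gluing isomorphism along the rotation loop---precisely the bookkeeping you flag, which is indeed handled by the arguments in~\cite{BM,FH} and the proof of Lemma~\ref{lem:T}. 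Both routes are short; the paper's is slightly more direct because it bypasses the gluing compatibility check, while yours has the advantage of literally reducing to the already-proved lemma rather than reproving it.
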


The previous results motivate the following definition. We denote 
$$
\tcSg:=\{S\in\tcS\, : \, S \mbox{ constant or }
\mu_{RS}(S)-\mu_{RS}(S_1) \mbox{ is even}\},
$$
and 
$$
\tcSb:= \{S\in\tcS\, : \, S \mbox{ nonconstant and }
\mu_{RS}(S)-\mu_{RS}(S_1) \mbox{ is odd}\},
$$
so that $\tcSb=\tcS \setminus \tcSg$. Although the determinant lines over the 
various spaces $\tcO$ are nonorientable if one of the asymptotes is in $\tcSb$, 
we can construct {\bf covers $\ttcO$ of $\tcO$} over which
\index{$\ttcO$, cover of space of CR operators}
the determinant 
lines become orientable. Let $\uS_0\in\cS$ and $\oS\in\tcSb$ with 
$\oS(\theta)=\oS_1(m\theta)$, $\theta\in\R/\Z$. We define 
$\ttcOs(\R\times S^1,E;\oS,\uS_0)$ to consist of pairs $(D,\otheta)$ such 
that $\otheta\in\R/\frac 2 m \Z$, 
$D=(\p_s + J_0\p _\theta + S(s,\theta))(ds-id\theta)\in\tcO^s(\R\times S^1,E;\oS,\uS_0)$ with
$\lim_{s\to-\infty} S(s,\theta)=\oS(\theta+\otheta)$. 
The obvious projection is $\ttcOs\to \tcO^s$ is a double cover and the lift 
of the determinant bundle to $\ttcOs$ is orientable. We define in a completely 
analogous manner double covers 
$\ttcO_\pm(S)\to\tcO_\pm(S)$,  
$\ttcOu(\oS_0,S)\to \tcO^u(\oS_0,S)$, $S\in\tcSb$ and a cover 
$\ttcO(\oS,\uS)\to \tcO(\oS,\uS)$ which is double if exactly one of $\oS,\uS$ is in $\tcSb$, 
and quadruple if both $\oS,\uS$ are in $\tcSb$. 

\medskip

We define now gluing operations between elements of the various spaces
$\tcO$. Let $K$ in $\tcO_+(\C,E;S)$, $\tcO(\R\times S^1,E;\oS,S)$,
or $\tcO^u(\R\times S^1,E;\oS_0,S)$, and $L$ in $\tcO_-(\C,F;S)$,
$\tcO(\R\times S^1,F;S,\uS)$, or $\tcO^s(\R\times S^1,F;S,\uS_0)$. 
We denote by $S_K$, respectively $S_L$ the matrix valued functions
involved in $K$ and $L$ near infinity. We assume that
$$
\lim_{s\to+\infty}S_K(s,\cdot)=\lim_{s\to-\infty}
S_L(s,\cdot)=S(\cdot+\theta_0)=:S_{\theta_0} 
$$
for some $\theta_0\in\R/\Z$. We choose a cutoff function $\beta:\R\to
[0,1]$ such that $\beta(s)=0$ if $s\le 0$ and $\beta(s)=1$ if $s\ge 1$. 
Given $R>0$ large we define operators $K_R$ and $L_R$ by replacing
$S_K$ and $S_L$ by $S_{\theta_0}+\beta(R-s)(S_K-S_{\theta_0})$ and
$S_{\theta_0}+\beta(R+s)(S_L-S_{\theta_0})$ respectively.  
We cut out semi-infinite cylinders $\{s>R\}$ from the base of
$E$, $\{s<-R\}$ from the base of $F$, then identify their boundaries
using the coordinate $\theta$. We glue the vector bundles $E$ and $F$
using their given trivializations near infinity and denote the
resulting vector bundle by $E\# F$. We define $K\#_R L$ by
concatenating $K_R$ and $L_R$, so that $K\#_R L$ belongs to one of the
spaces $\cO(\C P^1)$, $\tcO_+(\C; \uS)$, $\cO_+(\C;\uS_0)$, or
$\tcO_-(\R\times S^1; \oS)$, $\tcO(\R\times 
S^1; \oS,\uS)$, $\tcO^s(\R\times S^1;\oS,\uS_0)$, or 
$\cO_-(\C;\oS_0)$, $\tcO^u(\R\times S^1;\oS_0,\uS)$,
$\cO(\R\times S^1;\oS_0,\uS_0)$, where we have omitted the symbol $E\#
F$ from the notation. 

The above gluing operations admit a straightforward extension to the spaces 
$\ttcO$. For example, two elements $(K,\utheta)\in\ttcOu(\oS_0,S)$, $(L,\otheta)\in
\ttcOs(S,\uS_0)$ can be glued if $\utheta=\otheta$, in which case they 
give rise to an element $K\#_R L\in\cO(\oS_0,\uS_0)$. 

Recall that the domain of an operator $D$ in some $\tcO$ contains a
canonically oriented $1$-dimensional summand for each asymptote in
$\tcS$, together with a canonical isomorphism with $\R$. We denote by
$V_K$, $V_L$ the summands corresponding to the asymptote $S$ of $K$
and $L$ respectively, and we let $V:=V_K\oplus_\R V_L$ be their
(canonically oriented) fibered sum. By~\cite[Corollary~6]{BM}, for
$R>0$ large enough there is a natural isomorphism $\Det(K\oplus_\R
L)\simeq \Det(K\#_R L)$ defined up to homotopy, where $K\oplus_\R L$
is the restriction of $K\oplus L$ to the fibered sum of their
domains. Since $V$ is canonically oriented, it follows that
$\Det(K\oplus_\R L)$ is canonically isomorphic to $\Det(K\oplus
L)\simeq \Det(K)\otimes \Det(L)$. Hence we obtain a canonical
isomorphism $\Det(K)\otimes \Det(L)\stackrel \sim \to \Det(K\#_R L)$
defined up to homotopy, and inducing an associative gluing operation
for orientations. Similar considerations apply to the elements of the spaces $\ttcO$. 

\medskip 

\begin{remark} 
 We can construct a system of coherent orientations on the
determinant line bundles $\Det(\tcO_\pm(\C,E;S))$ and
$\Det(\tcO(\R\times S^1,E;\oS,\uS))$ with $S,\oS,\uS\in \tcSg$ by the
same procedure as for the spaces $\cO$. We can moreover extend this to
a system of coherent orientations involving all spaces $\cO$, $\tcO$ and $\ttcO$. 
Nevertheless, if we want that certain orientations have a
geometric meaning, we have to impose compatibility conditions which
seem ad-hoc in such a general setup. This is why we restrict ourselves
in the sequel to the spaces $\cO$, $\tcO$ and $\ttcO$ which are relevant for
our geometric situation. 
\end{remark} 

We use now the notations of Section~\ref{sec:MBcomplex}. Given
$\gamma\in\cP(H)$ we denote by $\Psi_\gamma$ the linearization of the
Hamiltonian flow along $\gamma$ given by~\eqref{eq:CRtriv} and let
$S_\gamma:\R/\Z\to M_{2n}(\R)$ be the corresponding loop of symmetric
matrices defined by $\dot \Psi_\gamma = J_0 S_\gamma
\Psi_\gamma$. Then $S_\gamma\in \tcSg$ if and only if $\gamma$ is a
good orbit. We similarly define $S_{\gamma_q}$ for each
$\gamma_q\in\cP(H_\delta)$, with $q\in\textrm{Crit}(f_\gamma)$. 
For $\og\in \cP(H)$, $\ug_q\in\cP(H_\delta)$ we denote
$\tcO^s(\R\times
S^1,E;\og,\ug_q):= \tcO^s(\R\times S^1,E;S_\og,S_{\ug_q})$ etc. 

\medskip 

\noindent {\bf Convention.} In what follows the spaces $\tcO$ will be 
understood to be indexed only by good orbits, whereas if one of the 
asymptotic orbits is bad we use the corresponding double or quadruple 
cover $\ttcO$. 

\medskip 

We construct orientations on the determinant bundles over all spaces 
$\cO$, $\tcO$, $\ttcO$ indexed by the elements of $\cP(H)$ and $\cP(H_\delta)$ as follows. 
We start by choosing arbitrary orientations of $\Det(\tcO_+(\C,E;\gamma))$, 
respectively $\Det(\ttcO_+(\C,E;\gamma))$, 
$\gamma\in\cP(H)$ such that the trivialization of $E$ at infinity
extends to $\C$. We then choose orientations
of $\Det(\tcO^s(\R\times S^1,E;\gamma,\gamma_q))$, respectively 
$\Det(\ttcOs(\R\times S^1,E;\gamma,\gamma_q))$,
$\gamma\in\cP(H)$, $q\in\textrm{Crit}(f_\gamma)$ such that the trivializations 
of $E$ at infinity extend to $\R\times S^1$, as follows. If $\gamma$ is good, the space
$\tcO^s(\R\times S^1,\theta_n;\gamma,\gamma_q)$ contains a distinguished
family of operators of the form $\Phi_\gamma\circ D_u\circ
\Phi_\gamma^{-1}$, where $u=u_{\delta,\gamma,-1,\infty}$ is the cylinder
corresponding to a semi-infinite gradient trajectory ending at
$q$ and $\Phi_\gamma:T\widehat W|_{S_\gamma} \to S_\gamma\times
\R^{2n}$ is a fixed trivialization satisfying
$\Phi_\gamma(X_H)=(1,0)\in \R\oplus \R^{2n-1}$. 
This family is naturally parametrized by $W^s(q)$, hence it is
connected. As seen in Proposition~\ref{prop:Surjectivity_udelta} the
above Fredholm operators are surjective and have index $1-\ind(q)$.
If the index is zero we choose the orientation sign to be $+1$. If the
index is one the kernel is generated by a nonvanishing section of the
form $(f,0)$, hence is canonically isomorphic to $\R\oplus 0$ and
therefore admits a canonical orientation. 
If $\gamma$ is bad, we choose in an arbitrary way a lift of the operator
$\Phi_\gamma\circ D_u\circ
\Phi_\gamma^{-1}$, where $u=u_{\delta,\gamma,-1,\infty}$ is the cylinder
corresponding to a constant semi-infinite gradient trajectory at
$q$. This determines a lift of the whole path of operators described above, 
and hence an orientation of $\Det(\ttcOs(\R\times S^1, E;\gamma,\gamma_q))$ 
by the previous rule. 

We induce orientations on $\Det(\cO_+(\theta_n))$ by
gluing orientations on the line bundles $\Det(\tcO_+(\theta_n))$ and 
$\Det(\tcO^s(\theta_n))$. The orientations on $\Det(\tcO_+(\theta_n))$
and $\Det(\cO_+(\theta_n))$ determine orientations on
$\Det(\tcO_\pm(E))$ and $\Det(\cO_\pm(E))$ by requiring that the
glued orientation on $\Det(\cO(\C P^1,E))$ is the canonical one.      
We get orientations of $\Det(\tcO(\R\times S^1,E))$ by requiring
that the orientation induced on $\Det(\cO(\C P^1,\theta_n
\# E \# \theta_n))$ by the gluing operation 
$$
\tcO_+(\C,\theta_n;\og)\ _{\uev}\times_{\oev} \tcO(\R\times
S^1\!,E;\og,\ug) _{\uev}\times_{\oev} 
\tcO_-(\C,\theta_n;\ug) \!\to \cO(\C P^1\!,\theta_n \# E \# \theta_n)
$$
is the canonical one. Here we have denoted by $\oev$, $\uev$ the
evaluation maps to $S^1$ at $-\infty$ and $+\infty$ respectively.
Similarly, we get orientations on $\Det(\cO(\R\times S^1,E))$,
$\Det(\tcO^u(\R\times S^1,E))$ and $\Det(\tcO^s(\R\times S^1,E))$, as well as 
orientations on $\Det(\ttcO)$ for the various spaces $\ttcO$.

\begin{lemma} \label{lem:coh}
 The above recipe defines a system of coherent orientations.
\end{lemma}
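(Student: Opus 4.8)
The plan is to verify that the orientation-assignment recipe just described is \emph{coherent} in the sense of \cite[Definition~11]{FH}, i.e.\ that the gluing isomorphisms on determinant lines respect the chosen orientations on every composite, and that the orientations are consistent under the various decompositions of a glued operator into pieces. The proof is essentially a bookkeeping argument: we have \emph{defined} most of the orientations precisely by declaring a gluing isomorphism to be orientation-preserving, so coherence has to be checked only on those gluings that were not used in the definitions, and there it follows from the associativity of the gluing operation on orientations (established, in the nondegenerate case, in \cite[Theorem~10]{FH}, and extended here to the spaces $\tcO$ and $\ttcO$ after the fibered-sum discussion above).

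First I would recall that the orientations on $\Det(\cO_\pm(\C,E;S_0))$, $\Det(\cO(\R\times S^1,E;\oS,\uS))$ for nondegenerate asymptotes already form a coherent system by \cite{BM}, and that nothing in the present construction alters them. So the new content concerns the spaces with one degenerate ($\tcS$) asymptote, and their covers $\ttcO$ when that asymptote is bad. The key point is that each such orientation was fixed by exactly one defining gluing relation: $\Det(\tcO_+(\C,\theta_n;\gamma))$ and $\Det(\tcO^s(\R\times S^1,\theta_n;\gamma,\gamma_q))$ are chosen first (the latter via the geometric normalization using the cylinders $u_{\delta,\gamma,-1,\infty}$ and Proposition~\ref{prop:Surjectivity_udelta}); then $\Det(\cO_+(\theta_n))$ is \emph{defined} by gluing these two; then $\Det(\tcO_\pm(E))$, $\Det(\cO_\pm(E))$ by capping against $\Det(\cO(\C P^1,E))$ carrying its canonical complex orientation; then $\Det(\tcO(\R\times S^1,E))$ and the remaining spaces by the standard sandwich gluing against $\Det(\cO(\C P^1,\theta_n\#E\#\theta_n))$. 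I would then argue, gluing by gluing, that any composite built from these pieces receives a well-defined orientation independent of how it is decomposed: given two decompositions, one interpolates between them by a sequence of elementary moves (inserting a trivial cap $\cO_-(\C,\theta_n)$ followed by a trivial cap $\cO_+(\C,\theta_n)$, or regrouping three consecutive factors), and each move preserves the glued orientation by associativity together with the fact that the canonical orientation of $\Det(\cO(\C P^1,\theta_n))$ is the one used in all defining relations.

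The subtle step — and the one I expect to be the main obstacle — is handling the \emph{bad} asymptotes, i.e.\ the passage to the covers $\ttcO$. Here $\Det(\tcO)$ itself is nonorientable (Lemma~\ref{lem:tcap}, Lemma~\ref{cor:tcyl}), so ``orientation'' only makes sense after pulling back to $\ttcO$, and one must check two things: (a) that the gluing operations lift to the covers, i.e.\ the matching condition $\otheta=\utheta$ on the $\R/\tfrac{2}{m}\Z$-labels is compatible with the geometric catenation used elsewhere (this was set up in the paragraph on gluing for the $\ttcO$ spaces, so it is available); and (b) that the arbitrary choice of lift made in the definition of the orientation on $\Det(\ttcOs(\R\times S^1,E;\gamma,\gamma_q))$ propagates coherently — i.e.\ once a lift of the distinguished operator $\Phi_\gamma\circ D_u\circ\Phi_\gamma^{-1}$ at $u=u_{\delta,\gamma,-1,\infty}$ is fixed, all other composites involving the bad orbit $\gamma$ inherit a consistent lift by transport along the connected family $W^s(q)$. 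I would reduce (b) to the statement that the monodromy of $\Det(\tcO)$ around the generating loop of $\pi_1$ (computed in Lemma~\ref{lem:tcap} via the action $\phi_m$ of Lemma~\ref{lem:cover}) is exactly cancelled by passing to the double cover, which is how $\tcSb$ was defined; so the lifted determinant bundle is genuinely orientable and the chosen orientation at one point spreads uniquely. Once (a) and (b) are in place, coherence for the $\ttcO$ spaces follows from coherence for the $\cO$, $\tcO$ spaces by the same associativity argument, since every gluing either stays within a single cover or lands in a genuinely orientable $\cO$-space. This completes the verification.
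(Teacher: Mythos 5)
Your high-level framing is right — the statement amounts to checking that $o_A\#(o_K\#o_L)\#o_B$ agrees with the canonical orientation on $\Det(\cO(\C P^1))$, and the easy cases are indeed consequences of the definitions plus associativity. Your discussion of the bad-orbit covers $\ttcO$ (points (a) and (b)) is also sound, provided coherence for the $\cO$ and $\tcO$ spaces is already known.

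However, the central step of your argument does not work, and it is precisely the case the paper is careful about. You propose to interpolate between two decompositions by ``inserting a trivial cap $\cO_-(\C,\theta_n)$ followed by a trivial cap $\cO_+(\C,\theta_n)$.'' Gluing a negative cap to a positive cap produces a closed operator in $\cO(\C P^1,\theta_n)$, which cannot be spliced into the interior of a chain $A\#K\#L\#B$ — every gluing in this formalism is along a matched pair of positive/negative punctures, and a closed sphere has none left over. The only available relations are of the form $o_A\#o_K\#o_{B'}$ canonical on $\cO(\C P^1,\theta_n\#E\#\theta_n)$, $o_{A'}\#o_L\#o_B$ canonical on $\cO(\C P^1,\theta_n\#F\#\theta_n)$, and $o_{A'}\#o_{B'}$ canonical on $\cO(\C P^1,\theta_n)$; these three cannot be combined by associativity alone to produce the statement about $\cO(\C P^1,\theta_n\#E\#F\#\theta_n)$ when both $E$ and $F$ are nontrivial. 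That case requires a genuinely different idea.

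What the paper does instead is introduce an auxiliary loop $S_0\in\cS$ that commutes with $J_0$, so that the $\C$-linear elements of $\cO_\pm(\C,E';S_0)$ form a nonempty convex set and hence $\Det(\cO_\pm(\C,E';S_0))$ carries a canonical orientation (Remark~\ref{rmk:Clin}). This lets one transfer the topological twist of $E$ from the cylinder to the cap: writing $\theta_n\#E=E_1\#\theta_n$, one replaces $o_A\#o_K$ by $o_{A_1}\#o_{K_1}$ with $A_1$ over the nontrivial bundle $E_1$ (asymptote $S_0$) and $K_1$ over $\theta_n$. After this swap, associativity brings the whole expression into the form $o_{A_1}\#o_{K_1\#L\#B}$, and the final observation is that both glued pieces are homotopic to $\C$-linear operators with asymptote $S_0$, whose gluing remains $\C$-linear and therefore has the canonical complex orientation. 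Your proposal contains no trace of this decoupling device or the $\C$-linearity argument, and without it the general case of coherence is not established.
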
 

\begin{proof} 
We have to prove that, given operators $K$, $L$ that can be glued
lying in one of the 
spaces $\cO$, $\tcO$ or $\ttcO$, the coherent orientations 
$o_K$, $o_L$ of $\Det(K)$ and
$\Det(L)$ induce an orientation $o_K\# o_L$ that coincides with the
coherent orientation of $\Det(K\# L)$. In the case when $K$, $L$
belong to some $\cO(\R\times S^1)$, $\tcO(\R\times S^1)$ or $\ttcO(\R\times S^1)$ 
this means that, for a suitable 
choice of operators $A\in\ttcO_+(\C,\theta_n)$, $A\in\tcO_+(\C,\theta_n)$ or
$A\in\cO_+(\C,\theta_n)$, and $B\in\ttcO_-(\C,\theta_n)$, $B\in\tcO_-(\C,\theta_n)$ or $B\in
\cO_-(\C,\theta_n)$, with $o_A,o_B$ the coherent
orientations on the respective determinant line bundles, 
$o_A\#(o_K\# o_L)\#o_B$ is the canonical orientation on $\Det(\cO(\C
P^1))$. 

Let $E$ and $F$ be the symplectic vector bundles corresponding to $K$
and $L$ respectively. If $E=\theta_n$ or $F=\theta_n$ the conclusion
is a direct consequence of the definitions and of the associativity of
gluing. In the general case $E\neq \theta_n$ and $F\neq \theta_n$ we
give the proof when $K\in \tcO^u(\R\times S^1,E;\og_p,\gamma)$ and $L\in
\tcO^s(\R\times S^1,F;\gamma,\ug_q)$, the other cases being similar. 
Let us introduce an auxiliary loop of symmetric matrices $S_0\in\cS$
such that $[S_0,J_0]=0$, and we define the orientations on
$\Det(\cO_\pm(\C,E';S_0))$ to be the canonical ones (see
Remark~\ref{rmk:Clin}). This determines in turn orientations on 
$\Det(\tcO^u(\R\times S^1,E';S_0,S_\gamma))$, $\gamma\in \cP(H)$ by
requiring that gluing induces the coherent orientation on
$\Det(\tcO_+(\C,\theta_n\#E';\gamma))$.    

Let $A_1\in\cO_+(\C,E_1;S_0)$, $K_1\in\tcO^u(\R\times
S^1,\theta_n;S_0,S_\gamma)$ with $E_1\#\theta_n=\theta_n\#E$. 
By the above definition, we have $o_{A_1}\# o_{K_1}=o_A\#o_K$. 
We obtain 
\begin{eqnarray*}
  o_A\#(o_K\# o_L)\#o_B & = & (o_A\#o_K)\# o_L\#o_B =
 (o_{A_1}\#o_{K_1})\# o_L\#o_B \\ 
 & = & o_{A_1}\#o_{K_1\# L}\#o_B = o_{A_1}\#o_{K_1\# L\#B}.
\end{eqnarray*} 
The operators $A_1$ and $K_1\#L\#B$ are homotopic to $\C$-linear
operators with asymptotic condition $S_0$. The main observation now is
that the gluing of two $\C$-linear operators is again $\C$-linear,
hence the gluing of the above orientations is the canonical one on
$\Det(\cO(\C P^1))$. 
\end{proof} 

Let $\og,\ug$ be good orbits. In this case the procedure for 
orienting the Morse-Bott spaces of
Floer trajectories $\widehat \cM^A(S_\og,S_\ug;H,J)$ 
is entirely similar to the corresponding procedure in the nondegenerate case (it
is actually simpler since we do not need the intermediate transition
from $L^{p,d}$ to $L^p$ spaces). Namely, we pull back the orientation
on $\Det(\tcO)$ using the natural map $\widehat \cM \to \tcO$. This in
turn induces orientations on the quotient spaces
$\cM^A(S_\og,S_\ug;H,J)$. Recall that, given oriented vector spaces
$V\subset W$, we define an orientation on $W/V$ by requiring that the
isomorphism $V\oplus (W/V) \simeq W$ is orientation preserving.   

Since the stable and unstable manifolds of the functions $f_\gamma$
are canonically oriented, one gets orientations (i.e. signs) on all
zero-dimensional moduli spaces of Floer trajectories with gradient
fragments $\cM^A(p,q;H,\{f_\gamma\},J)$ which involve only good
orbits. This is done by the following {\bf fibered sum rule}. Let 
$f_i:W_i\to W$, $i=1,2$ be linear maps 
of oriented vector spaces such that $f:W_1\oplus W_2\to W$,
$(w_1,w_2)\mapsto f_1(w_1) - f_2(w_2)$ is surjective. The orientation
on the fibered sum $W_1 {_{f_1}\oplus_{f_2}} W_2 :=\ker f$ is
defined such that the isomorphism of vector spaces $(W_1\oplus
W_2)/\ker f \stackrel \sim \to W$ induced by $f$ changes orientations
by the sign $(-1)^{\dim\,W_2 \cdot \dim\, W}$. Note that this rule is
such that the fibered sum operation is associative for oriented vector
spaces, and moreover, if $f_2$ is an orientation preserving
isomorphism, the natural isomorphism $W_1 {_{f_1}\oplus_{f_2}} W_2
\simeq W_1$ is orientation preserving. Similarly, if $f_1$ is an
orientation preserving isomorphism, the natural isomorphism $W_1
{_{f_1}\oplus_{f_2}} W_2 \simeq W_2$ is orientation preserving. 

The important remark now is that, although the spaces 
$\widehat \cM^A(S_\og,S_\ug;H,J)$ with $\og$ or $\ug$ being a bad orbit 
may not be orientable, we can nevertheless define orientations (i.e. signs)
on \emph{all} zero-dimensional moduli spaces 
of Floer trajectories with gradient fragments
$\cM^A(p,q;H,\{f_\gamma\},J)$. The sign 
of an (isolated) point $[\u]=(c_m,[u_m],\ldots,c_1,[u_1],c_0)$ in this
moduli space is determined as follows. For each operator $D_{u_i}$,
$i=1,\ldots,m$ with at least one bad asymptote we choose a lift in the
corresponding space $\ttcO(\R\times S^1)$. For each $c_i$,
$i=0,\ldots,m$ lying on a bad orbit $\gamma_i$ the corresponding
operator $D_{u_{\delta,\gamma_i,-T_i/2,T_i/2}}$ admits a unique lift
to the space $\ttcO(S_{\gamma_i},S_{\gamma_i})$ such that it can be
glued with both $D_{u_{i+1}}$ and $D_{u_i}$. Since all these operators
are surjective, the orientations of the determinant line bundles over
the spaces $\tcO$ and $\ttcO$ induce orientations on $T_{u_i}\widehat
\cM^{A_i}(S_{\gamma_i},S_{\gamma_{i-1}})$, respectively 
$TW^u(p)$, $TW^s(q)$ and $T_{(c_i(-T_i/2),T_i)} (S_{\gamma_i}\times
\R^+)$, $i=1,\ldots,m-1$. By the fibered sum rule we get an
orientation on $T_\u \widehat \cM^A(p,q;H,\{f_\gamma\},J)$ which we
call ``the coherent orientation''. On the other hand this vector space
carries the ``geometric orientation'' of the basis $(\p_s
u_m,\ldots,\p_s u_1)$. We define the sign 
\begin{equation} 
\label{eq:signepsilon} \index{$\epsilon(\u)$} 
\epsilon(\u)=\epsilon([\u])
\end{equation}
to be $+1$ if these two orientations coincide, and $-1$ if they are
different.

We now want to compare the signs $\epsilon(\u)$ with the
signs $\epsilon(u_\delta)$ of the glued trajectories $u_\delta$
corresponding to $\u$. The situation is expressed by the following
diagram, in which we dropped the decorations $A$, $(H,\{f_\gamma\},J)$
and $(H_\delta,J)$ and in which we have
indicated on the morphism arrows the way in which the corresponding
isomorphisms of vector spaces act on orientations. 

$$
\xymatrix
@C=25pt
@R=30pt
{
\genfrac{}{}{0pt}{3}{\mbox{\scriptsize{Coherent}}}
{\mbox{\scriptsize{orientation}}} \ar@{.>}[r]
&
T\widehat \cM(p,q) \ar[r]_-{\phi}^-1
\ar[d]_{\textrm{Id}}^{\epsilon(\u)}  & 
T\widehat \cM(\og_p,\ug_q)\oplus \R^{m-1} 
\ar[d]_{\textrm{Id}}^{\epsilon(u_\delta)} 
&
\genfrac{}{}{0pt}{3}{\mbox{\scriptsize{Coherent}}}
{\mbox{\scriptsize{orientation}}} \ar@{.>}[l]
\\
\genfrac{}{}{0pt}{3}
{\genfrac{}{}{0pt}{3}{\mbox{\scriptsize{Geometric}}}
 {\mbox{\scriptsize{orientation}}}}
{\stackrel {\, }
{\langle\p_s u_m,\ldots,\p_s u_1\rangle}}
\ar@{.>}[r]
&
T\widehat \cM(p,q) \ar[r]_-{\phi}^-? & T\widehat
\cM(\og_p,\ug_q) \oplus \R^{m-1} 
&
\genfrac{}{}{0pt}{3}
{\genfrac{}{}{0pt}{3}{\mbox{\scriptsize{Geometric}}}
 {\mbox{\scriptsize{orientation}}}}
{\stackrel {\, }
{\langle\p_s u_\delta\rangle \, \oplus \, \R^{m-1}}}
\ar@{.>}[l] 
}
$$

The map $\phi$ is defined from gluing as follows. The tangent space
$T\widehat \cM(p,q)$ is the kernel of the operator $D_{\widetilde w}$,
$\widetilde w = (v_m,u_m,\ldots,v_1,u_1,v_0)$ considered in 
Lemma~\ref{lem:Dwtilde}. Moreover, since the cokernel of
$D_{\widetilde w}$ is naturally oriented, the coherent orientation of
$\Det(D_{\widetilde w})$ induces a ``coherent'' orientation on
$\ker\,D_{\widetilde w}$. Recall that
the analytical expression of $D_\tw$ is $D_{v_m}\oplus D_{u_m} \oplus
D'_{v_{m-1}} \oplus \ldots \oplus D'_{v_1} \oplus D_{u_1} \oplus
D_{v_0}$, and note that $D_\tw$ admits 
a natural stabilization $D_\tw^{\R^{m-1}}$ obtained by replacing
$D'_{v_i}$, $i=1,\ldots,m-1$ 
with $D\{\dbar_T\}(v_i,T_{v_i})$ (see Remark~\ref{rmk:dbarT} for the 
definitions). By~\cite[Corollary~6]{BM} there is a natural isomorphism
$\widetilde \phi:\ker\,D_\tw^{\R^{m-1}} \stackrel \sim \to \ker\,
D_{G_\delta(\tw)}^{\R^{m-1}}$ which preserves the coherent orientations. 
We denote by $\phi:\ker\,D_\tw^{\R^{m-1}} \stackrel \sim \to
\ker\,D_{u_\delta}^{\R^{m-1}}$ the composition of $\widetilde \phi$ with
the projection $\Pi$ on $\ker\,D_{u_\delta}^{\R^{m-1}}$ along the
image of the right inverse $Q_\delta$ of $D_{G_\delta(\tw)}$ given by
Proposition~\ref{prop:gluing}. Since $D_{G_\delta(\tw)}$ and
$D_{u_\delta}$ are close in the relevant $\delta$-norm, we get that
$\phi$ is an isomorphism preserving coherent orientations.  

The vertical maps change orientations by $\epsilon(\u)$, respectively
$\epsilon(u_\delta)$ by definition, and the whole work now goes into
determining the action of $\phi$ on the geometric
orientations. 

\begin{remark} {\rm 
If $\gamma$ is a good orbit and $p\in\textrm{Crit}(f_\gamma)$,  
the geometric orientations on $W^u(p)$ and $W^s(p)$ coincide with the 
coherent ones. Indeed, the unstable manifold 
$W^u(p)$ is naturally identified with the
zero set of the section $\dbar_{-\infty,1}$ defined on
$\cB_\delta^{1,p,d}(p,S_\gamma; f_\gamma)$ by~\eqref{eq:Hprime},
whereas the stable manifold $W^s(p)$ is naturally identified with the
zero set of the section $\dbar_{-1,\infty}$ defined on
$\cB_\delta^{1,p,d}(S_\gamma,q; f_\gamma)$. The assertion for $W^s(p)$
is then a direct consequence of 
the definition of the orientation on $\Det(\tcO^s(\R\times
S^1,\theta_n;\gamma,\gamma_q))$. As for $W^u(p)$, let us consider
the gluing operation 
$$
\tcO^u(\R\times S^1,\theta_n;\gamma_p,\gamma) {_{\uev}}\times_{\oev}
\tcO^s(\R\times S^1,\theta_n;\gamma,\gamma_p) \to \cO(\R\times
S^1,\theta_n;\gamma_p,\gamma_p). 
$$
We choose the surjective operators $D_1:=D_{u_{\delta,\gamma,-\infty,1}}$,
$D_2:=D_{u_{\delta,\gamma, -1,\infty}}$ corresponding to the constant
gradient trajectory at $p$. With these choices $D_1\#
D_2=D_{u_{\delta,\gamma,-\infty,\infty}}=:D$ also corresponds to the
constant gradient trajectory at $p$. The operator $D$ is an
isomorphism and, by the coherent choice of the orientations, the
determinant line $\Det(D)\simeq \R$ is positively oriented. 
If $p$ is the maximum of $f_\gamma$ then $\ker\,D_2\simeq T_pS_\gamma$
as oriented vector spaces (by definition), the kernel of $D_1$ is trivial
and its determinant line must be positively oriented. If $p$ is the
minimum of $f_\gamma$ then $\ker\,D_2$ is trivial and its determinant
line is positively oriented by definition, therefore $\ker\, D_1\simeq
T_pS_\gamma$ must have the geometric orientation. 
}
\end{remark}

\begin{lemma} \label{lem:signs}
 Assume $\dim \, \cM^A(p,q;H,\{f_\gamma\},J) =0$ and fix an element 
 $[\u]\in\cM^A(p,q;H,\{f_\gamma\},J)$ with $m\le 2$ 
 sublevels. Then $\epsilon(\u)=\epsilon(u_\delta)$ if $m=0,1$ and
$\epsilon(\u)=-\epsilon(u_\delta)$ if $m=2$. 
\end{lemma}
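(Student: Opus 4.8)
The plan is to treat the three cases $m=0$, $m=1$, $m=2$ separately, in each case comparing the coherent orientation on $\ker D_{\tw}^{\R^{m-1}}$ with the geometric orientation, and then chasing both through the isomorphism $\phi$ constructed via gluing. In all cases $\phi$ preserves coherent orientations (it is the composition of the gluing isomorphism $\widetilde\phi$ of~\cite[Corollary~6]{BM} with a projection along $\im Q_\delta$, which is close to the identity), so the sign $\epsilon(\u)\epsilon(u_\delta)$ equals the product of (coherent vs. geometric) discrepancies on the two sides of the diagram, multiplied by the sign by which $\phi$ maps one geometric orientation to the other.

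First I would dispose of $m=0$: here $\u=u_\delta=c_0$ is a single semi-infinite gradient cylinder of the form $u_{\delta,\gamma,-\infty,\infty}$, and $\widehat\cM(p,q)=\widehat\cM(\og_p,\ug_q)$ with no $\R$-factor and no intermediate gradient fragments, so $\phi=\mathrm{Id}$ literally and $\epsilon(\u)=\epsilon(u_\delta)$ trivially. For $m=1$ the tuple is $(c_1,u_1,c_0)$ with $c_1,c_0$ semi-infinite and $u_1$ a single nonconstant Floer trajectory in $\cM^A(S_\og,S_\ug;H,J)$; then $D_{\tw}=D_{v_1}^u\oplus D_{u_1}\oplus D_{v_0}^s$ with no stabilization ($m-1=0$), the coherent orientation on $\ker D_{\tw}$ is exactly the fibered-sum orientation that defines $\epsilon(\u)$, and on the glued side $\ker D_{u_\delta}$ is $1$-dimensional oriented by $\p_s u_\delta$. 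The key computation is that $\phi$ carries the geometric generator $\p_s u_1$ (viewed inside $\ker D_{\tw}$ via the fibered-sum description, using the transversality of the evaluation maps in $\Freg(H,J)$ to identify $\ker D_{\tw}$ with a subspace of $\ker D_{u_1}$) to a positive multiple of $\p_s u_\delta$; this follows because pregluing $G_{\delta,\oeps}(\u)$ agrees with $u_1$ on the central neck, $\p_s$ of the preglued curve is close in the $\delta$-norm to $\p_s u_\delta$, and the projection $\Pi$ along $\im Q_\delta$ is a small perturbation of the identity. Hence both vertical maps and $\phi$ contribute $+1$ to the product and $\epsilon(\u)=\epsilon(u_\delta)$.

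The case $m=2$, tuple $(c_2,u_2,c_1,u_1,c_0)$ with $c_1$ nonconstant, is where the sign appears and is the main obstacle. Now $D_{\tw}$ has a genuine stabilization $D_{\tw}^{\R^{1}}$, obtained by replacing $D'_{v_1}$ with $D\{\dbar_T\}(v_1,T_{v_1})$ of index $2$, and the extra $\R$-summand on the glued side carries the \emph{geometric} orientation by definition of the bottom row of the diagram. The plan is: (a) on the domain side, use Proposition~\ref{prop:support_of_section} — $\ker D'_{v_1}$ is $2$-dimensional with basis $\zeta^1$ (reparametrization of the underlying gradient trajectory, corresponding under $\Phi_\gamma$ to the positive generator of $T S_{\gamma_1}$) and $\zeta^2$ (lengthening of the interval, corresponding to the $\R^+$-factor) — together with the fibered-sum rule and its sign $(-1)^{\dim W_2\cdot\dim W}$ to expand the coherent orientation on $\ker D_{\tw}^{\R^1}$; (b) on the glued side, $\ker D_{u_\delta}^{\R^1}=\langle\p_s u_\delta\rangle\oplus\R$ with the $\R$-factor the geometric one; (c) track how $\widetilde\phi$ (which preserves coherent orientations) interacts with the stabilization. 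The crucial point is a dimension bookkeeping identity: each application of the fibered-sum rule across an intermediate critical circle contributes a sign $(-1)^{\dim S_{\gamma}\cdot\dim S_{\gamma}}=(-1)^{1}=-1$ (since $\dim S_\gamma=1$), and with $m=2$ there is exactly one such intermediate circle $S_{\gamma_1}$, producing the single overall sign $(-1)^{m-1}=-1$. I would verify this by writing out the associative chain of fibered sums explicitly (using that $\zeta^1$ corresponds to the canonical generator of $TS_{\gamma_1}$ while the stabilizing $\R$ corresponds to $\zeta^2$), checking the signs against the fibered-sum rule stated before~\eqref{eq:signepsilon}, and confirming that the contribution of $c_1$ being a good or bad orbit cancels out (for bad orbits one works in the double cover $\ttcO$, where Lemma~\ref{lem:T} controls the discrepancy, but since both $D_{u_2}$ and $D_{u_1}$ glue to $D_{u_{\delta,\gamma_1,-T_1/2,T_1/2}}$ simultaneously the ambiguity of the lift is irrelevant to the \emph{relative} sign). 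The anticipated technical nuisance is getting the orientation conventions for $W/V$ and for fibered sums to interlock correctly with the natural isomorphism $\Det(K\oplus_\R L)\simeq\Det(K)\otimes\Det(L)$ used in~\cite[Corollary~6]{BM}; I would pin these down once and for all at the start and then the $m=2$ sign should drop out mechanically.
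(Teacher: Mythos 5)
Your cases $m=0$ and $m=1$ are fine and match the paper's argument: for $m=0$ the point is $\u=u_\delta$, and for $m=1$ one only needs that shifting $u_1$ produces a glued curve shifted by the same amount, so $\phi(\p_s u_1)$ is a positive multiple of $\p_s u_\delta$. (Minor slip: for $m=0$ the trajectory $c_0$ is bi-infinite, of the form $u_{\delta,\gamma,-\infty,\infty}$, not ``semi-infinite''.)

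The $m=2$ case has a genuine gap, and it is located precisely where you say the computation should ``drop out mechanically.'' The sign $\alpha$ by which $\phi$ maps the geometric orientation on $T_\u\widehat\cM(p,q)$ to the geometric orientation on $T\widehat\cM(\og_p,\ug_q)\oplus\R$ is \emph{not} a consequence of fibered-sum combinatorics. The fibered-sum signs are absorbed entirely into the definition of the coherent orientation on the Morse-Bott side, i.e. into $\epsilon(\u)$; since $\phi$ preserves coherent orientations by construction, these signs cancel out around the square. What remains — and what actually produces the factor $(-1)^{m-1}$ — is the purely analytic question of where the specific tangent vectors $\p_s u_2,\p_s u_1$ land in $\langle\p_s u_\delta\rangle\oplus\R$. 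The paper settles this by showing, via $\tw^{\sigma,\sigma}$ and $\tw^{-\sigma,\sigma}$ together with the gluing parameter $\epsilon(\sigma)=2\delta\sigma$ in~\eqref{eq:minussigma}, that $\phi(\p_s u_2 + \p_s u_1)$ is close to $(\p_s u_\delta,0)$ while $\phi(\p_s u_1 - \p_s u_2)$ is close to $(-\p_s u_\delta,-c)$ with $c>0$; the determinant of this pair against $(\p_s u_\delta,e_1)$ is negative, giving $\alpha=-1$. None of this follows from counting intermediate circles.

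A secondary problem: you misquote the fibered-sum convention. The sign stated before~\eqref{eq:signepsilon} is $(-1)^{\dim W_2\cdot\dim W}$ for the map $f:W_1\oplus W_2\to W$, which for a fibered product over $W=TS_\gamma$ reduces to $(-1)^{\dim W_2}$ — not $(-1)^{\dim S_\gamma\cdot\dim S_\gamma}$ — and the dimensions $\dim W_2$ of the successive factors vary with the indices $\mu(\gamma_i)$, so even the arithmetic of your proposed ``bookkeeping identity'' is not uniform over intermediate circles. To repair the argument you would need to replace the bookkeeping claim with the analytic identification of $\phi(\p_s u_1\pm\p_s u_2)$ carried out in the paper; as it stands your step (c) (``track how $\widetilde\phi$ interacts with the stabilization'') is precisely the unproved content.
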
 

\begin{proof} If
$m=0$ the statement is obvious since $\u$ consists of a single
gradient trajectory and $\u=u_\delta$ (see
Lemma~\ref{lem:signsgrad}). We now have to show that the map
$\phi$ in the previous diagram preserves the geometric orientation if
$m=1$, respectively reverses it if $m=2$. Since a shift $\sigma$ on $u_1$
produces a glued trajectory $u_\delta$ shifted by the same amount
$\sigma$, we infer that $\phi(\p_s u_1)=\p_s u_\delta$ and, for
$m=1$, the statement follows from the commutativity of the
diagram.  

Let us now examine the case $m=2$. We recall that $\phi=\Pi \circ
\widetilde \phi$, where the isomorphism $\widetilde \phi$ is the
composition of the gluing map $G$ in the proof of
Proposition~\ref{prop:gluing} with  the projection to
$\ker\,D_{G_\delta(\tw)}^\R$ along the image of $Q_\delta$
(see~\cite{BM}). We first show that $\phi(\p_s u_1 + \p_s u_2)$ is
close in $\|\cdot\|_{1,\delta}$-norm to $\p_s u_\delta$. We denote
$\tw^{\sigma,\sigma}:=(v_2,u_2(\cdot+\sigma),v_1,u_1(\cdot+\sigma),v_0)$,
$\sigma\in\R$.  
Then $G(0\oplus\p_s u_2 \oplus 0 \oplus
\p_s u_1 \oplus 0)$ is $\|\cdot\|_{1,\delta}$-close to $\frac d
{d\sigma} \big|_{\sigma=0} G_\delta(\tw^{\sigma,\sigma})$, which is
$\|\cdot\|_{1,\delta}$-close to $\frac d
{d\sigma} \big|_{\sigma=0} G_\delta(\tw)(\cdot +\sigma)$, which is in
turn close to $\frac d {d\sigma} \big|_{\sigma=0}
v_\delta(\cdot+\sigma) = \p_s v_\delta$. Then $\phi(\p_s u_1 +\p_s
u_2) = \Pi(G(0\oplus\p_s u_2 \oplus 0 \oplus
\p_s u_1 \oplus 0))$ is $\|\cdot\|_{1,\delta}$-close to $\Pi(\p_s
u_\delta)=\p_s u_\delta$. 

We now show that $\phi(\p_s u_1 - \p_s u_2)\in \ker \, D_{u_\delta}
\oplus \R$ is a vector having a negative component in the $\R$
direction and whose component on $\ker\,D_{u_\delta}$ is
$\|\cdot\|_{1,\delta}$-close to $-\p_s u_1$. Then the conclusion
follows. 

Let  
$\tw^{-\sigma,\sigma}:=(v_2,u_2(\cdot-\sigma),v_1,u_1(\cdot+\sigma),v_0)$,
$\sigma\in\R$. Then $G(0\oplus (-\p_s u_2) \oplus 0 \oplus
\p_s u_1 \oplus 0)$ is $\|\cdot\|_{1,\delta}$-close to $\frac d
{d\sigma} \big|_{\sigma=0} G_\delta(\tw^{-\sigma,\sigma})$. We define
$\epsilon(\sigma):= 2\delta\sigma$ and the section
\begin{equation} \label{eq:minussigma}
\frac d {d\sigma} \big|_{\sigma=0} G_{\delta,\epsilon(\sigma)}
(\tw^{-\sigma,\sigma}) = \frac d {d\sigma} \big|_{\sigma=0}
G_\delta(\tw^{-\sigma,\sigma}) + 2\delta \frac d {d\epsilon}
\big|_{\epsilon=0} G_{\delta,\epsilon}(\tw)
\end{equation}
is by construction $\|\cdot\|_{1,\delta}$-close to $\frac d {d\sigma}
\big|_{\sigma=0} G_\delta(\tw^{-\sigma,-\sigma})$, hence
$\|\cdot\|_{1,\delta}$-close to $-\p_s u_\delta$. 
By adapting the arguments in the
proof of Proposition~\ref{prop:catenation} one sees that the section 
$$
\frac d {d\epsilon} \big|_{\epsilon=0} \dbar_{T_{v_1}+\epsilon}
G_{\delta,\epsilon}(\tw) = D\dbar_{T_{v_1}} \frac d {d\epsilon}
\big|_{\epsilon=0} G_{\delta,\epsilon}(\tw) + D\{\dbar_T\}
(G_{\delta}(\tw),T_{v_1})\cdot (0,1)
$$ 
is $\|\cdot\|_\delta$-small. Here the sections $\dbar_T$ are of the
form $\dbar_{H_T,J}$, where $H_T$ is the $s$-dependent Hamiltonian
given respectively by~\eqref{eq:Hprime} on the intervals of definition
of $v_2$, $v_1$, $v_0$, and equal to $H$ on the intervals of
definition of $u_1$, $u_2$. The previous equation shows that $(\frac d
{d\epsilon} \big|_{\epsilon=0} G_{\delta,\epsilon}(\tw),1)\in
\textrm{dom}(D_{G_{\delta}(\tw)}^\R)$ is $\|\cdot \|_{1,\delta}$-close
to $\ker\,D_{u_\delta}^\R$. On the other hand,
equation~\eqref{eq:minussigma} 
shows that $G(0\oplus (-\p_s u_2) \oplus 0 \oplus \p_s u_1 \oplus 0)$
is $\|\cdot\|_{1,\delta}$-close to $-\p_s u_\delta -2\delta \frac d
{d\epsilon} \big|_{\epsilon=0} G_{\delta,\epsilon}(\tw)$. Hence, after
projecting to $\ker\,D_{u_\delta}^\R=\ker\,D_{u_\delta}\oplus\R$, we
get a vector having a negative component in the $\R$ direction and
whose component on $\ker\,D_{u_\delta}$ is
$\|\cdot\|_{1,\delta}$-close to $-\p_s u_\delta$.  
\end{proof}

\begin{proof}[{\bf Proof of Proposition~\ref{prop:signs}}] The
special statement concerning the case $m=0$ was proved in
Lemmas~\ref{lem:signsgrad} and~\ref{lem:signs}, 
whereas the equality $\epsilon(\u)=(-1)^{m-1}\epsilon(u_\delta)$ in
case $m=1,2$ was the content of Lemma~\ref{lem:signs}. 
The proof in the case $m\ge 3$ is just a more elaborate version of the
proof of Lemma~\ref{lem:signs}. We consider the basis of $T_\u\widehat
\cM(p,q)$ given by 
$$
\begin{array}{rcl}
e_0 & := & \p_s u_m + \p_s u_{m-1} +\ldots + \p_s u_2 + \p_s u_1, \\
e_1 & := & -\p_s u_m + \p_s u_{m-1} +\ldots + \p_s u_2 + \p_s u_1, \\
 & & \vdots \\
e_{m-2} & := & -\p_s u_m - \p_s u_{m-1} -\ldots + \p_s u_2 + \p_s u_1,
\\
e_{m-1} & := & -\p_s u_m - \p_s u_{m-1} -\ldots - \p_s u_2 + \p_s u_1.
\end{array}
$$
It is easy to see that the orientation determined by
$(e_0,\ldots,e_{m-1})$ is the same as the geometric orientation determined
by $(\p_s u_m,\ldots,\p_s u_1)$. We have to show that the orientation 
of the basis $(\phi(e_0),\ldots,\phi(e_{m-1}))$ differs from the canonical
orientation of $\langle \p_s u_\delta \rangle \oplus \R^{m-1}$ by
$(-1)^{m-1}$.  

As in Lemma~\ref{lem:signs} we see that $\phi(e_0)$ is
$\|\cdot\|_{1,\delta}$-close to $\p_s u_\delta$. We now show that 
$\phi(e_k)\in\ker D_{u_\delta} \oplus \R^{m-1}$, $k=1,\ldots,m-1$ has
a negative component which is bounded away from zero along the
corresponding factor $\R\subset \R^{m-1}$, that the other components
in $\R^{m-1}$ are close to zero, whereas the component along $\ker
D_{u_\delta}$ is close to $-\p_s u_\delta$ in
$\|\cdot\|_{1,\delta}$-norm. Then the conclusion will follow since 
the orientation defined by $(\phi(e_0),\ldots,\phi(e_{m-1}))$ is the
same as the orientation defined by 
$$
(\p_s u_\delta,0,\ldots,0),(-\p_s u_\delta,-1,0,\ldots,0), 
\ldots,(-\p_s u_\delta,0,\ldots,0,-1)).
$$

Let us fix $k=1,\ldots,m-1$. We shall freely use the notation $e_k$
for the vector $0\oplus(-\p_s
u_m)\oplus 0 \oplus \ldots \oplus (-\p_s u_{m-k+1})\oplus 0 \oplus
\p_s u_{m-k} \oplus \ldots \p_s u_1\oplus 0$ in the domain of the
gluing map $G$ defined in the proof of
Proposition~\ref{prop:gluing}. For $\sigma>0$ we denote 
$$
\tw_k^{-\sigma,\sigma} \!:=\! (v_m,u_m(\cdot-\sigma),\ldots,
u_{m-k+1}(\cdot-\sigma),v_{m-k},u_{m-k}(\cdot+\sigma),
\ldots,u_1(\cdot+\sigma),v_0),
$$
$$
\tw^{-\sigma,-\sigma} \!:=\! (v_m,u_m(\cdot-\sigma),\ldots,
u_{m-k+1}(\cdot-\sigma),v_{m-k},u_{m-k}(\cdot-\sigma),\ldots,u_1(\cdot-\sigma),v_0).
$$
Then $G(e_k)$ is $\|\cdot\|_{1,\delta}$-close to $\frac d
{d\sigma}\big|_{\sigma=0}G_\delta(\tw_k^{-\sigma,\sigma})$. 
We denote 
$$
\oeps_k(\epsilon):=(0,\ldots,\epsilon,\ldots,0),
$$
where the parameter $\epsilon>0$ appears on position $m-k$.
The section
\begin{equation} \label{eq:minussigmak}
 \frac d {d\sigma} \big|_{\sigma=0} G_{\delta,\oeps_k(2\delta\sigma)}
(\tw_k^{-\sigma,\sigma}) = \frac d {d\sigma} \big|_{\sigma=0}
G_\delta(\tw_k^{-\sigma,\sigma}) + 2\delta \frac d {d\epsilon}
\big|_{\epsilon=0} G_{\delta,\oeps_k}(\tw)
\end{equation}
is by construction $\|\cdot\|_{1,\delta}$-close to $\frac d
{d\sigma}\big|_{\sigma=0} G_\delta(\tw^{-\sigma,-\sigma})$, hence 
$\|\cdot\|_{1,\delta}$-close to $-\p_s u_\delta$. 
As in Lemma~\ref{lem:signs}, by adapting the arguments in the proof
of Proposition~\ref{prop:catenation} one sees that the section 
\begin{eqnarray*}
\frac d {d\epsilon} \big|_{\epsilon=0} \dbar_{T_{v_{m-k}}+\epsilon}
G_{\delta,\oeps_k(\epsilon)}(\tw) & = & 
D\dbar_{T_{v_{m-k}}} \frac d {d\epsilon}
\big|_{\epsilon=0} G_{\delta,\oeps_k(\epsilon)}(\tw) \\ 
& & + \ D\{\dbar_T\}
(G_{\delta}(\tw),T_{v_{m-k}})\cdot (0,1)
\end{eqnarray*}
is $\|\cdot\|_\delta$-small. As before, the sections $\dbar_T$ are of the
form $\dbar_{H_T,J}$, where $H_T$ is the $s$-dependent Hamiltonian
given respectively by~\eqref{eq:Hprime} on the intervals of definition
of $v_m, v_{m-1},\ldots,v_0$, and equal to $H$ on the intervals of
definition of $u_m,\ldots,u_1$. The previous equation shows that
$$
(\frac d {d\epsilon} \big|_{\epsilon=0}
G_{\delta,\oeps_k(\epsilon)}(\tw),0,\ldots,1,\ldots,0)\in
\textrm{dom}(D_{G_{\delta}(\tw)}^{\R^{m-1}})
$$
is $\|\cdot
\|_{1,\delta}$-close to $\ker\,D_{u_\delta}^{\R^{m-1}}$. On the other
hand, equation~\eqref{eq:minussigmak} shows that 
$G(e_k)$ is $\|\cdot\|_{1,\delta}$-close to
$-\p_s u_\delta -2\delta \frac d {d\epsilon} \big|_{\epsilon=0}
G_{\delta,\oeps_k(\epsilon)}(\tw)$. After projecting to
$\ker\,D_{u_\delta}^{\R^{m-1}}$ we get a vector
whose $k$-th component in $\R^{m-1}$ is negative, whose other
components in $\R^{m-1}$ are small, and whose component 
on $\ker\,D_{u_\delta}$ is $\|\cdot\|_{1,\delta}$-close to $-\p_s u_\delta$. 
\end{proof}

\medskip 

\begin{remark}{\rm 
We chose to define the signs $\epsilon(\u)$ by comparing
the orientation induced on $T_\u \widehat \cM^A(p,q;H,\{f_\gamma\},J)$
by the fiber sum rule from the coherent orientations on $T\widehat
\cM^{A_i}(S_{\gamma_i},S_{\gamma_{i-1}};H,J)$, $i=1,\ldots,m$ with the
orientation of the basis $(\p_s u_m,\ldots,\p_s u_1)$. Another
possible recipe would have been the following: induce orientations on
$T\cM^{A_i}(S_{\gamma_i},S_{\gamma_{i-1}};H,J)$ out of the coherent
orientations by quotienting out $\langle \p_s \rangle$, then apply
the fiber sum rule in order to get a sign on the zero-dimensional
spaces $T_{[\u]} \cM^A(p,q;H,\{f_\gamma\},J)$. The sign obtained in
this way would have differed from the previously defined $\epsilon(\u)$
by a factor $\pm 1$ which can be explicitly computed and which depends on
the combinatorics of the levels of $\u$. The curious reader can
test this procedure in the case $m=1$: it gives a sign equal to
$\epsilon(\u)$ if $p,q$ are both minima, respectively equal to
$-\epsilon(\u)$ if $p,q$ are both maxima. The following two properties
of the fibered sum constitute a useful tool for making the
verification (here $W_1$ and $W_2$ are oriented vector spaces). 
\begin{itemize}
 \item the natural isomorphism $W_1 \,
{_{\genfrac{}{}{0pt}{3}{{\ }}{\mbox{\scriptsize{0}}}}}
 \!\!\bigoplus_{f_2}
 W_2 \stackrel  
 \sim \to W_1\oplus \ker f_2$ changes the orientation by $(-1)^{\dim\,
 W_1 \cdot (\dim \, W_2 +1)}$; 
 \item the natural isomorphism $W_1 \, {_{f_1}}\!\!\bigoplus_{\,0}
 W_2 \stackrel 
 \sim \to \ker f_1 \oplus W_2$ preserves the orientation.
\end{itemize} 
}
\end{remark}

%%%%%%%%%%%%%%%%%%%%%%%%%%%%%%%%%%%%%%%%%%%%%%
%%%%%%%%%%%%%%%%%%%%%%%%%%%%%%%%%%%%%%%%%%%%%%
%%%%%%%%%%%%%%%% Appendix %%%%%%%%%%%%%%%%%%%%
%%%%%%%%%%%%%%%%%%%%%%%%%%%%%%%%%%%%%%%%%%%%%%
%%%%%%%%%%%%%%%%%%%%%%%%%%%%%%%%%%%%%%%%%%%%%%

\appendix
\section{Appendix: Asymptotic estimates} \label{sec:appendix}
  
For all $\gamma \in \cP(H)$, we choose coordinates $(\vartheta, z) \in
S^1 \times \R^{2n-1}$
parametrizing a tubular neighbourhood of $\gamma$, such that
$\vartheta \circ \gamma(\theta)
= \theta$ and $z \circ \gamma(\theta) = 0$. Given a smooth function
$f_\gamma : S_\gamma \to \R$, we denote by $\varphi^{f_\gamma}_s$ the
gradient flow
of $f_\gamma$ with respect to the natural metric on $S^1$.

In a neighbourhood of $\gamma\in \cP(H)$ 
the Floer equation $\p_s u+J\p_\theta u-JX_H=0$ becomes $\p_s
Z+J\p_\theta Z+J\frac \p {\p\vartheta}-JX_H=0$, where 
$Z(s,\theta):=(\vartheta\circ u(s,\theta)-\theta, z\circ 
u(s,\theta))$. Since 
$X_H=\frac \p {\p\vartheta}$ on $\{z=0\}$ this can be rewritten as   
$$
\p_s Z+ J\p_\theta Z + Sz=0
$$
for some matrix-valued function $S=S(\vartheta,z)$. The matrix 
$S_\infty(\theta):=S(\theta,0)$\index{$S_\infty$, asymptotic matrix} 
is symmetric.
Let $A_\infty:H^k(S^1,\R^{2n})\to H^{k-1}(S^1,\R^{2n})$ be the operator 
defined by 
$$
A_\infty Z:=J\frac d {d\theta} Z+ S_\infty(\theta)z.
\index{$A_\infty$, asymptotic operator}
$$
The kernel of $A_\infty$ has dimension one and is spanned  
by the constant vector $e_1:=(1,0,\ldots,0)$. We denote by $Q_\infty$
\index{$Q_\infty$, asymptotic operator} 
the orthogonal projection onto $(\ker \, A_\infty)^\perp$ and we
set
\index{$P_\infty$, asymptotic operator}
$P_\infty := \one -Q_\infty$. Then 
$A_\infty$ is invertible when restricted to $\im\,Q_\infty$ and 
$Q_\infty A_\infty=A_\infty$.

\begin{proposition}  \label{prop:asymptotic}
Let $H \in \cH'$ be fixed. There exists $r > 0$ such that for all $J 
\in \cJ^\ell$ and for all
$u \in \cM^A(S_{\og},S_{\ug};H,J)$, $\og, \ug \in \cP(H)$ we have
\begin{eqnarray*}
\vartheta \circ u(s,\theta) - \theta - \otheta_0 & \in & 
W^{1,p}(]-\infty, -s_0]
\times S^1, \R ; e^{r|s|} ds \, d\theta) , \\
z \circ u(s,\theta) &\in& W^{1,p}(]-\infty, -s_0] \times S^1,
\R^{2n-1} ; e^{r|s|} ds \, d\theta) , \\
\vartheta \circ u(s,\theta) - \theta - \utheta_0 & \in & W^{1,p}([s_0,\infty[
\times S^1, \R ; e^{r|s|} ds \, d\theta) , \\
z \circ u(s,\theta) &\in& W^{1,p}([s_0,\infty[ \times S^1, \R^{2n-1} ;
e^{r|s|} ds \, d\theta) ,
\end{eqnarray*}
for some $\otheta_0, \utheta_0 \in S^1$ and some $s_0 > 0$ sufficiently large.
\end{proposition}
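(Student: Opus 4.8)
\textbf{Proof plan for Proposition~\ref{prop:asymptotic}.}
The statement is a standard exponential-convergence result for finite-energy Floer cylinders asymptotic to a transversally nondegenerate (hence Morse--Bott, of dimension one) orbit, adapted from the Morse--Bott asymptotic analysis in~\cite{B}; see also the Floer trajectory estimates in~\cite{Sa}. The plan is to treat the negative end $s\to-\infty$; the positive end is identical after reversing the sign of $s$. First I would use a bootstrapping/elliptic regularity argument together with the energy bound to show that $u(s,\cdot)$ converges in $C^\infty(S^1)$ to some orbit $\og_H\in S_{\og}$ as $s\to-\infty$, and that $\p_s u(s,\cdot)\to 0$; this uses only the finiteness of the energy $\cE(u)$ and the compactness of the target, and is where the absence of bubbling (condition~\eqref{eq:asph}) and the maximum principle for admissible Hamiltonians guarantee we stay in a compact region. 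Once convergence is established, for $s$ below some $-s_0$ the cylinder takes values in the fixed tubular neighbourhood of $\og_H$, so we may pass to the coordinates $(\vartheta,z)$ and write $Z(s,\theta)=(\vartheta\circ u(s,\theta)-\theta,\,z\circ u(s,\theta))$, which satisfies $\p_sZ+J\p_\theta Z+SZ=0$ with $S\to S_\infty$ and $Z(s,\cdot)\to 0$ in $C^1$ after an appropriate constant shift in $\vartheta$.

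The heart of the argument is a differential-inequality estimate for the projected components $Q_\infty Z$ and $P_\infty Z$ along the cylinder. Writing $g(s):=\tfrac12\|Q_\infty Z(s,\cdot)\|_{L^2(S^1)}^2$, I would differentiate twice in $s$, use the equation and the fact that $A_\infty$ is invertible (with a spectral gap) on $\im\,Q_\infty$, and the fact that $\|S(s,\cdot)-S_\infty\|$ and $\|Z(s,\cdot)\|$ are small for $s$ very negative, to derive an inequality of the form $g''(s)\ge c^2 g(s)$ for some $c>0$ not equal to any eigenvalue of $A_\infty$. Since $g(s)\to 0$ as $s\to-\infty$, a standard convexity/maximum-principle argument (no interior maximum) then forces $g(s)\le C e^{2c s}$, i.e.\ exponential decay of $Q_\infty Z(s,\cdot)$ in $L^2$. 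The $P_\infty$-component, which records the shift along the one-dimensional critical manifold, is then estimated by feeding this decay back into the equation: $\p_s(P_\infty Z)$ is controlled by $\|Q_\infty Z\|$ up to quadratic error, so $P_\infty Z(s,\cdot)$ converges exponentially fast to a constant, which after subtracting gives the desired decay of $\vartheta\circ u(s,\theta)-\theta-\otheta_0$. Upgrading from $L^2$ to $W^{1,p}$ and pointwise exponential bounds is then done by interior elliptic estimates on unit cylinders $[s-1,s+1]\times S^1$ applied to the equation, using that the $L^2$-exponential bound on each slice propagates to a $W^{1,p}$-exponential bound with the same (or slightly smaller) rate; choosing $r>0$ strictly below $c$ absorbs the loss and gives the claimed membership in the weighted $W^{1,p}$ spaces.

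The main obstacle is the presence of the one-dimensional kernel of $A_\infty$: the naive exponential estimate fails on $\ker A_\infty$, and one must carefully separate the $\vartheta$-shift degree of freedom (the $P_\infty$-direction, along which only convergence to a constant, not decay, holds) from the genuinely decaying normal directions $Q_\infty$, and then show the shift-convergence is itself exponential by a secondary estimate driven by the normal decay. A second technical point is ensuring the rate $r$ can be chosen uniformly in $J\in\cJ^\ell$ and in $u\in\cM^A(S_{\og},S_{\ug};H,J)$: this follows because the relevant spectral gap of $A_\infty$ depends only on $\og$ and the fixed data $(H,J_0)$, and because the smallness thresholds in the differential inequality are uniform once the $C^1$-convergence $Z(s,\cdot)\to 0$ is known, which in turn is uniform by the energy bound and Arzel\`a--Ascoli. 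I would finish by remarking that exactly the same proof, with the constant cylinder replaced by a target point and $S_{\og}$ by a single constant orbit, covers the variant needed for $\cM^A(S_{\og},\tq;H,J)$, and that the analogous statement with the gradient-flow term present (Proposition~\ref{prop:asymptoticdelta}) is proved by the same scheme with $S_\infty$ replaced by its $\delta$-perturbation.
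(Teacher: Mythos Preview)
Your proposal is correct and follows essentially the same approach as the paper: establish the differential inequality $f''\ge 4\rho^2 f$ for $f(s)=\tfrac12\|Q_\infty Z(s)\|^2$, deduce exponential decay of $Q_\infty Z$ by a comparison/maximum-principle argument, then use the equation (in the form $\p_s Z = -A(s)Q_\infty Z$, since $A(s)=A(s)Q_\infty$) to bound $\p_s Z$ and integrate to control the $P_\infty$-component. The only cosmetic difference is in the upgrade to pointwise bounds: the paper runs the same differential-inequality argument in every $H^k$-norm and invokes Sobolev embedding, rather than interior elliptic estimates on unit cylinders as you suggest; both routes are standard and yield the conclusion for any $r<\rho$.
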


\begin{proof} We make the proof only at $+\infty$ since the case of 
$-\infty$ is 
entirely similar. For $s$ large enough 
we set $S(s,\theta):=S(\vartheta \circ u(s,\theta), z\circ 
u(s,\theta))$, so that 
$S_\infty(\theta)=\lim _{s\to \infty} S(s,\theta)$  and 
$\lim_{s\to \infty} |\p_sS(s,\theta)| =0$.  

Let $A(s):H^k(S^1,\R^{2n})\to H^{k-1}(S^1,\R^{2n})$ be the operator 
defined by 
$$
A(s)Z:=J\frac d {d\theta} Z+ S(s,\theta)z,
$$
so that 
$A_\infty=\lim _{s\to \infty} A(s)$. 
We have $A(s)=A(s)Q_\infty$, $\p_sQ_\infty=Q_\infty\p_s$. Since
$A_\infty$ is invertible when restricted to $\im\,Q_\infty$ and 
$Q_\infty A_\infty=A_\infty$, the operators $A(s)$ and $Q_\infty A(s)$  
are also invertible when restricted to $\im\,Q_\infty$ for $s$ large 
enough and there exists $c>0$ such that 
$$
\|A(s)Q_\infty Z\|^2 \ge 
\|Q_\infty A(s)Q_\infty Z\|^2 \ge  c\|Q_\infty Z\|^2 
$$
for all $Z\in H^k(S^1,\R^{2n})$. For $s$ large enough we define 
$$
f(s):=\frac 1 2 \|Q_\infty Z(s)\|^2 .
$$
We have 
\begin{eqnarray*}
f''(s) 
&  = & \|\p_sQ_\infty Z\|^2 + \langle Q_\infty Z,\p_s^2Q_\infty Z 
\rangle \\
& = & \|\p_sQ_\infty Z\|^2 - \langle Q_\infty 
Z,\p_sQ_\infty A(s) Q_\infty Z \rangle \\
& = & \|Q_\infty A(s) 
Q_\infty Z\|^2 - \langle Q_\infty Z,Q_\infty (\p_sA(s)) Q_\infty Z 
-
Q_\infty A(s)^2 Q_\infty  Z \rangle \\
& \ge & \!(c- 
\eps)\|Q_\infty Z\|^2  \!+\! \langle (A(s)^*-A(s))Q_\infty Z, 
A(s)Q_\infty Z\rangle 
\!+\! 
\|A(s)Q_\infty Z\|^2 \\
& \ge & 
(2c-2\eps) \|Q_\infty Z\|^2 \ge 4\rho^2 f(s).
\end{eqnarray*}
Here $A(s)^*$ is the adjoint of $A(s)$ and we used the fact that 
$\|\p_sA(s)\|\to 0$, $A(s)^*-A(s)\to 0$ for $s\to \infty$ and 
$\|A(s)\|$ is uniformly bounded. 

Let now $s_0$ be large enough and 
define $g(s):=f(s_0)e^{-2\rho(s-s_0)}$. Then 
$g''=4\rho^2g$, $(f-g)''\ge 4\rho^2(f-g)$, $(f-g)(s_0)=0$ and
$\lim_{s\to \infty} f(s)-g(s) = 0$. Then $f-g\le 0$ on 
$[s_0,\infty[$ because it cannot have a strictly positive
maximum. Therefore  
$$
\|Q_\infty Z(s)\| \le 
\|Q_\infty Z(s_0)\| e^{-\rho(s-s_0)}.
$$

It is important to note that this estimate holds for any Sobolev norm
$H^k$. By the Sobolev embedding theorem this implies the following
pointwise estimate
$$
|Q_\infty Z(s,\theta)|\le C e^{-\rho s}, \quad 
|\p_\theta Q_\infty Z(s,\theta)|=|\p_\theta Z(s,\theta)| \le C 
e^{-\rho s}, \quad s\ge s_0.
$$
Because $\p_s Z + A(s) Z = \p_s Z + A(s) Q_\infty Z =0$ we obtain  
$$
|\p_s Z(s,\theta)| \le C e^{-\rho s}, \quad s\ge s_0
$$
and, by integration on $[s,\infty[$ and taking into account that 
$Z(s,\theta)$ converges to $(\utheta_0,0,\ldots,0)$ for $s\to \infty$,
we obtain the pointwise estimate
$$
|(\vartheta - \theta - \utheta_0,z)|\le C e^{-\rho s}. 
$$
This implies the conclusion for any $r<\rho$. 
\end{proof}

\begin{proposition}  \label{prop:asymptoticdelta}
Let $H \in \cH'$ and  $\{ f_\gamma : S_\gamma \to \R \}$ be a
collection of perfect Morse functions
indexed by $\gamma \in \cP_\lambda$. There exist $r > 0$ and $\delta_0
>  0$ such that for all
$J \in \cJ$, $\og, \ug \in \cP(H)$, $p \in \mathrm{Crit}(f_{\og}), q
\in \mathrm{Crit}(f_{\ug})$
and for all $(\delta, u) \in
\cM^A_{]0,\delta_0]}(\og_p,\ug_q;H,\{f_\gamma\},J)$, we have
\begin{eqnarray*}
\vartheta \circ u(s,\theta) - \theta - \varphi^{\delta
f_{\og}}_s(\otheta_0) &\in& W^{1,p}(]-\infty, -s_0]
\times S^1, \R ; e^{r|s|} ds \, d\theta) , \\
z \circ u(s,\theta) &\in& W^{1,p}(]-\infty, -s_0] \times S^1, 
\R^{2n-1} ; e^{r|s|} ds \, d\theta) , \\
\vartheta \circ u(s,\theta) - \theta - \varphi^{\delta 
f_{\ug}}_s(\utheta_0) &\in& W^{1,p}([s_0,\infty[
\times S^1, \R ; e^{r|s|} ds \, d\theta) , \\
z \circ u(s,\theta) &\in& W^{1,p}([s_0,\infty[ \times S^1, \R^{2n-1} 
; e^{r|s|} ds \, d\theta) ,
\end{eqnarray*}
for some $\otheta_0, \utheta_0 \in S^1$ and some $s_0 > 0$
sufficiently large. 
\end{proposition}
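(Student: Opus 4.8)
The plan is to reduce Proposition~\ref{prop:asymptoticdelta} to the analysis of the operator governing the approach of $u$ to $S_{\og}$, respectively $S_{\ug}$, following the same scheme as the proof of Proposition~\ref{prop:asymptotic} but now separating the two asymptotic directions in $\R^{2n}=\R\oplus\R^{2n-1}$. Fix the end $s\to+\infty$; the argument at $-\infty$ is symmetric. Writing $Z(s,\theta)=(\vartheta\circ u(s,\theta)-\theta,\,z\circ u(s,\theta))$, the Floer equation for $H_\delta$ in the tubular chart around $S_{\ug}$ becomes $\p_s Z+J\p_\theta Z+S(s,\theta)Z=-\delta\,(\text{gradient term})$, and the key structural fact—already exploited in the proof of Proposition~\ref{prop:Surj_udelta}—is that $S(s,\theta)$ preserves the splitting $\xi\oplus\langle\p/\p t,X_H\rangle$ up to terms of order $\delta$, that $S$ acts on the $X_H$-direction essentially by $\delta f''_{\ug}$, and that on the $\R^{2n-1}$-factor it is uniformly close to the nondegenerate asymptotic matrix $S_\infty$. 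So I would decompose $Z=P_\infty Z+Q_\infty Z$ and treat the two pieces by genuinely different mechanisms.

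For the normal component $Q_\infty Z$ (the $z$-part together with the $L^2$-complement of the constant in the $\vartheta$-direction): here the Morse–Bott nondegeneracy gives $\|A(s)Q_\infty Z\|\ge c\|Q_\infty Z\|$ for some $c>0$ independent of $\delta$ once $\delta_0$ is small and $s_0$ is large, exactly as in Proposition~\ref{prop:asymptotic}. The perturbation contributes a term of size $O(\delta)\|Q_\infty Z\|$ plus a bounded forcing term coming from $\delta\nabla f_{\ug}$, but the latter lives in the $X_H$-direction (tangent to $S_{\ug}$) and hence is killed by $Q_\infty$ up to $O(\delta^2)$. The convexity/maximum-principle argument on $f(s)=\tfrac12\|Q_\infty Z(s)\|^2$ then yields $f''\ge 4\rho^2 f$ with $\rho$ uniform in $\delta$, giving exponential decay $\|Q_\infty Z(s)\|\le \|Q_\infty Z(s_0)\|e^{-\rho(s-s_0)}$ in every Sobolev norm $H^k$, hence pointwise $C^1$ decay $|Q_\infty Z(s,\theta)|\le Ce^{-\rho s}$ by Sobolev embedding. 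This is the same computation as in Proposition~\ref{prop:asymptotic}, and I would cite it, only checking that the constants survive the $\delta$-perturbation.

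For the tangential component $P_\infty Z$ (the $\vartheta$-shift $\otheta(s):=\vartheta\circ u(s,0)-\theta$, averaged over $S^1$): here there is no exponential decay toward a point—the correct statement is that $\otheta(s)$ tracks the gradient flow line $\varphi_s^{\delta f_{\ug}}(\utheta_0)$. Projecting the Floer equation onto $\ker A_\infty$ and using the already-established exponential smallness of $Q_\infty Z$, one gets an ODE of the form $\dot{\otheta}(s)=\delta\nabla f_{\ug}(\otheta(s))+O(e^{-\rho s})$, i.e. $\otheta$ solves the gradient equation up to an exponentially small error. A Gronwall-type comparison with the exact flow line—comparing $\otheta(s)$ to $\varphi_s^{\delta f_{\ug}}(\utheta_0)$ with $\utheta_0$ chosen as the appropriate limiting point $\lim_{s\to+\infty}\varphi_{-s}^{\delta f_{\ug}}(\otheta(s))$, which exists precisely because the flow near the hyperbolic fixed point $q$ has a stable manifold—then shows $\otheta(s)-\varphi_s^{\delta f_{\ug}}(\utheta_0)$ decays exponentially. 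One does have to be slightly careful that the rate in this comparison is uniform in $\delta$: the linearization of $\delta f_{\ug}$ at its critical point is $\delta\,\mathrm{Hess}f_{\ug}$, whose eigenvalues scale with $\delta$, so a naive Gronwall loses the uniformity. The fix is that the exponentially decaying forcing term $O(e^{-\rho s})$ decays at a rate $\rho$ independent of $\delta$ and hence \emph{faster} than the $O(\delta)$ expansion/contraction of the gradient flow, so after absorbing it the difference still decays at rate $\min(\rho,\,\delta\lambda_{\min})$—which for the purposes of the statement (membership in a weighted $W^{1,p}$ with \emph{some} $r>0$) is all that is needed, at the cost of shrinking $r$; this is why the statement only asserts existence of $r,\delta_0$ rather than a $\delta$-uniform rate, and it is the point I expect to require the most care. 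Finally I would assemble: on $[s_0,\infty[$, $\vartheta\circ u-\theta-\varphi_s^{\delta f_{\ug}}(\utheta_0)$ and $z\circ u$ both decay exponentially in $C^1$, so they lie in $W^{1,p}(e^{r|s|}ds\,d\theta)$ for any $r$ below the extracted rate, and symmetrically at $-\infty$ with $\og$, $\otheta_0$, completing the proof.
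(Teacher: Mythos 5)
Your plan --- decomposing into $Q_\infty Z$ (handled by the convexity estimate of Proposition~\ref{prop:asymptotic}, after checking that the $\delta$-perturbation contributes only $O(\delta)$) and $P_\infty Z$ (an ODE along $\ker A_\infty$ compared to the exact gradient flow line) --- is the paper's approach, and your choice of $\utheta_0$ as $\lim_{s\to\infty}\varphi_{-s}^{\delta f_\ug}(P_\infty Z(s))$ is the correct one. The gap is in the rate estimate for the tangential piece, and it is not the benign loss you describe. You settle for a rate that tends to $0$ with $\delta$ and say ``some $r>0$'' suffices ``at the cost of shrinking $r$'', reading the statement as not requiring $\delta$-uniformity. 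But the quantifiers are ``there exist $r>0$ and $\delta_0>0$ such that for all $(\delta,u)$'': $r$ is fixed \emph{before} $\delta$ is chosen, so a rate that degenerates as $\delta\to0$ does not prove the statement, and the uniformity is essential downstream, where the exponential weight $d<r$ in $\cB^A_\delta$ is fixed once and for all.

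In fact your choice of $\utheta_0$ already eliminates the slow mode; you just did not exploit it. Near $q$ the ODE reads $\p_s(P_\infty Z) \mp \delta M\, P_\infty Z = G$ with $|G|\le Ce^{-\rho s}$. Writing $P_\infty Z(s)=c(s)e^{\pm\delta Ms}$ gives $\dot c = G e^{\mp\delta Ms}$, so for $\delta<\rho/M$ the limit $c_\infty:=\lim_{s\to\infty}c(s)$ exists, and your $\utheta_0$ corresponds exactly to $\varphi_s^{\delta f_\ug}(\utheta_0)=c_\infty e^{\pm\delta Ms}$. Then
$$
\big|P_\infty Z(s)-\varphi^{\delta f_\ug}_s(\utheta_0)\big|
=\Big|e^{\pm\delta Ms}\int_s^\infty G(\sigma)\,e^{\mp\delta M\sigma}\,d\sigma\Big|
\le \frac{C}{\rho\mp\delta M}\,e^{-\rho s}.
$$
The homogeneous mode, decaying only at the slow $\delta$-dependent rate, is exactly what matching at $+\infty$ kills; only the forced response, which decays at rate $\rho$ independent of $\delta$, survives. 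With this correction and $\delta_0<\rho/M$, any $r<\rho$ works uniformly, and the rest of your argument goes through.
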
 

\begin{proof}
  The proof is similar to the one of 
Proposition~\ref{prop:asymptotic}. 
With the same notations as before 
the Floer equation satisfied by $u$ 
can be written in local 
coordinates $Z=(\vartheta-\theta,z)$ as 
\begin{equation} 
\label{eq:CRdelta}
\p_s Z+ J\p_\theta Z +Sz -\delta \nabla 
f_\ug(Z_1)=0,
\end{equation}
where $Z_1:=\vartheta-\theta$. We again 
show that 
$f(s)=\frac 1 2 \|Q_\infty Z\|^2$ satisfies an inequality 
of the 
form $f''(s)\ge 4\rho^2f(s)$. There are
two additional terms 
to estimate in the expression of $f''(s)$, namely 
\begin{equation} 
\label{eq:app1}
\langle Q_\infty Z, \delta Q_\infty A(s) \nabla 
f_\ug(Z_1) \rangle
\end{equation}
and
\begin{equation} 
\label{eq:app2}
\langle Q_\infty Z, \delta Q_\infty \p_s(\nabla 
f_\ug(Z_1)) \rangle.
\end{equation}
Let $P_\infty:=\one-Q_\infty$ be 
the orthogonal projection on $\ker\,A_\infty$. 
The main observation 
is that 
$Q_\infty \nabla f_\ug (P_\infty Z_1)=0$. As a consequence 
there exists a 
matrix-valued function $L=L(s,\theta)$ such that 
$$
Q_\infty \nabla f_\ug (Z_1) = L Q_\infty(Z_1). 
$$
The 
term~\eqref{eq:app1} is then estimated by 
\begin{eqnarray*}
\langle 
Q_\infty Z, \delta Q_\infty A(s) \nabla f_\ug(Z_1) \rangle
& = & 
\langle Q_\infty Z, \delta Q_\infty A(s) Q_\infty \nabla f_\ug(Z_1) 
\rangle \\
& \le & C\delta \|Q_\infty Z\|^2
\end{eqnarray*}
for $s\ge s_0$,  where $s_0$ depends on $u$, but $C$ depends only  
on $\ug$ and $f_\ug$. Similarly, the term~\eqref{eq:app2} is estimated
by  
\begin{eqnarray*}
\langle Q_\infty Z, \delta Q_\infty \p_s(\nabla 
f_\ug(Z_1))\rangle
& = & \langle Q_\infty Z, \delta \p_s Q_\infty 
\nabla f_\ug(Z_1)\rangle \\
& \le & C\delta 
\|Q_\infty Z\| \|\p_s 
Q_\infty (Z_1)\| \\
& \le & \frac {C\delta} 2 \Big( \|Q_\infty Z\| ^2 
+ 
\|\p_s Q_\infty Z\|^2\Big). 
\end{eqnarray*}
The norm of $\p_s 
Q_\infty Z = Q_\infty \p_s Z$ satisfies
$$
\|\p_sQ_\infty Z\| = 
\|Q_\infty A(s) Z - \delta Q_\infty \nabla f_\ug (Z_1)\| \le 
C\|Q_\infty Z\|. 
$$
As a consequence, there exists $\delta_0>0$ and 
$\rho>0$ such that 
$f''(s)\ge 4\rho^2f(s)$ for $s\ge s_0$ and 
$0<\delta\le \delta_0$. As before, we infer 
the pointwise bounds 
\begin{equation} \label{eq:pointwiseQdelta} 
|Q_\infty 
Z(s,\theta)|\le C e^{-\rho s}, \ 
|\p_\theta Q_\infty 
Z(s,\theta)|=|\p_\theta Z(s,\theta)| \le C e^{-\rho s}, \ s\ge 
s_0.
\end{equation}

It remains to estimate $P_\infty Z$. For that we 
write $\nabla f_\ug (Z_1)= 
\nabla f_\ug(P_\infty (Z_1)) + K Q_\infty 
(Z_1)$ for some matrix-valued 
function $K=K(s,\theta)$. Again, for 
$s\ge s_0$, the norm $\|K\|$ is 
uniformly bounded by a constant 
depending only on $\ug$ and $f_\ug$. 
By applying $P_\infty$ to the 
equation~\eqref{eq:CRdelta} and using the fact that 
$P_\infty\nabla 
f_\ug(P_\infty (Z_1))=\nabla f_\ug(P_\infty (Z_1))$
and 
$P_\infty(Z_1)=P_\infty(Z)$ we obtain 
\begin{equation} 
\label{eq:pinfty}
|\p_s(P_\infty Z)-\delta\nabla f_\ug(P_\infty Z)| 
\le Ce^{-\rho s}.
\end{equation}

We claim that this implies 
\begin{equation} \label{eq:pointwisePdelta}
|P_\infty Z (s) - 
\varphi_s^{\delta f_\ug}(\utheta_0)|\le Ce^{-\rho s}, \ s\ge 
s_0
\end{equation}
for a suitable $\utheta_0$. We choose a Morse 
coordinate $x$ on $S_\ug$ 
around the critical point $q$ of $f_\ug$ 
in which the gradient 
$\nabla f_\ug(x)=\pm Mx$, $M>0$. Then 
equation~\eqref{eq:pinfty} becomes 
$$
\p_s(P_\infty Z)(s) \mp \delta 
M P_\infty Z (s) = G(s)
$$
with $|G(s)|\le Ce^{-\rho s}$. Then 
$P_\infty Z(s)=c(s)e^{\pm\delta Ms}$ with 
$e ^{\pm\delta Ms}\p_s 
c(s)=G(s)$. As a consequence, for $\delta < \rho/M$ 
the function $c$ 
admits a limit $c_\infty$ as $s\to\infty$ and 
$c(s)=c_\infty - 
\int_s^\infty G(\sigma) e^{\mp\delta M \sigma}\, d\sigma$. Let 
$\utheta_0$ be such that $\varphi_s^{\delta 
f_\ug}(\utheta_0)=c_\infty e^{\pm\delta M s}$ 
(note that $c_\infty=0$ if $q$ is a maximum). Then
\begin{eqnarray*}
|P_\infty Z 
(s) - \varphi_s^{\delta f_\ug}(\utheta_0)| & = & 
|e^{\pm\delta M 
s}\int_s^\infty G(\sigma) e^{\mp\delta M \sigma}\, d\sigma| \\
&  \le 
& Ce^{-\rho s}.
\end{eqnarray*}
The estimates~\eqref{eq:pointwiseQdelta}
and~\eqref{eq:pointwisePdelta} imply the conclusion.
\end{proof}

\begin{proposition} \label{prop:intervaldelta}
Let $\delta\in]0,\delta_0]$ and  
let $u_\delta\in \widehat \cM^A(\og_p,\ug_q;H_\delta,J)$. Let  
$I_\delta=[s_0(\delta),s_1(\delta)]\subset \R$ be an interval 
such that $u_\delta(I_\delta\times S^1)$ is contained in the domain  
of a coordinate chart $Z=(\vartheta,z)$ around $S_\gamma$ for some
$\gamma\in \cP(H)$.

There exist $\rho>0$, $\theta_0\in S^1$, 
$C>0$ and $M>0$ such that 
$z \circ u(s,\theta)$ and its (first
order) derivatives are bounded by 
\begin{equation} \label{eq:boundz}
C\max(\|Q_\infty Z(s_0)\|, \|Q_\infty Z(s_1)\|)
\frac {\cosh(\rho(s-\frac
{s_0+s_1}2))} {\cosh(\rho(s_1-s_0)/2)}
\end{equation}
for $s\in I_\delta$, $\theta\in S^1$. 
If $P_\infty Z(s)$, $s\in I_\delta$
stays away from all but one of the critical points of $f_\gamma$, then
$\vartheta \circ u(s,\theta) - \theta - \varphi^{\delta
f_{\gamma}}_s(\theta_0)$ and its (first
order) derivatives are bounded by 
$$
C\max(\|Q_\infty Z(s_0)\|, \|Q_\infty Z(s_1)\|)
e^{\delta M(s_1-s_0)}\frac {\cosh(\rho(s-\frac
{s_0+s_1}2))} {\cosh(\rho(s_1-s_0)/2)}.
$$
Moreover, if $P_\infty Z(s)$, $s\in I_\delta$
stays away from all critical points of $f_\gamma$, the above bound is
improved to~\eqref{eq:boundz}.
\end{proposition}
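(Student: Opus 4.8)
The plan is to adapt the argument from Proposition~\ref{prop:asymptotic} and Proposition~\ref{prop:asymptoticdelta} to a bounded interval, replacing one-sided exponential decay with the two-sided $\cosh$-profile that is typical of linear ODE estimates on a finite segment. First I would write the Floer equation satisfied by $u_\delta$ in the coordinate chart $(\vartheta,z)$ around $S_\gamma$ as in~\eqref{eq:CRdelta}, namely $\p_s Z + J\p_\theta Z + Sz - \delta\nabla f_\gamma(Z_1) = 0$ with $Z = (\vartheta - \theta, z)$, $Z_1 = \vartheta - \theta$. Applying $Q_\infty$ kills the gradient term up to the factor $L\,Q_\infty(Z_1)$ by the key identity $Q_\infty\nabla f_\gamma(P_\infty Z_1)=0$ from the proof of Proposition~\ref{prop:asymptoticdelta}, so that $f(s):=\frac12\|Q_\infty Z(s)\|^2$ satisfies a differential inequality $f''(s)\ge 4\rho^2 f(s)$ for $\delta\le\delta_0$ small, uniformly in the interval. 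The difference with the earlier propositions is only that we work on $I_\delta=[s_0,s_1]$ rather than a half-line.

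Next I would invoke the maximum principle on the finite interval: if $g(s)$ is the solution of $g''=4\rho^2 g$ with $g(s_0)=f(s_0)$, $g(s_1)=f(s_1)$, explicitly a combination of $\cosh$ and $\sinh$, then $f-g$ is subharmonic for the operator $\frac{d^2}{ds^2}-4\rho^2$ and vanishes at the endpoints, hence $f\le g$ on $[s_0,s_1]$. Bounding $g$ by $\max(f(s_0),f(s_1))\,\cosh(\rho(s-\tfrac{s_0+s_1}{2}))/\cosh(\rho(s_1-s_0)/2)$ gives the $L^2$-version of~\eqref{eq:boundz} for $\|Q_\infty Z(s)\|$. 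As in the previous proofs, the estimate holds for every Sobolev norm $H^k$ on $S^1$ because the operators $A(s)$ and the inequality are $k$-independent, so the Sobolev embedding theorem converts it into the pointwise bound on $z\circ u$ and, using $\p_s Z = -A(s)Q_\infty Z + \delta(\cdots)$ together with $\p_\theta Z = \p_\theta Q_\infty Z$, on the first derivatives as well. This yields the first assertion.

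For the second assertion I would estimate $P_\infty Z$, which by~\eqref{eq:pinfty} satisfies $|\p_s(P_\infty Z) - \delta\nabla f_\gamma(P_\infty Z)| \le C(\|Q_\infty Z(s_0)\|+\|Q_\infty Z(s_1)\|)\cosh(\rho(s-\tfrac{s_0+s_1}{2}))/\cosh(\rho(s_1-s_0)/2)$, since the coupling term is controlled by the bound just obtained for $Q_\infty Z$. Away from all but one critical point of $f_\gamma$, the flow $\varphi^{\delta f_\gamma}_s$ has a single hyperbolic fixed point in the relevant region, so the variation-of-constants formula in a Morse chart $x$ where $\nabla f_\gamma = \pm Mx$ (as in the proof of Proposition~\ref{prop:asymptoticdelta}) gives $P_\infty Z(s) = c(s)e^{\pm\delta M s}$ with $\p_s c$ controlled by the above; integrating and comparing with $\varphi^{\delta f_\gamma}_s(\theta_0)$ for a suitable $\theta_0$ produces the factor $e^{\delta M(s_1-s_0)}$ in front of the $\cosh$-profile. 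If $P_\infty Z(s)$ stays away from \emph{all} critical points, then on the compact complement of small neighbourhoods of $\mathrm{Crit}(f_\gamma)$ the flow $\varphi^{\delta f_\gamma}_s$ is a smooth contraction/dilation with bounded distortion over a time of order $\delta(s_1-s_0)$ which is itself bounded (since $u_\delta$ is a genuine Floer trajectory, its total variation in the $\vartheta$-direction on $I_\delta$ is $O(1)$), and one absorbs the exponential factor into the constant, recovering the sharper bound~\eqref{eq:boundz}.

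The main obstacle is the bookkeeping in the last case: one must argue that staying away from every critical point forces $\delta(s_1-s_0)$ to be bounded, so that the distortion factor $e^{\delta M(s_1-s_0)}$ of the gradient flow is $O(1)$ and can be dropped. This follows because a component of the gradient flow of $f_\gamma$ that avoids all critical points travels a bounded distance on $S^1$ in bounded rescaled time $\delta s$; the quantitative version of this, together with the compactness of $S^1\setminus(\text{neighbourhoods of }\mathrm{Crit}(f_\gamma))$ and the uniform Lipschitz bounds on $\varphi^{f_\gamma}$, is exactly what is needed, and it should be stated carefully since it is the one place where the finiteness of the interval is used in an essential way rather than as a cosmetic change from the half-line case.
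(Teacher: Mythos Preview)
Your treatment of $Q_\infty Z$ (the convexity inequality $f''\ge 4\rho^2 f$, the $\cosh$ comparison function on $[s_0,s_1]$, and the upgrade to pointwise bounds via Sobolev in every $H^k$) is exactly what the paper does. The organization for $P_\infty Z$ differs, and in the last case your argument is more fragile than necessary.

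For the case ``$P_\infty Z$ stays away from \emph{all} critical points'' the paper does not pass through the exponential factor and then try to absorb it. Instead it observes that on an arc of $S_\gamma$ not containing any critical point one can choose a coordinate in which $\nabla f_\gamma$ is a \emph{constant} $M$ (not the linear vector field $\pm Mx$ of a Morse chart). The ODE for $P_\infty Z$ then reads $\p_s(P_\infty Z)-\delta M=G(s)$ with $|G|\le C_1 g_1$, and one integrates directly from the midpoint $\tfrac{s_0+s_1}{2}$: using $\sqrt{\cosh x}\le\sqrt2\cosh(x/2)$, the primitive of $g_1$ is bounded by the $\cosh$-profile~\eqref{eq:boundz} with no exponential prefactor at all. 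Only in the case where $P_\infty Z$ is allowed to approach one critical point does the paper switch to the Morse chart $\nabla f_\gamma=\pm Mx$ and pick up the factor $e^{\delta M(s_1-s_0)}$ via variation of constants.

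Your alternative route—prove the weaker bound first and then argue that $e^{\delta M(s_1-s_0)}$ is $O(1)$ because $\delta(s_1-s_0)$ is bounded—can be made to work, but your justification (``total variation in the $\vartheta$-direction is $O(1)$'') is not quite an argument. What one actually needs is: integrate $\p_s(P_\infty Z)=\delta\nabla f_\gamma(P_\infty Z)+G$ over $[s_0,s_1]$; since $\nabla f_\gamma$ has a definite sign bounded away from zero on the arc, the first term contributes at least $c\delta(s_1-s_0)$ in absolute value, while $\int_{s_0}^{s_1}|G|$ is bounded (the $\cosh$-profile integrates to a bounded multiple of $\max(\|Q_\infty Z(s_0)\|,\|Q_\infty Z(s_1)\|)$) and $|P_\infty Z(s_1)-P_\infty Z(s_0)|$ is bounded by the arc length. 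This yields a universal bound on $\delta(s_1-s_0)$. Once spelled out this is fine, but the paper's constant-gradient coordinate avoids the detour entirely.
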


\begin{proof}
With the notations of Proposition~\ref{prop:asymptoticdelta}, 
the Floer equation satisfied by $u$ 
can be written in local 
coordinates $Z=(\vartheta-\theta,z)$ as 
\begin{equation} 
\label{eq:CRdelta_bis}
\p_s Z+ J\p_\theta Z +Sz -\delta \nabla 
f_\gamma(Z_1)=0,
\end{equation}
where $Z_1:=\vartheta-\theta$. Let $A_\infty=J\frac d {d\theta} +
S_\infty(\theta)$ the asymptotic operator at $\gamma$, let 
$Q_\infty$ be the orthogonal
projection onto $(\ker \, A_\infty)^\perp$ and $P_\infty:=\one-
Q_\infty$. Then, as in Proposition~\ref{prop:asymptoticdelta}, the 
quantity 
$f(s)=\frac 1 2 \|Q_\infty Z\|^2$ satisfies an inequality 
of the form $f''(s)\ge 4\rho^2f(s)$. Define
$$
g(s):=\max(f(s_0),f(s_1))\frac {\cosh (2\rho(s-\frac {s_0+s_1}2))} 
{\cosh(\rho(s_1-s_0))}.
$$ 
Then $(f-g)''\ge 4\rho^2(f-g)$ and $f-g$
cannot have a strictly positive maximum. Since $f-g\le 0$ at $s_0$ and
$s_1$, we infer that $f-g\le 0$ on
$I_\delta$. As in Proposition~\ref{prop:asymptotic}, we infer the
pointwise bounds for $s\ge s_0$
\begin{eqnarray} \label{eq:pointwiseintervalQdelta} 
|Q_\infty 
Z(s,\theta)| &\le& Cg_1(s), \\
|\p_\theta Q_\infty 
Z(s,\theta)|  =  |\p_\theta Z(s,\theta)| 
&\le& Cg_1(s),\nonumber \\
|\p_s(P_\infty Z)(s)-\delta\nabla f_\gamma(P_\infty Z)(s)|
&\le& C_1g_1(s), \nonumber 
\end{eqnarray}
where 
$$
g_1(s):=\max(\|Q_\infty Z(s_0)\|, \|Q_\infty Z(s_1)\|)
\sqrt{\frac {\cosh (2\rho(s-\frac {s_0+s_1}2))} 
{\cosh(\rho(s_1-s_0))}}. 
$$
If $P_\infty Z(s)$ stays away from $\textrm{Crit}(f_\gamma)$, 
we can assume that $\nabla f_\gamma(P_\infty Z(s))=M$ in some suitable 
coordinate on $S^1$. Then the last equation becomes 
$$
\p_s(P_\infty Z)(s)-\delta M = G(s),
$$
where $|G(s)|\le C_1g_1(s)$. By direct integration we obtain 
\begin{eqnarray*}
 |(P_\infty Z)(s)-\delta M s - c_0| & = & \Big|\int _{\frac {s_0+s_1} 2}^s
G(\sigma) \, d\sigma \Big| \\
& \le & C_2\Big| \int _{\frac {s_0+s_1} 2}^s
 \sqrt {\cosh(2\rho(s-\frac {s_0+s_1}2))}\, d\sigma \Big| \\
& \le & C_2\frac {\sqrt 2} \rho \big|\sinh(\rho(s-\frac
{s_0+s_1}2))\big|
\\
& \le & C_2\frac {\sqrt 2} \rho \cosh(\rho(s-\frac
{s_0+s_1}2)).
\end{eqnarray*}
Here $C_2=C_1\max(\|Q_\infty Z(s_0)\|, \|Q_\infty Z(s_1)\|)/
\sqrt{\cosh(\rho(s_1-s_0))}$ and we have used the inequality 
$\sqrt{\cosh x}\le \sqrt 2 \cosh (x/2)$. Therefore, there exists a
uniquely determined $\theta_0\in S^1$ such that 
\begin{eqnarray*}
\lefteqn{|(P_\infty Z)(s)-\varphi_s^{\delta f_\gamma}(\theta_0)|} \\ 
& \le&
\frac {C_1\sqrt 2} \rho 
\max(\|Q_\infty Z(s_0)\|, \|Q_\infty Z(s_1)\|)
\frac {\cosh(\rho(s-\frac
{s_0+s_1}2))} {\sqrt{\cosh(\rho(s_1-s_0))}} \\
&\le  &
\frac {C_1\sqrt 2} \rho 
\max(\|Q_\infty Z(s_0)\|, \|Q_\infty Z(s_1)\|)
\frac {\cosh(\rho(s-\frac
{s_0+s_1}2))} {\cosh(\rho (s_1-s_0)/2)}. 
\end{eqnarray*}
The last inequality follows from $\cosh(x/2)\le \sqrt {\cosh x}$. A
similar manipulation on~\eqref{eq:pointwiseintervalQdelta} 
gives 
$$
|Q_\infty Z(s,\theta)|\le C\sqrt 2 
\max(\|Q_\infty Z(s_0)\|, \|Q_\infty Z(s_1)\|)
\frac {\cosh(\rho(s-\frac
{s_0+s_1}2))} {\cosh(\rho(s_1-s_0)/2)}. 
$$ 
The last two inequalities imply the conclusion of the Proposition in
the case when $P_\infty Z(s)$, $s\in I_\delta$ stays away from
$\textrm{Crit}(f_\gamma)$. 

If $P_\infty Z(s)$ is allowed to approach one of the critical points
of $f_\gamma$, the estimate on
$|Q_\infty Z(s)|$ stays the same, but the estimate involving $P_\infty
Z (s)$ has to be modified as follows. In a suitable Morse coordinate
chart around the critical point we can assume that $\nabla
f_\gamma(x)=\pm Mx$, $M>0$ and we have to study the equation 
$$
\p_s(P_\infty Z)(s) \mp \delta M P_\infty Z(s) = G(s),
$$ 
with $|G(s)|\le C_1g_1(s)$. As in
Proposition~\ref{prop:asymptoticdelta} we have $P_\infty
Z(s)=c(s)e^{\pm\delta Ms}$ with $e^{\pm\delta Ms}\p_sc(s)=G(s)$. Then 
$c(s)=c_0+\int_{\frac {s_0+s_1} 2}^s e^{\mp\delta M
\sigma}G(\sigma)\,d\sigma$ and there exists a $\theta_0\in S^1$ such
that $\varphi_s^{\delta f_\gamma}(\theta_0)=c_0e^{\pm\delta Ms}$. We
obtain 
\begin{eqnarray*}
 |(P_\infty Z)(s)-\varphi_s^{\delta f_\gamma}(\theta_0)| & \le & 
\Big|\int _{\frac {s_0+s_1} 2}^s
e^{\pm\delta M(s-\sigma)}G(\sigma) \, d\sigma \Big| \\
&\le & e^{\delta M (s_1-s_0)} \Big|\int _{\frac {s_0+s_1} 2}^s
G(\sigma) \, d\sigma \Big|. 
\end{eqnarray*}
The last integral is bounded by 
$$
\frac {C_1\sqrt 2} \rho 
\max(\|Q_\infty Z(s_0)\|, \|Q_\infty Z(s_1)\|)
\frac {\cosh(\rho(s-\frac
{s_0+s_1}2))} {\cosh(\rho (s_1-s_0)/2)}
$$
as in the previous case and the conclusion follows.
\end{proof}

%%%%%%%%%%%%%%%%%%%%%%%%%%%%%%%%%%%%%%%%%%%%%%
%%%%%%%%%%%%%%%%%%%%%%%%%%%%%%%%%%%%%%%%%%%%%%
%%%%%%%%%%%%%%%% References %%%%%%%%%%%%%%%%%%
%%%%%%%%%%%%%%%%%%%%%%%%%%%%%%%%%%%%%%%%%%%%%%
%%%%%%%%%%%%%%%%%%%%%%%%%%%%%%%%%%%%%%%%%%%%%%

\nopagebreak[4] 
%{\scriptsize \input{paper1_30Oct07.ind}}

{\scriptsize{\printindex}}

\end{document}